\setlist{nosep} 
    \newtheorem*{rep@theorem}{\rep@title}
    \newcommand{\newreptheorem}[2]{%
    \newenvironment{rep#1}[1]{%
    \def\rep@title{#2 \ref{##1}}%
    \begin{rep@theorem}}%
    {\end{rep@theorem}}}
\newtheorem{theorem}{Theorem}[section]
\newtheorem{proposition}[theorem]{Proposition}
\newtheorem{corollary}[theorem]{Corollary}
\newtheorem{lemma}[theorem]{Lemma}
\newtheorem{Set up}[theorem]{Set-up}
\newtheorem{introthm}{Theorem}
\newtheorem{introcor}[introthm]{Corollary}
\theoremstyle{definition}
\newtheorem{introdef}[introthm]{Definition}
\newtheorem{introprop}[introthm]{Proposition}
\newtheorem{definition}[theorem]{Definition}
\newtheorem{example}[theorem]{Example}
\newtheorem{remark}[theorem]{Remark}
\newtheorem*{answer*}{Answer}
\newtheorem*{application*}{Application}
\newtheorem*{warning}{Warning}
\DeclarePairedDelimiterX{\Norm}[1]{\lVert}{\rVert}{#1}
\theoremstyle{definition}
  \newcommand{\gothic}{\mathfrak}
  \newcommand{\go}{{\gothic o}}
\renewcommand*{\backrefalt}[4]{\ifcase #1 (Not cited).\or (Cited p.~#2).\else (Cited pp.~#2).\fi} 
\def\subsection{\@startsection{subsection}{1}\z@{.7\linespacing\@plus\linespacing}
    {.5\linespacing}{\normalfont\scshape\centering}}\makeatother 
\newcounter{shcount}
\newcommand*{\bsh}[1]{\theoremstyle{definition}\newtheorem{subhead\theshcount}[theorem]{#1}
    \begin{subhead\theshcount}} 
\newcommand*{\esh}{\end{subhead\theshcount}\stepcounter{shcount}} 
\newcounter{enumlabelcount}
\newcommand\enumlabel[1][]{\item[#1]
    \refstepcounter{enumlabelcount}\def\@currentlabel{#1}}\makeatother
\definecolor{harrycomment}{rgb}{0.6,0,0.4}
\newcommand*{\hp}[1]{{\color{harrycomment}#1}}
\DeclareMathOperator{\dist}{\mathsf{d}}
\DeclareMathOperator{\Dist}{\mathsf{D}}
\DeclareMathOperator{\diam}{diam}
\DeclareMathOperator{\isom}{Isom}
\DeclareMathOperator{\Out}{Out}
\DeclareMathOperator{\PU}{PU}
\newcommand*{\B}{\mathcal B}
\newcommand*{\Chain}{\mathcal C}
\newcommand*{\R}{\mathbf R}
\newcommand*{\T}{\cal T}
\newcommand*{\X}{X_{\Dist}}
\newcommand*{\Z}{\mathbf Z}
\newcommand*{\cal}{\mathcal}
\renewcommand{\bf}{\mathbf}
\newcommand{\eps}{\varepsilon}
\newcommand*{\mc}{\mathcal}
\newcommand*{\ora}{\to_\omega }
\newcounter{claimcount}
\newenvironment{claim*}[1]{\par\vspace{2mm}\noindent
    \underline{Claim:}\hspace{2mm}#1}{}
\newenvironment{claimproof}[1]{\par\vspace{2mm}\noindent\underline{Proof:}\hspace{2mm}#1}
    {\leavevmode\unskip\penalty9999\hbox{}\nobreak\hfill\quad\hbox{$\diamondsuit$}\vspace{2mm}}
\newcommand*{\dcomment}[1]{ {\color{blue} [#1]}}
\title[hyperbolic models for CAT(0) spaces]{Hyperbolic models for CAT(0) spaces} 
\author{Harry Petyt, Davide Spriano, and Abdul Zalloum}
\begin{document}

\maketitle

\begin{abstract} We introduce two new tools for studying CAT(0) spaces: \emph{curtains}, versions of cubical hyperplanes; and the \emph{curtain model}, a counterpart of the curve graph. These tools shed new light on CAT(0) spaces, allowing us to prove a dichotomy of a rank-rigidity flavour, establish Ivanov-style rigidity theorems for isometries of the curtain model, find isometry-invariant copies of its Gromov boundary in the visual boundary of the underlying CAT(0) space, and characterise rank-one isometries both in terms of their action on the curtain model and in terms of curtains. Finally, we show that the curtain model is universal for WPD actions over all groups acting properly on the CAT(0) space.
\end{abstract}

\setcounter{tocdepth}{1}\tableofcontents\setcounter{tocdepth}{2}

\section{Introduction}

Two of the most well-studied topics in geometric group theory are CAT(0) cube complexes and mapping class groups. This is in part because they both admit powerful combinatorial-like structures that encode interesting aspects of their geometry: hyperplanes for the former and curve graphs for the latter. In recent years, analogies between the two theories have become more and more apparent. For instance: there are counterparts of curve graphs for CAT(0) cube complexes \cite{hagen:weak,genevois:hyperbolicities} and rigidity theorems for these counterparts that mirror the surface setting \cite{ivanov:automorphisms,fioravanti:automorphisms}; it has been shown that mapping class groups are quasiisometric to CAT(0) cube complexes \cite{petyt:mapping}; and both can be studied using the machinery of hierarchical hyperbolicity \cite{behrstockhagensisto:hierarchically:2}. However, the considerably larger class of CAT(0) \emph{spaces} is left out of this analogy, as the lack of a combinatorial-like structure presents a difficulty in importing techniques from those areas. In this paper, we bring CAT(0) spaces into the picture by developing versions of hyperplanes and curve graphs for them.

\medskip

CAT(0) cube complexes have been studied both via group actions and as interesting spaces in their own right \cite{nibloreeves:geometry,chepoi:graphs,sageevwise:tits,capracesageev:rank,chatterjifernosiozzi:median,huang:top,chalopinchepoi:safe}, and led to groundbreaking advances in 3--manifold theory \cite{wise:structure,agol:virtual}. Since revolutionary work of Sageev \cite{sageev:ends,sageev:codimension} it has become increasingly clear that their geometry is entirely encoded by their hyperplanes and how they interact with one another. Notably, we are not aware of any cases where methods from the world of cube complexes have been successfully exported to the full CAT(0) setting. One explanation could be that CAT(0) cube complexes seem to be rather unrepresentative of the more general class of CAT(0) spaces. For instance, many CAT(0) groups have property~(T), but no group admitting an unbounded action on a CAT(0) cube complex can have property~(T) \cite{nibloreeves:groups}.

The main new notion we introduce is that of \emph{curtains}, which are CAT(0) analogues of hyperplanes.

\begin{introdef}
Let $X$ be a CAT(0) space. A \emph{curtain} is $\pi_\alpha^{-1}(P)$, where $\alpha$ is a geodesic, $\pi_\alpha$ is the closest-point projection, and $P$ is a length-one subinterval of the interior of $\alpha$.
\end{introdef}

\noindent Each curtain delimits two natural path-connected ``halfspaces'', which it separates from each other, and just as in CAT(0) cube complexes, one can use curtains to measure the distance between two points (see Lemma~\ref{lem:chain_distance}). Curtains also present key differences from hyperplanes. The two most noteworthy are that the set of curtains is uncountable, and that curtains are not convex in general (Remark~\ref{rem:nonconvex}). However, such differences are necessary, as Sageev's construction \cite{sageev:codimension} produces cube complexes under very weak conditions. If we are to consider the more general class of CAT(0) spaces, curtains must not satisfy such conditions. Moreover, nonconvexity is even to be expected by comparison with complex hyperbolic space, see Remark~\ref{rem:nonconvex}.

\medskip

The other analogy considered is with \emph{curve graphs} of surfaces. The discovery that curve graphs \cite{harvey:boundary} are hyperbolic \cite{masurminsky:geometry:1} has been one of the most influential results in the theory of mapping class groups, and it has had many important repercussions \cite{farbmosher:convex,bowditch:tight,maher:random,brockcanaryminsky:classification,masurschleimer:geometry,bestvinabrombergfujiwara:constructing}. Since then, analogous spaces have been introduced in different settings with great effect, notably $\Out F_n$ \cite{hatchervogtmann:complex,bestvinafeighn:hyperbolicity,hatcher:homological,kapovichlustig:geometric,handelmosher:free:1,mann:hyperbolicity}, free products \cite{horbez:hyperbolic}, graph products \cite{genevois:cubical}, cocompactly cubulated groups \cite{hagen:weak,genevois:hyperbolicities}, and various classes of Artin groups \cite{kimkoberda:embedability,calvezwiest:curve,cumplidogebhardtgonzalezmeneseswiest:onparabolic,morriswright:parabolic,martinprzytycki:acylindrical,hagenmartinsisto:extra}. 

Given an arbitrary CAT(0) space $X$, we use curtains to define a new metric $\Dist$ on $X$, and it will be seen from the results described below that the space $\X = (X, \Dist)$, which we call the \emph{curtain model} of $X$, is a very close analogue of the curve graph, as they share many fundamental properties. Moreover, the full isometry group $\isom X$ acts on $\X$, providing a new avenue to study isometry groups of CAT(0) spaces (which are uncountable in general) and not only groups with a geometric action on a CAT(0) space. Finally, let us remark that this is the first construction of a space with properties as strong as the curtain model even for the cubical setting, despite the fact that general CAT(0) spaces are considerably more wild than cube complexes. In the first place, we show the following.

\begin{introthm} \label{ithm:XD_hyperbolic}
There exists some $\delta$ such that for every CAT(0) space $X$, the curtain model $\X$ is $\delta$-hyperbolic and $\isom X\le\isom\X$. Furthermore, there is some $k$ such that each CAT(0) geodesic is an unparametrised $(1,k)$--quasigeodesic in $\X$.
\end{introthm}

\begin{warning} 
The construction of the curtain model involves an infinite family of auxiliary spaces. We postpone a discussion of these spaces to Section~\ref{subsec:aux_spaces}, but the reader should be aware that most of the results of the paper are first established for those auxiliary spaces, and only in Section~\ref{sec:universal} will the data be combined to obtain statements about the curtain model.
\end{warning}

\subsection{Hyperbolic isometries}

Both surfaces and proper CAT(0) spaces have automorphisms that can naturally be considered hyperbolic-like, namely pseudo-Anosov homeomorphisms and rank-one isometries. Pseudo-Anosovs are exactly those mapping classes that act loxodromically on the curve graph \cite{masurminsky:geometry:1}, and, in the cubical setting, rank-one isometries are those that skewer a pair of \emph{separated} hyperplanes \cite{capracesageev:rank,genevois:contracting}. For non-proper CAT(0) spaces, the natural generalisation of ``rank-one'' is ``contracting'' \cite{bestvinafujiwara:characterization}. The notion of separation carries over to the setting of curtains (Definition~\ref{def:separated}), allowing us to bring the two perspectives together in CAT(0) spaces. 

\begin{introthm}\label{thmi:rank-one_iff_skewers_iff_QI}
Let $g$ be a semisimple isometry of a CAT(0) space $X$. The following are~\mbox{equivalent}.
\begin{itemize}
\item   $g$ is contracting (or equivalently rank-one if $X$ is proper). 
\item   $g$ skewers a pair of separated curtains.
\item   $g$ acts loxodromically on the curtain model $\X$.
\end{itemize}
\end{introthm}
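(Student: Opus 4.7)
The plan is to establish the cycle $(1) \Rightarrow (2) \Rightarrow (3) \Rightarrow (1)$.

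\emph{Proof of $(1) \Rightarrow (2)$.} Suppose $g$ is rank-one with axis $\gamma$, so by standard CAT(0) theory $\gamma$ is $D$-contracting for some $D$. For a large integer $T$, set $x = \gamma(0)$ and $y = \gamma(T)$ and consider the curtains $h_x, h_y$ obtained from the length-one subintervals of $\gamma$ centred at $x$ and $y$. I would argue that for some $L = L(D)$, any chain of pairwise-crossing curtains meeting both $h_x$ and $h_y$ has length at most $L$: such a curtain must project over an interval of $\gamma$ of length roughly $T$, and the contracting property forces it into a uniformly thin tubular neighbourhood of $\gamma$, where there is no room for long chains. Taking $T$ to be a multiple of the translation length of $g$ then makes $\{h_x,h_y\}$ a separated pair skewered by $g$.

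\emph{Proof of $(2) \Rightarrow (3)$.} If $g$ skewers the $L$-separated pair $h_0, h_1$, then $\{g^n h_0\}_{n \in \Z}$ is a chain of pairwise $L$-separated curtains along the axis of $g$. By the chain-distance interpretation of $\dist_L$ (Lemma~\ref{lem:chain_distance}), this forces $\dist_L(x, g^n x)$ to grow linearly in $|n|$ for $x$ on the axis. Since $\isom X \le \isom X_L$ and $X_L$ is hyperbolic (Theorem~\ref{ithm:hyperbolic}), $g$ is an isometry of $X_L$ with positive translation length, hence loxodromic.

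\emph{Proof of $(3) \Rightarrow (1)$.} Suppose $g$ acts loxodromically on some $X_L$. Elliptic isometries of $X$ have bounded orbits, hence bounded image in $X_L$, so $g$ must be axial; let $\gamma$ be an axis. Theorem~\ref{ithm:hyperbolic} says $\gamma$ descends to an unparametrised quasigeodesic in $X_L$, and the loxodromicity of $g$ implies that this image is unbounded. It remains to show that $\gamma$ is contracting in $X$. I would proceed by contrapositive: if $\gamma$ cobounded a flat half-plane, then any two curtains dual to $\gamma$ could be joined by a long chain of pairwise-crossing curtains built from geodesics in the flat that are transverse to $\gamma$, so no two such curtains can be $L$-separated; the chain-distance formula would then force $\dist_L(x, g^n x)$ to be sublinear in $n$, contradicting loxodromicity.

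The main obstacle is $(3) \Rightarrow (1)$: converting a purely asymptotic metric hypothesis on $X_L$ into the geometric conclusion that no flat half-plane flanks the axis requires a quantified understanding of how flat pieces in $X$ manufacture long chains of pairwise-crossing curtains, and a matching lower bound for $\dist_L$ in terms of separated curtains. Without the combinatorial bookkeeping available in CAT(0) cube complexes, the control on these chains must come directly from the CAT(0) geometry near the axis, which is where the technical work lies.
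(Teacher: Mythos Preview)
Your cycle $(1)\Rightarrow(2)\Rightarrow(3)$ is essentially the paper's route and is correct up to details (the paper packages $(1)\Rightarrow(2)$ as Lemma~\ref{lem:contracting_crosses}, and your $(2)\Rightarrow(3)$ is the first half of the paper's $(4)\Rightarrow(1)$; note that you should write $\{g^{nm}h_0\}$ with the skewering exponent $m$, and should deduce that $g$ is hyperbolic before speaking of its axis).

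The real divergence is in $(3)\Rightarrow(1)$. The paper does \emph{not} argue by contrapositive via half-flats. Instead it uses Theorem~\ref{ithm:contracting} directly: from loxodromicity one gets some $n$ with $\dist_L(x,g^nx)>L+4$; an $L$--chain realising this, glued to its $g^n$--translates via Lemma~\ref{lem:gluing_chains}, produces an infinite $L$--chain that the axis crosses at a uniform rate; Proposition~\ref{prop:cross_contracting} then gives contracting. This is short and needs no flat-strip theorem or Bestvina--Fujiwara.

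Your contrapositive has a genuine gap. You argue that in the presence of a half-flat no two curtains \emph{dual to $\gamma$} are $L$--separated, and then assert this forces $\dist_L(x,g^nx)$ to be sublinear. But $\dist_L$ is a supremum over \emph{all} $L$--chains separating the two points, not just dual ones. To bridge this you would need Lemma~\ref{lem:updating_curtains} (any long $L$--chain can be replaced by one dual to $[x,y]$), which you do not invoke---and the ``chain-distance formula'' you cite is Lemma~\ref{lem:chain_distance}, which concerns $\dist_\infty$, not $\dist_L$. With the dualising lemma your approach can be completed (build a long chain of curtains dual to a vertical segment in the half-flat, each meeting both vertical strips $h_x\cap H$ and $h_y\cap H$), but this is more work than the paper's direct route and uses properness in an essential way through the Bestvina--Fujiwara equivalence of rank-one and contracting.
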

\noindent We remark that a version of Theorem~\ref{thmi:rank-one_iff_skewers_iff_QI} holds even without periodicity; namely a geodesic is contracting if and only if it is quasiisometrically embedded in $\X$ (see Corollary~\ref{cor:contracting_qie:D}).

There is a standard technique that constructs an action on a hyperbolic space with a given finite set of contracting elements acting loxodromically, namely the celebrated \emph{projection complex} construction \cite{bestvinabrombergfujiwara:constructing, bestvinabrombergfujiwarasisto:acylindrical}. Our construction uses different methods, which rely on the original space being CAT(0), and this allows us to get a significantly stronger result: we obtain a hyperbolic space where \emph{all} contracting elements act loxodromically. That is to say, the space $\X$ is a hyperbolic space that is \emph{universal with respect to rank-one elements}. The existence of such a space was completely open even under the stronger assumptions of a group $G$ acting geometrically and of $X$ being a CAT(0) cube complex. 

Theorem~\ref{thmi:rank-one_iff_skewers_iff_QI} has the following simple consequence. If $\X$ is unbounded and a group $G$ acts coboundedly on $X$, then Gromov's classification of actions on hyperbolic spaces yields an element of $G$ acting loxodromically on $\X$, which is a contracting isometry by the above theorem.

When considering subgroups instead of single elements, an important notion of hyperbolic-type behaviour is given by \emph{stability}; an isometry of a CAT(0) space is contracting if and only if the cyclic subgroup it generates is stable. Stability was introduced by Durham-Taylor \cite{durhamtaylor:convex}, who showed that a subgroup of the mapping class group is stable if and only if it is convex cocompact in the sense of Farb--Mosher \cite{farbmosher:convex}. Hence, a subgroup of the mapping class group is stable exactly when its orbit maps to the curve graph are quasiisometric embeddings \cite{kentleininger:shadows,hamenstadt:word}. Other instances of this perspective include \cite{bestvinabrombergkentleininger:undistorted,aougabdurhamtaylor:pulling,koberdamangahastaylor:geometry,abbottbehrstockdurham:largest,chesser:stable}. We show that the same result holds for the curtain model.

\begin{introthm}\label{thmi:stable}
A subgroup of a group acting properly coboundedly on a CAT(0) space $X$ is stable if and only if it is finitely generated and its orbit maps to the curtain model $\X$ are quasiisometric embeddings.
\end{introthm}

Theorem~\ref{thmi:rank-one_iff_skewers_iff_QI} gives a complete characterisation of elements of $\isom X$ that act loxodromically on the curtain model. This is a property of the individual isometry in question, and does not give any global information. In this setting, a natural global notion of directional faithfulness is \emph{weak proper discontinuity (WPD)}: we say that $g\in G$ acting loxodromically on $Y$ is WPD if for each $\eps>0$ and each $x\in Y$, there exists $m>0$ such that
\[\left\vert\{h \in G \::\: \dist(x,hx),\, \dist(g^mx,hg^mx)<\eps\}\right\vert < \infty.	
\]

It is straightforward to construct examples where no element of $\isom X$ is WPD for the action on $\X$ of the full isometry group $\isom X$, simply because $\isom X$ is too large. Hence the WPD notion can only be expected when there is some restriction on the action.

\begin{introthm} \label{introthm:non_uniform_acyl}
Let $X$ be a CAT(0) space. If $G\leq\isom X$ acts properly on $X$, then every rank-one element of $G$ is a WPD loxodromic of the curtain model $\X$ with respect to $G$.
\end{introthm}

Theorems~\ref{thmi:stable} and~\ref{introthm:non_uniform_acyl} can both be viewed as stemming from Theorem~\ref{thmi:rank-one_iff_skewers_iff_QI}: Theorem~\ref{thmi:stable} extends the result to subgroups that are not necessarily cyclic, and Theorem~\ref{introthm:non_uniform_acyl} strengthens its conclusion. Our next result marries the two, using the notion of A/QI triple, a natural extension of WPD actions for (not necessarily cyclic) subgroups that was introduced by Abbott--Manning \cite{abbottmanning:acylindrically}.

\begin{introthm} \label{ithm:A/QI}
The following are equivalent for a finitely generated subgroup $H$ of a group $G$ acting properly coboundedly on a CAT(0) space $X$.
\begin{itemize}
\item   $H$ is stable.
\item   $(G,\X,H)$ is an A/QI triple.
\item   There exists an A/QI triple $(G,Z,H)$ for some hyperbolic space $Z$.
\end{itemize}
\end{introthm}

When restricted to cyclic subgroups, Theorem~\ref{ithm:A/QI} implies that if $G$ is a CAT(0) group acting on a hyperbolic space $Z$ and $g\in G$ is a WPD loxodromic for that action, then $g$ is a WPD loxodromic for the action of $G$ on $\X$. Therefore, the curtain model can be thought of as a universal hyperbolic space for WPD actions. And in fact Theorem \ref{ithm:A/QI} states that a similar statement holds for any stable subgroup.

We also point out that Theorem~\ref{introthm:non_uniform_acyl} recovers that the existence of a rank-one element implies acylindrical hyperbolicity of the group if it is not virtually cyclic \cite{sisto:contracting}. Thus it is natural to ask for a geometric criterion implying the existence of a rank-one element. We show that curtains provide such a criterion under the natural (and standard) assumption of the \emph{geodesic extension property}. Recall that a CAT(0) space has the geodesic extension property if every geodesic segment appears in some biinfinite geodesic.

\begin{introthm} \label{ithm:dichotomy} 
Let $X$ be a CAT(0) space with the geodesic extension property and let $G$ be a group acting properly cocompactly on $X$. Then $G$ contains a rank-one element if and only if $X$ contains a pair of separated curtains. 
\end{introthm}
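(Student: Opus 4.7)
The forward implication is immediate from Theorem~\ref{thmi:rank-one_iff_skewers_iff_QI}: any rank-one isometry skewers a pair of separated curtains, so such a pair exists in $X$. For the converse, the strategy is to use the cocompactness of the $G$-action to promote a single separated pair into a rank-one element. More precisely, I would exhibit $g \in G$ skewering a pair of separated curtains; Theorem~\ref{thmi:rank-one_iff_skewers_iff_QI} then immediately upgrades $g$ to a rank-one isometry, invoking that every element of $G$ is semisimple (standard for a proper cocompact action on a proper CAT(0) space).

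So suppose $X$ contains a pair of disjoint $L$-separated curtains $h, h'$, with $h'$ lying in a halfspace $h^+$ of $h$. I would pick a point $x_0$ deep in the opposite halfspace $h^-$ and a point $y$ very deep in the halfspace of $h'$ disjoint from $h$. By cocompactness there exists $g \in G$ mapping $x_0$ to within bounded distance of $y$. If $y$ is chosen deep enough, both translates $gh, gh'$ land inside the halfspace of $h'$ not containing $h$, producing a nested chain of four curtains $h, h', gh, gh'$. Iterating the construction with the element $g$ yields an arbitrarily long nested chain $\dotsc, g^{-1}h, g^{-1}h', h, h', gh, gh', \dotsc$, and $g$ skewers the pair $\{h, gh\}$.

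The main obstacle is arranging that this chain be $L'$-separated for some fixed $L'$: each pair $\{g^n h, g^n h'\}$ is $L$-separated by equivariance of the $G$-action, but the ``interface'' separation between $g^n h'$ and $g^{n+1}h$ must be controlled quantitatively. The plan is to push $y$ deep enough that $gh$ lies very far (metrically) from $h'$ inside their common halfspace, and then invoke the fact --- which should follow from the formal definition of separation --- that $L$-separation is preserved (possibly with a larger parameter) when curtains lie sufficiently deep in a common halfspace. Once $L'$-separation is secured along the whole chain, the element $g$ skewers the $L'$-separated pair $\{h, gh\}$, and Theorem~\ref{thmi:rank-one_iff_skewers_iff_QI} concludes that $g$ is a rank-one isometry.
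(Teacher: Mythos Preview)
The forward direction is fine. The backward direction, however, has a real gap at exactly the point you flag, and your proposed fix does not close it.

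The problem is the sentence ``if $y$ is chosen deep enough, both translates $gh, gh'$ land inside the halfspace of $h'$ not containing $h$.'' Curtains are unbounded, non-convex subsets of $X$ (Remark~\ref{rem:nonconvex}); knowing that $gx_0$ lies far inside $h'^+$ tells you nothing about where the rest of $gh$ or $gh'$ goes. There is no reason the translate cannot stretch back across $h'$ and even across $h$. Your proposed remedy---that separation should persist for curtains ``sufficiently deep in a common halfspace''---is neither stated nor provable from the definition: two disjoint curtains can lie arbitrarily far apart and still be crossed by an infinite chain (think of a flat). A version of your strategy \emph{is} carried out in the paper as Proposition~\ref{prop:uniform_diameter}, but note what it needs: the geodesic extension property (to produce points far beyond the last curtain along its own dual geodesic, so that one can build an auxiliary chain of $L+1$ curtains controlling how many of the $gh_i$ can cross back), and a starting $L$--chain of length at least four (so that after losing curtains to back-crossing there are still two left). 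With only a single separated pair and no geodesic extension, that machinery does not get off the ground.

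The paper takes a completely different route. It shows (Corollary~\ref{cor:boundary_diameter}) that a pair of separated curtains forces the Tits boundary to have diameter at least $\tfrac{3\pi}{2}$: the limit sets $\Lambda(h)$ and $\Lambda(h')$ are each at angle at least $\tfrac{\pi}{2}$ from the endpoints of their dual geodesics and from each other, so any path in $\partial X$ joining the two dual geodesics must pick up total angle at least $\tfrac{3\pi}{2}$. One then invokes \cite[Thm~3.12]{guralnikswenson:transversal}, which says that a proper cocompact CAT(0) space with no rank-one isometry has $\diam\partial_T X<\tfrac{3\pi}{2}$. So the existence of the rank-one element is obtained indirectly, via the Tits boundary, rather than by exhibiting an explicit skewering isometry.
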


As a direct consequence of \cite[Prop.~3.3]{kapovichleeb:3manifold}, we obtain the following corollary, which extends a result of Levcovitz \cite{levcovitz:divergence} for CAT(0) cube complexes (see also \cite{gersten:quadratic}). It can additionally be interpreted in terms of $\X$, generalising a result of Hagen about contact graphs of CAT(0) cube complexes \cite{hagen:simplicial}.

\begin{introcor}
Let $X$ be a CAT(0) space with the geodesic extension property admitting a proper cocompact group action. If  $X$ has a pair of separated curtains, then the divergence of $X$ is at least quadratic.
\end{introcor}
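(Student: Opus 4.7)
The plan is to combine Theorem~\ref{ithm:dichotomy} with the cited Kapovich--Leeb result, which is precisely the way the corollary is advertised as a \emph{direct consequence}. Since $X$ admits a proper cocompact action of some group $G$, and $X$ contains a pair of separated curtains, Theorem~\ref{ithm:dichotomy} immediately supplies a rank-one isometry $g\in G$. As explained in Section~\ref{isubsec:hyp_isom}, in a proper CAT(0) space rank-one is equivalent to having a contracting (equivalently Morse) axis, so $g$ provides a contracting geodesic line $\gamma\subset X$ invariant under a cocompact group action.

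Next, I would feed this into \cite[Prop.~3.3]{kapovichleeb:3manifold}, which asserts exactly that the existence of a contracting (Morse) periodic geodesic in a CAT(0) space with a proper cocompact group action forces the divergence function of $X$ to grow at least quadratically. This yields the desired conclusion directly. So the entire argument is: \textbf{separated curtains $\Rightarrow$ rank-one element (Theorem~\ref{ithm:dichotomy}) $\Rightarrow$ contracting periodic geodesic $\Rightarrow$ quadratic divergence (Kapovich--Leeb)}.

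There is no genuine obstacle here beyond having Theorem~\ref{ithm:dichotomy} in place; the content of the corollary is really that curtains give a clean geometric certificate (the existence of a separated pair) that triggers the Kapovich--Leeb hypothesis. If one prefers the $X_L$-phrasing mentioned after the statement, one would replace the first step by: if some $X_L$ is unbounded, then by Theorem~\ref{thmi:rank-one_iff_skewers_iff_QI} the cocompact action yields a loxodromic element on $X_L$, which is again rank-one on $X$, and then apply Kapovich--Leeb as before; this is the analogue of Hagen's deduction from unboundedness of the contact graph in \cite{hagen:simplicial}. No further calculation is needed.
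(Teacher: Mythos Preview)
Your proposal is correct and matches the paper's approach exactly: the corollary is stated immediately after Theorem~\ref{ithm:dichotomy} as a direct consequence of \cite[Prop.~3.3]{kapovichleeb:3manifold}, with no further proof given. The chain of implications you spell out---separated curtains $\Rightarrow$ rank-one element $\Rightarrow$ contracting periodic geodesic $\Rightarrow$ at-least-quadratic divergence via Kapovich--Leeb---is precisely the intended argument.
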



\subsection{Ivanov's theorem} \label{isubsec:ivanov}

Aside from hyperbolicity, one of the most important results about the curve graph is \emph{Ivanov's theorem} \cite{ivanov:automorphisms,korkmaz:automorphisms,luo:automorphisms}, which states that every automorphism of the curve graph is induced by some mapping class. This has been the fundamental tool in the proofs of some very strong theorems about mapping class groups, such as quasiisometric rigidity \cite{behrstockkleinerminskymosher:geometry,behrstockhagensisto:quasiflats,bowditch:large:mapping} and commensuration of the Johnson kernel, Torelli group, and other more general normal subgroups \cite{brendlemargalit:commensurations,brendlemargalit:normal}. There are also versions of Ivanov's theorem in the $\Out F_n$ context \cite{aramayonasouto:automorphisms,horbezwade:automorphisms} and for the contact graphs of many CAT(0) cube complexes \cite{fioravanti:automorphisms}. We prove the following analogue for CAT(0) spaces. 

\begin{introthm} \label{ithm:ivanov}
Let $X$ be a proper CAT(0) space with the geodesic extension property. If any one of the following holds, then $\isom X=\isom\X$.
\begin{itemize}
\item    $X$ admits a proper cocompact action by a group that is not virtually free, or
\item    $X$ is a tree that does not embed in $\R$, or
\item    $X$ is one-ended.
\end{itemize}
\end{introthm}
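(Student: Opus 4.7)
Since Theorem~\ref{ithm:hyperbolic} already gives $\isom X\le\isom X_L$ and since $X$ and $X_L$ share the same underlying set, the goal reduces to showing that any $\phi\in\isom X_L$, viewed as a set-theoretic bijection $X\to X$, preserves $\dist_X$. The plan is to follow the standard template of Ivanov-style rigidity theorems: recover a canonical combinatorial structure intrinsically from $\dist_L$, show that $\phi$ permutes that structure, and then use it together with the geodesic extension property to reconstruct $\dist_X$. The natural structure here is the collection of curtains and their associated halfspaces, which, by Lemma~\ref{lem:chain_distance}, already encode $\dist_L$ through $L$-separated chains.

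The first step is a metric characterisation of curtains within $(X,\dist_L)$. The plan is to identify curtains, or equivalently their halfspaces, as the extremal separating sets realising chain-distances in $\dist_L$, so that no subset of $X$ meets this definition unless it is genuinely a curtain. Granted such a characterisation, $\phi$ must permute curtains and respect the ``two halfspaces'' bipartition of $X\setminus h$ for every curtain $h$. The second step is to recover $\dist_X$ from this permutation action: given $x,y\in X$, the geodesic extension property lets us extend $[x,y]$ to a biinfinite geodesic $\alpha$, and the unit-length subintervals of $\alpha$ supply a canonical linearly ordered family of curtains transverse to $\alpha$ at unit spacing, whose count between two points on $\alpha$ is the $\dist_X$-distance up to bounded error. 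Since $\phi$ permutes curtains and preserves incidences, this count, and hence $\dist_X$, is preserved.

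The three alternative hypotheses are needed precisely to ensure the curtain structure is rich enough to carry out the reconstruction. The degenerate case they jointly exclude is that of a virtually-free tree embedding in $\R$ (essentially a line), in which $X_L$ collapses and acquires many extra symmetries. Under hypothesis (i), Theorem~\ref{ithm:dichotomy} applied to the non-virtually-free cocompact action produces a rank-one element, and in particular a pair of separated curtains, which then propagate throughout $X$ by cocompactness. Under (ii), a branch point of the tree supplies three independent rays, each generating curtains in its direction. Under (iii), one-endedness prevents the line-like degeneracy and, together with the geodesic extension property, forces curtains to thread through every point of $X$ in enough directions.

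The main obstacle will be the metric characterisation of curtains in Step 1. Curtains are defined extrinsically, through closest-point projections onto $X$-geodesics, and translating this into a condition expressible solely in terms of $\dist_L$ is delicate: the candidate characterisation must pick out exactly the curtains among all subsets of $X$ that resemble walls from the chain-distance perspective, and it must do so uniformly across all three hypothesised geometries. Once this characterisation is established, the geodesic-extension argument in Step 2 is comparatively direct, and Step 3 reduces to checking case by case that curtains through each point provide enough data to pin down $\dist_X$; the trickiest verification is likely the one-ended case, where no group action is available and one must argue using the topology of ends together with the \textbf{$L$}--separation structure from Definition~\ref{def:separated}.
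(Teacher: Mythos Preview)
Your proposal has a genuine gap at its core, and the paper's approach is entirely different from yours.

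The central difficulty you yourself flag---Step~1, an intrinsic $\dist_L$--characterisation of curtains---is not merely delicate; there is no reason to expect it to exist. Curtains form an uncountable family of nonconvex, highly noncanonical subsets (Remark~\ref{rem:nonconvex}, Example~\ref{eg:tree_of_flats}), and nothing in the paper suggests they can be singled out purely from $\dist_L$. Your proposal offers no candidate characterisation and no argument that one should succeed, so the entire Step~2--3 reconstruction rests on an unestablished lemma. Moreover, your treatment of hypothesis~(i) is incorrect: Theorem~\ref{ithm:dichotomy} does \emph{not} say that a non--virtually-free cocompact group has a rank-one element. The group $\mathbf Z^2$ acting on $\mathbf R^2$ is not virtually free and has no rank-one isometry; indeed, higher-rank symmetric spaces are explicitly mentioned in the paper as examples where Theorem~\ref{ithm:ivanov} applies but every $X_L$ is bounded. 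So your proposed mechanism for producing separated curtains under~(i) fails.

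The paper's route avoids curtains entirely at this stage. It first proves directly (Proposition~\ref{prop:preserving}) that any $\phi\in\isom X_L$ preserves $n$--balls of $(X,\dist)$, and, under the geodesic extension property, preserves $n$--spheres---in particular $1$--spheres. The problem then becomes: when does a bijection preserving unit spheres have to be an isometry? This is Aleksandrov's problem, and the paper cites Andreev's results \cite{andreev:aleksandrov} to handle it. For one-ended $X$ this is Corollary~\ref{cor:one-ended}; for trees not embedding in $\R$ it follows easily from sphere-preservation; and for a non--virtually-free cocompact action one uses Dunwoody accessibility and Stallings' theorem to extract a one-ended vertex group $G_v$, shows its convex hull $X_v$ is one-ended with a cocompact $G_v$--action, applies Andreev's results inside $X_v$, and then propagates via virtually asymptotic geodesics (Lemma~\ref{lem:virtually_asymptotic}). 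The hypotheses are used to rule out the genuine counterexample $X=\R$ and to feed into Andreev's machinery, not to manufacture separated curtains.
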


\noindent Note that Theorem~\ref{ithm:ivanov} gives exact \emph{equalities} between $\isom X$ and $\isom\X$, rather than just isomorphisms, because $\isom X$ is always a subset of $\isom\X$. We also point out that Theorem~\ref{ithm:ivanov} applies in particular to higher-rank symmetric spaces and buildings, which may be surprising, as it follows from Theorem~\ref{thmi:rank-one_iff_skewers_iff_QI} that their curtain models are bounded.

The statement of Theorem~\ref{ithm:ivanov} cannot hold in full generality, as one can see by considering the real line. Indeed, for any order-preserving bijection $\phi:(0,1)\to(0,1)$, there is an element of $\isom \R_{\Dist}$ whose restriction to each component of $\R\smallsetminus\Z$ is $\phi$. If $\phi$ is not the identity, then this map is not an isometry of $\R$, it is merely a $(1,1)$--quasiisometry. This also shows that the following result is optimal.

\begin{introthm} \label{ithm:1.1}
Let $X$ be a CAT(0) space. Every isometry of $\X$ is a $(1,1)$--quasiisometry of~$X$. 
\end{introthm}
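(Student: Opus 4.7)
The plan is to exploit a local correspondence between the metrics $d$ and $\dist_L$, then upgrade to a global bound via the triangle inequality along CAT(0) geodesics.

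The key observation will be the equivalence, for all $x,y \in X$, between $d(x,y) > 1$ and the existence of a curtain separating $x$ from $y$. One direction is immediate: if $d(x,y) > 1$, then the geodesic $[x,y]$ contains a unit subinterval $P$ not meeting its endpoints, and $\pi_{[x,y]}^{-1}(P)$ is such a separating curtain. Conversely, if $\pi_\alpha^{-1}(P)$ with $|P| = 1$ separates $x$ from $y$, then $\pi_\alpha(x)$ and $\pi_\alpha(y)$ lie on opposite sides of $P$ along $\alpha$, so their distance along $\alpha$ exceeds $1$; since closest-point projection in CAT(0) spaces is $1$-Lipschitz, this forces $d(x,y) > 1$. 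Since a single curtain is trivially an $L$-separated chain of length $1$, combining this with the chain-distance description of $\dist_L$ (Lemma~\ref{lem:chain_distance}) yields a threshold $c_L > 0$ such that $\dist_L(x,y) \ge c_L$ if and only if $d(x,y) > 1$.

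Given $f \in \isom X_L$, whenever $d(x,y) \le 1$ we have $\dist_L(x,y) < c_L$, hence $\dist_L(f(x),f(y)) < c_L$, and therefore $d(f(x),f(y)) \le 1$. To globalise, take arbitrary $x, y$ with $d(x,y) = D$ and subdivide the geodesic from $x$ to $y$ into $N = \lceil D \rceil$ equal pieces of length $D/N \le 1$, yielding points $x = x_0, x_1, \ldots, x_N = y$. The local estimate gives $d(f(x_k), f(x_{k+1})) \le 1$ for each $k$, so the triangle inequality produces
\[ d(f(x), f(y)) \le N \le D + 1 = d(x,y) + 1. \]
Applying the same reasoning to $f^{-1}$ gives the matching lower bound, so $|d(f(x),f(y)) - d(x,y)| \le 1$. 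Since $f$ is a bijection of $X$, it is in particular surjective, establishing the $(1,1)$-quasiisometry property.

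The main obstacle is the threshold statement $\dist_L(x,y) \ge c_L \Leftrightarrow d(x,y) > 1$, which is essentially Lemma~\ref{lem:chain_distance} applied to chains of length one; the key geometric input is the $1$-Lipschitz nature of CAT(0) projections. Once this is secured, the globalisation via equal subdivision of the geodesic is a routine chain-of-points argument. The additive constant $1$ is tight, arising from the unit length of curtain base intervals, as the real line example preceding the theorem shows.
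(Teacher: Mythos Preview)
Your proof is correct and follows essentially the same route as the paper. The paper's argument (Proposition~\ref{prop:preserving} and Corollary~\ref{cor:quasiivanov}) also rests on the identity $B_X(x,1)=B_{X_L}(x,1)$, which is exactly your threshold observation with $c_L=2$, and then propagates it along CAT(0) geodesics; the only cosmetic difference is that the paper phrases the propagation as an induction showing that integer-radius balls are preserved, which yields the slightly sharper conclusion $|\dist(fx,fy)-\dist(x,y)|<1$.
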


\noindent There is often an important distinction to be made between quasiisometries with multiplicative constant one (also known as \emph{rough isometries}) and more general quasiisometries, as rough isometries tend to preserve more geometric structure. For instance, four-point hyperbolicity is preserved by rough isometries, but not by general quasiisometries \cite[E.g.~11.36]{drutukapovich:geometric}.

Our route to proving Theorem~\ref{ithm:ivanov} is to connect the group $\isom \X$ with Andreev's work in CAT(0) spaces on Aleksandrov's problem \cite{andreev:aleksandrov}. The problem asks for which metric spaces it is the case that any self-map that setwise sends unit spheres to unit spheres is necessarily an isometry \cite{aleksandrov:mappings}. This problem originates from the Beckman--Quarles theorem \cite{beckmanquarles:onisometries}, which shows that this holds for Euclidean $n$--space ($n>1$).

\subsection{Large-scale properties} \label{subsec:large-scale}
One of the main open problems in CAT(0) geometry is the \emph{rank-rigidity} conjecture \cite{ballmannbuyalo:periodic}, which asks for a CAT(0) version of the celebrated theorem for Hadamard manifolds \cite{ballmannbrineberlein:structure,ballmannbrinspatzier:structure,ballmann:nonpositively,burnsspatzier:manifolds,eberleinheber:differential}. Even partial progress on this has been quite hard, leading Behrstock--Dru\c tu to ask the simpler question of whether a proper cocompact CAT(0) space $X$ without rank-one isometries must be wide, i.e. no asymptotic cone of $X$ has a cut point \cite[Q.~6.10]{behrstockdrutu:divergence}. We show the following.

\begin{introthm}[Rank dichotomy] \label{icor:rr}
Let $G$ be a group acting properly cocompactly on a CAT(0) space $X$. Exactly one of the following holds.
\begin{itemize}
\item 	The curtain model has bounded diameter, in which case $G$ is wide.
\item 	The curtain model is unbounded, in which case $G$ has a rank-one element, and if $G$ is not virtually cyclic then it is acylindrically hyperbolic.
\end{itemize}
\end{introthm}

In particular, the above corollary provides a positive answer to the question of Behrstock--Dru\c tu. After establishing Theorem~\ref{icor:rr}, we were surprised to discover that their question had already been answered, by Kent--Ricks \cite[p.1467]{kentricks:asymptotic}. However, our proof uses completely different methods, being based on the notions and tools we develop in this paper, including curtains, the curtain model $\X$, and the characterisations of rank-one isometries via separated curtains (Theorem~\ref{thmi:rank-one_iff_skewers_iff_QI} and Theorem~\ref{ithm:dichotomy}).

In the setting of CAT(0) cube complexes, the full rank-rigidity conjecture was proved by Caprace--Sageev \cite{capracesageev:rank}, with the proof relying heavily on the discrete combinatorial structure of hyperplanes. More recently, Stadler has established an important part of the conjecture for CAT(0) spaces of rank two \cite{stadler:cat}.

\medskip

The curtain model also allows us to gain new insights on the visual boundary $\partial X$ of $X$. Indeed, $\X$ comes equipped with its Gromov boundary $\partial \X$, and one can ask how $\partial \X$ is related to the visual boundary $\partial X$ of $X$. The relationship turns out to be remarkably strong.

\begin{introthm} \label{ithm:boundaries}
Let $X$ be a proper CAT(0) space. The space $\partial \X$ embeds homeomorphically as an $\isom X$-invariant subspace of $\partial X$, and every point in the image of $\partial \X$ is a visibility point of $\partial X$. The embedding is induced by the change-of-metric map $\X\to X$. If $\partial\X$ is nonempty and $X$ is cobounded, then $\partial\X$ is dense in $\partial X$.
\end{introthm}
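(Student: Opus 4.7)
The plan is to define the embedding $\Phi\colon\partial X_L\to\partial X$ via CAT(0) geodesic rays, whose image will be the collection of visual endpoints of contracting rays. Fix a basepoint $o\in X$. Given $\xi\in\partial X_L$, choose a Gromov sequence $(x_n)\subset X$ with $x_n\to\xi$ in $X_L$. The change-of-metric map $X\to X_L$ is coarsely $1$-Lipschitz (immediate from the curtain-counting definition of $\dist_L$), so $\dist(o,x_n)\ge\dist_L(o,x_n)-O(1)\to\infty$; by properness of $X$ and Arzelà--Ascoli the CAT(0) segments $[o,x_n]$ subconverge on compact sets to a CAT(0) ray $\gamma$ based at $o$, and we set $\Phi(\xi):=\gamma(\infty)$.

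The main obstacle is well-definedness. By Theorem~\ref{ithm:hyperbolic}, the segments $[o,x_n]$ are uniform unparametrised quasigeodesics in the hyperbolic space $X_L$ whose endpoints converge to $\xi$, so by hyperbolicity of $X_L$ they fellow-travel in $X_L$ on longer and longer initial segments. Since compact-open convergence in $X$ passes to $X_L$, the limit $\gamma$ inherits the same fellow-traveling, becoming an unparametrised $X_L$-quasigeodesic converging to $\xi$. Applying Theorem~\ref{ithm:contracting}, $\gamma$ crosses an infinite $L$-separated chain of curtains at a uniform rate and is therefore contracting in $X$. For the rigidity needed for well-definedness (and injectivity), observe that if $\gamma_1,\gamma_2$ are distinct CAT(0) rays from $o$ and $\gamma_1$ is contracting, then boundedness of the contracting projection $\pi_{\gamma_1}(\gamma_2)\subset\gamma_1([0,D])$ implies that cofinitely many curtains of an $L$-separated chain along $\gamma_1$ separate $\gamma_1(t)$ from $\gamma_2(t)$, forcing $\dist_L(\gamma_1(t),\gamma_2(t))\to\infty$; hence two contracting CAT(0) rays from $o$ with the same $X_L$-endpoint must coincide, so $\Phi(\xi)$ depends only on $\xi$.

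Equivariance under $\isom X\le\isom X_L$ is automatic from the construction, so $\image\Phi$ is $\isom X$-invariant. For continuity from $\partial X_L$ to $\partial X$, convergence $\xi_n\to\xi$ in $\partial X_L$ means that Gromov products in $X_L$ of the associated contracting rays blow up, which, combined with the coarse Lipschitz inequality and properness, upgrades to compact-open convergence of the rays in $X$; the converse direction follows because cone-topology convergence of CAT(0) rays forces $X_L$-fellow-traveling on longer and longer initial segments, i.e.\ convergence in $\partial X_L$. Finally, to establish visibility of $\eta=\Phi(\xi)$ against any $\zeta\in\partial X\setminus\{\eta\}$, take $y_n\to\zeta$ along the unique CAT(0) ray to $\zeta$ and subconverge the CAT(0) geodesics $[y_n,\gamma(n)]$ via properness to a biinfinite CAT(0) geodesic; the contracting property of $\gamma$ forces its $\eta$-end to be $\eta$ via the standard Morse fellow-traveling argument. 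The conceptual heart of the proof is thus the interplay between the three guises of contracting rays afforded by Theorem~\ref{ithm:contracting}: infinite $L$-separated chains of curtains, quasigeodesic behaviour in $X_L$, and bounded contracting projection.
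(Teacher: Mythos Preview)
There is a genuine gap: you repeatedly invoke that the limit ray $\gamma$ is \emph{contracting}, but this is false in general. Being an unparametrised quasigeodesic in $X_L$ only tells you that $\dist_L(\gamma(0),\gamma(t))\to\infty$, hence that $\gamma$ crosses an infinite $L$--chain of curtains; it gives no control on the \emph{rate} of crossing measured in the CAT(0) metric, which is exactly what Theorem~\ref{ithm:contracting} requires. Concretely, the image of $\partial X_L$ in $\partial X$ is the set $\cal B_L$ of rays crossing some infinite $L$--chain, and the paper explicitly notes that $\cal B=\bigcup_L\cal B_L$ is in general strictly larger than the Morse boundary. So your arguments for well-definedness, injectivity, and visibility---all of which hinge on bounded contracting projection and Morse fellow-travelling---collapse for non-contracting points of $\cal B_L$.

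The paper's route avoids this by never appealing to contraction. Injectivity and well-definedness come from Corollary~\ref{cor:diverging_geodesics}: two distinct CAT(0) rays from a common basepoint cannot both cross the same infinite $L$--chain, so once you have an $L$--chain dual to $\gamma$ (via Lemma~\ref{lem:updating_curtains}), any other ray eventually stays in a fixed halfspace and is therefore $\dist_L$--far from $\gamma$. Visibility is handled by the bottleneck Lemma~\ref{lem:bounding_distance_with_dual_curtains}: geodesics from $c(n)$ to $b(n)$ must pass uniformly close to a fixed point of $b$ lying between consecutive dual curtains, and Arzel\`a--Ascoli then produces the biinfinite geodesic. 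Continuity of the inverse uses the curtain topology on $\cal B_L$ rather than any Morse-type stability. Your overall architecture (define $\Phi$ via subconverging CAT(0) segments, check equivariance, etc.) is fine, but the analytic engine must be the curtain/bottleneck machinery, not contraction.
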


The existence of a contracting geodesic in $X$ implies that $\partial\X$ is nonempty, but it is not a necessary condition; see Example~\ref{eg:XD_boundary}. Results similar to Theorem~\ref{ithm:boundaries} for curve graphs \cite{klarreich:boundary,hamenstadt:train} and the free factor complex \cite{bestvinareynolds:boundary} have proved to be rather useful, for instance as a tool for studying random walks on mapping class groups and $\Out F_n$ \cite{kaimanovichmasur:poisson,maher:random,horbez:poisson}. Work of Le Bars relies on Theorem~\ref{ithm:boundaries} to analyse the asymptotic behaviour of random walks in CAT(0) spaces, as discussed in Section~\ref{subsec:further_directions}. We remark that it follows from the non-periodic version of Theorem~\ref{thmi:rank-one_iff_skewers_iff_QI} (Corollary~\ref{cor:contracting_qie:D}) that $\partial\X$ contains the \emph{Morse boundary} of $X$, \cite{charneysultan:contracting,cordes:morse}; in general it will be a much larger subspace of $\partial X$ than the Morse boundary. In fact, upcoming work of Vest on \emph{sublinearly Morse boundaries} shows that the $\kappa$--boundary of $X$ continuously injects in $\partial\X$ for most sublinear functions $\kappa$ \cite{vest:curtain}.

\subsection{Constructing {\texorpdfstring{$\X$}{XD}}, and non-geodesic hyperbolic spaces} \label{subsec:aux_spaces}

As mentioned, although we stated everything in terms of $\X$, the majority of the body of the paper is concerned with a family $\{X_L\}_{L=1}^\infty$ of auxiliary spaces, and only in the last section (Section~\ref{sec:universal}) do we distil all of this information into the single space $\X$ and show that many of the results proven for the family $\{X_L\}$ carry over.

Roughly speaking, the spaces $X_L = (X, \dist_L)$ are defined as follows. We say that a \emph{chain} is a disjoint set of curtains separating two points, and two curtains $h_1, h_2$ are \emph{$L$--separated} if any chain whose elements all intersect both $h_1$ and $h_2$ has size at most $L$. The distance $\dist_L$ between two points is essentially the size of the largest chain separating them whose elements are pairwise $L$--separated. Various notions of (cubical) separation have appeared in the literature, for example \cite{capracesageev:rank,behrstockcharney:divergence,charneysultan:contracting,chatterjimartin:note,genevois:hyperbolicities}. Our notion is a CAT(0) version of the one introduced by Genevois, which he uses to define a hyperbolic distance for cube complexes that is similar to $\dist_L.$ 

Purely for intuitional purposes, requiring that two curtains are $L$--separated can be thought of as requiring that there is a \say{bottleneck} between them of width at most $L$, and then $\dist_L(x,y)$ measures the time spent in such bottlenecks by the geodesic from $x$ to $y$. Since increasing $L$ allows for larger bottlenecks, the identity map $(X, \dist_{L+1}) \to (X, \dist_{L})$ is 1--Lipschitz. If $X$ is hyperbolic, then one would expect that the distance $\dist_L$ would coarsely coincide with $\dist$, and indeed:

\begin{introthm}\label{thmi:X_hypebolic_iff_X_is_XL}
A CAT(0) space $X$ is hyperbolic if and only if there exists $L_0$ such that for each $L\geq L_0$, the identity map is a quasiisometry between each pair among $X$, $X_L$, and $\X$.
\end{introthm}

In particular, Theorem~\ref{thmi:X_hypebolic_iff_X_is_XL} provides a case where the family $\{X_L\}$ \say{stabilises}, and a little experimentation shows that in many other examples it also happens that $X_{L}$ and $X_{L+1}$ are quasiisometric for large enough $L$. Thus, a first approach to obtaining a single space that encodes all the data collectively witnessed by the family $\{X_L\}$ might be to take a limit $\lim_{L\to \infty}\dist_L(x,y)$. However, there are simple examples where the family does not stabilise and the resulting space is not hyperbolic. To solve this, we need a more conceptually sophisticated procedure.  

Choose a full-support probability distribution on the natural numbers and let $Y$ be the associated random variable. (For technical reasons we require the distribution to have finite second moment.) Then, for $x,y \in X$, we define $\Dist (x,y)$ to be the expected value $\mathbf{E}[\dist_Y(x,y)]$. More concretely, this means that for each $L\in\mathbf N$, we choose a weight $\lambda_L>0$ (satisfying certain properties) and define $\Dist(x,y) = \sum_{L=1}^\infty \lambda_L\dist_L(x,y)$. While the second formulation is more suited for computations, we hope that the first can be exported to other situations where geometric properties are encoded by a family of metrics.

A technical point that is worth mentioning is that the spaces $X_L$ and $\X$ are discrete, and in particular not geodesic (although they are roughly geodesic by Theorem~\ref{ithm:XD_hyperbolic}). There are a few inequivalent notions of hyperbolicity for non-geodesic metric spaces. In the interests of clarity and self-containment, we record the following proposition, which does not seem to appear in the literature. We denote by $E(X)$ the \emph{injective hull} of $X$, and refer to the appendix for precise definitions.

\begin{introprop}\label{iprop:defn_hyperbolic}
Let $X$ be a metric space. The following are equivalent. 
\begin{itemize}
\item   $X$ is quasihyperbolic (quasigeodesic triangles are thin) and weakly roughly geodesic. \label{itemi:quasihyperbolic}
\item   $X$ is hyperbolic in the sense of the four-point condition and weakly roughly~geodesic.\label{itemi:four_point}
\item   $X$ is $\isom X$--equivariantly roughly isometric to the geodesic space $E(X)$, and $E(X)$ is \mbox{hyperbolic}. \label{itemi:coarsely_injective}
\end{itemize}
\end{introprop} 

For us a space will be hyperbolic if it satisfies Proposition~\ref{iprop:defn_hyperbolic}.

\subsection{Further directions} \label{subsec:further_directions}

The framework of curtains and curtain models allows for results that are strikingly similar to ones for cube complexes and mapping class groups. This opens up a large range of potential directions for further study. We briefly discuss a few of these below.

\smallskip

\noindent\textbf{Random walks.}

Thanks to work of Maher--Tiozzo \cite{mahertiozzo:random}, much can be said about random walks on groups that act by isometries on hyperbolic spaces. Since every action on a CAT(0) space $X$ descends to an action on the curtain model, this now includes the class of CAT(0) groups. As suggested in \cite{lebars:random}, it would be natural to try to use curtain models to obtain results about random walks on CAT(0) groups, for example a description of the Poisson boundary similar to those for mapping class groups \cite{kaimanovichmasur:poisson} and $\Out F_n$ \cite{horbez:poisson}. One could also make use of the cubical perspective, and try to use curtains and Theorem~\ref{ithm:boundaries} to emulate the strategy of \cite{fernoslecureuxmatheus:random} to prove a central limit theorem for random walks on CAT(0) groups. This strategy has recently been successfully implemented by Le Bars~\cite{lebars:central} (see also \cite{choi:limit}).

\smallskip




 
\noindent\textbf{Quasiisometry-invariance.}

If $X$ and $Y$ are quasiisometric cobounded CAT(0) spaces, are their curtain models quasiisometric? What if $X$ and $Y$ both admit proper cocompact actions by a common group? Examples of Croke--Kleiner show that such a statement is not a given \cite{crokekleiner:spaces}. A positive answer would provide new group invariants for CAT(0) groups, such as the curtain model and its boundary. Note that there are quasiisometric CAT(0) spaces $X$ and $Y$, produced by Cashen \cite{cashen:quasiisometries}, such that $\X$ is (roughly) an infinite wedge of rays but $Y_{\Dist}$ is $\R$ with a ray glued at each integer. These spaces are not cobounded.


\smallskip
 

 

\noindent\textbf{Roller boundaries.}

In CAT(0) cube complexes there is a notion of boundary defined using hyperplanes, namely the Roller boundary. In it, a point is defined as a choice of orientation for each halfspace, or, more technically, as a non-principal ultrafilter over the set of orientations of hyperplanes.  By including curtains in one of their halfspaces, the construction can be extended to the general CAT(0) setting. One can hope to exploit this boundary to import cubical results. For instance, in \cite{fernoslecureuxmatheus:contact} it is shown that the set of regular points of the Roller boundary is homeomorphic to the Gromov boundary of the contact graph. 

\smallskip

\noindent\textbf{Sublinearly Morse geodesics.}

Recent work of Qing--Rafi--Tiozzo introduces the notion of \emph{sublinearly Morse} geodesics, which generalise contracting geodesics in the CAT(0) setting, and uses them to define the sublinearly Morse boundary \cite{qingrafitiozzo:sublinearly:2}. In Theorem~\ref{thmi:rank-one_iff_skewers_iff_QI} we characterise contracting geodesics in terms of separated curtains and $\X$. Is it possible to obtain a characterisation for the sublinear case similar to the ones in \cite{durhamzalloum:geometry} and \cite{murrayqingzalloum:sublinearly}? And what can be said about the topology on the boundary? Upcoming work of Vest addresses some of these questions \cite{vest:curtain}.

\smallskip

\noindent\textbf{Combination theorems.} 

Given some kind of geometric decomposition of a CAT(0) space $X$ into CAT(0) pieces, what can be said about the curtain model of $X$ from the smaller pieces? One instance would be a space $X$ admitting a geometric action by a group $G$ that admits a graph of groups decomposition. Another would be a CAT(0) space $X$ hyperbolic relative to a family $\{Y^i\}$ of CAT(0) subspaces: is $\X$ hyperbolic relative to the family $\{Y_{\Dist}^i\}$? Is the cone-off of $X$ with respect to $\{Y^i\}$ quasiisometric to the cone-off of $\X$ with respect to $\{Y_{\Dist}^i\}$?

\smallskip

\noindent\textbf{Acylindrical actions.}

Theorem~\ref{introthm:non_uniform_acyl} states that the curtain model is universal for WPD actions. In practice, this result is deduced from the stronger fact that any group acting properly on a CAT(0) space $X$ acts \emph{non-uniformly acylindrically} on $\X$ (Proposition~\ref{prop:nonuniform_acyl:D}). In the cubical setting, Shepherd has recently constructed an example of a rank-one cocompactly cubulated group that does not act acylindrically on the contact graph \cite{shepherd:cubulation}. This raises the question of whether non-uniform acylindricity is optimal: we suspect that (perhaps a variant of) Shepherd's group does not act acylindrically on its curtain model.

\subsection*{Summary of proof locations}

\noindent We list the statements in the body of the paper from which the results described above \mbox{follow}. 
$\,$ Theorem~\ref{ithm:XD_hyperbolic} is Theorem~\ref{thm:D_hyperbolic} and Proposition~\ref{prop:cat0_rough_geodesic:D}. 
$\,$ Theorem~\ref{thmi:rank-one_iff_skewers_iff_QI} is Corollary~\ref{cor:contracting_qie:D} and Theorem~\ref{thm:rank_one_characterisation}. 
$\,$ Theorem~\ref{thmi:stable} is Corollary~\ref{cor:stable:D}. 
$\,$ Theorem~\ref{introthm:non_uniform_acyl} is Proposition~\ref{prop:nonuniform_acyl:D}. 
$\,$ Theorem~\ref{ithm:A/QI} is Theorem~\ref{thm:A/QI:D}.
$\,$ Theorem~\ref{ithm:dichotomy} is Theorem~\ref{thm:separated_implies_contracting}. 
$\,$ Theorem~\ref{ithm:ivanov} is Corollary~\ref{cor:one-ended}, Theorem~\ref{thm:ivanov}, and Remark~\ref{rem:ivanov:D}.
$\,$ Theorem~\ref{ithm:1.1} is Corollary~\ref{cor:quasiivanov}.
$\,$ Theorem~\ref{icor:rr} is Theorem~\ref{thm:wide_dichotomy} and Remark~\ref{rem:XD_diameter}.
$\,$ Theorem~\ref{ithm:boundaries} is Theorem~\ref{thm:boundary:D} and Corollary~\ref{cor:contracting_qie:D}.
$\,$ Theorem~\ref{thmi:X_hypebolic_iff_X_is_XL} is Corollary~\ref{cor:hyp_iff_XL_and_X_QI} and Proposition~\ref{prop:qie_in_XD_iff_XL}. 
$\,$ Proposition~\ref{iprop:defn_hyperbolic} is Proposition~\ref{prop:Rips_coarsely_injective}.

\subsection*{Acknowledgements} 

\noindent We are pleased to thank Carolyn Abbot, Macarena Arenas, Daniel Berlyne, Pierre-Emmanuel Caprace, Sami Douba, Matt Durham, Talia Fern\'os, Thomas Haettel, Mark Hagen, Sam Hughes, Annette Karrer, Alice Kerr, Corentin Le Bars, Johanna Manhagas, Mahan Mj, Thomas Ng, Jacob Russell, Michah Sageev, Sam Shepherd, Alessandro Sisto, Samuel Taylor, Yvon Verberne, and especially Anthony Genevois for interesting questions, comments, and discussions related to this work. DS and AZ are grateful to the LabEx CARMIN (ANR-10-LABX-59-01) and the Institut Henri Poincar\'e (UAR 839 CNRS-Sorbonne Université) for their hospitality during the thematic trimester of Spring 2022, where part of this work was carried out. HP was partially supported by an EPSRC DTP at the University of Bristol and DS was partially supported by Christ Church Research Centre.

\section{Curtains and the \texorpdfstring{$L$}{L}--separation space}

We refer the reader to \cite[Part II]{bridsonhaefliger:metric} for a thorough treatment of CAT(0) spaces. There are two main properties that we shall use directly. The first is that for any geodesic $\alpha:I\to X$, where $I\subset\R$ is an interval, the closest-point projection map, written $\pi_\alpha:X\to\alpha$, is 1--Lipschitz. The second is the fact that CAT(0) spaces are geodesic metric spaces with a \emph{convex} metric. That is, given any two geodesic segments $\alpha,\beta:[0,1]\to X$, parametrised proportional to arc-length, the function $t\mapsto\dist(\alpha(t),\beta(t))$ is convex. An immediate consequence of this is that each pair of points $(x,y)$ is joined by a unique geodesic. We denote this geodesic $[x,y]$.

\subsection{Curtains}

The main ingredient of this paper is the concept of \emph{curtains}, which morally mimic hyperplanes in CAT(0) cube complexes. The following appears in simplified form in the introduction.

\begin{definition}[Curtain, pole] 
Let $X$ be a CAT(0) space and let $\alpha:I\to X$ be a geodesic. For a number $r$ with $[r-\frac12,r+\frac12]$ in the interior of $I$, the \emph{curtain dual to $\alpha$ at $r$} is
\[
h=h_\alpha=h_{\alpha,r}=\pi^{-1}_\alpha(\alpha[r-\frac12,r+\frac12]).
\] 
The segment $\alpha[r-\frac12,r+\frac12]$ is called the \emph{pole} of the curtain.
\end{definition}

Although the analogy between curtains and hyperplanes is not perfect, they do share a number of important properties. For instance, curtains separate the space into two halfspaces.  

\begin{definition}[Halfspaces, separation] 
Let $X$ be a CAT(0) space and let $h=h_{\alpha,r}$ be a curtain. The \emph{halfspaces} determined by $h$ are $h^-=\pi_\alpha^{-1}\alpha(I\cap(-\infty,r-\frac12))$ and $h^+=\pi_\alpha^{-1}\alpha(I\cap(r+\frac12,\infty))$. Note that $\{h^-,h,h^+\}$ is a partition of $X$. If $A$ and $B$ are subsets of $X$ such that $A \subseteq h^{-}$ and $B \subseteq h^{+}$, then we say that $h$ \emph{separates} $A$ from $B$.
\end{definition}

\begin{remark}[Curtains are thick and closed] \label{rem:thick_curtains}
Because $\pi_\alpha$ is 1--Lipschitz, we have $\dist(h_\alpha^-,h_\alpha^+)=1$. Moreover, curtains are closed subsets because closest-point projection to a geodesic is continuous and $[r-\frac12,r+\frac12]$ is closed.
\end{remark}

\begin{remark}[Failure of convexity] \label{rem:nonconvex}
Unlike hyperplanes in a CAT(0) cube complex, curtains can fail to be convex: it is possible to have two geodesic segments $\alpha\colon [a,b] \to X$ and $ \beta$ such that $\pi_\beta(\alpha(a)) = \pi_\beta(\alpha(b))$, but $\pi_\beta(\alpha)$ is not constant; in particular, it can happen that $\alpha(a),\alpha(b)\in h_\beta^-$ but $\alpha(t)\in h_\beta^+$ for some $t\in(a,b)$. We thank Michah Sageev for informing us of this fact and providing an example. Another example appears as \cite[Ex.~5.1]{piatek:viscosity}---it is reproduced in Figure~\ref{Figure:Failure_convexity}. Note that such a configuration appears, for instance, in the product of two (non--2-valent) trees.

Whilst the lack of convexity may seem like a failure, it is actually to be expected, for a similar phenomenon occurs in complex hyperbolic geometry. Indeed, every totally geodesic submanifold of $\bf H^n_{\bf C}$ is either totally real or complex-linear \cite[\S3.1.11]{goldman:complex}, and in particular has real-codimension at least 2 when $n\ge2$. Nonetheless, Mostow considered bisectors $\{z\in\bf H^n_{\bf C}\::\:\dist(x,z)=\dist(z,y)\}$, calling them \emph{spinal surfaces} \cite{mostow:onremarkable} (for $n=2$ these were earlier considered by Giraud \cite{giraud:surcertaines}), and using them in the construction of nonarithmetic lattices in $\PU(2,1)$. These spinal surfaces are spiritually similar to the curtains we consider here.
\end{remark}

\begin{figure}[ht]
\centering
\begin{tikzpicture}
\node[anchor=south west,inner sep=0] (image) at (0,0) {\includegraphics[width=0.8\textwidth]{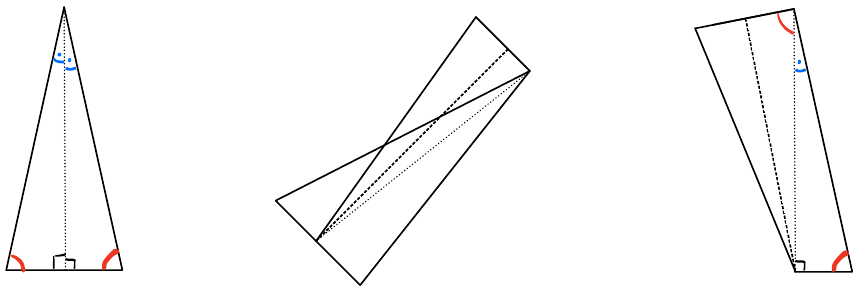}};
    \begin{scope}[x={(image.south east)},y={(image.north west)}]
    \node at (0.3, 0.3) {$A_1$};
    \node at (0.35, 0.15) {$B$};
    \node at (0.42, -0.02) {$A_2$};
    \node at (0.64, 0.75) {$C$};
    \node at (0.56, 0.99) {$E$};
    \node at (0.61, 0.87) {$D$};
    \node at (0.81, 0.96) {$E$};
    \node at (0.87, 0.99) {$D$};
    \node at (0.94, 1.02) {$C$};
    \node at (0.92, 0) {$B$};
    \node at (1, 0) {$A_i$};
    \end{scope}
\end{tikzpicture}
\caption{Glue together two copies of the isosceles triangle on the left to obtain the central CAT(0) space \cite[Ex.~5.1]{piatek:viscosity}. The right-hand quadrilateral has angle $\frac\pi2$ at $C$, so $\pi_{[E,C]}(A_1) = \pi_{[E,C]}(A_2) = C$, but $\pi_{[E,C]}(B) = D$.} \label{Figure:Failure_convexity}
\end{figure}

Although curtains need not be convex, Lemmas~\ref{lem:curtains_separate},  \ref{lem:convex_curtains}, and \ref{lem:controlling_curtains} below show that they do enjoy some convexity-like features. 

\begin{lemma}[Curtains separate] \label{lem:curtains_separate}
Let $h=h_{\alpha,r}$ be a curtain, and let $x \in h^{-}$, $y\in h^{+}$. For any continuous path $\gamma:[a,b]\to X$ from $x$ to $y$ and any $t\in[r-\frac12,r+\frac12]$, there is some $c\in[a,b]$ such that $\pi_\alpha\gamma(c)=\alpha(t)$.
\end{lemma}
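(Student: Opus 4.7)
The statement is essentially an intermediate-value argument applied to the composition $\pi_\alpha\circ\gamma$. I would first note that, because $\alpha$ is a geodesic and therefore an isometric embedding of $I\subset\R$, the image of $\alpha$ is in bijection with $I$ via $\alpha$, and the inverse $\alpha^{-1}\colon\alpha(I)\to I$ is continuous. Hence the function
\[
f\colon[a,b]\to I,\qquad f(c):=\alpha^{-1}\bigl(\pi_\alpha(\gamma(c))\bigr),
\]
is well-defined. It is moreover continuous, since $\pi_\alpha$ is $1$--Lipschitz (Remark~\ref{rem:thick_curtains}), $\gamma$ is continuous by hypothesis, and $\alpha^{-1}$ is continuous on $\alpha(I)$.

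Next I would unpack the halfspace hypothesis. Since $x\in h^-=\pi_\alpha^{-1}\alpha\bigl(I\smallsetminus[r-\tfrac12,\infty)\bigr)$, there is some $s_a<r-\tfrac12$ with $\pi_\alpha(x)=\alpha(s_a)$, so $f(a)=s_a<r-\tfrac12$. Analogously, $y\in h^+$ forces $f(b)=s_b>r+\tfrac12$.

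The conclusion is then immediate from the intermediate value theorem applied to the continuous real-valued function $f$ on $[a,b]$: for every $t\in[r-\tfrac12,r+\tfrac12]\subset(f(a),f(b))$, there exists $c\in[a,b]$ with $f(c)=t$, which by definition of $f$ means $\pi_\alpha(\gamma(c))=\alpha(t)$.

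There is no real obstacle here; the only mild care needed is to justify that one may actually parametrise the projection through $\alpha$ (i.e.\ that $\alpha^{-1}$ is well-defined and continuous on its image), which is why I would make $f$ explicit rather than argue directly about the image of $\pi_\alpha\circ\gamma$ as a connected subset of $\alpha$.
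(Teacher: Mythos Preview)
Your proof is correct and is essentially identical to the paper's own argument: the paper defines the same function $f=\alpha^{-1}\pi_\alpha\gamma:[a,b]\to I$, observes $f(a)<t<f(b)$, and invokes the intermediate value theorem. Your version simply adds a bit more justification for the continuity and well-definedness of $f$, which is fine but not strictly necessary at this level of detail.
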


\begin{proof}
The map $f=\alpha^{-1}\pi_\alpha\gamma:[a,b]\to I$ is continuous, with $f(a)<t<f(b)$, so $c$ is provided by the intermediate value theorem.
\end{proof}

\begin{lemma}[Star convexity] \label{lem:convex_curtains}
Let $h$ be a curtain with pole $P$. For every $x\in h$, the geodesic $[x, \pi_Px]$ is contained in $h$. In particular, $h$ is path-connected.
\end{lemma}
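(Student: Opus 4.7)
The plan is to reduce the statement to a standard property of closest-point projection to convex subsets in CAT(0) spaces, applied to the geodesic segment $P$.

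First I would verify that $\pi_P(x)=\pi_\alpha(x)$. Since $x\in h=\pi_\alpha^{-1}(P)$, we have $\pi_\alpha(x)\in P$. Any other $p\in P$ lies on $\alpha$, so $\dist(x,p)\ge \dist(x,\pi_\alpha(x))$, meaning $\pi_\alpha(x)$ minimises the distance from $x$ over $P$. Because $P$ is a geodesic segment, it is convex in $X$, so the closest-point projection $\pi_P$ is a well-defined single-valued map, and therefore $\pi_P(x)=\pi_\alpha(x)$.

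Next, I would show that the whole geodesic $[x,\pi_\alpha(x)]$ projects to the single point $\pi_\alpha(x)$ under $\pi_\alpha$, which immediately gives $[x,\pi_\alpha(x)]\subseteq\pi_\alpha^{-1}(P)=h$. Fix $y\in[x,\pi_\alpha(x)]$, so $\dist(x,\pi_\alpha(x))=\dist(x,y)+\dist(y,\pi_\alpha(x))$. Using that $\pi_\alpha$ is $1$--Lipschitz (equivalently, $\pi_\alpha(y)$ is a nearest point to $y$ on $\alpha$) together with the triangle inequality,
\[
\dist(x,\pi_\alpha(x))\;\le\;\dist(x,\pi_\alpha(y))\;\le\;\dist(x,y)+\dist(y,\pi_\alpha(y))\;\le\;\dist(x,y)+\dist(y,\pi_\alpha(x))\;=\;\dist(x,\pi_\alpha(x)).
\]
So equality holds throughout, and $\pi_\alpha(y)$ is also a point of $\alpha$ realising the minimum distance from $x$. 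By uniqueness of closest-point projection to the (convex) geodesic $\alpha$, $\pi_\alpha(y)=\pi_\alpha(x)\in P$, as required.

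Finally, for path-connectedness, given $x,y\in h$, concatenate three paths: the geodesic $[x,\pi_\alpha(x)]\subseteq h$ (by the above), the sub-segment of $P$ from $\pi_\alpha(x)$ to $\pi_\alpha(y)$ (which lies in $P\subseteq h$ since $P$ is an interval on $\alpha$), and the geodesic $[\pi_\alpha(y),y]\subseteq h$. There is no real obstacle here; the only point requiring slight care is the identification $\pi_P(x)=\pi_\alpha(x)$, after which the result reduces to the standard observation that, in a CAT(0) space, the geodesic from a point to its closest-point projection onto a convex set has constant projection.
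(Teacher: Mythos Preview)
Your proof is correct and follows essentially the same approach as the paper: both argue that $\pi_\alpha$ is constant along $[x,\pi_\alpha(x)]$ via the triangle inequality and uniqueness of closest-point projection, concluding $[x,\pi_\alpha(x)]\subset h$. You simply supply more detail, including the identification $\pi_P(x)=\pi_\alpha(x)$ (which the paper uses implicitly) and an explicit three-segment path for path-connectedness.
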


\begin{proof}
Let $h = h_\alpha$. Since $x\in h$, we have  $\pi_\alpha(x)\in P$. By the triangle inequality, $\pi_\alpha[x,\pi_\alpha(x)]=\pi_\alpha(x)$, so $[x,\pi_\alpha(x)]\subset h$.
\end{proof}

\noindent The same argument shows that $h^+$ and $h^-$ are path-connected.

\begin{lemma}[No bigons for related curtains]\label{lem:controlling_curtains}
Let $\alpha=[x_1,x_3]$ be a geodesic and let $x\notin \alpha$. For any $x_2 \in \alpha$, if $h$ is a curtain dual to $[x_2,x]$ that meets $[x_1,x_2]$, then $h$ does not meet $[x_2,x_3]$.
\end{lemma}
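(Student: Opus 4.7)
Write $\beta=[x_2,x]$ for the geodesic to which $h$ is dual, and parametrise $\beta$ from $\beta(0)=x_2$. Let $P=\beta[r-\sfrac12,r+\sfrac12]$ be the pole of $h=h_{\beta,r}$. By the definition of a curtain, $r-\sfrac12>0$, so $x_2=\beta(0)\notin P$. In particular, any point $y$ with $\pi_\beta(y)=x_2$ lies outside $h$.

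The central geometric input is a CAT(0) projection lemma: if the Alexandrov angle at a point $p$ between two geodesics $[p,q]$ and $[p,q']$ is at least $\pi/2$, then $\pi_{[p,q]}(y)=p$ for every $y\in[p,q']$. (This is immediate from the first variation formula, or equivalently from applying the CAT(0) inequality to infinitesimal comparison triangles at $p$; crucially, the hypothesis depends only on the germ of $[p,q']$ at $p$, so the conclusion applies at \emph{every} point of the segment, not merely at its far endpoint.) I will apply this twice, with $p=x_2$.

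The key combinatorial observation is that $[x_1,x_2]$ and $[x_2,x_3]$ concatenate to form the geodesic $\alpha$, so the Alexandrov angle at $x_2$ between them equals $\pi$. The triangle inequality for Alexandrov angles then gives
\[
\angle_{x_2}(x_1,x)+\angle_{x_2}(x,x_3)\ \geq\ \angle_{x_2}(x_1,x_3)\ =\ \pi,
\]
so at least one of these two angles is at least $\pi/2$.

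Now I use the hypothesis that $h$ meets $[x_1,x_2]$ to pin down which side is which. If $\angle_{x_2}(x_1,x)\geq\pi/2$, then the projection lemma applied to $\beta$ and $[x_2,x_1]$ forces $\pi_\beta(y)=x_2$ for every $y\in[x_1,x_2]$; combined with $x_2\notin P$, this gives $h\cap[x_1,x_2]=\emptyset$, contradicting the hypothesis. Hence $\angle_{x_2}(x_1,x)<\pi/2$, and the displayed inequality upgrades this to $\angle_{x_2}(x,x_3)>\pi/2$. Applying the projection lemma to $\beta$ and $[x_2,x_3]$ then yields $\pi_\beta(y)=x_2\notin P$ for every $y\in[x_2,x_3]$, so $h\cap[x_2,x_3]=\emptyset$, as required.

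The only possible wrinkle is the invocation that the projection-to-a-point conclusion holds along the entire segment rather than only at its endpoint; I will address this by noting that the Alexandrov angle at $x_2$ only depends on the germ of the geodesic, so restricting $[x_2,x_3]$ to any subsegment $[x_2,y]$ does not change the angle. Apart from this, the argument is an elementary dichotomy on the two angles formed at $x_2$, and I do not anticipate any genuine obstacle.
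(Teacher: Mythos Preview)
Your proof is correct but takes a genuinely different route from the paper's. The paper argues by a direct metric contradiction: assuming $h$ meets both segments, continuity of $\pi_\beta$ and the intermediate value theorem produce points $p_1\in h\cap[x_1,x_2]$ and $p_3\in h\cap[x_2,x_3]$ with a \emph{common} projection $q=\pi_\beta(p_1)=\pi_\beta(p_3)\in P$. Since $x_2\in\beta\smallsetminus P$, one has $\dist(p_i,q)<\dist(p_i,x_2)$, and then
\[
\dist(p_1,p_3)\le\dist(p_1,q)+\dist(q,p_3)<\dist(p_1,x_2)+\dist(x_2,p_3)=\dist(p_1,p_3),
\]
contradicting that $\alpha$ is a geodesic through $x_2$. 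This is more elementary than your approach---it uses only the triangle inequality and uniqueness of nearest points, avoiding Alexandrov angles and first variation entirely. Your argument is more structural: it locates the obstruction in the angle configuration at $x_2$ and in fact proves the stronger conclusion that all of $[x_2,x_3]$ projects to $x_2$ on $\beta$, not merely that it misses $h$. (One small omission: your angle dichotomy tacitly assumes $x_2$ is interior to $\alpha$; when $x_2\in\{x_1,x_3\}$ the relevant segment degenerates to $\{x_2\}\not\subset h$, which disposes of those cases immediately.)
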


\begin{proof}
If $h$ does this then there exist $p_1\in h\cap[x_1,x_2]$, $p_3\in h\cap[x_2,x_3]$ with $\pi_{[x,x_2]}(p_1)=\pi_{[x,x_2]}(p_3)$. Because $x_2\in[x,x_2]\smallsetminus h$, we have $\dist(p_i,x_2)>\dist(p_i,\pi_{[x,x_2]}(p_i))$. But this contradicts the fact that $\alpha$ is a geodesic. 
\end{proof}
 
We also record the following basic property of closest-point projections.

\begin{lemma} \label{lem:comparing_projections_2}
Let $\alpha$ be a geodesic, and let $x\in X$. For any $y\in\alpha$, we have $\pi_{[x,\pi_\alpha x]}(y)=\pi_\alpha(x)$.
\end{lemma}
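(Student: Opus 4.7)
Set $p=\pi_\alpha(x)$; if $x=p$ the segment $[x,p]$ is a single point and the claim is trivial, so suppose $x\ne p$. The plan is to reduce the assertion to two foundational properties of closest-point projection in CAT(0) spaces: stability of projection along the segment from $x$ to its projection, and a Pythagorean-type inequality at the projection point.

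The first step is to verify that $\pi_\alpha(z)=p$ for every $z\in[x,p]$. This is a one-line triangle inequality: for any $w\in\alpha$,
\[
\dist(z,w) \;\geq\; \dist(x,w)-\dist(x,z) \;\geq\; \dist(x,p)-\dist(x,z) \;=\; \dist(z,p),
\]
where the first inequality is the triangle inequality, the second uses that $p$ minimises $\dist(x,\cdot)$ on $\alpha$, and the last equality uses that $z$ lies on the geodesic from $x$ to $p$.

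The second step is to apply the standard CAT(0) Pythagorean-type inequality for projection onto a complete convex set (see \cite[II.2.4]{bridsonhaefliger:metric}): if $v = \pi_C(u)$, then $\dist(u,w)^2 \geq \dist(u,v)^2+\dist(v,w)^2$ for every $w\in C$. Applying this with $C=\alpha$, $u=z\in[x,p]$, $v=p$ (justified by the first step), and $w=y$ gives
\[
\dist(z,y)^2 \;\geq\; \dist(z,p)^2 + \dist(p,y)^2 \;\geq\; \dist(p,y)^2,
\]
with strict inequality whenever $z\ne p$ (assuming $y\ne p$; the case $y=p$ is immediate). Hence $p$ is the unique closest point of $[x,p]$ to $y$, i.e.\ $\pi_{[x,\pi_\alpha x]}(y)=\pi_\alpha(x)$.

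I do not anticipate a genuine obstacle: both ingredients are textbook facts for CAT(0) spaces, and the convexity of $\alpha$ (as the image of a geodesic) is exactly what allows the Pythagorean inequality to be invoked. The only mild subtlety is the need for the first step, without which one could not apply the projection inequality at the point $z$ rather than at $x$.
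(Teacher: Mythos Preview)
Your proof is correct and follows essentially the same two-step structure as the paper: the first step (showing $\pi_\alpha(z)=p$ for all $z\in[x,p]$ via the triangle inequality) is identical, and for the second step the paper uses that $\pi_\alpha$ is $1$--Lipschitz (giving $\dist(y,p)=\dist(\pi_\alpha y,\pi_\alpha z)\le\dist(y,z)$) together with strict convexity of balls, whereas you invoke the Pythagorean-type inequality from \cite[II.2.4]{bridsonhaefliger:metric}, which packages the strict inequality in one stroke. Both are equally standard CAT(0) facts, so the difference is cosmetic.
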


\begin{proof}
By the triangle inequality, we have $\pi_\alpha(z)=\pi_\alpha(x)$ for all $z\in[\pi_\alpha x,x]$. Because $\pi_{[x,\pi_\alpha x]}$ is 1--Lipschitz, we therefore have $\dist(y,\pi_\alpha x)\le\dist(y,z)$ for all $z\in[\pi_\alpha x,x]$, and the inequality must be strict for $z\ne\pi_\alpha(x)$ as balls are strictly convex.
\end{proof}

Curtains can be used to define a new family of distances on $X$. The first that we shall consider is the \emph{chain distance}.

\begin{definition}[Chain, chain distance] 
A set $\{h_i\}$ of curtains is a \emph{chain} if $h_i$ separates $h_{i-1}$ from $h_{i+1}$ for all $i$. We say that $\{h_i\}$ separates $A,B\subset X$ if every $h_i$ does. The \emph{chain distance} from $x\in X$ to $y\in X\smallsetminus\{x\}$ is
\[
\dist_\infty(x,y)=1+\max\{\hspace{1mm}|c|:c\text{ is a chain separating }x\text{ from }y\}.
\]
\end{definition}

The fact that $\dist_\infty$ is a metric follows from Lemma~\ref{lem:chain_distance} below. Unless specified otherwise, for a curtain $h_i$ in a chain $\{h_i\}$ we orient the half-spaces such that $h_j^+\subset h_i^+$ for all $j>i$. In other words, the order on the curtains induced by the indices and the one induced by the halfspaces coincide. In particular, we can talk of maximal and minimal elements. The following lemma states that the chain distance $\dist_\infty$ and the original distance $\dist$ on $X$ differ by at most 1. Apart from the statement that curtains really do encode ``meaningful'' distances on $X$, the fact that $\dist$ and $\dist_\infty$ are comparable turns out to be surprisingly useful, and will come up in multiple places.

\begin{lemma} \label{lem:chain_distance}
For any $x,y\in X$, there is a chain $c$ of curtains dual to $[x,y]$ such that  $1+|c|=\dist_\infty(x,y)$ and $1+|c|=\lceil\dist(x,y)\rceil$. 
\end{lemma}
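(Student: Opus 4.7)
The plan is to split the equality $\dist_\infty(x,y) = \lceil \dist(x,y) \rceil$ into a lower bound via explicit construction and a matching upper bound on the size of any chain separating $x$ from $y$.

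Setting $d = \dist(x,y)$ and $n = \lceil d \rceil$, parametrise $\alpha = [x,y]$ on $[0,d]$. If $n \leq 1$ the empty chain works. Otherwise $d > n-1$, so the interval $(1/2,\, d-1/2)$ has length $d-1 > n-2$, leaving room to choose $n-1$ parameters $r_1 < \cdots < r_{n-1}$ inside it with each consecutive gap strictly greater than $1$. The curtains $h_{\alpha,r_i}$ then form a chain dual to $\alpha$ of size $n-1$, proving $\dist_\infty(x,y) \geq n$.

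For the upper bound, I claim that any chain $\{h_1,\dots,h_m\}$ separating $x$ from $y$ satisfies $d > m$. For each $i$ I orient $\alpha_i$ so that $\pi_{\alpha_i}(x)$ has parameter less than $r_i - 1/2$, write $\phi_i(t)$ for the $\alpha_i$--parameter of $\pi_{\alpha_i}([x,y](t))$, and set
\[
w_i \;=\; \inf\bigl\{\,t \in [0,d] \,:\, [x,y](t) \in h_i^+\,\bigr\}.
\]
Since $h_i^+$ is open and $y \in h_i^+$, this is a well-defined point of $(0,d]$, and continuity of closest-point projection places $[x,y](w_i)$ on the boundary of $h_i^+$, giving $\phi_i(w_i) = r_i + 1/2$ (in particular $[x,y](w_i) \in h_i$). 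The $w_i$ are distinct because the curtains of a chain are pairwise disjoint, so after relabelling we may assume $w_1 < \cdots < w_m$.

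The main obstacle is proving $w_{i+1} - w_i > 1$ for all $i$, without appealing to any a priori compatibility between the different $\alpha_i$. The key observation is that for $t < w_{i+1}$ one has $[x,y](t) \notin h_{i+1}^+$; applied at $t = w_i < w_{i+1}$ together with $[x,y](w_i) \in h_i$ and the disjointness $h_i \cap h_{i+1} = \emptyset$, this forces $[x,y](w_i) \in h_{i+1}^-$, so $\phi_{i+1}(w_i) < r_{i+1} - 1/2$. Since $\phi_{i+1}(w_{i+1}) = r_{i+1} + 1/2$, the $1$--Lipschitz property of $\pi_{\alpha_{i+1}}$ yields
\[
w_{i+1} - w_i \;\geq\; \phi_{i+1}(w_{i+1}) - \phi_{i+1}(w_i) \;>\; 1,
\]
and the same Lipschitz argument applied to $\phi_1$ at $0$ and $w_1$ gives $w_1 > 1$. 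Summing, $w_m > 1 + (m-1) = m$, and since $w_m \leq d$ we conclude $d > m$, whence $m + 1 \leq \lceil d \rceil$. The chain constructed in the second paragraph therefore realises both $\dist_\infty(x,y)$ and $\lceil \dist(x,y) \rceil$.
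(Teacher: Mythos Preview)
Your proof is correct and follows the same overall strategy as the paper: construct an explicit chain of $\lceil d\rceil-1$ curtains dual to $[x,y]$ for the lower bound, and use the thickness of curtains for the upper bound. The paper's upper bound is a one-line appeal to $\dist(h^-,h^+)=1$ and closedness of curtains; your argument via the first-entry times $w_i$ into $h_i^+$ is a careful unpacking of exactly that idea, making precise why a chain of $m$ curtains forces $\dist(x,y)>m$. One small point: you write $w_i\in(0,d]$, but in fact $w_i<d$ (since $y\in h_i^+$ while $[x,y](w_i)\notin h_i^+$); this does not affect the conclusion, and indeed you only use $w_m\le d$.
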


\begin{proof}
Let $D=\lceil\dist(x,y)\rceil$, and let $\delta=\dist(x,y)-\lfloor\dist(x,y)\rfloor$ be the fractional part. For $i\in\{1,\dots,D-1\}$, let $r_i=i-\frac12+\frac {i\delta}{D}$. Observe that the intervals $[r_i-\frac12,r_i+\frac12]$ are pairwise disjoint. This implies that the curtains $h_{\alpha,r_i}$ form a chain $c$ of cardinality $\lceil\dist(x,y)\rceil-1$. Hence $\dist_\infty(x,y)\ge\lceil\dist(x,y)\rceil$. On the other hand, for any curtain $h$ we have $\dist(h^-,h^+)=1$. As curtains are closed, it follows that $\dist_\infty(x,y)\le\lceil\dist(x,y)\rceil$.
\end{proof}

\subsection{\texorpdfstring{$L$}{L}--separation}

We are now ready to introduce the notion of $L$--separation, which will be fundamental to this article. If curtains are reminiscent of hyperplanes and wall-spaces, then $L$--separation mimics the behaviour of such objects in hyperbolic spaces, and thus should be thought of as a source of negative curvature. For instance, it is possible to induce a wall-space structure on the hyperbolic plane $\mathbf{H}^2$ by considering a surface quotient and using the lifts of appropriate curves to define walls. Such lifts will enjoy strong separation properties, for instance the closest-point projections between them will have uniformly bounded diameter. In our case, curtains are not necessarily convex, so closest-point projections are not always well defined. However, one can achieve similar results by considering how curtains interact with one other.

\begin{definition}[$L$--separated, $L$--chain] \label{def:separated}
Let $L\in\mathbf N$. Disjoint curtains $h$ and $h'$ are said to be \emph{$L$--separated} if every chain meeting both $h$ and $h'$ has cardinality at most $L$. Two disjoint curtains are said to be \emph{separated} if they are $L$-separated for some $L.$ If $c$ is a chain of curtains such that each pair is $L$--separated, then we refer to $c$ as an \emph{$L$--chain}. 
\end{definition}

This notion is inspired by one due to Genevois for cube complexes \cite{genevois:contracting} (see also \cite{behrstockcharney:divergence,charneysultan:contracting}). The next two lemmas will be a staple asset during the paper---their proofs are purely combinatorial and do not use any CAT(0) geometry. 

\begin{lemma}[Gluing disjoint $L$--chains] \label{lem:gluing_disjoint_chains}
Suppose that $c$ and $c'$ are $L$--chains, let $h$ be the maximal element of $c$, and let $h'$ be the minimal element of $c'$. If $c'$ is contained in $h^+$ and $c$ is contained in $h'^-$, then $c\cup c'\smallsetminus\{h\}$ and $c\cup c'\smallsetminus\{h'\}$ are $L$--chains.
\end{lemma}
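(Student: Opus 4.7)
My plan is to verify directly that $(c\cup c')\smallsetminus\{h\}$, equipped with the natural total order inherited from the two given chains, is an $L$-chain. Write $c = \{h_{-n},\dots,h_{-1},h\}$ and $c' = \{h'=h'_0,h'_1,\dots\}$, and list the combined set as $h_{-n},\dots,h_{-1},h'_0,h'_1,\dots$. The chain structure within each of $c\smallsetminus\{h\}$ and $c'$ is inherited, so the nontrivial work is confined to the two new consecutive triples at the gluing point.

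For the triple $(h_{-1}, h', h'_1)$: the inclusion $h_{-1}\subset h'^-$ is immediate from the hypothesis $c\subset h'^-$, and $h'_1\subset h'^+$ follows from $c'$ being a chain with $h'$ minimal, so $h'$ separates them. For the triple $(h_{-2}, h_{-1}, h')$: from the chain structure of $c$ we have $h_{-2}\subset h_{-1}^-$, and the remaining task is to show $h'\subset h_{-1}^+$. Since $h'$ is disjoint from $h_{-1}$ by hypothesis and path-connected by Lemma~\ref{lem:convex_curtains}, it lies in exactly one of $h_{-1}^\pm$. To rule out $h'\subset h_{-1}^-$, I would use that $h\subset h_{-1}^+\cap h'^-$ (the first inclusion from the chain structure of $c$, the second from the hypothesis $c\subset h'^-$) and invoke Lemma~\ref{lem:curtains_separate}: combining a path from $h$ to a point in $h'^+$ with the forced crossings of $h_{-1}$ and $h'$ yields the needed contradiction.

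For $L$-separation, pairs lying inside $c\smallsetminus\{h\}$ or inside $c'$ are covered by assumption, so only cross pairs $h_i\in c\smallsetminus\{h\}$, $h'_j\in c'$ remain. Given any chain $K$ meeting both elements, the strategy is to show that $K$ must also meet $h$: since $h_i\subset h^-$ within the chain $c$ and $h'_j$ lies on the opposite side (using $h\subset h'^-$ together with the chain structure of $c'$ to place $h'_j$ in $h^+$), any chain linking $h_i$ to $h'_j$ is forced to cross $h$. Then $K$ is a chain meeting two curtains of the $L$-chain $c$, namely $h_i$ and $h$, so $|K|\le L$.

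The main obstacle is establishing the two ``betweenness'' assertions used above: $h'\subset h_{-1}^+$ and the claim that any chain meeting both $h_i$ and $h'_j$ also meets $h$. Both require carefully turning the hypothesized disjointness and halfspace containment into consistent halfspace inclusions among the various curtains, a step which is delicate precisely because curtains can fail to be convex (Remark~\ref{rem:nonconvex}), so orientations do not propagate as automatically as for cubical hyperplanes.
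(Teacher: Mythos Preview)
Your overall strategy is the paper's: use $h$ as a separator between $c\smallsetminus\{h\}$ and $c'$, so that any chain meeting a curtain on each side is forced through $h$ and hence through two elements of the $L$-chain $c$. The paper's version is terser---it only checks the single new consecutive pair $\{h_{-1},h'\}$, implicitly using that in a chain, $L$-separation of consecutive pairs propagates to all pairs (any chain meeting $h_i$ and $h'_j$ must cross every intermediate curtain). You instead verify all cross pairs and both new separating triples directly, which is more work but the same underlying idea.

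Where your proposal runs into trouble is in trying to \emph{derive} the betweenness assertions rather than reading them off the setup. The paper simply asserts ``$h$ separates $h''$ from $h'$''; this is what ``$h$ maximal in $c$'' and ``$h'$ minimal in $c'$'' are meant to encode relative to the inclusion $c\subset h'^-$. Your sketched argument for $h'\subset h_{-1}^+$---taking a path from $h$ into $h'^+$ and noting forced crossings of $h_{-1}$ and $h'$---does not produce a contradiction: nothing prevents a path from crossing both curtains in turn. And indeed the assertion cannot be extracted from the literal hypotheses alone. In $X=\mathbf R$, take $c=\{h_{-1},h\}=\{[-3.5,-2.5],\,[-0.5,0.5]\}$ with $h$ declared maximal, and $h'=[-5.5,-4.5]$ oriented so that $h'^-=(-4.5,\infty)\supset c$; then all hypotheses hold, yet $h'\subset h_{-1}^-$ and $h$ does not separate $h_{-1}$ from $h'$. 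The same example defeats your argument for placing $h'_j$ in $h^+$. The compatibility of the two chain orderings with the halfspace $h'^-$ is a tacit part of the statement, not something to be proved from the other hypotheses.
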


\begin{proof}
Observe that $c\cup c'$ is a chain: $h$ separates $c\smallsetminus\{h\}$ from $c'$, and $h'$ separates $c$ from $c'\smallsetminus\{h'\}$. The argument is the same for both chains in the statement, so we just consider the former. Let $h''$ be the maximal element of $c\smallsetminus\{h\}$. It suffices to check that $h''$ and $h'$ are $L$--separated, which holds because any chain meeting both $h''$ and $h'$ must meet both $h''$ and $h$, because $h$ separates $h''$ from $h'$.
\end{proof}

From the above proof, note that the technical assumption in Lemma~\ref{lem:gluing_disjoint_chains} is equivalent to assuming that $c\cup c'$ is a chain.

\begin{lemma}[Gluing $L$--chains] \label{lem:gluing_chains}
Suppose that $c=\{\dots, h_{-1},h_0\}$ and $c'=\{h'_1,h_2', \dots\}$ are $L$--chains with $|c|>1$ and $|c'|>L+1$, oriented in the natural way. If $h_0^+\cap h'_j\ne\varnothing$ for all $j$ and ${h'_1}^-\cap h_i\neq\varnothing$ for all $i$, then $c''=\{\dots,h_{-1},h'_{L+2},\dots\}$ is an $L$--chain. Its cardinality is $|c|+|c'|-L-2$ when $c$ and $c'$ are finite.
\end{lemma}

\begin{proof}
Suppose that $h_{-1}$ meets $h'_{L+1}$. Since ${h'_1}^-\subset{h'_{L+1}}^-$ intersects $h_{-1}$, we must also have that $h_{-1}$ meets $h'_1$. Hence $h_{-1}$ meets $h'_j$ for all $j\le L+1$. Similarly, the fact that $h_0^+\subset h_{-1}^+$ intersects $h'_j$ means that $h_0$ meets $h'_j$ for all $j\le L+1$ But this contradicts the fact that $c'$ is an $L$--chain. This shows that $h_{-1}$ is disjoint from $h'_{L+1}$.

Now, the fact that $h_0^+\subset h_{-1}^+$ intersects $h'_{L+1}$ means that $c'\smallsetminus\{h'_1,\dots,h'_{L}\}\subset h_{-1}^+$. Moreover, the fact that ${h'_1}^-\subset{h'_{L+1}}^-$ intersects $h_{-1}$ means that $c\smallsetminus\{h_0\}\subset{h'_{L+1}}^-$. We are therefore in the setting of Lemma~\ref{lem:gluing_disjoint_chains}, and applying it yields the result.
\end{proof}

The following lemma is a good first example of using curtains to obtain strong geometric features via simple combinatorial arguments.

\begin{lemma}[Bottleneck]\label{lem:bounding_distance_with_dual_curtains} 
Suppose that $A,B$ are two sets which are separated by an $L$-chain $\{h_1,h_2,h_3\}$ all of whose elements are dual to a geodesic $b=[x_1,y_1]$ with $x_1 \in A$ and $y_1 \in B$. For any $x_2 \in A$ and $y_2 \in B$, if $p\in h_2 \cap [x_2,y_2]$, we have $\dist(p, \pi_{b}(p))\le2L+1$.
\end{lemma}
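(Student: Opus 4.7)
The plan is to argue by contradiction: if $\dist(p, \pi_b(p)) \geq 2L + 1$, I will produce a chain of more than $L$ curtains meeting both $h_1$ and $h_2$, contradicting their $L$-separation.

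Set $q = \pi_b(p)$. Because $p \in h_2$, the projection $q$ lies on the pole $P_2$ of $h_2$, so Lemma~\ref{lem:convex_curtains} places $[p, q]$ inside $h_2$. Assuming $\dist(p, q) \geq 2L + 1$, Lemma~\ref{lem:chain_distance} gives a chain $c = \{k_1, \ldots, k_n\}$ of curtains dual to $[p, q]$ with $n \geq 2L$; each $k_i$ meets $h_2$ because its pole lies in $[p, q] \subset h_2$.

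I distinguish two cases. If some $k_i$ meets $h_1$, then $c$ itself is a chain meeting both $h_1$ and $h_2$, so the $L$-separation of $\{h_1, h_2\}$ forces $n \leq L$, contradicting $n \geq 2L$ (for $L \geq 1$). Otherwise no $k_i$ meets $h_1$, and I extend $c$ by appending $h_1$. The key input is Lemma~\ref{lem:comparing_projections_2}: for every $y \in b$ one has $\pi_{[p, q]}(y) = q$. Since $q$ is an endpoint of $[p, q]$ lying strictly beyond the pole of every $k_i$, this forces $b \subset k_i^+$ for all $i$. In particular the pole $P_1 \subset b$ lies in $k_i^+$, and since $h_1$ is path-connected (Lemma~\ref{lem:convex_curtains}) and disjoint from each $k_i$, we conclude $h_1 \subset k_i^+$ for all $i$. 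Hence $\{k_1, \ldots, k_n, h_1\}$ is a chain of size $n + 1$ meeting both $h_1$ and $h_2$, so $n + 1 \leq L$, again contradicting $n \geq 2L$.

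The main obstacle is this second case, where $h_1$ must be attached to the end of the chain $c$ and one needs to verify that the result is still a chain. Lemma~\ref{lem:comparing_projections_2} is the crucial tool there: it pins down where the original geodesic $b$, and hence the curtain $h_1$ dual to it, sits relative to each of the auxiliary curtains $k_i$ dual to $[p, q]$.
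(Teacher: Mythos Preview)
Your argument has a genuine gap, rooted in a misreading of the definition of $L$--separation. In Definition~\ref{def:separated}, a chain ``meets both $h$ and $h'$'' means that \emph{every} element of the chain intersects both curtains. With this reading, neither of your two cases goes through.

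In your Case~1, knowing only that \emph{some} $k_i$ meets $h_1$ does not make $c$ a chain meeting both $h_1$ and $h_2$; the $L$--separation of $\{h_1,h_2\}$ only bounds the sub-chain of those $k_i$ that hit $h_1$, and the remaining $k_i$ are left unaccounted for. In your Case~2, the enlarged chain $\{k_1,\dots,k_n,h_1\}$ contains $h_1$, which is \emph{disjoint} from $h_2$ by hypothesis, so again this is not a chain meeting both $h_1$ and $h_2$ and no bound follows. More tellingly, your argument never invokes $h_3$ at all; the ``$2L$'' in the statement is a hint that two separate applications of $L$--separation are needed, one using the pair $\{h_1,h_2\}$ and one using $\{h_2,h_3\}$.

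The paper's route is to keep your chain $c$ dual to $[p,q]$ and your observation via Lemma~\ref{lem:comparing_projections_2} that no $k_i$ meets $b$, but then to bring in $[x_2,y_2]$: by Lemma~\ref{lem:controlling_curtains}, no $k_i$ can cross both $[x_2,p]$ and $[p,y_2]$. If $k_i$ misses $[x_2,p]$, then that segment lies entirely on the $p$--side of $k_i$, while $b$ lies on the $q$--side; since $[x_2,p]$ must cross $h_1$ (as $x_2\in h_1^-$ and $p\in h_2\subset h_1^+$) and the pole of $h_1$ lies in $b$, the curtain $h_1$ has points on both sides of $k_i$ and hence meets $k_i$. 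Symmetrically, if $k_i$ misses $[p,y_2]$ then $k_i$ meets $h_3$. Thus every $k_i$ meets $h_1$ or $h_3$ in addition to $h_2$, and $L$--separation bounds each sub-chain by $L$, giving $|c|\le 2L$.
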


\begin{proof} 
Let $c$ be a chain dual to $[p,\pi_b(p)]$ that realises $1+|c|=\lceil\dist(p,\pi_b(p))\rceil$, as provided by Lemma~\ref{lem:chain_distance}. According to Lemma~\ref{lem:comparing_projections_2}, every $z\in b$ has $\pi_{[p,\pi_b(p)]}(z)=\pi_b(p)$, so no element of $c$ meets $b$. Furthermore, Lemma~\ref{lem:controlling_curtains} shows that no element of $c$ can meet both $[x_2,p]$ and $[p,y_2].$ Therefore, each element of $c$ must meet either $h_1$ or $h_2$. Since $\{h_1,h_2,h_3\}$ is an $L$-chain, this means that $c$ has at most $2L$ elements. By the choice of $c$ we have $\dist(p,\pi_b(p)) \leq 2L+1$.
\end{proof}

Given a CAT(0) space $X$, we now use curtains and $L$--separation to define a family of metrics on $X$, similarly to \cite{genevois:contracting}. The metric spaces produced are the auxiliary spaces alluded to in Section~\ref{subsec:aux_spaces}.

\begin{definition}[$L$--metric]
Given distinct points $x,y\in X$, set $\dist_L(x,x)=0$ and define 
\[
\dist_L(x,y) = 1+\max\{\hspace{1mm}|c|:c \text{ is an } L\text{--chain separating }x\text{ from }y\}.
\]
\end{definition}

\begin{remark} \label{rmk:new_metric_bounded}
Since $L$--chains are chains, we have $\dist_L(x,y)\le\dist_\infty(x,y)<1+\dist(x,y)$.
\end{remark}

Let us show that $\dist_L$ is a metric. For $L=\infty$, this also follows from Lemma~\ref{lem:chain_distance}.

\begin{lemma}
The function $\dist_L$ is a metric for every $L\in\mathbf N\cup\{\infty\}$.
\end{lemma}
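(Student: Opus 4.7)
The plan is to verify the three metric axioms. Non-degeneracy and symmetry are immediate: $\dist_L(x,x)=0$ by convention, if $x\ne y$ the empty $L$--chain vacuously separates them so $\dist_L(x,y)\ge 1$, and the definition is symmetric in the roles of $h^-$ and $h^+$. The substance is the triangle inequality, which I will attack by partitioning a maximal $L$--chain separating $x$ from $z$ according to where $y$ sits.

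Assume $x,y,z$ are pairwise distinct (the degenerate cases are immediate) and let $c=\{h_1,\dots,h_n\}$ be an $L$--chain realising $\dist_L(x,z)=n+1$, oriented so that $x\in h_i^-$ and $z\in h_i^+$ for every $i$. Since $X=h_i^-\sqcup h_i\sqcup h_i^+$, I partition $c=c_1\sqcup c_2\sqcup c_3$ with $c_1=\{h\in c:y\in h^+\}$, $c_2=\{h\in c:y\in h^-\}$, and $c_3=\{h\in c:y\in h\}$; elements of $c_1$ separate $x$ from $y$ and elements of $c_2$ separate $y$ from $z$. Since curtains in a chain are pairwise disjoint---adjacent ones directly by the chain condition, non-adjacent ones because each curtain is path-connected (Lemma~\ref{lem:convex_curtains}) and so lies in a single halfspace of any disjoint curtain---the point $y$ belongs to at most one curtain of $c$, so $|c_3|\le 1$ and $|c_1|+|c_2|\ge n-1$.

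The key step will be to verify that $c_1$ and $c_2$ are themselves $L$--chains: pairwise $L$--separation is inherited from $c$, and the induced sub-sequence of a chain satisfies the chain condition provided the halfspaces along $c$ nest consistently with the chosen $x$-to-$z$ orientation. With this in hand, $\dist_L(x,y)\ge|c_1|+1$ and $\dist_L(y,z)\ge|c_2|+1$, and summing gives $\dist_L(x,y)+\dist_L(y,z)\ge n+1=\dist_L(x,z)$, as required. The case $L=\infty$ proceeds identically, and can also be read off from Lemma~\ref{lem:chain_distance}, since $\dist_\infty=\lceil\dist\rceil$ inherits the triangle inequality from $\dist$ via $\lceil a+b\rceil\le\lceil a\rceil+\lceil b\rceil$. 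The main obstacle is the sub-chain claim: for CAT(0) cube complexes it is immediate from the convexity of halfspaces, but Remark~\ref{rem:nonconvex} shows that curtains need not be convex in general, so one needs a monotonicity input ensuring that $y$ cannot oscillate between the $x$-side of one $h_i$ and the $z$-side of a later $h_j$---this is where the CAT(0) geometry does the work.
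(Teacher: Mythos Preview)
Your argument is exactly the paper's: take an $L$--chain $c$ realising the distance between two of the three points, note that the third lies on at most one curtain of $c$, and split the remaining curtains according to which side the third point occupies. The paper writes this in three lines and simply calls the relevant subset a ``subchain'' without further comment.

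The ``main obstacle'' you flag is not an obstacle, and no CAT(0) input beyond Lemmas~\ref{lem:curtains_separate} and~\ref{lem:convex_curtains} is needed. The halfspaces along any chain nest: $h_i^-\cup h_i$ is path-connected (the star-convexity argument of Lemma~\ref{lem:convex_curtains} applies to the $\pi_\alpha$--preimage of any subinterval), it is disjoint from $h_{i+1}$ (since $h_{i+1}$ is path-connected and disjoint from $h_i$, hence contained in one halfspace of $h_i$, and a symmetric argument using $z\in h_i^+\cap h_{i+1}^+$ forces $h_{i+1}\subset h_i^+$), and it contains $x\in h_{i+1}^-$; so Lemma~\ref{lem:curtains_separate} gives $h_i^-\cup h_i\subset h_{i+1}^-$. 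Iterating yields $h_1^+\supset h_2^+\supset\cdots\supset h_n^+$, which immediately rules out the oscillation you worry about: your $c_1$ is an initial segment of $c$ and $c_2$ a final segment, both manifestly chains. The non-convexity of curtains noted in Remark~\ref{rem:nonconvex} plays no role here.
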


\begin{proof}
The map $\dist_L$ is clearly symmetric and separates points. Given $x,y,z\in X$, let $c$ be a chain realising $\dist_L(x,y)=1+|c|$. We have $z\in h$ for at most one $h\in c$, and every other element of $c$ separates $z$ from at least one of $x$ and $y$. Let $c'\subset c$ be the subchain of curtains separating $z$ from $x$. We have shown that $\dist_L(x,z)+\dist_L(z,y)\ge (1+|c'|)+(1+|c|-|c'|)-1=1+|c|=\dist_L(x,y)$.
\end{proof}

\bsh{Notation}
For $L\in\mathbf N$, we write $X_L$ to denote the metric space $(X,\dist_L)$.
\esh





\begin{example} \label{eg:tree_of_flats}
An instructive example to consider is the \emph{tree of flats}, i.e. the Cayley complex $C$ of the right-angled Artin group $\Z^2*\Z=\langle a,b\rangle*\langle c\rangle$. Consider the geodesic $\alpha=[a^{-1},a]$, with curtain $h=h_{\alpha,0}$. The vertices $C^0\cap h$ in $h$ are exactly those corresponding to reduced words whose first letter is not $a$ or $a^{-1}$. See Figure~\ref{fig:tree_of_flats}.

There are two noteworthy things here. Firstly, $h$ is not Hausdorff--close to any hyperplane of $C$ in the cubical sense. Secondly, $h$ contains points that are arbitrarily far from $h^-\cup h^+$, even in the metric $\dist_L$. 

\begin{figure}[ht]
\includegraphics[width=10cm]{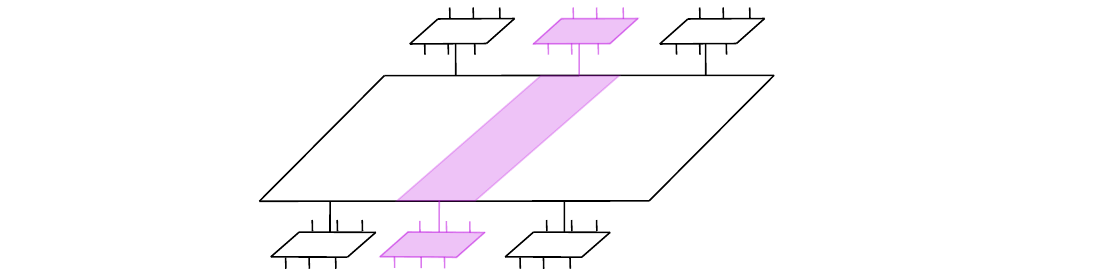}\centering
\caption{A curtain in a CAT(0) cube complex that is not close to any hyperplane.} \label{fig:tree_of_flats}
\end{figure}

Note that there are only three ways that a curtain $h$ can intersect a flat $F$ in this example: the intersection $h\cap F$ is either empty, equal to $F$, or a strip of width at most 1. From this it can be seen that $X_L$ is quasiisometric to the Bass--Serre tree for every $L\ge2$.
\end{example}

The difference between curtains and hyperplanes raises the following question.

\bsh{Question}
How do the spaces $X_L$ defined here using curtains compare to the spaces defined by Genevois using hyperplanes \cite{genevois:hyperbolicities} in the case where $X$ is a CAT(0) cube complex? 
\esh

It is possible to construct quasiline subcomplexes of $\Z^2$ where any correspondence depends on some coboundedness constant. Indeed, let $\gamma\subset\Z^2$ be the ``zigzag'' geodesic that passes through $(n,n)$ and $(n+1,n)$ for all $n$. For each natural number $k$, let $X(k)$ be the CAT(0) cube complex bounded by $\gamma$ and the translation of $\gamma$ by $(0,k)$, which has a $\frac k2$--cobounded $\Z$--action. It can easily be seen that the contact graph of $X(k)$ is a quasiline, so Genevois's spaces are all unbounded \cite[Fact~6.50]{genevois:hyperbolicities}, but the space $X(k)_L$ is bounded for all $L\le\frac{k-3}2$.

\medskip

A disadvantage of the metric $\dist_L$ is that it provides no information about any of the chains of curtains realising the distance between a given pair of points, apart from their length. We conclude the section by proving that in many situations, up to a linear loss in length, we can replace a given $L$--chain by one dual to a fixed geodesic. This will simplify arguments in a number of places.

\begin{lemma}[Dualising chains] \label{lem:updating_curtains} 
Let $L,n\in\mathbf N$, let $\{h_1,\dots,h_{(4L+10)n}\}$ be an $L$--chain, and suppose that $A,B\subset X$ are separated by every $h_i$. For any $x\in A$ and $y\in B$, the sets $A$ and $B$ are separated by an $L$--chain of length at least $n+1$ all of whose elements are dual to $[x,y]$ and separate $h_1$ from $h_{(4L+10)n}$. 
\end{lemma}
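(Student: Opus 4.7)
The plan is to parametrise $\alpha=[x,y]$ by arc length, identify where each $h_i$ meets $\alpha$, sparsely select $n+1$ indices spaced by $4L+10$, and replace the selected curtains by curtains dual to $\alpha$ whose poles sit at the crossing points. The ratio $4L+10 \approx 2(2L+5)$ is meant to leave a buffer of $\gtrsim 2L+1$ curtains on each side of every selected index, so that bottleneck-type arguments can be run.

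In more detail, since $\{h_1,\dots,h_N\}$ with $N=(4L+10)n$ is a chain separating $x$ from $y$, Lemma~\ref{lem:chain_distance} gives $D:=\dist(x,y)\ge N$. Each $h_i$ separates $A$ from $B$, hence separates $x\in A$ from $y\in B$, so by Lemma~\ref{lem:curtains_separate} the geodesic $\alpha$ meets each $h_i$; pick $t_i$ with $\alpha(t_i)\in h_i$. Because $h_{i+1}\subset h_i^+$, the parameters satisfy $t_1<t_2<\dots<t_N$, and applying Lemma~\ref{lem:chain_distance} to the sub-chain $\{h_{i+1},\dots,h_{j-1}\}$ (which separates $\alpha(t_i)$ from $\alpha(t_j)$) yields $t_j-t_i\ge j-i-1$. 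Now choose indices $i_0<i_1<\dots<i_n$ from $\{1,\dots,N\}$ with consecutive gaps $\ge 4L+10$, so that $t_{i_{k+1}}-t_{i_k}\ge 4L+9$; after a harmless shift of size $\le 1$ to keep each $r_k$ inside $[\tfrac12,D-\tfrac12]$, set $\kappa_k := h_{\alpha,r_k}$ where $r_k$ is chosen in (or adjacent to) $\alpha^{-1}(h_{i_k})$. The poles $[\alpha(r_k-\tfrac12),\alpha(r_k+\tfrac12)]$ are pairwise disjoint along $\alpha$, so $\{\kappa_0,\dots,\kappa_n\}$ is a chain of curtains all dual to $[x,y]$, of length $n+1$.

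Two things remain to verify. For \emph{$L$-separation}, suppose $c$ is a chain each of whose elements meets both $\kappa_j$ and $\kappa_{j'}$ with $j<j'$. Each such $g\in c$ is path-connected (Lemma~\ref{lem:convex_curtains}), so since $\pi_\alpha$ is 1-Lipschitz, $\pi_\alpha(g)$ is a connected subset of $\alpha$ that meets both poles, and in particular $g$ has points projecting near $\alpha(t_{i_j})\in h_{i_j}$ and near $\alpha(t_{i_{j'}})\in h_{i_{j'}}$. The buffer of $2L+1$ curtains from the original chain on either side of $h_{i_j}$ and $h_{i_{j'}}$, combined with Lemma~\ref{lem:controlling_curtains} and Lemma~\ref{lem:bounding_distance_with_dual_curtains} applied to the 3-subchains $\{h_{i_j-L-1},h_{i_j},h_{i_j+L+1}\}$ and $\{h_{i_{j'}-L-1},h_{i_{j'}},h_{i_{j'}+L+1}\}$, forces $g$ itself to meet $h_{i_j}$ and $h_{i_{j'}}$. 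The original $L$-separation of $\{h_{i_j},h_{i_{j'}}\}$ then gives $|c|\le L$. For \emph{separation of $A$ from $B$}, one argues that a point $a\in A$ with $a\in\kappa_k^+$ would force $\pi_\alpha(a)$ past the pole of $\kappa_k$; applying the bottleneck argument to the $L$-chain inside the $2L+1$-buffer around $h_{i_k}$ on the geodesic $[a,y]$ shows that the crossing point of $[a,y]$ with $h_{i_k}$ lies within $2L+1$ of $\alpha$, and a short combinatorial argument (using Lemma~\ref{lem:controlling_curtains} and the ordering of the crossings along $[a,y]$) then contradicts the sub-chain $\{h_{i_k-L-1},\dots,h_{i_k+L+1}\}$ all separating $a$ from $y$.

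The main obstacle is the separation of $A$ from $B$ by each individual $\kappa_k$: because halfspaces of curtains are not convex in general (Remark~\ref{rem:nonconvex}), one cannot simply argue that $A\subset\kappa_k^-$ from $A\subset h_{i_k}^-$ via convexity of projection onto $\alpha$, and the proof instead needs the buffer of $2L+1$ curtains on each side of $h_{i_k}$, together with the Bottleneck Lemma, to control how far points of $A$ project along $\alpha$. This is precisely what dictates the $(4L+10)$ factor in the hypothesis.
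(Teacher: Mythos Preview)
Your overall strategy---choose $n+1$ spaced indices and drop a single curtain $\kappa_k=h_{\alpha,r_k}$ dual to $\alpha$ at each crossing point $\alpha(t_{i_k})$---does not go through, and the difficulty is exactly the one you flag at the end but do not resolve.

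The concrete gap is in both the $L$--separation step and the separation-of-$A,B$ step. You invoke Lemma~\ref{lem:bounding_distance_with_dual_curtains} on sub-$L$--chains $\{h_{i_j-L-1},h_{i_j},h_{i_j+L+1}\}$ of the \emph{original} chain, but that lemma requires the three curtains to be dual to a common geodesic, which the $h_i$ are not. Without it, the chain of implications ``$g$ meets $\kappa_j$ $\Rightarrow$ $\pi_\alpha(g)$ passes near $\alpha(t_{i_j})$ $\Rightarrow$ $g$ meets $h_{i_j}$'' breaks at the last arrow: a point whose $\pi_\alpha$-image lies at $\alpha(t_{i_j})$ has no reason to lie in $h_{i_j}$, because $h_{i_j}$ is defined by projection to its own geodesic, not to $\alpha$. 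The same obstruction kills the separation argument: a point $a\in A$ lies in $h_i^-$ for every $i$, but this gives no control on $\pi_\alpha(a)$, so nothing stops $a$ from lying in $\kappa_k^+$.

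The paper circumvents this by never trying to match a single dual curtain to a single $h_i$. Instead, for each block it takes the \emph{first} entry point $a_1$ of $\alpha$ into $h_3$ and the \emph{last} exit point $b_1$ from $h_{2L+3}$, so that $\dist(a_1,b_1)>2L+1$, and lays down an entire chain $\{k^1_0,\dots,k^1_{2L}\}$ of curtains dual to $\alpha$ between them. Now the $L$--separation of the pair $\{h_1,h_2\}$ is applied to this chain of $2L+1$ dual curtains: at most $L$ of them can meet $h_1$, so the middle one $k^1_L$ is disjoint from $h_1$ (and symmetrically from $h_{2L+5}$). Once $k^1_L$ is genuinely sandwiched between two of the original curtains, both the separation of $A$ from $B$ and the $L$--separation from the next block's curtain follow immediately. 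The missing idea in your argument is this ``overbuild and trim'' step: construct many dual curtains in each slot and use $L$--separation of the original pair as a pigeonhole to force one of them to be disjoint from a specified $h_j$.
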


\begin{proof} 
Let us first prove the statement for $n=1$, illustrated in Figure~\ref{fig:updating}. Writing $\alpha=[x,y]$, let $a_1$ and $a_2$ be the first points of $\alpha\cap h_3$ and $\alpha\cap h_{2L+8}$ respectively, and let $b_1$ and $b_2$ be the last points of $\alpha\cap h_{2L+3}$ and $h_{4L+8}$, respectively. Since curtains are thick, we have $\dist(a_i,b_i)>2L+1$, so Lemma~\ref{lem:chain_distance} provides chains $\{k^i_0,\dots,k^i_{2L}\}$ dual to $\alpha$ that separate $a_i$ from $b_i$. Since $h_1$ and $h_2$ are $L$--separated, the curtain $k^1_L$ is disjoint from $h_1$, and similarly $k^1_L$ is disjoint from $h_{2L+5}$. The same argument shows that $k^2_L$ is disjoint from $h_{2L+6}\cup h_{4L+10}$. This shows that the $k^i_L$ separate $A$ from $B$, but it also shows that $k^1_L$ and $k^2_L$ are $L$--separated, because any curtain meeting both must also meet $h_{2L+5}$ and $h_{2L+6}$.

Now suppose that $n>1$, and again write $\alpha=[x,y]$. For $j\in\{1,\dots,n-1\}$, let $x_j\in\alpha \cap h_{(4L+10)j}^+ \cap h_{1+(4L+10)j}^-$. Let $x_0=x$, $x_n=y$. The $n=1$ case provides, for each $j<n$, a pair of $L$--separated curtains $k_j^1$ and $k_j^2$ that are dual to $\alpha$ and separate $x_j$ from $x_{j+1}$. Since the $k_j^i$ are all dual to $\alpha$, they are pairwise disjoint and form a chain, so we can repeatedly apply Lemma~\ref{lem:gluing_disjoint_chains} to complete the proof.
\end{proof}

\begin{figure}[ht]
\begin{center} \makebox[0pt]{\begin{minipage}{1.4\textwidth}
\includegraphics[width=\textwidth]{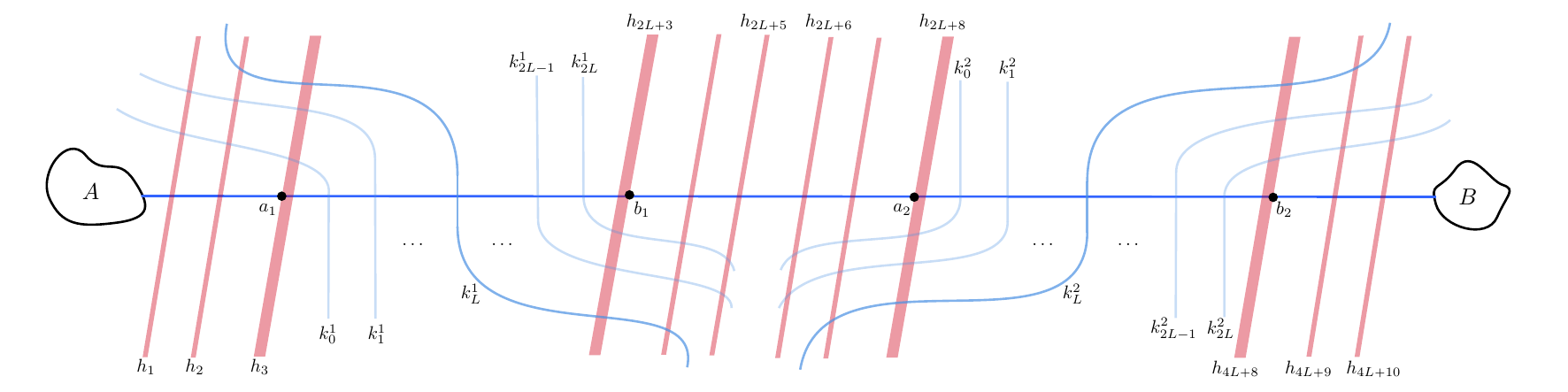}\centering
\caption{The base case of Lemma~\ref{lem:updating_curtains}} \label{fig:updating}
\end{minipage}}\end{center}
\end{figure}

\begin{corollary} \label{cor:diverging_geodesics} 
Let $b,c$ be geodesic rays with $b(0)=c(0)$. If some infinite $L$--chain is crossed by both $b$ and $c$, then $b=c$.
\end{corollary}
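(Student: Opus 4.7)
The plan is to convert the given infinite $L$--chain into an $L$--chain dual to $b$ that is still crossed by $c$, apply the bottleneck Lemma~\ref{lem:bounding_distance_with_dual_curtains} to force $c$ to come uniformly close to $b$ at unbounded parameters, and then conclude by CAT(0) convexity of the distance function.

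Orient the given $L$--chain $\{h_i\}_{i\ge1}$ so that $x:=b(0)=c(0)\in h_i^-$ for every $i$. For each $n$, I would apply Lemma~\ref{lem:updating_curtains} to the initial segment $\{h_1,\dots,h_{(4L+10)n}\}$ with $A=\{x\}$ and $B=\{b(T)\}$ for $T$ large, to obtain an $L$--chain $\{k_1,\dots,k_{n+1}\}$ dual to $b$. A look inside the construction in that lemma shows that each $k_\ell$ sits in a disjoint block of consecutive original curtains and is disjoint from two specific ``outer'' curtains $h_{i_\ell^-},h_{i_\ell^+}$ of its block, so $h_{i_\ell^-}\subset k_\ell^-$ and $h_{i_\ell^+}\subset k_\ell^+$.

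Since $c$ crosses every $h_i$, it starts in $h_{i_\ell^-}^-\subset k_\ell^-$ and eventually enters $h_{i_\ell^+}^+\subset k_\ell^+$, so Lemma~\ref{lem:curtains_separate} forces $c$ to cross $k_\ell$; pick $p_\ell\in c\cap k_\ell$. For $2\le\ell\le n$, I apply Lemma~\ref{lem:bounding_distance_with_dual_curtains} to the $L$--chain $\{k_{\ell-1},k_\ell,k_{\ell+1}\}$, which is dual to the sub-ray $[x,b(T)]\subset b$, with $A=\{x\}$ and $B=\{b(T),c(S)\}$, where $T$ and $S$ are large enough to lie past $k_{\ell+1}$. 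Taking the geodesic $[x,c(S)]$, which is a sub-ray of $c$, this yields $\dist(p_\ell,\pi_b(p_\ell))<2L+1$. Because the $n+1$ poles of the $k_\ell$ are pairwise disjoint length-one intervals on $b$, letting $n\to\infty$ produces points $p=c(s)$ with $\pi_b(p)=b(t)$ for $t$ arbitrarily large. The triangle inequality and $b(0)=c(0)$ then give $|t-s|<2L+1$, hence $\dist(b(t),c(t))<4L+2$ along a sequence of parameters going to infinity.

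The function $f(t):=\dist(b(t),c(t))$ is nonnegative, convex, and vanishes at $0$, so $f(t)/t$ is nondecreasing in $t$. Being bounded along a sequence $t_\ell\to\infty$ forces $f(t)/t\to 0$ and hence $f\equiv 0$, giving $b=c$. The main obstacle is the verification in the second paragraph that $c$ crosses every new curtain $k_\ell$: this requires peeking inside the proof of Lemma~\ref{lem:updating_curtains} to identify precisely which pair of original $h_i$'s each $k_\ell$ separates, and to confirm that the two $h$'s in question are indeed on opposite sides of $k_\ell$.
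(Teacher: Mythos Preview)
Your proof is correct and follows essentially the same three-step strategy as the paper: dualize the chain to $b$ via Lemma~\ref{lem:updating_curtains}, apply the bottleneck Lemma~\ref{lem:bounding_distance_with_dual_curtains}, and conclude by convexity. The obstacle you flag in your last paragraph can be sidestepped entirely: when invoking Lemma~\ref{lem:updating_curtains}, take $B=\{b(T),c(S)\}$ with $T,S$ large enough that both points lie beyond $h_{(4L+10)n}$; the lemma then directly returns an $L$--chain dual to $[x,b(T)]\subset b$ that separates $x=c(0)$ from $c(S)$, so $c$ crosses every new curtain with no need to inspect the construction.
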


\begin{proof}
By Lemma~\ref{lem:updating_curtains}, $b$ and $c$ cross an infinite $L$--chain $\{h_i\}$ dual to $b$. By Lemma~\ref{lem:bounding_distance_with_dual_curtains}, if $c(t_i)\in h_i$, then $\dist(c(t_i),b)<2L+1$. By convexity of the metric, this implies that $c=b$.
\end{proof}

\section{Hyperbolicity and isometries} \label{sec:hyperbolicity} 

In this section we begin establishing, for a CAT(0) space $X$, some of the properties of the spaces $X_L=(X,\dist_L)$ that mirror those of the curve graph. In Section~\ref{subsec:hyperbolicity}, we prove that every $X_L$ is hyperbolic (in the sense of the four-point condition). The strategy for this is to apply ``guessing geodesics'', Proposition~\ref{prop:guessing_geodesics}, to the CAT(0) geodesics of $X$. In Section~\ref{subsec:Ivanov}, we prove results on CAT(0) analogues of Ivanov's theorem, as discussed in Section~\ref{isubsec:ivanov}.

\subsection{Hyperbolicity of the models} \label{subsec:hyperbolicity}

In order to apply the ``guessing geodesics'' criterion to CAT(0)-geodesic triangles, we need to understand their interaction with $L$--separated curtains. We start by bounding the amount that a CAT(0) geodesic can backtrack through separated curtains. Whilst curtains are not themselves convex, this can be thought of as showing that a pair of separated curtains is (almost) a convex object in its own right.

\begin{lemma} \label{lem:convexity_around_L_chains}
Let $h$ and $k$ be $L$--separated, and let $\alpha$ be a CAT(0) geodesic. If there exist $t_1<t_2<t_3<t_4$ satisfying either 
\begin{align*}
&	\alpha(t_1)\in h, \: \alpha(t_2)\in k, \: \alpha(t_3)\in k, \: \alpha(t_4)\in h \\
\text{or }\:\: &	\alpha(t_1)\in h, \: \alpha(t_2)\in k, \: \alpha(t_3)\in h, \: \alpha(t_4)\in k,
\end{align*}
then $t_3-t_2\le L+1$.
\end{lemma}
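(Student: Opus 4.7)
The plan is to argue by contradiction. Assume $t_3-t_2>L+1$ and apply Lemma~\ref{lem:chain_distance} to the endpoints of the subgeodesic $\beta=\alpha|_{[t_2,t_3]}$. This produces a chain $c_1,\dots,c_N$ of curtains dual to $\beta$ that separate $\alpha(t_2)$ from $\alpha(t_3)$, with
\[
N=\lceil t_3-t_2\rceil-1\geq L+1.
\]
Each $c_i$ has pole strictly interior to $\beta$, so by construction $\alpha(t_2)\in c_i^-$ and $\alpha(t_3)\in c_i^+$.

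The crux is to show that every $c_i$ meets both $h$ and $k$, which will contradict the $L$--separation hypothesis. Since $c_i$ is dual to a subsegment of the geodesic $\alpha$, points outside $\beta$ project to the nearer endpoint; in particular $\pi_\beta(\alpha(t_1))=\alpha(t_2)$ and $\pi_\beta(\alpha(t_4))=\alpha(t_3)$, so $\alpha(t_1)\in c_i^-$ and $\alpha(t_4)\in c_i^+$. Now handle the two configurations simultaneously: in the first configuration $\{\alpha(t_2),\alpha(t_3)\}\subset k$ and $\{\alpha(t_1),\alpha(t_4)\}\subset h$, and in the second $\{\alpha(t_2),\alpha(t_4)\}\subset k$ and $\{\alpha(t_1),\alpha(t_3)\}\subset h$. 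Either way, $h$ has points in both halfspaces $c_i^-$ and $c_i^+$, and likewise for $k$. By Lemma~\ref{lem:convex_curtains}, $h$ and $k$ are path-connected, so Lemma~\ref{lem:curtains_separate} applied to a path within $h$ (respectively $k$) between the two relevant points forces that path to cross $c_i$. Hence $c_i\cap h\neq\varnothing$ and $c_i\cap k\neq\varnothing$.

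This produces a chain of length $N\geq L+1$ each of whose curtains meets both $h$ and $k$, contradicting the $L$--separation of $h$ and $k$. We conclude that $t_3-t_2\leq L+1$.

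The anticipated difficulty is mild and essentially bookkeeping: one has to set up the two configurations so that the endpoints $\alpha(t_1),\alpha(t_4)$ can be used to witness that the dualised curtains $c_i$ pass through $h$, while the interior points $\alpha(t_2),\alpha(t_3)$ witness the crossing of $k$ (and in case two these roles partially swap). The only nontrivial input is the sub-geodesic projection identity $\pi_\beta(\alpha(t_1))=\alpha(t_2)$, which is immediate from the parametrisation of $\alpha$ by arc-length.
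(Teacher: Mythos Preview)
Your proof is correct and follows essentially the same approach as the paper: both produce a chain of curtains dual to $\alpha$ (or equivalently to $\beta$, since the poles lie in the interior of $\beta$) realising $\dist_\infty(\alpha(t_2),\alpha(t_3))$ via Lemma~\ref{lem:chain_distance}, observe that each such curtain separates $\alpha(t_1)$ from $\alpha(t_4)$ and hence meets both $h$ and $k$ by path-connectedness, and conclude via $L$--separation. The only cosmetic difference is that you frame it as a contradiction while the paper bounds $|c|\le L$ directly.
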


\begin{proof}
The two cases are treated similarly, so we just consider the former. Let $c$ be a chain dual to $\alpha$ that realises $\dist_\infty(\alpha(t_2),\alpha(t_3))=1+|c|$, as given by Lemma~\ref{lem:chain_distance}. Every curtain in $c$ separates $\alpha(t_1)$ from $\alpha(t_4)$, and hence meets $h$ because $h$ is path-connected. Similarly, every curtain in $c$ meets $k$. Thus $|c|\le L$, so 
\[
\dist(\alpha(t_2),\alpha(t_3)) \le \dist_\infty(\alpha(t_2),\alpha(t_3)) = 1+|c| \le 1+L. \qedhere
\]
\end{proof}

\begin{corollary} \label{cor:double_crossing}
If $\alpha$ is a CAT(0) geodesic and $t_1<t_2<t_3$, then any $L$--chain $c$ separating $\alpha(t_2)$ from  $\{\alpha(t_1),\alpha(t_3)\}$ has cardinality at most $L'=1+\lfloor\frac L2\rfloor$.
\end{corollary}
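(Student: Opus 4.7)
The plan is to track how $\alpha$ enters and leaves each curtain on either side of $t_2$, and then apply Lemma~\ref{lem:convexity_around_L_chains} to the outermost pair in order to control the extent of the nested region.

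First, orient the halfspaces consistently so that $\alpha(t_1),\alpha(t_3)\in h_1^-\subseteq h_2^-\subseteq\cdots\subseteq h_n^-$ and $\alpha(t_2)\in h_n^+\subseteq h_{n-1}^+\subseteq\cdots\subseteq h_1^+$; here $c=\{h_1,\dots,h_n\}$ is the given $L$--chain and $h_n$ is innermost. For each $i$, define
\[
u_i=\max\{t\in[t_1,t_2]\::\:\alpha(t)\notin h_i^+\},\qquad v_i=\min\{t\in[t_2,t_3]\::\:\alpha(t)\notin h_i^+\}.
\]
Since $h_i^+$ is open, these extrema are attained, and a quick continuity argument forces $\alpha(u_i),\alpha(v_i)\in h_i$ (they cannot lie in $h_i^-$ by openness of that set either). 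The nesting of halfspaces and pairwise disjointness of the $h_i$ give the orderings $u_1<u_2<\cdots<u_n<t_2<v_n<v_{n-1}<\cdots<v_1$, so each $h_i$ is crossed by $\alpha$ at the ``entry'' time $u_i$ and again at the ``exit'' time $v_i$.

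Next, I apply Lemma~\ref{lem:convexity_around_L_chains} to the $L$--separated pair $(h_1,h_2)$: the pattern $h_1,h_2,h_2,h_1$ is realised at times $u_1<u_2<v_2<v_1$, so the lemma yields $v_2-u_2\le L+1$. In other words, the entire nested region around $t_2$ is squeezed into a geodesic segment of length at most $L+1$.

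To conclude $n\le 1+\lfloor L/2\rfloor$, the idea is to build an explicit chain of $2(n-1)$ curtains dual to $\alpha$ each meeting both $h_1$ and $h_2$; combined with the $L$--separation of that pair this forces $2(n-1)\le L$. Path-connectedness of curtains (Lemma~\ref{lem:convex_curtains}) shows that any curtain dual to $\alpha$ whose pole meets the image $\pi_\alpha(h_i)$ meets $h_i$, and since all intervals $[u_i,v_i]$ are nested inside $[u_1,v_1]$ and contain $[u_n,v_n]$, dual curtains placed sufficiently close to $[u_n,v_n]$ automatically meet every $h_i$. The factor of two comes from exploiting both sides of $t_2$: interleave one dual curtain between each pair of consecutive entry crossings $u_i,u_{i+1}$ and between each pair of consecutive exit crossings $v_{i+1},v_i$ for $i=1,\dots,n-1$; these yield a chain along $\alpha$ whose members each meet $h_1$ and $h_2$, of combined cardinality $2(n-1)$.

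The main obstacle is that consecutive crossings like $u_i$ and $u_{i+1}$ could a priori be closer than the pole-width $1$ of a dual curtain, leaving no room to insert one strictly between them. I expect that Lemma~\ref{lem:gluing_chains} is the right tool to overcome this: rather than placing a single dual curtain in every tiny gap, one extracts $L$--chains from dualizing along each sub-segment and glues them using the gluing lemma, absorbing the short gaps. Verifying that the resulting amalgamated chain still has at least $2(n-1)$ elements and still meets both $h_1$ and $h_2$ is the delicate bookkeeping step, but once this is in place the bound $2(n-1)\le L$ follows from $L$--separation of $(h_1,h_2)$, and we conclude $n\le 1+\lfloor L/2\rfloor=L'$.
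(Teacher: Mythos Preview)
Your setup and application of Lemma~\ref{lem:convexity_around_L_chains} to obtain $v_2-u_2\le L+1$ are correct and match the paper's first move exactly. The divergence comes afterwards, and your proposed route has a genuine gap.

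You aim to interleave $2(n-1)$ curtains dual to $\alpha$, one in each gap $(u_i,u_{i+1})$ and $(v_{i+1},v_i)$, and then invoke $L$--separation of $(h_1,h_2)$. But two of those gaps, namely $(u_1,u_2)$ and $(v_2,v_1)$, lie \emph{outside} $[u_2,v_2]$, so a dual curtain placed there has no reason to meet $h_2$. Dropping those two leaves at most $2(n-2)$ usable curtains, and $L$--separation then yields only $n\le L'+1$, off by one. Your proposed rescue via Lemma~\ref{lem:gluing_chains} is not carried out, and it is hard to see how it could help: that lemma \emph{loses} $L+2$ curtains per gluing, which goes the wrong way, and you still have to produce the curtains in the first place before you can glue them. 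The ``delicate bookkeeping'' you defer is in fact the entire difficulty.

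The paper finishes much more directly, using ingredients you already have. After $v_2-u_2\le L+1$, sharpen: entering $h_2^+$ from the $h_2^-$ side and exiting again each costs distance at least $1$ by Remark~\ref{rem:thick_curtains}, so the portion of $\alpha$ lying in $h_2^+$ has length at most $L-1$. Now forget dual curtains entirely. The chain $\{h_3,\dots,h_n\}$ separates $\alpha(u_2)\in h_2\subset h_3^-$ from $\alpha(t_2)\in h_n^+$, so by Lemma~\ref{lem:chain_distance} one gets $t_2-u_2>n-2$, and symmetrically $v_2-t_2>n-2$. Hence $2(n-2)<L-1$, which rearranges to $n\le 1+\lfloor L/2\rfloor=L'$. (For $L\le 1$ the inequality forces the $h_2^+$--segment to have nonpositive length, contradicting $\alpha(t_2)\in h_2^+$, so already $n\le 1$.)
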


\begin{proof}
Assume that $|c|\ge2$ and that $\alpha(t_2)\in h^+$ for every $h\in c$. Since $h$ is path-connected, both $\alpha|_{(t_1,t_2)}$ and $\alpha|_{(t_2,t_3)}$ cross $h$. Let $h_1,h_2\in c$ be minimal (namely every element of $c- \{h_1\}$ separates $h_1$ from $\alpha(t_2)$). According to Lemma~\ref{lem:convexity_around_L_chains}, the length of $\alpha\cap(h_2\cup h_2^+)$ is at most $L+1$. Recall that curtains are closed, and that $\dist(h^-,h^+)=1$ for every $h\in c$. For $L\in\{0,1\}$, this gives a contradiction with the fact that $\alpha(t_2)\in h_2^+$, so $|c|\le1$. Otherwise, it implies that $\alpha\cap h_2^+$ has length at most $L-1$, and hence we obtain $|c|\le2+\lfloor\frac{L-2}{2}\rfloor$.
\end{proof}

This lack of backtracking allows us to show that, up to parametrisation, CAT(0) geodesics of $X$ are uniform rough geodesics of $X_L$. (Recall that a $k$--rough isometry is a $(1,k)$--quasiisometry, and a $k$--rough geodesic is a $(1,k)$--quasigeodesic.)

\begin{proposition} \label{prop:cat0_rough_geodesic}
If $\alpha$ is a CAT(0) geodesic and $x,y,z\in\alpha$ have $z\in[x,y]$, then $\dist_L(x,y)\ge\dist_L(x,z)+\dist_L(z,y)-3L-3$. In particular, for each $L$ there is a constant $q_L$, linear in $L$, such that CAT(0) geodesics are unparametrised $q_L$--rough geodesics of $X_L$.
\end{proposition}

\begin{proof}
Let $c_x$ and $c_y$ be maximal $L$--chains separating $z$ from $x$ and $y$, respectively. By Corollary~\ref{cor:double_crossing}, only the first $L$ elements of $c^L_y$ can fail to separate $y$ from $x$. Let $b_y$ be the complement in $c_y$ of those elements, so that all elements of $b_y$ separate $y$ from $[x,z]$. We can similarly remove at most $L$ elements of $c_x$ to obtain an $L$--chain $b_x$ whose elements separate $x$ from $[z,y]$. 

Let $h$ be the last element of $b_x$. Since $h^+$ contains $[z,y]$, the half-space $h^+$ intersects all the elements of $b_y$. Using this and the symmetric argument, we can apply Lemma~\ref{lem:gluing_chains} to remove at most $L+2$ elements of $b_x\cup b_y$ and obtain an $L$--chain $b$ separating $x$ from $y$. We have $|b|\ge|c_x|+|c_y|-3L-2$, so by definition of $c_x$ and $c_y$ it follows that $\dist_L(x,y)\ge\dist_L(x,z)+\dist_L(z,y)-3L-3$.
\end{proof}

\begin{remark}[Synchronous fellow-travelling]
Given two CAT(0) geodesic segments $\alpha$ and $\beta$ issuing from the same point, the convexity of the CAT(0) metric bounds $\dist(\alpha(t), \beta(t))$ in terms of the distance between the endpoints. Quite surprisingly, it turns out that $\dist(\alpha(t), \beta(t))$ can also be bounded in terms of \emph{the $X_L$--distance} between the endpoints, at least on some initial subsegment. In particular, this tells us that CAT(0) geodesics synchronously fellow-travel in $X_L$, with respect to the CAT(0) parametrisation. For reasons of exposition, we postpone the precise statement and proof of this fact to Lemmas~\ref{lem:tails} and~\ref{lem:synchronous}, but the curious reader can skip ahead, as the proofs only require tools developed up to this point.  
\end{remark}

The next step is to show that triangles in $X_L$ whose edges are CAT(0) geodesics are necessarily thin. As we shall be working with triangles, it is useful to  introduce the notion of \emph{triangular chains}, which are a three-directional analogue of ordinary chains. Whilst this notion is not strictly necessary, in the sense that every result proven with triangular chains could be proven by considering three ordinary chains, considering a single object instead of three allows for a number of simplifications.
 
\begin{definition}[Triangular chain, corner] \label{def: triangular chain}
Let $x_1, x_2, x_3\in X$ be distinct points. A \emph{triangular chain} for $\{x_i\}$ is a collection $\cal{T}$ of pairwise-disjoint curtains such that each $h\in \cal{T}$ separates $x_i$ from $\{x_{i+1}, x_{i+2}\}$ for some $i$ (indices taken mod 3). The \emph{$x_i$--corner} of $\T$ is the subchain of $\T$ consisting of all curtains separating $x_i$ from $\{x_{i+1}, x_{i+2}\}$. A \emph{triangular $L$--chain} is a triangular chain where every two curtains are $L$--separated. We say that a triangular ($L$--)chain is \emph{maximal} if it is not contained in a larger triangular ($L$--)chain.
\end{definition} 

\noindent Note that the union of the $x_1$-- and $x_2$--corners is a chain separating $x_1$ from $x_2$.
 
In general, there might be many, very different, maximal triangular chains between three points. However, when restricting our attention to maximal triangular $L$--chains, the situation changes, and we can use such a maximal chain to approximate the distance in $X_L$. For points $x_1,x_2,x_3\in X$, we write $[x_1,x_2,x_3]$ for the geodesic triangle $[x_1,x_2]\cup[x_2,x_3]\cup[x_3,x_1]$.
 
\begin{definition}[$\T$--count]
Let $x_1,x_2,x_3\in X$, and let $\cal{T}$ be a triangular chain for $\{x_1,x_2,x_3\}$. For points $x,y \in [x_1,x_2,x_3]$, we denote $|x,y|_\T = |\{h\in \cal{T}\,:\, \  h \text{  separates } x \text{ from } y\}|$. 
\end{definition}

\begin{lemma} \label{lem: triangular distance same as L distance}
Let $T =[x_1, x_2, x_3]$ be a geodesic triangle and $\T$ a maximal triangular $L$--chain for $\{x_1,x_2,x_3\}$. For any $x,y\in T$ we have $\dist_L(x,y) \leq |x,y|_\T + 5L +12$. 
\end{lemma}

\begin{figure}[ht] 
\begin{subfigure}{0.3\textwidth}
    \includegraphics[width=4.5cm, trim = 3cm 11cm 4cm 6cm]{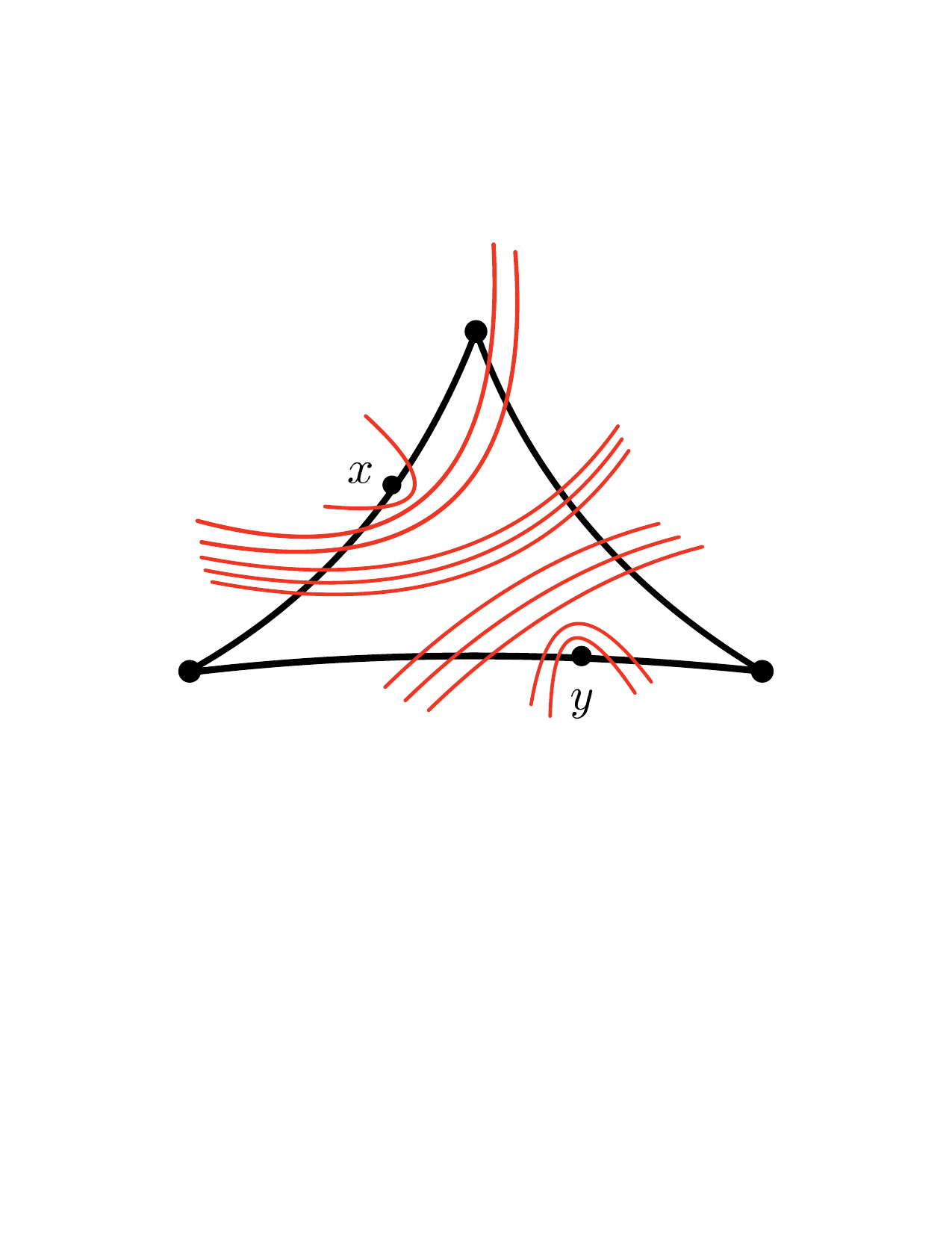}
    \caption*{The $L$--chain $\theta$. \\ { }\quad { }}
\end{subfigure}
\hfill
\begin{subfigure}{0.3\textwidth}
    \includegraphics[width=4.5cm, trim = 3cm 11cm 4cm 6cm]{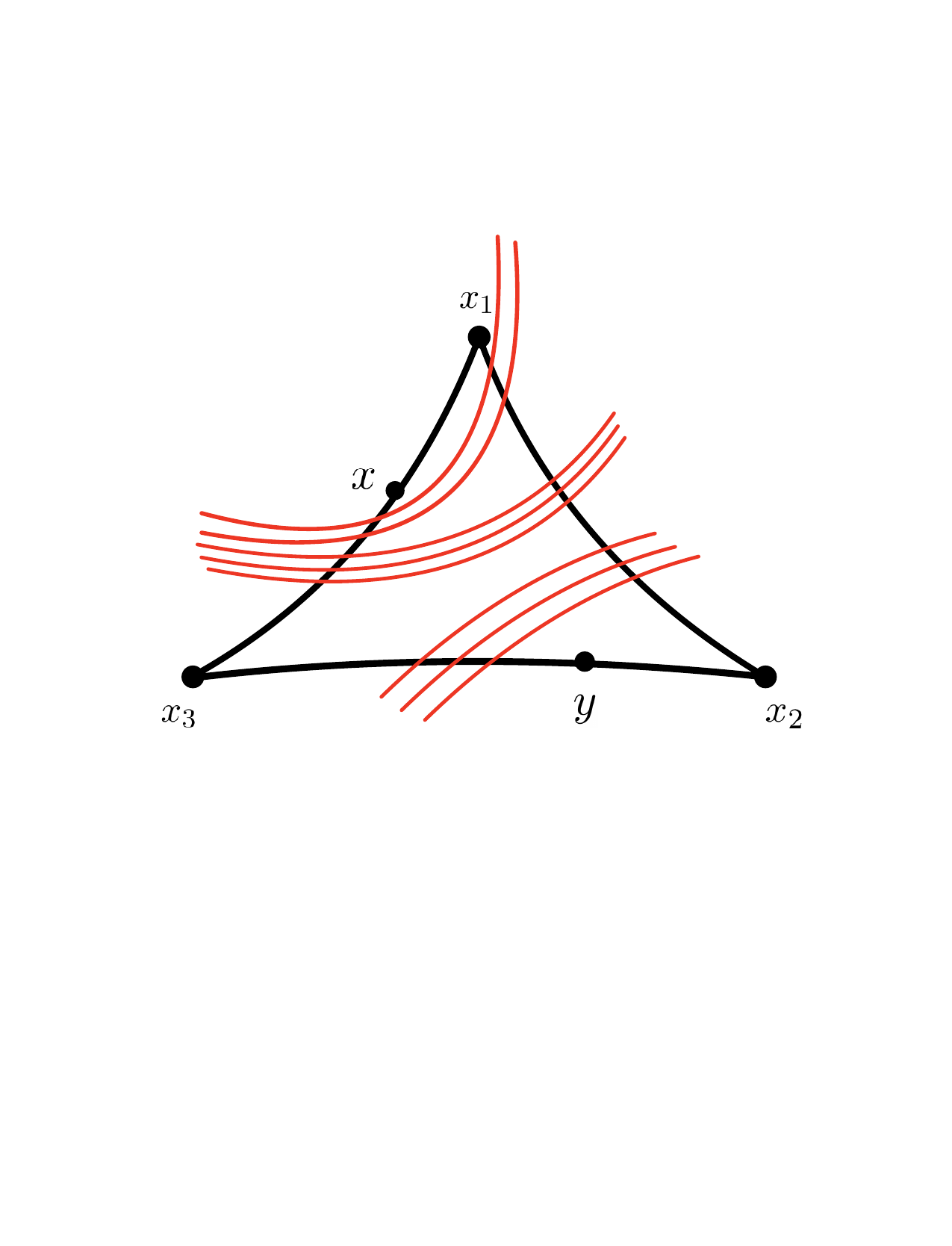}
    \caption*{The vertices can be relabelled so that $\theta'$ looks like this.}
\end{subfigure}
\hfill
\begin{subfigure}{0.3\textwidth}
    \includegraphics[width=5.6cm, trim = 5cm 5.3cm 4cm 6.5cm]{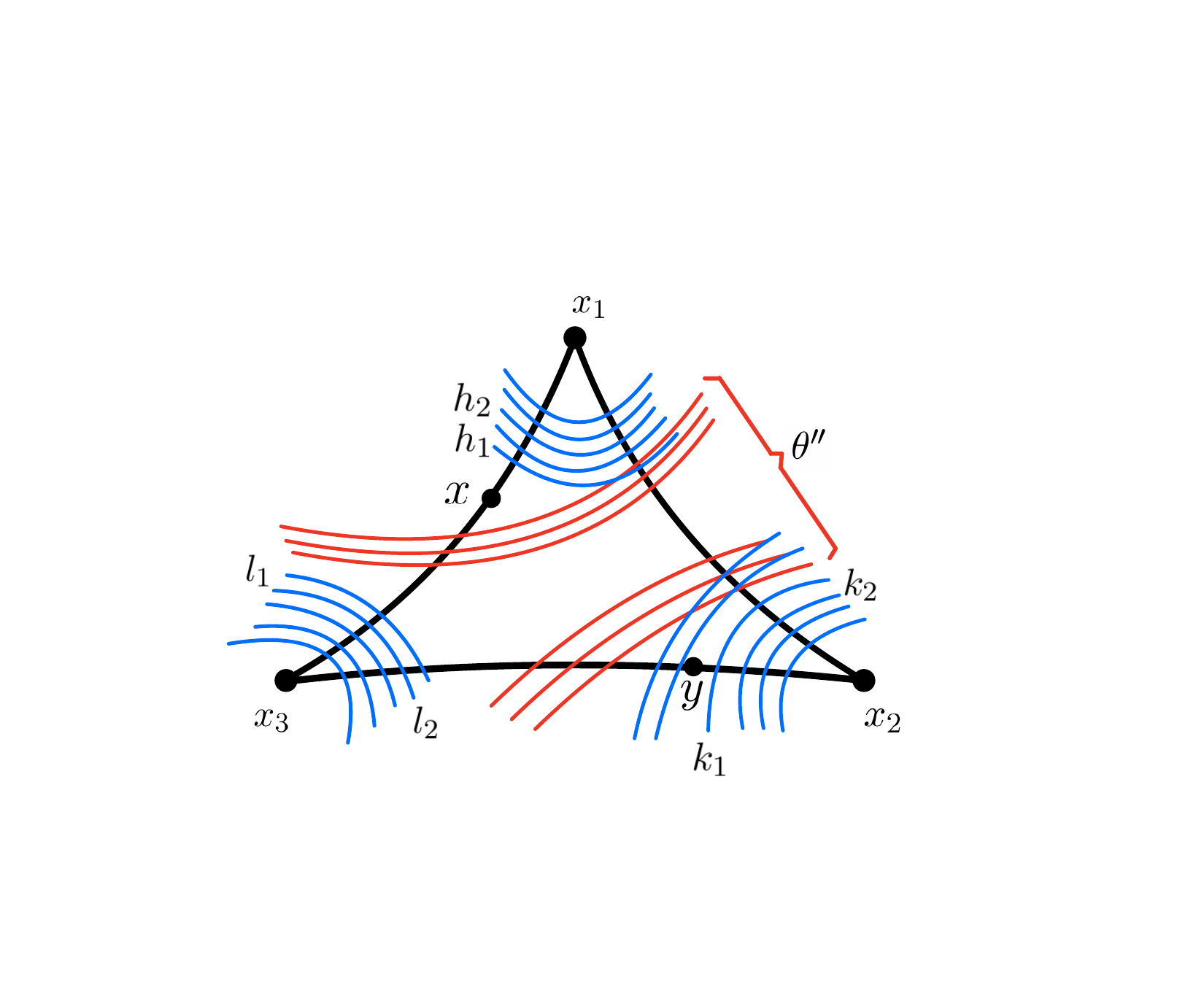}
    \caption*{$\theta''$ is disjoint from $\T'$. \\ { }\quad { }}
\end{subfigure}
\caption{The proof of Lemma~\ref{lem: triangular distance same as L distance}. The curtain combinatorics are the same if $x$ and $y$ lie on a common side of $T$.} \label{fig:triangle_same_as_L}
\label{fig:three graphs}
\end{figure}

\begin{proof}
Let $\theta$ be a maximal $L$--chain separating $x$ from $y$. Each $x_i$ lies in at most one curtain of $\theta$. Remove all such curtains. By Corollary~\ref{cor:double_crossing}, we can remove up to $2L$ curtains to ensure that for each element of the remaining subset $\theta'$ there is some $i$ such that it separates $x_i$ from $\{x_{i+1}, x_{i+2}\}$. Observe that there can be at most two values of $i$ appearing in this way: given three points $a,b,c$ and a chain disjoint from them, we cannot find three curtains of the chain where the first separates $a$ from $\{b,c\}$, the second $b$ from $\{c,a\}$, and the third $c$ from $\{a,b\}$. It is therefore possible to relabel the $x_i$ so that each element of $\theta'$ separates either $\{x,x_1\}$ from $\{y,x_2,x_3\}$, or $\{y,x_2\}$ from $\{x,x_1,x_3\}$. See Figure~\ref{fig:triangle_same_as_L}. In particular, every element of $\theta'$ separates $x_1$ from $x_2$.

Assume that $|x_1,x|_\T\geq2$, $|x_2,y|_\T\geq 2$, and the $x_3$--corner of $\mc{T}$ has length at least 2. Let $h_1$ and $h_2$ be the curtains of $\mc{T}$ separating $x_1$ from $x$ that are closest and second closest to $x$, respectively, and define analogously $k_1$ and $k_2$ for $y$ and $x_2$. Let $l_1$ and $l_2$ be the curtains in the $x_3$--corner that are furthest from $x_3$. Since every element of $\theta'$ separates $x_1$ from $x_2$, any element of it that intersects $l_2$ must intersect $l_1$, and there are therefore at most $L$ such elements of $\theta'$. Moreover, because every element of $\theta'$ separates $x$ from $y$, any element of it that intersects $h_2$ or $k_2$ must intersect $h_1$ r $k_1$, respectively, and so there are at most $2L$ such elements of $\theta'$. Let $\theta''$ be obtained from $\theta'$ by removing all curtains just described. 

We have $|\theta|-|\theta''|\le5L+3$. Let us write $c_1$ for the subchain of the $x_1$--corner of $\T$ that is separated from $x$ by $h_1$. Define the analogous subsets $c_2$ and $c_3$ of the $x_2$-- and $x_3$--corners of $\T$. By construction, every element of $\theta''$ is disjoint from the triangular $L$--chain
\begin{align*}
\T' \,&=\, c_1\cup c_2\cup c_3 \\
    &=\, \T \smallsetminus \big(\{h_1,k_1,l_1\} \cup 
        \{b\in\T\,:\, b\text{ separates } x \text{ from } y \text{ or intersects } \{x,y\}\}\big).
\end{align*}
Because $c_1\cup\theta''$ is a chain, Lemma~\ref{lem:gluing_disjoint_chains} tells us that $c_1\cup\theta''\smallsetminus\{h_2\}$ is an $L$--chain. Similarly, $c_2\cup\theta''\smallsetminus\{k_2\}$ is an $L$--chain. For $i\in\{1,2\}$, let $\theta''_i$ be the subchain of $\theta''$ consisting of those curtains that separate $x_i$ from $x_3$. We have $\theta''=\theta'_1\sqcup\theta''_2$. Because $c_3\cup\theta''_i$ is a chain, Lemma~\ref{lem:gluing_disjoint_chains} implies that $c_3\cup\theta''_i\smallsetminus\{l_2\}$ is an $L$--chain. This shows that $\T'\sqcup\theta''\smallsetminus\{h_2,k_2,l_2\}$ is a triangular $L$--chain for $\{x_1,x_2,x_3\}$.

Because $\T$ was chosen to be a maximal triangular $L$--chain for $\{x_1,x_2,x_3\}$, we have 
\begin{align*}
|\T| \,&\ge\, |\T'|+|\theta''|-3 \\
    &\ge\, |\T|-3-(|x,y|_\T+2) + (|\theta|-5L-3) -3,
\end{align*}
from which we find that $|\theta|\le|x,y|_\T+5L+11$, and hence $\dist_L(x,y)\le|x,y|_\T+5L+12$.

The remaining cases where either $|x_1,x|_\T<2$, $|x_2,y|_\T<2$, or the $x_3$--corner has length less than 2 all use the same argument as above, but without needing to use Lemma~\ref{lem:gluing_disjoint_chains}.
\end{proof}

As a consequence, CAT(0)-geodesic triangles are thin in $X_L$.

\begin{proposition} \label{prop:g3}
If $[x,y,z]$ is a CAT(0)-geodesic triangle, then, as subsets of $X_L$, the set $[x,y]$ is contained in the $5L+10$--neighbourhood of $[x,z]\cup[z,y]$.
\end{proposition}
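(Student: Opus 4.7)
The plan is to find, for each $p \in [x,y]$, a point $q \in [x,z] \cup [z,y]$ with $\dist_L(p,q)$ uniformly bounded by some $K(L)$. I will argue by contradiction, combining a type classification of curtains in a chain between $p$ and $z$ with the Bottleneck Lemma.

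Assume $\dist_L(p, [x,z] \cup [z,y])$ is very large; then in particular $\dist_L(p,z)$ is very large. Using Lemma~\ref{lem:updating_curtains}, at the cost of a factor $4L+10$, one obtains an $L$-chain $c = \{h_1,\dots,h_n\}$ dual to $[p, z]$ separating $p$ from $z$, indexed so that $h_i$ has pole at parameter $r_i$ on $[p,z]$ with $r_1 < r_2 < \cdots$. I would classify each $h_i$ by which halfspace contains $x$ and $y$, labelling $-$ for the $p$-side and $+$ for the $z$-side. Without loss of generality $\pi_{[p,z]}(x) \le \pi_{[p,z]}(y)$, and as $r_i$ increases the conditions $x \in h_i^-$ and $y \in h_i^-$ become progressively easier to satisfy, so the types appear along $c$ in the fixed order $(+,+), (-,+), (-,-)$, with the $(+,+)$ block forming a prefix.

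Type-$(+,+)$ curtains separate $p$ from both $x$ and $y$, and since $p\in[x,y]$ sits between them on the CAT(0) geodesic, Corollary~\ref{cor:double_crossing} bounds the size of any $L$-chain of such curtains by $L' = 1 + \lfloor L/2 \rfloor$. Hence the $(+,+)$ prefix has at most $L'$ curtains, so for $n > L'+2$ the curtain $h_{L'+2}$ is non-$(+,+)$; in particular $x \in h_{L'+2}^-$ while $z \in h_{L'+2}^+$, and by path-connectedness of curtains (Lemma~\ref{lem:convex_curtains}) the side $[x,z]$ crosses $h_{L'+2}$. Pick $q$ in this intersection. Applying the Bottleneck Lemma~\ref{lem:bounding_distance_with_dual_curtains} to the $L$-separated triple $\{h_{L'+1}, h_{L'+2}, h_{L'+3}\}$ dual to $b = [p,z]$, with $x_2 = x \in h_{L'+1}^-$ and $y_2 = z \in h_{L'+3}^+$ and secondary path $[x,z]$, yields $d(q, \pi_{[p,z]}(q)) < 2L+1$. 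Since $q \in h_{L'+2}$, the projection $\pi_{[p,z]}(q)$ lies in the pole of $h_{L'+2}$ at parameter $r_{L'+2}$, so by Remark~\ref{rmk:new_metric_bounded} we get $\dist_L(p,q) \le 1 + d(p,q) \le r_{L'+2} + 2L + 2$.

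The main obstacle is to bound $r_{L'+2}$ by a function of $L$ alone. The strategy will be to choose the $L$-chain $c$ so that its first few curtains are packed as close to $p$ as $L$-separation allows: starting from the dense chain dual to $[p,z]$ provided by Lemma~\ref{lem:chain_distance} and extracting a maximal $L$-subchain, while exploiting that in any region where no $L$-separated curtains are available the metric $\dist_L$ is locally bounded and the desired inequality is trivial. Combined with the $(+,+)$ prefix contributing at most $L'$ curtains before we reach a usable Bottleneck triple, this should yield $r_{L'+2} \le C(L)$ for some uniform constant, hence $\dist_L(p, q) \le K(L)$, contradicting the initial assumption and completing the proof.
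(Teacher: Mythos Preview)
Your approach differs from the paper's: you work along the cevian $[p,z]$ from a variable point $p\in[x,y]$ to the opposite vertex, whereas the paper fixes the maximal $L$--chain realising $\dist_L$ along one \emph{side} of the triangle and uses it as a coordinate system against which any competing $L$--chain is controlled by maximality and the gluing lemmas. Crucially, the paper never passes through CAT(0) distance.

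The step you flag as the ``main obstacle'' is a genuine gap, and the strategy you sketch does not close it. The parameter $r_{L'+2}$ is a CAT(0) distance along $[p,z]$ and is simply not bounded in terms of $L$: two curtains dual to $[p,z]$ being $L$--separated is a global condition, and the CAT(0) gap between consecutive $L$--separated dual curtains can be arbitrarily large. Your fallback --- that where no $L$--separated dual curtains are available ``the metric $\dist_L$ is locally bounded and the desired inequality is trivial'' --- conflates two different statements: a flat initial segment of $[p,z]$ tells you $\dist_L$ is small \emph{along that segment}, but says nothing about $\dist_L(p,q)$ for $q\in[x,z]$. Nor can you rescue the argument by bounding $\dist_L(p,w)$ for $w=\pi_{[p,z]}(q)$ directly: gluing a maximal $L$--chain from $p$ to $w$ with the tail $\{h_{L'+3},\dots,h_n\}$ via Lemma~\ref{lem:gluing_chains} and invoking maximality of the chain from $p$ to $z$ only yields $\dist_L(p,w)\lesssim\dist_L(p,z)\big(1-\tfrac{1}{4L+10}\big)$, since Lemma~\ref{lem:updating_curtains} loses a multiplicative factor when producing the dual chain. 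What is missing is an argument bounding $\dist_L(p,q)$ that uses only $L$--chain combinatorics; the paper achieves this by taking the reference chain along a side rather than a cevian, so that maximality gives a genuine constant rather than a fraction of a growing quantity.
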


\begin{proof}
Fix a maximal triangular $L$--chain $\T$ for $\{x,y,z\}$. Given $p\in[x,y]$, we can find $q\in [x,z]\cup[z,y]$ with $|p,q|_\T= 0$ as follows. If $p$ lies in a curtain $h\in\T$ then take $q$ in the nonempty set $h\cap \left( [x,z]\cup[z,y]\right)$. If $|p,x|_\T = 0$ or $|p,y|_\T =0$, take $q = x$ or $q=y$ respectively. Otherwise there are $h_1,h_2\in\T$ such that $p \in h_1^+ \cap h_2^-$ and  no element of $\T$ separates $h_1$ from $h_2$. As curtains are closed and disjoint, we can take $q\in \left(h_1^+ \cap h_2^-\right)\cap \left([x,z]\cup[z,y]\right)$.
\end{proof}

We are now ready to prove hyperbolicity of the spaces $X_L$. It is clear than any isometry of $X$ is also an isometry of $X_L$, and we take this opportunity to point out how the actions of $\isom X$ on the various $X_L$ relate to one another. Recall that if a group $G$ acts on two metric spaces $X$ and $Y$, then the action on $X$ is said to \emph{dominate} the action on $Y$ if there is a $G$-equivariant, coarsely Lipschitz map $X\to Y$.

\begin{theorem} \label{thm:XL_hyperbolic}
For each $L<\infty$, the space $X_L$ is hyperbolic. Moreover, $\isom X\leq \isom X_L$, and the action of $\isom X$ on $X_L$ dominates the one on $X_{L-1}$.
\end{theorem}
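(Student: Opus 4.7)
My plan is to combine the two propositions just established in this subsection by feeding them into the guessing geodesics criterion (Proposition~\ref{prop:guessing_geodesics}) applied to the family of CAT(0) geodesics of $X$. Concretely, to each pair $x,y \in X$ I assign the CAT(0) geodesic $[x,y]$. Proposition~\ref{prop:cat_unparametrised_quasigeodesic} says that with their CAT(0) parametrisation these are uniform unparametrised quasigeodesics of $X_L$, and Proposition~\ref{prop:g3} gives the necessary uniform thinness of triangles built from these paths. Since $X_L$ is already quasigeodesic --- it is weakly roughly geodesic by Lemma~\ref{lem:weakly_roughly_geodesic} --- the hypotheses of the guessing geodesics criterion are met, so $X_L$ is hyperbolic.

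For the inclusion $\isom X \le \isom X_L$, I would note that any isometry $g$ of $X$ sends geodesics to geodesics and, because closest-point projection is determined purely by the metric, satisfies $\pi_{g\alpha}(gx)=g\pi_\alpha(x)$. Consequently $g(h_{\alpha,r})=h_{g\alpha,r}$, so $g$ permutes the set of curtains bijectively while preserving the two halfspaces of each curtain. This means that $g$ sends chains to chains of the same cardinality, hence preserves $L$-separation and sends $L$-chains to $L$-chains. By the very definition of $\dist_L$, this forces $\dist_L(gx,gy)=\dist_L(x,y)$, i.e.\ $g\in\isom X_L$.

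Finally, for the domination of the $X_{L-1}$-action by the $X_L$-action, I observe that if two curtains $h,h'$ are $(L-1)$-separated then every chain meeting both has cardinality at most $L-1\le L$, so they are also $L$-separated; thus every $(L-1)$-chain is an $L$-chain. From the definition of $\dist_L$, this yields $\dist_{L-1}(x,y)\le\dist_L(x,y)$ for all $x,y\in X$. The identity map $\id\colon X_L\to X_{L-1}$ is therefore $1$-Lipschitz and trivially $\isom X$-equivariant (both actions are the underlying action on the set $X$), which is exactly the required domination.

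The genuinely non-trivial content was absorbed into Propositions~\ref{prop:cat_unparametrised_quasigeodesic} and~\ref{prop:g3}, whose combinatorial control of $L$-chains along CAT(0) geodesics made the guessing geodesics machinery applicable. With those in hand, the main theorem is essentially a packaging statement, and the only step requiring care is verifying that isometries of $X$ act on curtains via the expected formula, which is immediate from equivariance of closest-point projection.
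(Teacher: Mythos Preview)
Your proposal is correct and follows essentially the same approach as the paper: assign to each pair the CAT(0) geodesic, use Proposition~\ref{prop:cat_unparametrised_quasigeodesic} (together with uniqueness of CAT(0) geodesics) for \ref{ass:guessing1}--\ref{ass:guessing2} and coarse connectedness, use Proposition~\ref{prop:g3} for \ref{ass:guessing3}, and then invoke Proposition~\ref{prop:guessing_geodesics}. Your treatment of $\isom X\le\isom X_L$ and of domination is more explicit than the paper's (which simply says these follow from the definitions), but the content is identical.
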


\begin{proof}
Given $x,y\in X_L$, let $\eta_{xy}$ be the unique CAT(0) geodesic in $X$ from $x$ to $y$. Remark~\ref{rmk:new_metric_bounded} shows that the $\eta_{xy}$ are coarsely connected. Conditions \ref{ass:guessing1} and \ref{ass:guessing2} of Proposition~\ref{prop:guessing_geodesics} are immediate from Proposition~\ref{prop:cat0_rough_geodesic} because CAT(0) geodesics are unique. \ref{ass:guessing3} is provided by Proposition~\ref{prop:g3}. Thus $X_L$ has thin quasigeodesic triangles. As it is roughly geodesic, Proposition~\ref{prop:Rips_coarsely_injective} tells us that it is hyperbolic.
\end{proof}



\subsection{An Ivanov-type theorem}\label{subsec:Ivanov}

We have seen that $\isom X<\isom X_L$ for every CAT(0) space $X$ and every integer $L$. Moreover, since $X$ and $X_L$ have the same underlying set, every isometry of $X_L$ induces a bijection of $X$. Our purpose here is to address what can be said about these bijections. 

Given $x\in X$ and $r\in\R$, write $B(x,r)$ for the closed ball of radius $r$ centred on $x$, and write $S(x,r)$ for the sphere of radius $r$ centred on $x$. We say that a collection $\Chain$ of bijections $X\to X$ \emph{preserves $r$--balls} if $gB(x,r)=B(gx,r)$ for all $x\in X$, all $r\ge0$, and all $g\in\Chain$. We use similar terminology for spheres. Recall that $X$ has the geodesic extension property if every geodesic segment is contained in a biinfinite geodesic.

\begin{proposition} \label{prop:preserving}
Let $X$ be a CAT(0) space and let $n,L\in\mathbf N$. The group $\isom X_L$ preserves $n$--balls of the CAT(0) space $(X,\dist)$. If $X$ has the geodesic extension property, then $\isom X_L$ also preserves $n$--spheres.
\end{proposition}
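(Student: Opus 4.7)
I would prove the $n$-ball claim by induction on $n$. The base $n=0$ is trivial, since $B(x,0)=\{x\}$. For $n=1$ the key observation is the chain of equivalences
\[
\dist(x,y)\le 1 \iff \text{no curtain separates } x \text{ from } y \iff \dist_L(x,y)\le 1,
\]
where the first equivalence follows from Lemma~\ref{lem:chain_distance} (if $\dist(x,y)>1$ then $\lceil\dist(x,y)\rceil\ge 2$, producing a chain of positive size, hence a separating curtain), and the second holds because a single curtain is vacuously $L$-separated and so is itself an $L$-chain of size one. Thus $B(x,1)=\{y:\dist_L(x,y)\le 1\}$ is intrinsically defined from $\dist_L$, hence preserved by every $g\in\isom X_L$. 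For the inductive step I would use the decomposition $B(x,n)=\bigcup_{y\in B(x,1)}B(y,n-1)$, where $\supseteq$ is the triangle inequality and $\subseteq$ follows by taking, for $z\in B(x,n)$, the point on the CAT(0) geodesic $[x,z]$ at distance $\min\{1,\dist(x,z)\}$ from $x$; that point lies in $B(x,1)\cap B(z,n-1)$. The inductive hypothesis plus the $n=1$ case then gives $gB(x,n)=B(gx,n)$.

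For the sphere claim, I would use the geodesic extension property to embed the pair $(x,y)$ with $\dist(x,y)=n$ into an infinite family of auxiliary points. Extend $[x,y]$ to a biinfinite geodesic $\gamma$ with $\gamma(0)=x$ and $\gamma(n)=y$, and set $p_m=\gamma(m)$ for $m\in\mathbf Z$, so that $\dist(p_i,p_j)=|i-j|$. Applying $n$-ball preservation at every integer scale gives $\dist(gp_i,gp_j)\in(|i-j|-1,|i-j|]$ for all $i,j\in\mathbf Z$. The goal is to upgrade these bounds to exact equalities: once $\dist(gp_i,gp_j)=|i-j|$ on the nose, the $gp_m$ lie on an honest CAT(0) geodesic, and in particular $\dist(gx,gy)=\dist(gp_0,gp_n)=n$, so $gy\in S(gx,n)$.

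The main obstacle is precisely this last upgrade, since integer-ball preservation by itself only pins down distances up to an additive error of $1$. The plan is to exploit the bi-infinite structure of $\gamma$: telescoping the triangle inequality along $gp_a,gp_{a+1},\dots,gp_b$ yields the summability constraint $\sum_{k=a}^{b-1}(1-\dist(gp_k,gp_{k+1}))<1$ for all $a<b$, so the total deficiency over all of $\mathbf Z$ is uniformly bounded. Combining this with the strictness of the CAT(0) triangle inequality (equality only for collinear triples) and the extension of $\gamma$ to $\pm\infty$ should force each consecutive distance $\dist(gp_k,gp_{k+1})$ to equal $1$, yielding $\dist(gp_0,gp_n)=n$ and hence $gy\in S(gx,n)$; the reverse inclusion follows by the same argument applied to $g^{-1}\in\isom X_L$.
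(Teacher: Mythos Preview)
Your argument for the ball claim is correct and is essentially the paper's own proof: both use that $B_X(x,1)=B_{X_L}(x,1)$ and then induct by splitting a geodesic from $x$ to $z\in B(x,n)$ at an intermediate point. The only cosmetic difference is that you cut at distance $1$ from $x$ while the paper cuts at distance $n-1$ from $x$.

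The sphere argument, however, has a genuine gap. From ball preservation you correctly extract the constraints
\[
\dist(gp_i,gp_j)\in\bigl(|i-j|-1,\,|i-j|\bigr]\qquad\text{for all }i,j\in\Z,
\]
and the summability bound $\sum_k\bigl(1-\dist(gp_k,gp_{k+1})\bigr)\le1$. But these constraints alone do \emph{not} force $\dist(gp_0,gp_n)=n$. For a concrete obstruction, take the CAT(0) space $X=\R$ and the sequence $q_k=k$ for $k\le0$, $q_k=k-\tfrac12$ for $k\ge1$. One checks directly that $\dist(q_i,q_j)\in(|i-j|-1,|i-j|]$ for every pair, the summability bound holds with total defect $\tfrac12$, and all triples are collinear so no ``strict CAT(0) triangle inequality'' is available --- yet $\dist(q_0,q_n)=n-\tfrac12$. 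Since your argument uses only the inequalities above, it cannot rule this out. (The proposition \emph{does} hold for $\R$, so any valid proof must work there; in particular, appeals to strict convexity cannot be the crux.)

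What is missing is exactly the idea the paper uses: rather than chasing inequalities along an infinite string of points, extend $[x,y]$ by just one unit to a point $z$ with $\dist(x,z)=n+1$, and observe that $y$ is the \emph{unique} element of $B(x,n)\cap B(z,1)$. Since $g$ is a bijection preserving integer balls, $gy$ is then the unique element of $B(gx,n)\cap B(gz,1)$. In a CAT(0) space, two closed balls of radii $n$ and $1$ whose centres are at distance strictly less than $n+1$ meet in a nondegenerate segment, so the single-point intersection forces $\dist(gx,gz)=n+1$, and hence $\dist(gx,gy)=n$. The key ingredient you are missing is this uniqueness-of-intersection observation, which converts the set-theoretic information ``$g$ preserves balls'' into an exact metric equality.
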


\begin{proof}
First note that for any $x\in X$ we have $B_X(x,1)=B_{X_L}(x,1)$. Hence $\isom X_L$ preserves 1--balls. Now suppose that $\isom X_L$ preserves $(n-1)$--balls, and let $z\in B(x,n)$. There is some point $y\in[x,z]$ such that $\dist(x,y)\le n-1$ and $\dist(y,z)\le 1$, so by assumption we have $\dist(gx,gy)\le n-1$ and $\dist(gy,gz)\le1$ for all $g\in\isom X_L$. This shows that $gB(x,n)\subset B(gx,n)$ for all $g\in\isom X_L$. But now we have 
\[
gB(x,n) \subset B(gx,n) = gg^{-1}B(gx,n) \subset gB(x,n),
\]
so $\isom X_L$ preserves $n$--balls.

Now suppose that $X$ has the geodesic extension property. Given $x,y\in X$ with $\dist(x,y)=n\ge1$, let $z\in X$ be such that $y\in[x,z]$ and $\dist(y,z)=1$. This makes $y$ the unique element of $B(x,n)\cap B(z,1)$, so the fact that $g\in\isom X_L$ preserves $k$--balls implies that $gy$ is the unique element of $B(gx,n)\cap B(gz,1)$. The fact that these balls meet in a single point must mean that $\dist(gx,gz)=n+1$, and so $\dist(gx,gy)=n$. That is, $gS(x,n)\subset S(gx,n)$. As in the case of balls, it follows that $\isom X_L$ preserves $n$--spheres.
\end{proof}

Some additional assumption is certainly needed for the elements of $\isom X_L$ to preserve spheres, for if $X$ is a CAT(0) space of diameter at most one then every two distinct points of $X_L$ have distance exactly one.

\begin{corollary} \label{cor:quasiivanov}
Let $X$ be a CAT(0) space. Every isometry $g\in\isom X_L$ is a (1,1)--quasiisometry of $X$. 
\end{corollary}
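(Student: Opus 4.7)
The plan is to deduce this directly from Proposition~\ref{prop:preserving}, which tells us that every $g\in\isom X_L$ setwise preserves every closed $n$--ball of $(X,\dist)$ for $n\in\mathbf N$. The key observation is that ball-preservation, once applied to both $g$ and $g^{-1}$, forces $g$ to preserve the ceiling of the distance function exactly.

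More precisely, I would fix $g\in\isom X_L$ and $x,y\in X$, and argue as follows. For each integer $n\ge0$, the inclusion $y\in B(x,n)$ is equivalent, by Proposition~\ref{prop:preserving} applied to $g$, to $gy\in B(gx,n)$; applying the proposition to $g^{-1}$ yields the reverse implication. Therefore $\dist(x,y)\le n$ if and only if $\dist(gx,gy)\le n$, so the two real numbers $\dist(x,y)$ and $\dist(gx,gy)$ have the same ceiling. Since two non-negative reals with the same (positive) ceiling lie in a common interval of the form $(n-1,n]$, one concludes $|\dist(gx,gy)-\dist(x,y)|<1$, and if $\dist(x,y)=0$ then $x=y$ and the bound is trivial. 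This gives the $(1,1)$-coarse equality. Coarse surjectivity (in fact surjectivity) is automatic because $g$ is a bijection of $X$.

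There is no real obstacle here: the substantive content has already been packed into Proposition~\ref{prop:preserving}, and what remains is the elementary observation that preserving every closed integer-radius ball forces the ceiling of the metric to be invariant. The only point requiring a moment's care is that $B(x,n)$ and $B(gx,n)$ are \emph{closed} balls, which is exactly how the proposition is stated, so the equivalence $\dist(x,y)\le n\iff\dist(gx,gy)\le n$ is crisp with no boundary issues. Thus the proof is essentially a one-line corollary of Proposition~\ref{prop:preserving}.
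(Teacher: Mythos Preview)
Your proof is correct and follows essentially the same approach as the paper: both deduce from Proposition~\ref{prop:preserving} that $\dist(x,y)$ and $\dist(gx,gy)$ lie in the same interval $(n,n+1]$, hence differ by less than~$1$. One minor remark: the separate appeal to $g^{-1}$ is unnecessary, since Proposition~\ref{prop:preserving} already gives the set \emph{equality} $gB(x,n)=B(gx,n)$, from which the biconditional $y\in B(x,n)\iff gy\in B(gx,n)$ follows immediately.
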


\begin{proof}
For any pair $x,y\in X$ there is a unique integer $n$ such that $\dist(x,y)\in(n,n+1]$. Since $g$ preserves $n$--balls and $(n+1)$--balls, we have that $\dist(gx,gy)\in(n,n+1]$. This shows that $|\dist(x,y)-\dist(gx,gy)|<1$.
\end{proof}

\begin{remark}
The ``additive 1'' in Corollary~\ref{cor:quasiivanov} comes from our choice for curtains to have width one. Making a smaller choice of width would lead to a smaller error, so one may wonder whether by taking some kind of limit with increasingly fine curtains could yield a hyperbolic model $Y$ space such that every isometry of the CAT(0) space $X$ is an isometry of $Y$. Unfortunately, it is not clear how to take this limit whilst preserving hyperbolicity. This is related to the procedure we will describe in Section~\ref{sec:universal}.
\end{remark}

Note that in quasiisometrically rigid CAT(0) spaces, such as symmetric spaces and buildings \cite{kleinerleeb:rigidity}, Corollary~\ref{cor:quasiivanov} means that every isometry of $X_L$ is at bounded distance from an isometry of $X$. It turns out that for symmetric spaces and buildings, every isometry of $X_L$ coincides with an isometry of $X$ (Corollary~\ref{cor:one-ended}). It is natural to wonder whether this is the general picture. However, Corollary~\ref{cor:quasiivanov} is optimal in general: consider the real line, as discussed in Section~\ref{isubsec:ivanov}.

On the other hand, Proposition~\ref{prop:preserving} provides a route to obtaining stronger results under additional assumptions that rule out the real line. Our approach relies on Andreev's contributions \cite{andreev:aleksandrov} to Aleksandrov's problem in CAT(0) spaces, which asks whether every self-bijection that preserves 1--spheres is an isometry.

\begin{definition}[Diagonal tube]
Let $(X,\dist)$ be a metric space. The \emph{diagonal tube} of $\dist$ is the set $V_{\dist}=\{(x,y)\in X\times X\,:\,\dist(x,y)\le1\}$. We say that a metric $\dist'$ \emph{realises} a subset $V\subset X\times X$ if $V=V_{\dist'}$.
\end{definition}

In the case that $(X,\dist)$ is a CAT(0) space with the geodesic extension property, let $V=V_{\dist}$. For any element $\phi$ of any $\isom X_L$, we can consider the pullback metric $\dist'(x,y)=\dist(\phi(x),\phi(y))$. We know from Proposition~\ref{prop:preserving} that $\dist'$ realises $V$. Therefore, in order to show that $\isom X=\isom X_L$, it is sufficient to show that $\dist$ is the unique metric realising $V$.

For the remainder of this section, geodesics will be understood to be biinfinite. Say that geodesics $a$ and $b$ are \emph{asymptotic} if they share an endpoint in the visual boundary $\partial X$ (the collection of all equivalence classes of geodesic rays, where two rays are equivalent if they are at finite Hausdorff-distance). Say that geodesics $a$ and $c$ are \emph{virtually asymptotic} if there is a sequence $a_0=a,a_1,\dots,a_n=c$ such that $a_i$ is asymptotic to $a_{i-1}$ for all $i$.

\begin{lemma}[{\cite[Lem.~2.4]{andreev:aleksandrov}}] \label{lem:virtually_asymptotic}
Let $X$ be a proper CAT(0) space with the geodesic extension property and suppose that $\dist'$ is a metric on $X$ realising $V$. Let $a$ and $c$ be virtually asymptotic geodesics. If $\dist'|_a=\dist|_a$, then $\dist'|_c=\dist|_c$.
\end{lemma}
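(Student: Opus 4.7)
By induction on the length of the chain of asymptotic geodesics $a = a_0, \ldots, a_n = c$, it suffices to treat the case where $a$ and $c$ are themselves asymptotic, sharing a common endpoint $\xi \in \partial X$. Parametrize both by arc length so that $a(t), c(t) \to \xi$ as $t \to +\infty$; in CAT(0) the function $t \mapsto \dist(a(t), c(t))$ is convex and bounded above on $[0,\infty)$, so $a$ and $c$ become arbitrarily ``close'' toward $\xi$ (in the weak sense that the Hausdorff distance between tails is controlled).

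The argument then splits into two stages. In the first, I would establish $\dist'|_b = \dist|_b$ for every biinfinite geodesic $b$ that meets $a$ and is asymptotic to $\xi$. Uniqueness of geodesic rays from a fixed point to a fixed boundary point in a CAT(0) space guarantees that if $p \in b \cap a$, then $b$ and $a$ coincide on the forward ray $[p, \xi)$, so $\dist'$ and $\dist$ already agree there. For the backward ray of $b$ issuing from $p$, I would exploit the asymptoticity: the proximity of $a$ and $b$ far toward $\xi$, together with the geodesic extension property, supplies a sequence of unit-distance ``bridges'' between controlled points of $a$ and points of $b$, and iterating the relation $V_{\dist'} = V$ along these bridges propagates the exact agreement on $a$ to the backward ray of $b$. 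In the second stage I would deduce $\dist'|_c = \dist|_c$ by realizing arbitrarily long sub-segments of $c$ as such bridging geodesics $b$, produced via the geodesic extension property applied to segments joining chosen points of $a$ to points of $c$; combined with continuity of $\dist$ this exhausts $c$ and gives the conclusion.

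\textbf{Main obstacle.} The substantive content lies in the first stage: promoting the a priori weak 1-ball relation $V_{\dist'} = V$ to an exact metric agreement on a brand-new geodesic. Since $V$ only records which pairs of points lie within distance one of each other, my plan is to cascade unit steps through pairs of asymptotically near-coincident points on $a$ and $b$, then use convexity of the CAT(0) metric to interpolate between integer-distance nodes and capture non-integer values of $\dist'$. Both hypotheses on $X$ are essential for this bootstrap: properness secures the compactness and extraction arguments needed to produce the asymptotic bridges, while the geodesic extension property is what lets us manufacture the biinfinite geodesics witnessing the transfer from $a$ to $c$. Making the cascade precise—in particular, controlling how errors accumulate along the ladder of unit-distance bridges so that one recovers exact equality rather than a coarse comparison—is where I expect the Aleksandrov-type argument of \cite{andreev:aleksandrov} to do its real work.
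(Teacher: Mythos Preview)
The paper does not prove this lemma: it is stated with a direct citation to \cite[Lem.~2.4]{andreev:aleksandrov} and no argument is given in the text. There is therefore no ``paper's own proof'' to compare your proposal against.

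That said, your plan is broadly in the right spirit for an Aleksandrov-type argument---the induction along the chain $a_0,\dots,a_n$ is the obvious reduction, and the idea of using the shared boundary point together with unit-distance bridges encoded by $V$ is what one expects. But as written it is only a plan, not a proof: the ``cascade'' in your first stage is asserted rather than constructed, and you yourself flag that controlling error accumulation so as to recover \emph{exact} equality (not a coarse estimate) is the real content. If you want to actually establish the lemma rather than cite it, you will need to carry out Andreev's argument in detail; the present paper simply imports the result as a black box.
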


This lemma provides a route to proving that $\dist$ is the unique metric realising $V$. Namely, one can try to show that every geodesic of $X$ is virtually asymptotic to some geodesic whose metric is uniquely determined by $V$.

Following the terminology of \cite{andreev:aleksandrov}, say that a geodesic is \emph{higher-rank} if it bounds a Euclidean strip, and \emph{strictly rank-one} if it is not virtually asymptotic to any higher-rank geodesic.

\begin{proposition}[{\cite[Cor.~3.2]{andreev:aleksandrov}}] \label{prop:virtually_higher_rank}
Let $X$ be a CAT(0) space, and suppose that $\dist'$ is a metric on $X$ realising $V$. If $a$ is a higher-rank geodesic, then $\dist'|_a=\dist|_a$.
\end{proposition}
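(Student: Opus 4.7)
The plan exploits the Euclidean-plane structure that $a$ inherits from the higher-rank hypothesis. Since $a$ is higher-rank, it bounds a Euclidean strip $F \cong a \times [0,\epsilon]$ isometrically embedded in $X$, and restricting $\dist'$ yields a metric on $F$ whose unit-distance incidence coincides with that of the Euclidean metric $\dist|_F$. The problem thus reduces to a rigidity statement inside the Euclidean plane: the unit-ball incidence relation should rigidly determine distances along the edge $a$ of the strip.

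First I would establish the crude bound $|\dist'(x,y) - \dist(x,y)| \le 1$ for $x,y \in a$. Given any integer $n > \dist(x,y)$, subdivide $[x,y]$ along $a$ into $n$ equal sub-segments of $\dist$-length at most $1$; since $\dist$ realises $V$ on each such pair, we get $\dist'(p_{i-1},p_i) \le 1$, and the triangle inequality gives $\dist'(x,y) \le n$. A symmetric argument in the other direction, using the reverse implication $\dist(p,q)>1 \Rightarrow \dist'(p,q)>1$ applied to a would-be sub-unit $\dist'$-chain, yields the matching bound. This reduces the problem to sharpening an additive $1$-error.

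For the sharpening, I would construct rigid Euclidean configurations in $F$ whose mutual distances straddle $1$ in a way that depends delicately on $d = \dist(x,y)$. For instance, when $d<2$ the two points of $\R^2$ forming an isoceles triangle of given legs with base $[x,y]$ lie in $F$ provided $\epsilon$ is large enough; varying the leg length perturbs distances across $1$, producing simultaneously upper bounds on $\dist'(x,y)$ (from $\dist \le 1 \Rightarrow \dist' \le 1$) and lower bounds (from $\dist > 1 \Rightarrow \dist' > 1$). Chaining such rigid configurations---equilateral triangles, rhombi, and rectangles with one edge on $a$---and iterating through sub-segments of length at most $1$ pins $\dist'(x,y)$ down to exactly $d$. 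This is precisely the Beckman--Quarles-style reasoning carried out by Andreev as Corollary 3.2 of \cite{andreev:aleksandrov}, which we invoke.

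The main obstacle is extracting \emph{lower} bounds on $\dist'$ from the hypothesis. The realisation of $V$ directly yields upper bounds via unit-length chains, but lower bounds require the implication $\dist>1 \Rightarrow \dist'>1$, and one can only exploit this by threading auxiliary points whose $\dist$-distances cross the unit threshold. The higher-rank hypothesis is essential at exactly this point, because it supplies the $2$-dimensional flat region needed to realise such configurations---none of which would be available on a generic, strictly rank-one geodesic.
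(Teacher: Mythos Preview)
The paper does not prove this proposition at all: it is simply quoted as \cite[Cor.~3.2]{andreev:aleksandrov} and used as a black box. So there is no proof in the paper against which to compare your sketch; your proposal and the paper agree in that both ultimately defer to Andreev.

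That said, two steps in your outline are not quite right as written. First, the ``symmetric argument'' for the lower crude bound does not go through: you cannot produce a sub-unit $\dist'$-chain from $x$ to $y$, because nothing is assumed about $\dist'$ beyond realising $V$---in particular $\dist'$ need not be geodesic or even a length metric, so there is no way to subdivide a $\dist'$-path. The lower bound must instead be obtained from the Euclidean configurations themselves (via the implication $\dist>1\Rightarrow\dist'>1$ applied to auxiliary points), which you do sketch afterwards; the crude two-sided bound is therefore a consequence, not a preliminary reduction. Second, and more seriously, you note that your isoceles-triangle and rhombus configurations ``lie in $F$ provided $\epsilon$ is large enough'', but the higher-rank hypothesis only guarantees \emph{some} strip width $\epsilon>0$, which may be much smaller than $1$. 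Andreev's actual argument must (and does) handle arbitrarily thin strips, using configurations tailored to the available width; your sketch does not indicate how to do this, and the naive equilateral-triangle approach genuinely fails when $\epsilon<\sqrt{3}/2$. Since you explicitly invoke Andreev's result at the end, these gaps are not fatal, but the sketch as it stands is not a self-contained proof.
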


Combined with Lemma~\ref{lem:virtually_asymptotic}, this shows that if $X$ is a proper CAT(0) space with the geodesic extension property and every geodesic of $X$ is virtually higher-rank, then $\dist$ is the unique metric realising $V$, so $\isom X=\isom X_L$. This applies in particular to universal covers of Salvetti complexes of non-free right-angled Artin groups. Note that it is also straightforward to show from Proposition~\ref{prop:preserving} that $\isom X=\isom X_L$ when $X$ is a tree that does not embed in $\R$.

\begin{proposition}[{\cite[Thm~4.7]{andreev:aleksandrov}}] \label{prop:one-ended}
Let $X$ be a proper, one-ended CAT(0) space, and suppose that $\dist'$ is a metric on $X$ realising $V$. If $a$ is a geodesic that is strictly rank-one, then there is a geodesic $b$ virtually asymptotic to $a$ such that $\dist'|_b=\dist|_b$.
\end{proposition}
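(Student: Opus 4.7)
The plan is to exploit the combination of one-endedness and the sphere/ball preservation provided by Proposition~\ref{prop:preserving} to pin down $\dist'$ on a well-chosen geodesic $b$ virtually asymptotic to $a$. The starting observation is that, because $\dist'$ realises $V$, the argument in Proposition~\ref{prop:preserving} applies verbatim to $\dist'$ (with the geodesic extension property coming from Proposition~\ref{prop:virtually_higher_rank}'s setting), so $\dist'(x,y)=\dist(x,y)$ whenever $\dist(x,y)\in\mathbf{N}$. The whole challenge is thus to promote this from integer distances to \emph{all} distances along some specific geodesic $b$.

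My first step would be to use one-endedness to produce, starting from $a$, a nontrivial family of geodesics asymptotic to $a(+\infty)$ (and, through further steps, virtually asymptotic to $a$). In a proper one-ended CAT(0) space, $\partial X$ has no cut points that isolate $a(+\infty)$, so there are geodesics $a_1$ sharing the endpoint $a(+\infty)$ with $a$ but distinct from $a$; iterating yields a connected family of virtually-asymptotic geodesics containing $a$. Out of this family I would pick the candidate $b$ to be one that passes through a ``rich'' region of $X$: concretely, one where every point of $b$ admits enough ambient points on integer spheres to triangulate.

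The second step is the triangulation argument. Given $p,q\in b$, the claim $\dist'(p,q)=\dist(p,q)$ should follow by finding auxiliary points $z_1,z_2,z_3\in X$ with $\dist(p,z_i),\dist(q,z_i)\in\mathbf{N}$ and such that $q$ is the unique point of $X$ whose distances to $p,z_1,z_2,z_3$ match the prescribed integer values. Since $\dist'$ preserves all integer spheres, it forces $\dist'(p,q)=\dist(p,q)$. The role of one-endedness is to guarantee that such triangulating configurations are available \emph{uniformly} along $b$: if $X$ were split into two ends, the points of $b$ near a ``neck'' would have no room for the needed $z_i$ on the far side. Combining with $a$ being strictly rank-one ensures $b$ can be chosen in the generic (non-flat) locus where these triangulation constraints determine position rigidly.

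The main obstacle is exactly this choice of $b$ in the strictly rank-one and potentially hyperbolic case, where no higher-rank strip exists anywhere along the virtually-asymptotic class. There one cannot fall back on Proposition~\ref{prop:virtually_higher_rank}, and the triangulating argument must succeed purely from one-endedness and properness. I would expect the heart of the proof to be a geometric lemma saying: in a proper one-ended CAT(0) space, any strictly rank-one geodesic is virtually asymptotic to one whose every point has at least three ``independent'' unit-sphere directions whose mutual distances determine position uniquely. Given such a lemma, the sphere-preservation of $\dist'$ immediately yields $\dist'|_b=\dist|_b$, and chaining with Lemma~\ref{lem:virtually_asymptotic} in the corollary to come transfers the conclusion back to $a$.
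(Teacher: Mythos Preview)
The paper does not prove this proposition at all: it is quoted verbatim from Andreev's work \cite[Thm~4.7]{andreev:aleksandrov}, with no argument given. So there is no ``paper's own proof'' to compare against; your proposal is an independent attempt at a result the authors simply import.

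That said, your sketch has real gaps. First, you invoke Proposition~\ref{prop:preserving} to conclude that $\dist'$ preserves integer distances, appealing to the geodesic extension property. But the paper explicitly remarks, immediately after stating this proposition, that it does \emph{not} assume the geodesic extension property---this is what makes the result useful later. Without geodesic extension, the sphere-preservation half of Proposition~\ref{prop:preserving} is unavailable, and ball preservation alone only gives $\lceil\dist'(x,y)\rceil=\lceil\dist(x,y)\rceil$, not agreement on integer spheres. So your ``starting observation'' already overreaches.

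Second, the core of your plan---the triangulation lemma asserting that along some $b$ every point admits three auxiliary points at integer distances rigidly determining its position---is stated as something you ``would expect'' to hold, not something you prove. This is precisely the hard part. In a strictly rank-one region there are no flat strips to exploit, and one-endedness by itself does not obviously furnish the rigid configurations you need; you have given no mechanism for producing the $z_i$ or for showing uniqueness of the intersection of the relevant spheres. Andreev's actual argument (which the paper does not reproduce) is substantially more delicate than this outline suggests, and your proposal does not supply the missing geometry.
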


Importantly for us, Proposition~\ref{prop:one-ended} does not assume the geodesic extension property. Combining Propositions~\ref{prop:virtually_higher_rank} and~\ref{prop:one-ended} with Lemma~\ref{lem:virtually_asymptotic} and Proposition~\ref{prop:preserving} gives the following.

\begin{corollary} \label{cor:one-ended}
If $X$ is a proper, one-ended CAT(0) space with the geodesic extension property, then $\isom X=\isom X_L$ for all $L$.
\end{corollary}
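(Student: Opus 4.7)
The plan is to combine the three ingredients cited just before the corollary, namely Proposition~\ref{prop:preserving}, Lemma~\ref{lem:virtually_asymptotic}, and Propositions~\ref{prop:virtually_higher_rank} and~\ref{prop:one-ended}, to show that every $\phi\in\isom X_L$ is actually an isometry of $X$. The converse inclusion $\isom X\le\isom X_L$ is immediate from the definition of $\dist_L$, so this suffices.

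First I would fix $\phi\in\isom X_L$ and form the pullback metric $\dist'(x,y)=\dist(\phi(x),\phi(y))$. Because $X$ has the geodesic extension property, Proposition~\ref{prop:preserving} tells us that $\phi$ preserves $1$--spheres of $(X,\dist)$, which translates directly into $V_{\dist'}=V_{\dist}=:V$. So $\dist'$ is a metric on $X$ realising the diagonal tube $V$, and the whole task is reduced to showing that $\dist'=\dist$, i.e.\ that $V$ uniquely determines the CAT(0) distance under our hypotheses.

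Next I would fix an arbitrary biinfinite geodesic $a$ of $X$ and argue, in two cases, that $\dist'|_a=\dist|_a$. If $a$ is not strictly rank-one, then by definition it is virtually asymptotic to some higher-rank geodesic $c$; Proposition~\ref{prop:virtually_higher_rank} gives $\dist'|_c=\dist|_c$, and then Lemma~\ref{lem:virtually_asymptotic} (which needs properness and the geodesic extension property, both of which we have) transports this equality along the chain of asymptotic geodesics to conclude $\dist'|_a=\dist|_a$. If $a$ is strictly rank-one, then since $X$ is proper and one-ended we may apply Proposition~\ref{prop:one-ended} to produce a geodesic $b$ virtually asymptotic to $a$ with $\dist'|_b=\dist|_b$, and Lemma~\ref{lem:virtually_asymptotic} again yields $\dist'|_a=\dist|_a$.

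Finally, I would upgrade this pointwise-on-geodesics statement to a global one: given any $x,y\in X$, the geodesic extension property provides a biinfinite geodesic through $x$ and $y$, and the previous step forces $\dist'(x,y)=\dist(x,y)$. Hence $\phi$ is an isometry of $(X,\dist)$, giving $\isom X_L\le\isom X$ and completing the proof. The work has really been done by the cited results; the only thing to be careful about is to verify, case by case, that a geodesic is always virtually asymptotic to a geodesic along which $\dist'$ and $\dist$ are already known to agree---the main obstacle is simply making sure the higher-rank/strictly rank-one dichotomy covers every geodesic, which it does by definition of ``strictly rank-one''.
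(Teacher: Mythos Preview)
Your proposal is correct and follows exactly the approach the paper has in mind: the paper's ``proof'' of this corollary is a single sentence instructing the reader to combine Proposition~\ref{prop:preserving}, Lemma~\ref{lem:virtually_asymptotic}, and Propositions~\ref{prop:virtually_higher_rank} and~\ref{prop:one-ended}, and you have spelled out precisely that combination. One cosmetic remark: what you need from Proposition~\ref{prop:preserving} to conclude $V_{\dist'}=V_{\dist}$ is that $\phi$ preserves $1$--balls (not $1$--spheres), which holds even without the geodesic extension property; the sphere-preservation is not what is doing the work at that step.
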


This covers both symmetric spaces and buildings. In fact, if $X$ is cobounded, then it necessarily has a \emph{rank-one isometry} (a semisimple isometry with an axis that doesn't bound a half flat) if it is not one-ended, so this covers all higher-rank examples. However, one can generalise this result in the presence of a geometric action. 
%
%
Recall that for a subspace $Y$ of a CAT(0) space $X$, the \emph{convex hull} of $Y$ is defined to be the intersection of all convex sets containing $Y$, which is easily verified to be a CAT(0) subspace. Equivalently, let $Y^0=Y$, and given $Y^i$, let $Y^{i+1}$ be the union of all geodesics joining points of $Y^i$. The convex hull of $Y$ is $\bigcup Y^i$.

\begin{theorem} \label{thm:ivanov}
Let $X$ be a CAT(0) space with the geodesic extension property, and suppose that a group $G$ acts properly cocompactly on $X$. If $G$ is not virtually free, then $\isom X=\isom X_L$.
\end{theorem}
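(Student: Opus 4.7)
The plan is to parallel the proof of Corollary~\ref{cor:one-ended}. Fix $\phi\in\isom X_L$ and define the pullback metric $\dist'(x,y)=\dist(\phi x,\phi y)$; Proposition~\ref{prop:preserving} (using the geodesic extension property) ensures that $\dist'$ realises the same diagonal tube $V_\dist$ as $\dist$. It then suffices to show $\dist'=\dist$, for which, by Lemma~\ref{lem:virtually_asymptotic} and the geodesic extension property, it is enough to verify that every biinfinite geodesic of $X$ is virtually asymptotic to one on which $\dist'$ agrees with $\dist$. Proposition~\ref{prop:virtually_higher_rank} handles all higher-rank geodesics, so the entire content of the theorem concerns strictly rank-one geodesics.

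The strategy is to use the hypothesis that $G$ is not virtually free to construct a $G$-invariant collection of one-ended closed convex subspaces of $X$ on which Proposition~\ref{prop:one-ended} can be applied, and then to argue that every strictly rank-one geodesic is virtually asymptotic to one contained in such a subspace. To produce the subspaces, invoke Dunwoody accessibility together with Stallings' theorem: the finitely presented group $G$, being not virtually free, splits as a finite graph of groups with finite edge groups and vertex groups that are either finite or one-ended, with at least one infinite one-ended vertex group $H$. Taking $Y$ to be the convex hull of an $H$-orbit in $X$, the subgroup $H$ acts geometrically on $Y$, so $Y$ is one-ended, and $Y$ inherits (possibly after a bounded enlargement) the geodesic extension property from $X$. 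The $G$-translates $\{gY\}$ form the desired invariant family, and the finiteness of the edge groups ensures that the ``necks'' between distinct translates of $Y$ in $X$ are uniformly bounded.

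It remains to show that every strictly rank-one geodesic $a$ is virtually asymptotic to a geodesic inside some $gY$. The idea is a contracting-projection argument: because translates of $Y$ are separated by uniformly bounded necks, a geodesic that does not eventually stay within a single translate of $Y$ must cross such a neck infinitely often; but this contradicts the contracting behaviour of rank-one geodesics, which by Theorem~\ref{ithm:contracting} is encoded by an infinite $L$--chain of separated curtains crossed at a uniform rate, and separated curtains force any sufficiently long geodesic to quickly escape bounded sets along which it is projected. Hence $a$ is asymptotic into some $gY$, and, after possibly replacing $a$ with a geodesic cofinal with it inside $gY$ (which is asymptotic in $X$ and so virtually asymptotic in the sense required), Corollary~\ref{cor:one-ended} applied inside the one-ended CAT(0) space $gY$ yields a virtually asymptotic geodesic on which $\dist'$ agrees with $\dist$.

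The main obstacle is the geometric realisation of the Bass--Serre splitting inside $X$: one needs the convex subspaces $gY$ to be simultaneously one-ended, to satisfy the geodesic extension property, and to be separated by uniformly bounded necks so that the contracting-projection argument goes through. This may require refining the choice of $Y$ beyond the bare convex hull of an orbit (for instance, by enlarging it to a thickening that absorbs the finite edge stabilisers and all geodesics between $H$-orbit points). Conditional on this step, the combination of Propositions~\ref{prop:virtually_higher_rank} and~\ref{prop:one-ended} via Lemma~\ref{lem:virtually_asymptotic} then forces $\dist'=\dist$ on every biinfinite geodesic, hence on all of $X$ by geodesic extension, giving $\phi\in\isom X$.
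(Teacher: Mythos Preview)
Your overall strategy---reduce to showing $\dist'=\dist$ on a single geodesic and propagate via Lemma~\ref{lem:virtually_asymptotic}, using accessibility to produce a one-ended convex subspace $Y$---matches the paper. But the execution diverges at a crucial point and contains a real gap.

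The main problem is your treatment of the strictly rank-one case. In Andreev's terminology (adopted here), a geodesic is \emph{strictly rank-one} when it is not virtually asymptotic to any higher-rank geodesic; this has nothing to do with being contracting or crossing an $L$--chain of separated curtains. Your appeal to Theorem~\ref{ithm:contracting} to force such a geodesic to ``escape bounded necks'' is therefore unfounded: an arbitrary strictly rank-one geodesic need not be contracting, so there is no $L$--chain to invoke and no contracting-projection argument available. Relatedly, you try to arrange that $Y$ (or a thickening) has the geodesic extension property so as to apply Corollary~\ref{cor:one-ended} inside $Y$; this is neither justified nor needed, since Proposition~\ref{prop:one-ended} applies to any proper one-ended CAT(0) space without that hypothesis.

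The paper's route avoids all of this. Rather than pushing each rank-one geodesic into some translate $gY$, it fixes one axis $a\subset X_v$ and shows that \emph{every} biinfinite geodesic of $X$ is virtually asymptotic to $a$. The point is that finite edge groups give compact bottlenecks: for any pair $\xi,\zeta\in\partial X$ separated by such a bottleneck, Arzel\`a--Ascoli plus convexity yields a biinfinite geodesic joining them, so one can chain asymptotic geodesics across the Bass--Serre tree from any given geodesic down to $a$. Then Propositions~\ref{prop:virtually_higher_rank} and~\ref{prop:one-ended} applied inside the one-ended $X_v$ produce a geodesic $b$ virtually asymptotic to $a$ with $\dist'|_b=\dist|_b$, and Lemma~\ref{lem:virtually_asymptotic} (using geodesic extension only on $X$) finishes. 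Replacing your contracting-projection step with this bottleneck argument would close the gap.
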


\begin{proof}
If $G$ is one-ended, then this is Corollary~\ref{cor:one-ended}. Otherwise, observe that $G$ is finitely presented and hence is accessible by Dunwoody's theorem \cite{dunwoody:accessibility}. Hence, by Stallings' theorem \cite{stallings:ontorsionfree,stallings:group}, there is a nontrivial (finite) graph of groups decomposition of $G$ such that edge groups are finite and, as $G$ is not virtually free, some vertex group $G_v$ is one-ended. Let $f:G\to X$ be an orbit map $g\mapsto gx_0$ with quasiinverse $\bar f:X\to G$, and let $X_v$ be the CAT(0) convex hull in $X$ of $f(G_v)=G_v\cdot x_0$. Note that $X_v$ is a proper CAT(0) space, and the action of $G_v$ on $f(G_v)$ extends to an action on $X_v$.

\begin{claim*}
The action of $G_v$ on $X_v$ is cocompact.
\end{claim*}

\begin{claimproof}
Our goal is to show that the Hausdorff distance between $X_v$ and $f(G_v)$ is uniformly bounded. Since the edge groups of the decomposition are finite, for any other vertex group $G_w$ there is a ball $B_1\subset G$ of uniformly bounded radius and at uniformly bounded distance from $G_v$ such that any path in $G$ from a point of $\bar ff(G_w)$ to a point of $\bar ff(G_v)$ must pass through $B_1$. In particular, there is a ball $B_2\subset X$ (essentially any ball large enough to contain $f(B_1)$) that is uniformly close to $f(G_v)$ and such that if $z \in f(G_w)$ is a point on a geodesic with endpoints on $f(G_v)$ then $z\in B_2$. By the iterative construction of the convex hull, we can inductively prove that if $z \in f(G_w)$ is a point on a geodesic with endpoints on $f(G_v)^n$, then $z$ belongs to $B_2$. Thus, $X_v\cap f(G_w)\subset B_2$. As $f(G)$ coarsely coincides with $X$, we obtain that the Hausdorff distance between $X_v$ and $f(G_v)$ is uniformly bounded. Since $X_v$ is proper, the action of $G_v$ on $X_v$ is cocompact. 
\end{claimproof}

In particular, $f|_{G_v}$ is a quasiisometry from $G_v$ to $X_v$, so $X_v$ is one-ended. Since $X_v$ is proper and unbounded, one can use the Arzel\`a--Ascoli theorem to find a biinfinite geodesic $a\subset X_v$.

Now, suppose that $\xi$ and $\zeta$ are points of the visual boundary of $X$ with the property that there is a compact set $B$ such that any path $\alpha$ with $(\alpha(-n))\to\xi$ and $(\alpha(n))\to\zeta$ must pass through $B$. It is a simple consequence of the Arzel\`a--Ascoli theorem and convexity of the metric that there is a geodesic $\beta$ with $\beta(-\infty)=\xi$ and $\beta(\infty)=\zeta$. By repeatedly applying this fact, it can be seen that any geodesic in $X$ is virtually asymptotic to $a$. 

According to Propositions~\ref{prop:virtually_higher_rank} and~\ref{prop:one-ended}, for any metric $\dist'$ realising $V$ there is a geodesic $b\subset X_v$ that is virtually asymptotic to $a$ and has $\dist|_b=\dist'_b$. Because $X$ has the geodesic extension property, Lemma~\ref{lem:virtually_asymptotic} shows that $\dist'=\dist$. Hence $\dist$ is the unique metric realising $V$, so every element of $\isom X_L$ is an isometry of $X$.
\end{proof}

\section{Contracting geodesics and stability}

Let $X$ be a CAT(0) space. In this section, we consider geodesics in $X$ and groups of isometries of $X$ that can be considered ``negatively curved''. 

\begin{definition}[Contracting]
We say that a geodesic $\gamma$ is \emph{$D$--contracting} if for any ball $B$ disjoint from $\gamma$ we have $\diam\pi_\gamma(B)\leq D$. A hyperbolic isometry of $X$ is contracting if one, hence all, of its axes are contracting. 
\end{definition}

To simplify the constants in certain places, we shall always assume that $D\ge1$ unless otherwise specified.

In fact, the definition of a contracting geodesic makes sense in any metric space: given a geodesic $\gamma$ and a point $x$, the set of points in $\gamma$ realising $\dist(x,\gamma)$ is nonempty, because one only needs to consider a compact subinterval of $\gamma$ by the triangle inequality. One then finds that the closest-point projection to a contracting geodesic in a metric space is coarsely unique.
Charney--Sultan showed that, in CAT(0) spaces, a geodesic being contracting is equivalent to its being \emph{Morse} \cite[Thm~2.14]{charneysultan:contracting}. In the setting of proper CAT(0) spaces, Bestvina--Fujiwara showed an isometry is \emph{rank-one} if and only if its axes are contracting \cite[Thm~5.4]{bestvinafujiwara:characterization}. 

The first result of this section is the following, which sums up Lemma~\ref{lem:contracting_crosses} and Proposition~\ref{prop:cross_contracting}; it is worth noting that the proof does not directly use $X_L$ or the hyperbolicity thereof. We say that a geodesic $\alpha$ meets a chain $\{h_i\}$ of hyperplanes \emph{$r$--frequently} if there are $\alpha(t_i)\in h_i$ such that $t_{i+1}-t_i\le r$ for all $i$.

\begin{theorem} \label{thm:contracting_curtain_characterisation}
Let $X$ be a CAT(0) space. If $\alpha\subset X$ is a $D$--contracting geodesic, then there is a $(10D+3)$--chain of curtains met $8D$--frequently by $\alpha$. Conversely, if a geodesic $\beta$ meets an $L$--chain of curtains $T$--frequently, where $T\ge1$, then $\beta$ is $16T(L+4)$--contracting.
\end{theorem}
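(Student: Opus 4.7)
For the first direction, given a $D$-contracting geodesic $\alpha$, I would take the curtains $h_i=h_{\alpha,8Di}$, which are dual to $\alpha$ at spacing $8D$ and thus met $8D$-frequently by construction. Since curtains are path-connected (Lemma~\ref{lem:convex_curtains}), any curtain meeting both $h_i$ and $h_j$ with $i<j$ must meet every intermediate $h_k$, so verifying $L$-separation reduces to adjacent pairs $h_0,h_1$. Given a chain $\{k_1,\ldots,k_N\}$ with each $k_j$ meeting both $h_0$ and $h_1$, I would select $z_j\in k_j$ with $\pi_\alpha(z_j)=\alpha(4D)$; such a point exists because $k_j$ is path-connected and $\pi_\alpha(k_j)$ is therefore a connected subset of $\alpha$ spanning both poles. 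Applying Lemma~\ref{lem:chain_distance} to the subchain $\{k_2,\ldots,k_{N-1}\}$, which separates $z_1$ from $z_N$, yields $\dist(z_1,z_N)\geq N-2$. The $z_j$ all project to the single point $\alpha(4D)$ yet are forced pairwise far apart by the chain, so $D$-contractingness applied to the ball $B(z_1,\dist(z_1,z_N))$ gives the desired bound: either this ball is disjoint from $\alpha$, in which case its $\pi_\alpha$-diameter of at most $D$ must accommodate both $z_1$ and $z_N$ mapping to $\alpha(4D)$ along with all intermediate $z_j$, or it meets $\alpha$ and $\dist(z_1,\alpha)\leq\dist(z_1,z_N)$. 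Extracting the exact constant $10D+3$ from the interplay of these constraints is the key quantitative challenge.

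For the converse, suppose $\beta$ meets an $L$-chain $\{h_i\}$ at points $\beta(t_i)\in h_i$ with consecutive spacing at most $T$. Take a ball $B$ disjoint from $\beta$ and points $x,y\in B$ with $\pi_\beta(x)=\beta(a)$ and $\pi_\beta(y)=\beta(b)$, and assume $b-a>16T(L+4)$ toward a contradiction. Consider the sub-$L$-chain of $h_i$ with $t_i\in[a+L+1,b-L-1]$: Lemma~\ref{lem:convexity_around_L_chains} ensures $\beta(a)\in h_i^-$ and $\beta(b)\in h_i^+$ for each, so they separate $\beta(a)$ from $\beta(b)$. The spacing bound combined with $b-a>16T(L+4)$ yields at least $2(4L+10)$ such curtains, so Lemma~\ref{lem:updating_curtains} applied with the singleton sets $\{\beta(a)\}$ and $\{\beta(b)\}$ produces an $L$-chain $\{h'_1,\ldots,h'_{n+1}\}$ of length $n+1\geq 3$, every element dual to $\beta|_{[a,b]}$. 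Because these curtains are dual to $\beta$, membership of their halfspaces is determined by $\pi_\beta$, so $\pi_\beta(x)$ and $\pi_\beta(y)$ being on opposite sides of each $h'_j$ forces $x,y$ to be separated by the dualised chain as well.

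With the dualised chain in hand, Lemma~\ref{lem:bounding_distance_with_dual_curtains} (the Bottleneck lemma) applied to the triple $(h'_1,h'_2,h'_3)$ produces $p\in h'_2\cap[x,y]$ with $\dist(p,\pi_\beta(p))<2L+1$. When $\dist(B,\beta)>2L+1$ this contradicts $p\in[x,y]\subset B$ at once, so $b-a\leq 16T(L+4)$. The main obstacle is the case where $B$ lies within $2L+1$ of $\beta$, for which the Bottleneck point is no longer automatically outside $B$. I would handle this by applying Bottleneck to every consecutive triple of the dualised chain, obtaining many points $p_j\in[x,y]$ within $2L+1$ of distinct $\beta(r_j)\in\beta$ with the $r_j$ spread across $[a,b]$; combining $\dist(p_j,p_{j+2})\geq 1$ (from 1-Lipschitzness of $\pi_\beta$ and disjointness of the poles) with $\dist(p_j,p_{j'})\leq\dist(x,y)$ and the geometric constraints imposed by $B\cap\beta=\varnothing$ on a ball sitting close to $\beta$ in the presence of an $L$-chain should yield the claimed $16T(L+4)$ bound.
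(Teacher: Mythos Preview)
Your forward direction has a genuine gap. The points $z_j$ you select all satisfy $\pi_\alpha(z_j)=\alpha(4D)$, so the $\pi_\alpha$--image of $\{z_1,\dots,z_N\}$ is a single point. Applying $D$--contracting to the ball $B(z_1,\dist(z_1,z_N))$ therefore yields no constraint: if the ball misses $\alpha$ the projection--diameter bound is vacuously satisfied, and if it meets $\alpha$ you only learn $\dist(z_1,\alpha)\le\dist(z_1,z_N)$, which says nothing about $N$. The paper's argument is different: for a \emph{single} curtain $k$ meeting $h_i$ and $h_{i+1}$, it uses star convexity to find a geodesic segment inside $k$ whose $\pi_\alpha$--image crosses the midpoint $\alpha(8Di+4D)$, and then applies the bounded--geodesic--image lemma (Lemma~\ref{lem:pass_close_to_contracting}) to conclude that $k$ passes within $5D+1$ of that midpoint. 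Since every element of the chain comes close to this fixed point, the chain has length at most $10D+3$. The key input you are missing is Lemma~\ref{lem:pass_close_to_contracting}, not a ball--disjointness dichotomy.

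For the converse, your strategy (restrict to a sub--$L$--chain, dualise via Lemma~\ref{lem:updating_curtains}, then apply the Bottleneck lemma) matches the paper's, and your ``far ball'' case is fine. But the ``close ball'' case is a real obstacle, and your proposed fix does not work: the points $p_j\in[x,y]\subset B$ you extract satisfy $\dist(p_j,p_{j'})\le\diam B$, which is unbounded, so spreading the $r_j$ along $\beta$ gives no contradiction. The paper sidesteps this entirely. After dualising, it shows directly that if $\dist(\pi_\beta x,\pi_\beta y)$ is large then $\dist(x,y)\ge\dist(x,\beta)-(2L+1)$; contrapositively, any $y$ with $\dist(x,y)\le\dist(x,\beta)-2L-1$ has $\pi_\beta y$ close to $\pi_\beta x$. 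Then for an arbitrary $w$ in a ball $B$ centred at $x$ and disjoint from $\beta$, one takes the point $y\in[x,w]$ at distance $\min\{\dist(x,w),\dist(x,\beta)-2L-1\}$ from $x$: this $y$ has controlled projection, and $\dist(y,w)\le 2L+1$ since the radius of $B$ is at most $\dist(x,\beta)$, so the $1$--Lipschitz property of $\pi_\beta$ finishes. No case split on $\dist(B,\beta)$ is needed.
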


Recall the following version of bounded geodesic image for CAT(0) spaces.

\begin{lemma}[{\cite[Lem.~4.5]{charneysultan:contracting}}] \label{lem:pass_close_to_contracting}
Let $\alpha$ be a $D$--contracting geodesic in a geodesic space $X$. If $x,y \in X$ have $\dist(\pi_\alpha(x), \pi_\alpha(y))\geq 4D$, then any geodesic $\gamma$ from $x$ to $y$ satisfies $\pi_\alpha(\gamma) \subseteq \cal N_{5D}(\gamma)$.
\end{lemma}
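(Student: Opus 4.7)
The plan is to argue by contradiction: suppose some $p^\ast := \pi_\alpha(\gamma(t^\ast))$ has $\dist(p^\ast, \gamma) > 5D$. Since $\gamma(t^\ast) \in \gamma$, this forces $\dist(\gamma(t^\ast), p^\ast) = \dist(\gamma(t^\ast), \alpha) > 5D$, so the open ball $B := B(\gamma(t^\ast), 5D)$ is disjoint from $\alpha$. The contracting hypothesis then gives $\diam \pi_\alpha(B) \leq D$, which controls the projection of every point of $\gamma$ lying in $B$: each such projection is within $D$ of $p^\ast$.

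Because $\gamma$ is a geodesic, the sub-arc $\gamma((t^\ast - 5D, t^\ast + 5D))$ is contained in $B$. I would then use the hypothesis $\dist(\pi_\alpha x, \pi_\alpha y) \geq 4D$ to rule out both endpoints lying in $B$: otherwise $\pi_\alpha x, \pi_\alpha y \in \pi_\alpha(B)$ would force $\dist(\pi_\alpha x, \pi_\alpha y) \leq D < 4D$. So at least one endpoint, say $x$, lies outside $B$, and $\gamma$ runs for at least $5D$ on the $x$-side of $t^\ast$.

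The core of the argument is a walk along $\gamma$ from $\gamma(t^\ast)$ toward $x$ using overlapping balls of radius equal to the local distance-to-$\alpha$. Each such ball $B(\gamma(s), \dist(\gamma(s),\alpha))$ is disjoint from $\alpha$ and hence has projection-diameter at most $D$, so the projection of consecutive walk-points drifts by at most $D$ per step. The walk must eventually either reach $x$ itself or the first point $\gamma(s_0)$ with $\dist(\gamma(s_0), \alpha)$ small, at which stage $\pi_\alpha\gamma(s_0)$ lies near $\gamma(s_0) \in \gamma$, anchoring the projection to $\gamma$. Accumulating, one obtains $\dist(p^\ast, \gamma) \leq 5D$, contradicting the standing assumption.

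The main obstacle is making the chain-of-balls estimate tight enough to recover the specific constant $5D$, rather than something that grows with the length of $\gamma$. The key observation is that $\gamma$ being a geodesic means a ball of radius $r$ centred at $\gamma(s)$ covers the full parameter-interval $(s - r, s + r)$ on $\gamma$, so each step of the walk advances by a macroscopic amount tied to the current distance to $\alpha$, and the total drift is controlled by $O(D)$ independently of $\mathrm{length}(\gamma)$. Balancing this careful accounting with the anchor supplied by $\dist(\pi_\alpha x, \pi_\alpha y) \geq 4D$ yields the claimed contradiction.
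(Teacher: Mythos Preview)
The paper does not give its own proof of this lemma; it is quoted with a citation to Charney--Sultan, so there is nothing in the paper to compare your argument against.

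On its own merits, your sketch has the right opening move but a real gap at precisely the point you yourself flag as ``the main obstacle''. The chain-of-balls walk shows correctly that each step changes the $\alpha$--projection by at most $D$, but nothing you have written bounds the \emph{number} of steps. After $k$ steps from $\gamma(t^\ast)$ you can only conclude $\dist(p^\ast,\gamma)\le kD+\dist(\gamma(s_k),\alpha)$, and there is no mechanism forcing $k$ to be small: the triangle inequality gives no lower bound on $\dist(\gamma(s_{i+1}),\alpha)$ in terms of $\dist(\gamma(s_i),\alpha)$, so the step sizes can shrink and the walk can take arbitrarily many steps before reaching an anchor. Your assertion that ``the total drift is controlled by $O(D)$ independently of $\mathrm{length}(\gamma)$'' is exactly the statement that needs proving, and the justification offered---that each step advances by a macroscopic amount tied to the current height---does not yield it, because those amounts may form a slowly converging series.

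The hypothesis $\dist(\pi_\alpha x,\pi_\alpha y)\ge 4D$ is also underused. You invoke it only to place one endpoint outside the initial ball, but it must carry more of the argument: without it the conclusion is simply false (take $x=y$ far from $\alpha$), and nothing else in your outline distinguishes that degenerate case from the general one once the walk is underway.
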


We now prove the forward direction of Theorem~\ref{thm:contracting_curtain_characterisation}. Recall that $h_{\alpha,r}$ denotes the curtain dual to the geodesic $\alpha$ centred around the point $\alpha(r)$.

\begin{lemma} \label{lem:contracting_crosses}
Let $\alpha:I\to X$ be a $D$--contracting geodesic. The chain $\{h_i=h_{\alpha,8Di}\,:\,8Di\in I\}$ is a $(10D+3)$--chain dual to $\alpha$ such that $\alpha(8Di)\in h_i$.
\end{lemma}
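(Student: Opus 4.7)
The plan is to verify three separate assertions: that $\alpha(8Di)\in h_i$, that $\{h_i\}$ is a chain, and that any two distinct $h_i,h_j$ are $(10D+3)$-separated. The first is immediate, since $\pi_\alpha(\alpha(8Di))=\alpha(8Di)$ sits at the centre of the pole of $h_i$; the second is also immediate, as the poles are pairwise disjoint subintervals of $\alpha$ spaced at distance $8D>1$, so the $h_i$ are pairwise disjoint and $h_i$ separates $h_{i-1}$ from $h_{i+1}$.

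The content is in the separation. I fix $i<j$ and a chain $\{k_1,\dots,k_n\}$ whose elements each meet both $h_i$ and $h_j$, and aim for $n\le 10D+3$. Picking $x_m\in k_m\cap h_i$ and $y_m\in k_m\cap h_j$, the projections onto $\alpha$ satisfy $\dist(\pi_\alpha(x_m),\pi_\alpha(y_m))\ge 8D-1\ge 4D$ (assuming $D\ge 1$; the very small $D$ case is handled trivially). Lemma~\ref{lem:pass_close_to_contracting} then yields $\pi_\alpha([x_m,y_m])\subset\cal N_{5D}([x_m,y_m])$. Choosing a common $t^*$ in the middle of the two poles (so that $\alpha(t^*)$ lies in the projection image for every $m$), I obtain $p_m\in[x_m,y_m]$ with $\dist(p_m,\alpha(t^*))\le 5D$. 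Consequently all the $p_m$ lie in a single ball of radius $5D$, so in particular $\dist(p_1,p_n)\le 10D$.

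To convert this into a bound on $n$, I use the nested-halfspace structure of the chain. Orienting so that $k_{l-1}\subset k_l^-$ and $k_{l+1}\subset k_l^+$, one has $k_1\subset k_l^-$ and $k_n\subset k_l^+$ for every $1<l<n$. In the no-leakage case where each of $[x_1,y_1]$ and $[x_n,y_n]$ remains in the expected halfspace union its curtain, the points $p_1$ and $p_n$ lie on opposite sides of every intermediate $k_l$, so $\{k_2,\dots,k_{n-1}\}$ separates them. Lemma~\ref{lem:chain_distance} then gives $n-1\le\dist_\infty(p_1,p_n)=\lceil\dist(p_1,p_n)\rceil\le\lceil 10D\rceil$, hence $n\le 10D+2$, within the claimed bound.

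The hard part is dealing with leakage, since curtains are not convex (Remark~\ref{rem:nonconvex}): the geodesic $[x_1,y_1]$ could dip into $k_l^+$ and return, throwing $p_1$ onto the wrong side of $k_l$. The nesting of halfspaces forces such leakage on each end to occur on an initial (resp.\ terminal) interval of indices, and I would bound the length of this interval by combining Lemma~\ref{lem:chain_distance} on the leaked sub-chain with a projection comparison argument (using that $\pi_\alpha(p_1)$ is within $5D$ of $\alpha(t^*)$ while $\pi_\alpha(x_1)$ is within $\sfrac12$ of $\alpha(8Di)$). The resulting constant overhead on each end is exactly absorbed into the slack of $3$ in the bound $10D+3$. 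An alternative that may streamline the proof is to pick $p_m$ inside $k_m$ itself, using path-connectedness and the star-convexity of Lemma~\ref{lem:convex_curtains}, though this then requires a supplementary BGI argument to control $\dist(p_m,\alpha(t^*))$.
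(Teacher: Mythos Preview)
Your primary approach has a genuine gap that you yourself flag but do not close. The leakage problem---that $[x_1,y_1]$ may wander into $k_l^+$ despite having both endpoints in $k_1\subset k_l^-$---is real, and your sketch of a fix (``a projection comparison argument'' whose overhead is ``exactly absorbed into the slack of~$3$'') is not substantiated. There is no evident reason the number of leaked indices should be bounded independently of $D$; without a concrete argument the bound $10D+3$ is simply not established.

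Your alternative suggestion at the end is the correct move, and it is exactly what the paper does. Rather than working with $[x_m,y_m]$, which need not lie in $k_m$, use star-convexity (Lemma~\ref{lem:convex_curtains}) to produce geodesics \emph{inside} $k_m$. First reduce to consecutive pairs $h_i,h_{i+1}$: any curtain meeting $h_i$ and $h_j$ meets every intermediate $h_l$ by path-connectedness, so it suffices to bound chains meeting $h_i$ and $h_{i+1}$. For such a curtain $k$ with pole $\beta$, take $x\in k\cap h_i$, $y\in k\cap h_{i+1}$, and let $x',y'\in\beta$ be such that $[x,x']\cup[y,y']\subset k$. The key observation is that $\pi_\alpha$ is $1$-Lipschitz and $\diam\beta=1$, so $\pi_\alpha(\beta)$ has diameter at most $1$; hence there exists $t\in[8Di+4D-1,\,8Di+4D+1]$ with $\alpha(t)\notin\pi_\alpha(\beta)$. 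By the intermediate value theorem applied to $\pi_\alpha$ along $[x,x']$ or $[y,y']$, one of these geodesics contains a point $z$ with $\pi_\alpha(z)=\alpha(t)$, and then Lemma~\ref{lem:pass_close_to_contracting} gives $\dist(\alpha(t),k)\le 5D$, hence $\dist(\alpha(8Di+4D),k)\le 5D+1$. This holds for \emph{every} such $k$ with the \emph{same} fixed point $\alpha(8Di+4D)$, so any chain of them has length at most $10D+3$, and there is no leakage to repair.
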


\begin{proof}
Let $k$ be a curtain meeting both $h_i$ and $h_{i+1}$, and let $\beta$ be its pole. See Figure~\ref{fig:dual}. Because $\diam\pi_\alpha\beta\le\diam\beta=1$, there exists $t\in[8Di+4D-1,8Di+4D+1]$ such that $\alpha(t)\not\in\pi_\alpha\beta$. Let $x\in k\cap h_i$ and $y\in k\cap h_{i+1}$. There are points $x',y'\in\beta$ such that $[x,x']\subset k$ and $[y,y']\subset k$. Since $\pi_\alpha$ is continuous, there is some $z\in[x,x']\cup[y,y']$ such that $\pi_\alpha(z)=\alpha(t)$. The cases are similar, so let us assume that $z\in[x,x']$. Lemma~\ref{lem:pass_close_to_contracting} tells us that $\alpha(t)=\pi_\alpha(z)\in\pi_\alpha[x,x']$ is $5D$--close to $[x,x']\subset k$.

Thus $\alpha(8Di+4D)$ is $(5D+1)$--close to every curtain $k$ meeting both $h_i$ and $h_{i+1}$. This shows that every chain of curtains meeting $h_i$ and $h_{i+1}$ has cardinality at most $10D+3$.
\end{proof}

\begin{figure}[ht]
\vspace{-1cm}\includegraphics[width=11cm]{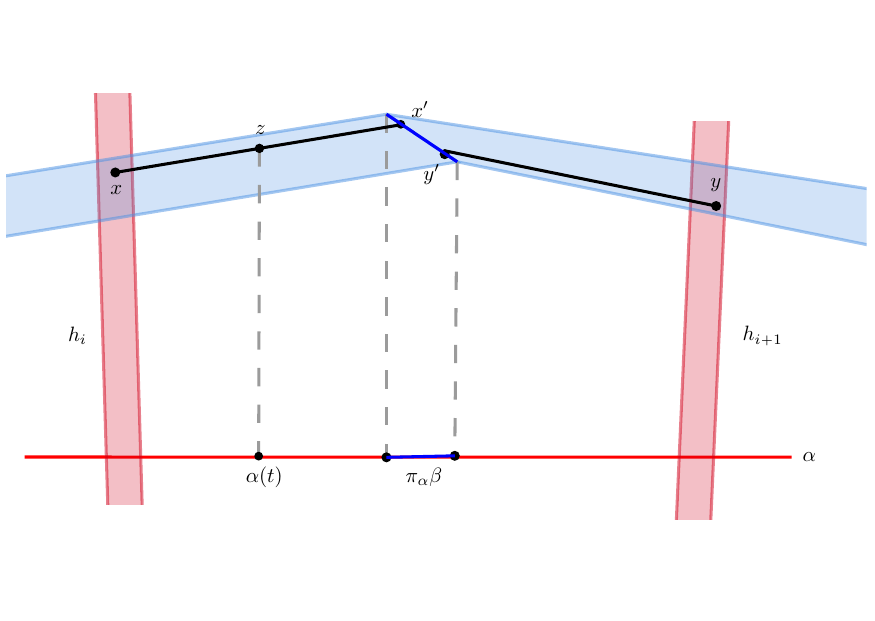}\centering\vspace{-1cm}
\caption{The proof of Lemma~\ref{lem:contracting_crosses}.} \label{fig:dual}
\end{figure}

For the reverse direction of Theorem~\ref{thm:contracting_curtain_characterisation}, we are given an $L$--chain of curtains that meet $\alpha$. We begin by showing that we can assume that they are actually dual to $\alpha$. This is similar to Lemma~\ref{lem:updating_curtains}, but includes the extra information of how frequent the crossing is.

\begin{lemma} \label{lem:crossers_are_dual}
Let $\alpha$ be a geodesic meeting an $L$--chain of curtains $c=\{h_i\}$ at points $\alpha(t_i)$ with $t_{i+1}-t_i\in[1,T]$. There is an $L$--chain $\{k_i=h_{\alpha,s_i}\}$ of curtains dual to $\alpha$ such that $s_{i+1}-s_i\le4T(L+3)$ for all $i$.
\end{lemma}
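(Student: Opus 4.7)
My plan is to apply the base case ($n=1$) of Lemma~\ref{lem:updating_curtains} to consecutive blocks of the chain $\{h_i\}$, and then glue the resulting dual curtains into a single $L$-chain, with positions controlled by the bound $t_{i+1}-t_i\le T$.

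I partition $\{h_i\}$ into consecutive blocks of size $B\sim 4(L+3)$. For each block, Lemma~\ref{lem:updating_curtains} in its base case produces two $L$-separated curtains $k_j^1,k_j^2$ dual to $\alpha$. Specifically, $k_j^1$ is constructed as the middle of a chain dual to $\alpha$ between anchor points in $h_{(j-1)B+3}$ and (roughly) $h_{(j-1)B+2L+5}$, and $k_j^2$ similarly with anchors in $h_{(j-1)B+2L+8}$ and $h_{(j-1)B+4L+8}$. Moreover, the construction places $k_j^1\subset h_{(j-1)B+1}^+\cap h_{(j-1)B+2L+7}^-$ and $k_j^2\subset h_{(j-1)B+2L+6}^+\cap h_{jB}^-$. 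To control the exact $\alpha$-positions of these curtains, I use the explicit crossings $\alpha(t_i)$ as anchor points rather than the first/last intersections used in the original proof of Lemma~\ref{lem:updating_curtains}; this substitution only affects positional bookkeeping and leaves the $L$-separation arguments intact, since those arguments require only that certain $\alpha$-crossings of the original $L$-chain lie on the correct sides of the dual subchain, which follows from the chain ordering and the bound $t_{i+1}-t_i\ge 1$.

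I then combine the pairs $\{k_j^1,k_j^2\}$ across blocks into a single $L$-chain. The only nontrivial check is the $L$-separation of $(k_j^2,k_{j+1}^1)$: any chain each of whose elements meets both curtains must, by path-connectedness of curtains (Lemmas~\ref{lem:curtains_separate} and~\ref{lem:convex_curtains}), cross both $h_{jB}$ and $h_{jB+1}$; since these are consecutive in the original $L$-chain, and hence $L$-separated, such a chain has cardinality at most $L$. Thus $\{k_1^1,k_1^2,k_2^1,k_2^2,\ldots\}$ is an $L$-chain dual to $\alpha$. For the spacing, the $\alpha$-position of $k_j^1$ lies in the interval $[t_{(j-1)B+3},t_{(j-1)B+2L+5}]$ determined by its anchors, and similarly $k_j^2$ has position in $[t_{(j-1)B+2L+8},t_{(j-1)B+4L+8}]$. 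With $B=4L+12$ chosen so that the right-padding stays inside the block, one computes $s_{j+1}^1-s_j^2$ and $s_j^2-s_j^1$ both of the form $(\text{index gap})\cdot T$ with index gap $\le 4L+10$, and hence $\le 4T(L+3)$.

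The main obstacle is ensuring that the modification of Lemma~\ref{lem:updating_curtains}'s separation argument to use the anchor points $\alpha(t_i)$ still goes through. The original proof invokes first/last intersection points in order to guarantee that $\alpha$-crossings of the preceding curtains lie below the dual subchain; with the explicit $\alpha(t_i)$, this is automatic from $t_i<t_{i+1}$ together with the chain ordering. Once this is verified, the path-connectedness contradiction underlying the disjointness statements in Lemma~\ref{lem:updating_curtains} is preserved, and the rest of the argument proceeds as above.
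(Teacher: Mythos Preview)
Your approach is correct and follows essentially the same strategy as the paper's proof: partition $\{h_i\}$ into blocks, produce dual curtains inside each block by a counting argument (a dual chain of length $\sim 2L$ can have at most $L$ elements meeting each boundary curtain of the block), and glue across blocks using that consecutive output curtains are separated by two consecutive $h_i$'s from the original $L$--chain. Your modification of the anchor points from first/last intersections to the given $\alpha(t_i)$ is exactly what the paper itself does in its direct proof, so that concern is handled.

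The only substantive difference is packaging. You invoke the base case of Lemma~\ref{lem:updating_curtains} as a subroutine, which yields \emph{two} $L$--separated dual curtains per block of size $B=4L+12$; the paper instead inlines a lighter version of that argument with block size $m=2L+6$, extracting just \emph{one} dual curtain $g_i$ per block (one that avoids both $h_i$ and $h_{i+m-1}$), and relies entirely on the gluing step for $L$--separation. Both yield the same spacing bound $4T(L+3)$, but the paper's version is a little leaner: it does not need the internal $L$--separation of the pair $(k^1_L,k^2_L)$, only disjointness of a single middle curtain from the block's two bounding $h_i$'s.
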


\begin{proof}
Let $m=2L+6$. If $\alpha$ has length at most $2T(L+4)$, then let $\{k_i\}$ consist of any one curtain dual to $\alpha$. Otherwise, there is some $i$ such that both $h_i$ and $h_{i+m-1}$ exist. For each such $i$, let $c'_i$ be a chain of curtains dual to $\alpha$ realising $|c'_i|+1=\dist_\infty(\alpha(t_{i+2}),\alpha(t_{i+m-3}))\ge m-5=2L+1$. Since $h_i$ and $h_{i+1}$ are $L$--separated, at most $L$ elements of $c'_i$ meet $h_i$, and similarly at most $L$ elements of $c'_i$ meet $h_{i+m-1}$. Let $g_i$ be any other element of $c'_i$, so that $\{h_i,g_i,h_{i+m-1}\}$ is a chain. 

Define $k_i=g_{mi}$ for every $i$ such that $h_{mi}$ and $h_{mi+m-1}$ exist, and consider the chain $c''=\{k_i\}$. For each $i$, let $s_i$ be the real number such that $k_i=h_{\alpha,s_i}$. We have $s_{i+1}-s_i\le \dist(h_{im},h_{im+2m})\le2Tm$, so it remains to show that $c''$ is an $L$--chain. Since $h_{i+m-1}$ and $h_{i+m}$ separate $k_i$ from $k_{i+1}$, any curtain meeting $k_i$ and $k_{i+1}$ must meet $h_{i+m-1}$ and $h_{i+m}$. Because $c$ is an $L$--chain, this implies that $c''$ is an $L$--chain.
\end{proof}

Lemma~\ref{lem:crossers_are_dual} allows us to apply Lemma~\ref{lem:bounding_distance_with_dual_curtains} to prove the reverse direction of Theorem~\ref{thm:contracting_curtain_characterisation}.

\begin{proposition} \label{prop:cross_contracting}
If $\alpha$ is a geodesic meeting an infinite $L$--chain of curtains $\{h_i\}$ at points $\alpha(t_i)$ such that $t_{i+1}-t_i\le T$, then $\alpha$ is $(16T(L+3)+3)$--contracting.
\end{proposition}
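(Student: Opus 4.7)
The plan is to combine the Bottleneck Lemma~\ref{lem:bounding_distance_with_dual_curtains} with the dualization procedure of Lemma~\ref{lem:crossers_are_dual}. First I would apply Lemma~\ref{lem:crossers_are_dual} to replace the given $L$--chain by a dual $L$--chain $\{k_i=h_{\alpha,s_i}\}$ with consecutive centers at most $G:=4T(L+3)$ apart on $\alpha$. This moves us into the setting where the bottleneck mechanism applies directly, at the cost of replacing the hypothetical frequency $T$ by $G$.

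The core step is the following: for any ball $B$ disjoint from $\alpha$ and any $x,y\in B$, if $\dist(\pi_\alpha x,\pi_\alpha y)>4G+3$, then $\dist(B,\alpha)<2L+1$. The gap bound $G$ lets me select $i_1<i_2<i_3$ so that $s_{i_1},s_{i_2},s_{i_3}$ lie strictly between the parameters of $\pi_\alpha x$ and $\pi_\alpha y$ on $\alpha$. Because the $k_{i_j}$ are dual to $\alpha$, their halfspaces are detected directly by projection to $\alpha$, so $x\in k_{i_1}^-$ and $y\in k_{i_3}^+$; convexity of balls in CAT(0) spaces yields $[x,y]\subseteq B$, and since $k_{i_2}$ separates $x$ from $y$ the geodesic $[x,y]$ must meet $k_{i_2}$ at some $p$. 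Taking $b=[\pi_\alpha x,\pi_\alpha y]\subseteq\alpha$, a short check (using that the three poles sit strictly inside $b$) shows each $k_{i_j}$ is equally dual to $b$ with the same halfspaces, so Lemma~\ref{lem:bounding_distance_with_dual_curtains} applied to $\{x,\pi_\alpha x\}\subseteq k_{i_1}^-$ and $\{y,\pi_\alpha y\}\subseteq k_{i_3}^+$ with $x_1=\pi_\alpha x$, $y_1=\pi_\alpha y$ and the point $p$ yields $\dist(p,\pi_\alpha p)=\dist(p,\pi_b p)<2L+1$. Since $p\in[x,y]\subseteq B$, this forces $\dist(B,\alpha)<2L+1$.

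From the main estimate, the regime $\dist(B,\alpha)\ge 2L+1$ is handled by contrapositive: $\diam\pi_\alpha B\le 4G+3=16T(L+3)+3$. For the residual regime $\dist(B,\alpha)<2L+1$, I would shrink $B$ to a concentric sub-ball $B'$ with $\dist(B',\alpha)=2L+1$ when this is possible, control the annular remainder $B\setminus B'$ using that $\pi_\alpha$ is $1$--Lipschitz, and dispose of the degenerate case in which no such $B'$ exists by observing that then the radius of $B$ is at most $2L+1$ and the same $1$--Lipschitz bound applies.

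The main obstacle I anticipate is recovering the declared constant $16T(L+3)+3$ in the residual regime; the naive shrinking-plus-annulus estimate produces an extra $O(L)$ additive term. My backup plan is to work with the equivalent near-orthogonal formulation of contracting, namely bounding $\dist(\pi_\alpha x,\pi_\alpha y)$ when $\dist(x,y)\le\dist(x,\alpha)$: applying the bottleneck argument at a putative counterexample gives $\dist(x,y)\ge\dist(x,\alpha)+\dist(y,\alpha)-(4L+2)$ via the triangle inequality at $p$, which contradicts $\dist(x,y)\le\dist(x,\alpha)$ in exactly the regime where the bound is needed and so dissolves the case split on $\dist(B,\alpha)$.
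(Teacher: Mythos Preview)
Your first approach is correct and essentially parallel to the paper's: dualize via Lemma~\ref{lem:crossers_are_dual} and invoke the bottleneck Lemma~\ref{lem:bounding_distance_with_dual_curtains}. The case split on $\dist(B,\alpha)$ and the shrink-and-annulus estimate work, and as you note they produce the constant $4G+4L+5$ rather than $4G+3$.

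Your backup plan, however, has a genuine gap. The inequality $\dist(x,y)\ge\dist(x,\alpha)+\dist(y,\alpha)-(4L+2)$ combined with $\dist(x,y)\le\dist(x,\alpha)$ does \emph{not} yield a contradiction: it only gives $\dist(y,\alpha)\le 4L+2$, and you have said nothing about how to bound $\dist(\pi_\alpha x,\pi_\alpha y)$ when $y$ happens to lie that close to $\alpha$. The phrase ``in exactly the regime where the bound is needed'' hides this.

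The paper dissolves the case split by a different manoeuvre. After dualizing, it collapses the two parameters by replacing $L$ with $G=4T(L+3)$ (the dual curtains are still a $G$--chain since $L\le G$), so the bottleneck constant and the gap are both $G$. Then, rather than comparing two arbitrary points of $B$, it fixes the centre $x$ and, for $w\in B$, interpolates: let $y$ be the point on $[x,w]$ at distance $\min\{\dist(x,w),\dist(x,\pi_\alpha x)-2G-1\}$ from $x$. The bottleneck argument applied to $x,y$ bounds $\dist(\pi_\alpha x,\pi_\alpha y)$, and since $\dist(y,w)<2G+1$ the $1$--Lipschitz property of $\pi_\alpha$ finishes the estimate. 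This single interpolation step is what replaces your case split and your incomplete contradiction; without it (or something equivalent) your backup plan does not close.
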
 

\begin{proof}
According to Lemma~\ref{lem:crossers_are_dual}, after replacing $L$ by $4T(L+3)$, we may assume that $h_i=h_{\alpha,t_i}$. Suppose that $x,y\in X$ have $\dist(\pi_\alpha x,\pi_\alpha y)>2L+2$. Then (perhaps after relabelling) there exists $i$ such that $x\in h_{i-1}^-$ and $y\in h_{i+1}^+$. Let $z\in[x,y]\cap h_i$. Lemma~\ref{lem:bounding_distance_with_dual_curtains} tells us that $\dist(z,\pi_\alpha z)<2L+1$. As $\pi_\alpha z\in h_i$, we have
\[
\dist(x,y) \:\ge\: \dist(x,z) \:\ge\: \dist(x,\pi_\alpha z)-2L-1 \:\ge\: \dist(x,\pi_\alpha x)-2L-1.
\]
In particular, any point $y$ with $\dist(x,y)\le\dist(x,\pi_\alpha x)-2L-1$ has $\dist(\pi_\alpha x,\pi_\alpha y)\le2+2L$.

Let $B$ be a ball centred on $x$ that is disjoint from $\alpha$, and let $w\in B$. Let $y$ be the point of $[x,w]$ with $\dist(x,y)=\min\{\dist(x,w),\dist(x,\pi_\alpha x)-2L-1\}$. As $\dist(y,w)\le2L-1$, we have 
\[
\dist(\pi_\alpha x,\pi_\alpha w) \:\le\: \dist(\pi_\alpha x,\pi_\alpha y)+\dist(\pi_\alpha y,\pi_\alpha w)
\:\le\: (2L+2)+(2L+1),
\]
because $\pi_\alpha$ is $1$--Lipschitz.
\end{proof}

\begin{corollary}\label{cor:hyp_iff_XL_and_X_QI}
A CAT(0) space $X$ is hyperbolic if and only if the identity map induces a quasi-isometry between $X$ and $X_L$ for some $L.$
\end{corollary}

\begin{proof}
All geodesics in a hyperbolic space are uniformly contracting, so the forward direction follows from Lemma~\ref{lem:contracting_crosses}. The reverse is Theorem~\ref{thm:XL_hyperbolic}.
\end{proof}

When considering axes of isometries, the conclusion of Theorem~\ref{thm:contracting_curtain_characterisation} can be strengthened. 

\begin{definition}[Skewer]
An isometry $g$ is said to skewer two curtains $h_1,h_2$ if, $g^mh_1$, $h_2$, and $h_1$ are pairwise disjoint and, perhaps after relabelling, $g^mh_1^+ \subsetneq h_2^+ \subsetneq h_1^+$ for some $m \in \mathbf{N}$. 
\end{definition}

As discussed in the introduction, the equivalence between contraction and skewering of separated curtains mirrors a characterisation in the cubical setting \cite{capracesageev:rank,genevois:contracting}.

\begin{theorem} \label{thm:rank_one_characterisation}
Let $g$ be a semisimple isometry of $X$. The following are equivalent.
\begin{enumerate}
\item   $g$ is contracting. \label{cond:rank_one}
\item   $g$ acts loxodromically on some $X_L$. \label{cond:loxodromic}
\item   $g$ is hyperbolic and there exist $L,n$ such that $\dist_L(x,g^nx)>L+4$ for some $x$ lying in an axis $\alpha$ of $g$. \label{cond:big}
\item   $g$ skewers a pair of separated curtains. \label{cond:skewer}
\end{enumerate}
\end{theorem}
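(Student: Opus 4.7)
The plan is to establish the cyclic chain of implications $(1) \Rightarrow (2) \Rightarrow (3) \Rightarrow (4) \Rightarrow (1)$, leveraging Theorem~\ref{thm:contracting_curtain_characterisation} and the combinatorial lemmas on $L$--chains from Section~2.

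Three of the four implications are relatively direct. For $(1) \Rightarrow (2)$, a $D$--contracting axis $\alpha$ produces an infinite uniformly-spaced $(10D+3)$--chain crossed by $\alpha$ via Lemma~\ref{lem:contracting_crosses}; since $g$ translates $\alpha$, its orbit in $X_L$ (for $L = 10D + 3$) crosses linearly many of these curtains, so $\dist_L(x, g^{kn}x)$ grows linearly in $k$, and $g$ acts loxodromically on $X_L$. For $(2) \Rightarrow (3)$, observe that elliptic semisimple isometries of $X$ have bounded orbits and hence bounded $X_L$-orbits (since $\dist_L \le 1 + \dist$), so a loxodromic $X_L$-action forces $g$ to be hyperbolic; picking $x$ on an axis and $n$ large enough then yields $\dist_L(x, g^n x) > L + 4$. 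For $(4) \Rightarrow (1)$, skewering of an $L$--separated pair $\{h_1, h_2\}$ produces a strictly nested decreasing sequence of halfspaces $g^{km}h_1^+$, whose thickness ($\dist(h^-,h^+) = 1$) forces unbounded orbits, so that $g$ is hyperbolic with some axis $\alpha$; the iterates $\{g^{km}h_1, g^{km}h_2\}_{k \in \mathbf{Z}}$ constitute an infinite $L$-chain that $\alpha$ crosses at uniform rate $m|g|$, and Proposition~\ref{prop:cross_contracting} then gives that $\alpha$ is contracting.

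The key implication is $(3) \Rightarrow (4)$. Given an $L$--chain $c_0$ of cardinality $m \geq L + 4$ separating $x$ from $g^n x$ on the axis $\alpha$, consider the translates $c_j = g^{jn} c_0$, which are $L$--chains of size $m$ separating consecutive orbit points. The goal is to combine $\bigcup_{j=0}^{N-1} c_j$ into a single $L$--chain whose cardinality grows linearly in $N$. For this, applying Corollary~\ref{cor:double_crossing} to triples like $(g^{(j-1)n}x, g^{jn}x, g^{(j+1)n}x)$ bounds by $L' = 1 + \lfloor L/2 \rfloor$ the number of curtains of $c_j$ that are misaligned in a way that would obstruct applying Lemma~\ref{lem:gluing_chains}; after discarding these, the remaining middle portions can be glued, with a net gain of at least two curtains per step since $m \geq L + 4$. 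Once the combined chain has cardinality at least $4L + 10$ (after boundedly many gluings), Lemma~\ref{lem:updating_curtains} with $n_0 = 1$ extracts a pair $\{h_1, h_2\}$ of $L$--separated curtains dual to $\alpha$ separating $x$ from $g^{Nn}x$. Finally, because $g$ translates $\alpha$ by $|g|$, the iterate $g^{KNn}h_1$ is pushed past $h_2$ along $\alpha$ for $K$ large, giving $g^{KNn}h_1^+ \subsetneq h_2^+ \subsetneq h_1^+$; that is, $g$ skewers the $L$--separated pair $\{h_1, h_2\}$.

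The hard part will be the careful combinatorial bookkeeping in $(3) \Rightarrow (4)$: verifying that the compatibility hypothesis of Lemma~\ref{lem:gluing_chains} is satisfied after the trimming dictated by Corollary~\ref{cor:double_crossing}, and ensuring a positive net gain in chain size per gluing. The bound $L + 4$ in the hypothesis of $(3)$ is calibrated precisely so that this iterative process produces an $L$--chain long enough to activate Lemma~\ref{lem:updating_curtains}.
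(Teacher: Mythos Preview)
Your proposal is essentially correct and close in spirit to the paper's argument, but it differs structurally in one place and has one small gap elsewhere.

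\textbf{Structural difference.} You run the cycle $(1)\Rightarrow(2)\Rightarrow(3)\Rightarrow(4)\Rightarrow(1)$. The paper instead proves $(1)\Rightarrow(2)$, $(1)\Rightarrow(4)$, $(2)\Rightarrow(3)$, $(3)\Rightarrow(1)$, and $(4)\Rightarrow(1)$. The substantive difference is your $(3)\Rightarrow(4)$ versus the paper's $(3)\Rightarrow(1)$. The paper's route is shorter: once translates of the $L$--chain $c$ are glued (via Lemma~\ref{lem:gluing_chains}) into an infinite $L$--chain crossed by the axis, Theorem~\ref{thm:contracting_curtain_characterisation} immediately gives that the axis is contracting. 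Your route builds only a finite chain of length $4L+10$, then invokes Lemma~\ref{lem:updating_curtains} to dualise and finally argues skewering directly. Both are valid; yours trades a lighter gluing burden (finitely many steps) for the extra dualising step and the skewering verification. The paper's approach is more economical since $(4)$ then falls out of $(1)$ for free via Lemma~\ref{lem:contracting_crosses}.

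\textbf{A gap in $(4)\Rightarrow(1)$.} Your assertion that $\{g^{km}h_1, g^{km}h_2\}_{k\in\mathbf Z}$ is an $L$--chain is not justified: it is not clear that $h_2$ and $g^m h_1$ are $L$--separated, since a curtain meeting both might lie entirely in $h_1^+$ and never touch $h_1$. The paper sidesteps this by using only $\{g^{km}h_1\}_k$: since $h_2$ separates $h_1$ from $g^m h_1$, any curtain meeting both $h_1$ and $g^m h_1$ must meet $h_2$, and $L$--separation of $\{h_1,h_2\}$ then bounds the chain. This fix is immediate, but as written your claim needs it.

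\textbf{On the bookkeeping in $(3)\Rightarrow(4)$.} Your ``net gain of at least two per step'' reads as applying Lemma~\ref{lem:gluing_chains} to $c_{j}$ of size $m\ge L+4$, giving gain $m-L-2\ge 2$. But you also say you pre-trim via Corollary~\ref{cor:double_crossing}, which would subtract a further $L'$ and can make the gain nonpositive. The paper is equally terse here (it simply asserts a nonempty $c'\subset c$ with $\bigsqcup g^{jn}c'$ an $L$--chain), and in fact one only needs a \emph{single} surviving curtain $h\in c$ with $h$ and $g^n h$ $L$--separated; this is what the paper extracts, and it suffices for either your route or theirs. So your acknowledged ``careful combinatorial bookkeeping'' is real, but the target is weaker than you suggest.
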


\begin{proof} 
Items \eqref{cond:loxodromic} and \eqref{cond:skewer} follow immediately from \eqref{cond:rank_one} by Theorem~\ref{thm:contracting_curtain_characterisation}. If $g$ acts loxodromically on $X_L$, then $g$ is necessarily a hyperbolic isometry of $X$, because $\dist_L\le1+\dist$ (Remark~\ref{rmk:new_metric_bounded}), and \eqref{cond:big} is immediate.

Assuming \eqref{cond:big}, let $c$ be an $L$--chain realising $|c|+1=\dist_L(x,g^nx)>L+4$. As $x$, $g^nx$, and $g^{2n}x$ lie on a geodesic,  Lemma~\ref{lem:gluing_chains} lets us find a nonempty subchain $c'$ such that $\bigsqcup_{n\in\Z}g^{jn}c'$ is an $L$--chain. Let $h_1$ be the minimal element of $c'$, and let $h_2$ be the maximal element. Let $s_1$ satisfy $\alpha(s_1)\in h_1$ and let $s_2$ satisfy $\alpha(s_2)\in gh_2$. Applying Theorem~\ref{thm:contracting_curtain_characterisation}, we see that $\alpha$ is contracting, with constant depending only on $L$ and $|s_2-s_1|$. Hence \eqref{cond:big} implies \eqref{cond:rank_one}.

Finally, we show that \eqref{cond:skewer} implies \eqref{cond:rank_one}. Let $h_1,h_2$ be curtains such that $g^mh_1^+ \subsetneq h_2^+ \subsetneq h_1^+$. Because any curtain crossing $h_1$ and $g^mh_1$ crosses $h_2$, the curtains $g^{km}h_1$ and $g^{(k+1)m}h_1$ are $L$--separated for every $k\in\Z$. That is, $\{g^{km}h_1:k\in\Z\}$ is an $L$--chain. In particular, we have a nested sequence of halfspaces
\[
\cdots g^{2m}h_1^+  \subsetneq g^mh_1^+ \subsetneq h_1^+  \subsetneq g^{-m}h_1^+  \subseteq g^{-2m}h_1^+ \cdots 
\]
If $p \in h_1$, then $\dist(p,g^{nm}p) \geq n$ by Remark~\ref{rem:thick_curtains}, so $\underset{k \rightarrow \infty}{\lim} \frac{\dist(p, g^{km}p)}{k} \geq 1$. Hence $g$ is hyperbolic, because it is semisimple. 

Any axis $\alpha$ of $g$ necessarily meets $h_1$. Indeed, let $x\in\alpha$. If $x\in h_1$ then we are done. Otherwise, there is some $n$ such that $h_1$ separates $x$ from $g^{nm}x$, and we apply Lemma~\ref{lem:curtains_separate}. Hence $\alpha=g^{km}\alpha$ meets every $g^{km}h_1$. Theorem~\ref{thm:contracting_curtain_characterisation} tells us that $\alpha$ is contracting.
\end{proof}

In particular, this gives a simple characterisation of CAT(0) spaces admitting a contracting isometry in terms of curtains and the spaces $X_L$.

\begin{lemma} \label{lem:unbounded_rank_one}
A group $G$ acting coboundedly by semisimple isometries on a CAT(0) space $X$ has a contracting element if and only if some $X_L$ is unbounded. 
\end{lemma}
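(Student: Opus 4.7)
The forward direction would be an immediate consequence of Theorem~\ref{thm:rank_one_characterisation}: a contracting element $g \in G$ acts loxodromically on some $X_L$ by the equivalence (1)$\Leftrightarrow$(2), and this forces $X_L$ to be unbounded.

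For the reverse direction, my plan is to produce an element of $G$ acting loxodromically on some $X_L$ and then invoke Theorem~\ref{thm:rank_one_characterisation} to conclude it is contracting. Assume some $X_L$ is unbounded. The first step is to observe that the $G$-action on $X_L$ is cobounded: since $\isom X \leq \isom X_L$ (Theorem~\ref{thm:XL_hyperbolic}) and $\dist_L \leq 1 + \dist$ (Remark~\ref{rmk:new_metric_bounded}), the cobounded $G$-action on $X$ descends to a cobounded action on $X_L$. Thus $G$ acts coboundedly on the unbounded hyperbolic space $X_L$.

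The second step is to extract a loxodromic element from this action, for which I would appeal to Gromov's classification of isometric group actions on hyperbolic spaces. A cobounded action on an unbounded space cannot be elliptic, since bounded orbits would force $X_L$ to be bounded. It also cannot be parabolic: if $G$ fixed a unique boundary point $\xi \in \partial X_L$ and contained no loxodromic element, then the Busemann cocycle associated with $\xi$ would be trivial, so every $G$-orbit would remain in a bounded neighbourhood of a single horosphere based at $\xi$, which is incompatible with coboundedness on the unbounded space $X_L$. The remaining alternatives (lineal, focal, or general type) each yield a loxodromic element $g \in G$. Since $g$ is semisimple by hypothesis, Theorem~\ref{thm:rank_one_characterisation} then gives that $g$ is contracting, completing the argument.

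The main potential obstacle is the careful handling of the parabolic case: on a general Gromov hyperbolic space, Busemann functions are only defined up to bounded error, and the associated cocycles are quasi-homomorphisms rather than homomorphisms, so ruling out this case requires some care. In practice, I would prefer to cite a standard reference on Gromov's classification to the effect that a cobounded isometric action on an unbounded hyperbolic space always contains a loxodromic element, rather than reprove this from scratch.
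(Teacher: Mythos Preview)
Your approach is essentially the same as the paper's: forward direction via Theorem~\ref{thm:rank_one_characterisation}, reverse direction via coboundedness of the action on $X_L$ together with Gromov's classification to extract a loxodromic, then Theorem~\ref{thm:rank_one_characterisation} again. The one point you are missing is not the parabolic case but the fact that $X_L$ is only a \emph{quasigeodesic} hyperbolic space (Lemma~\ref{lem:weakly_roughly_geodesic}, Theorem~\ref{thm:XL_hyperbolic}), whereas standard references for Gromov's classification (e.g.\ Caprace--Cornulier--Monod--Tessera) are stated for geodesic hyperbolic spaces. The paper handles this by passing to the injective hull $E(X_L)$, which by Proposition~\ref{prop:coarsely_injective} is a geodesic hyperbolic space containing $X_L$ as a coarsely dense subspace; the $G$--action extends (by functoriality of the injective hull) and remains cobounded there, so Gromov's classification applies cleanly to $E(X_L)$, and a loxodromic on $E(X_L)$ is loxodromic on the coarsely dense $X_L$.

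Your worry about ruling out the parabolic case by hand is therefore a red herring: once you are on a geodesic hyperbolic space, the statement that a cobounded action on an unbounded space admits a loxodromic is standard and citable, and no separate analysis of Busemann cocycles is needed. The fix to your write-up is simply to insert the passage to $E(X_L)$ before invoking the classification.
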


\begin{proof}
If $G$ has a contracting element, then Theorem~\ref{thm:rank_one_characterisation} implies that some $X_L$ is unbounded. If $X_L$ is unbounded, then (using Proposition~\ref{prop:Rips_coarsely_injective}) Gromov's classification \cite{gromov:hyperbolic,capracecornuliermonodtessera:amenable} implies that $G$ contains an isometry acting loxodromically on $X_L$, and that isometry is contracting by Theorem~\ref{thm:rank_one_characterisation}.
\end{proof}

We finish this section by characterising stable subgroups of CAT(0) groups in terms of their orbits on the $X_L$. As discussed in the introduction, this is the same as the situation in mapping class groups \cite{durhamtaylor:convex,farbmosher:convex,kentleininger:shadows}, and, more generally, hierarchically hyperbolic groups \cite{abbottbehrstockdurham:largest}.

\begin{definition}[Stable]
Let $Y$ be a subset of a CAT(0) space $X$. We say that $Y$ is \emph{stable} if there exist $\mu, D \geq 0$ such that every geodesic between points of $Y$ is $D$--contracting and stays $\mu$--close to $Y$. A subgroup of a group acting properly coboundedly on $X$ is \emph{stable} if it has a stable orbit.
\end{definition}

We remark that the above definition is specialised to the case of CAT(0) spaces. In general, it is necessary to consider \emph{Morse} geodesics  instead of contracting ones, but in CAT(0) spaces the two notions are equivalent \cite[Thm~2.9]{charneysultan:contracting}.

\begin{proposition}\label{prop: stable versus models}
Let $G$ be a group acting properly coboundedly on a CAT(0) space $X$. A subgroup $H\leq G$ is stable if and only if it is finitely generated and there is some $L$ such that the orbit maps $H\to X_L$ are quasiisometric embeddings.
\end{proposition}

\begin{proof}
Assume that an orbit $H \cdot x$ is stable in $X$, and let $g,h\in H$. By assumption, there is a contracting geodesic $\alpha$ connecting $gx$ and $hx$. By Theorem~\ref{thm:contracting_curtain_characterisation}, $\alpha$ meets an $L$--chain of curtains $L$--frequently, where $L$ is determined by the contracting constant. Thus $\alpha$ uniformly quasiisometrically embeds in $X_L$ by definition, which gives a coarse-linear equivalence between $\dist_L(gx,hx)$ and $\dist(gx,hx)$. Since $H$ is stable, it is both finitely generated and undistorted in $G$ \cite{durhamtaylor:convex,tran:onstrongly}. Thus $\dist(hx,kx)$ and $\dist_H(h,k)$ are comparable, yielding the forward direction.

For the converse, suppose that $H\cdot x$ is quasiisometrically embedded in $X_L$. Since $H$ is finitely generated, its orbit maps on $X$ are coarsely Lipschitz, and because the remetrisation $X\to X_L$ is coarsely Lipschitz, this means that $H\cdot x$ is also quasiisometrically embedded in $X$. In particular, for any $g,h\in H$, the CAT(0) geodesic $[gx,hx]$ uniformly quasiisometrically embeds in $X_L$, and so is contracting by Theorem~\ref{thm:contracting_curtain_characterisation}. The contracting property implies that the quasiisometric image of any $H$--geodesic from $g$ to $h$ is uniformly Hausdorff-close to $[gx,hx]$. Hence $[gx,hx]$ stays uniformly close to $H\cdot x$.
\end{proof}

Recall that for a CAT(0) space $X$, the \emph{Morse boundary}, denoted $\partial^{\star}X,$ is defined to be the collection of all contracting geodesic rays starting at a fixed point $\go$. The topology on the Morse boundary is given in Definition~\ref{def:Morse_boundary}. The previous result shows that for any finite family of rank-one elements, there exists some $X_L$ where they all act loxodromically. Using the Morse boundary, we can upgrade that to families satisfying some topological condition.

\begin{lemma}
Let $X$ be a proper CAT(0) space and let $\cal{H} \subset\isom X$ be a collection of rank-one elements whose limit set is contained in a compact subspace of the Morse boundary $\partial^{\star}X$. There exists an $L$ such that each element $g \in \cal{H}$ acts loxodromically on $X_L$. \end{lemma}

\begin{proof} 
If a subspace $Y$ of $\partial^{\star} X$ is compact, then there exists some $D$ such that the geodesic representative for each element in $Y$ is $D$-contracting \cite[Lem.~3.3]{murray:topology}. It follows that if $\alpha_g:(-\infty, \infty) \to X$ denotes the axis of $g\in\cal H$, then the unique geodesic rays $[\go, \alpha_g^{-\infty}], [\go, \alpha_g^{\infty}]$ are $D$-contracting. Hence $\alpha_g$ is $D'$-contracting where $D'$ depends only on $D$ (\cite[Lem.~2.4]{charneycordesmurray:quasimobius} and \cite[Thm~2.14]{charneysultan:contracting}). Thus, since the collection of lines $\{\alpha_g \,:\, g \in \cal{H}\}$ are all $D'$-contracting, Theorem~\ref{thm:contracting_curtain_characterisation} implies that each such $g$ acts loxodromically on $X_L$, where $L=10D'+3$.
\end{proof}

\section{Weak properness of actions} \label{sec:diameter} 

In the previous section, we explored properties of isometries and groups of isometries with respect to their actions on the spaces $X_L$. We will now consider how the actions on $X_L$ of various elements and subgroups interact with each other, and prove that they satisfy various notions of properness. 

\begin{definition}[WPD, non-uniform acylindricity]
Let $G$ be a group acting on a metric space $X$. An infinite order element $g \in G$ is called \emph{WPD} (weakly properly discontinuous) if it has a quasi-isometrically embedded orbit and for each $\eps>0$ and each $x\in X$, there exists $m>0$ such that
\[
|\{h \in G \::\: \dist(x,hx),\, \dist(g^mx,hg^mx)<\eps\}| < \infty.	
\]
The action is said to be \emph{non-uniformly acylindrical} if for each $\eps>0$ there exists $R$ such that for any $x,y\in X$ with $\dist(x,y)\ge R$, only finitely many $g\in G$ have $\max\{\dist(x,gx),\dist(y,gy)\}<\eps$. 
\end{definition}

Observe that every loxodromic isometry in a non-uniformly acylindrical action is WPD. Moreover, note that an action on a bounded metric space is always non-uniformly acylindrical.

We need the following lemma, which is similar to Lemma~\ref{lem:bounding_distance_with_dual_curtains}, but does not require the curtains to be dual to a single geodesic.

\begin{lemma} \label{lem:bottleneck}
Suppose that $\alpha$ and $\alpha'$ are geodesics that cross three pairwise $L$--separated curtains $h_1,h_2,h_3$, and let $x_i\in \alpha\cap h_i$, $y_i\in\alpha'\cap h_i$. If $\dist(x_1,x_2),\dist(x_2,x_3)\le T$, then $\dist(x_2,y_2) \leq 2L+\lceil T\rceil$.
\end{lemma}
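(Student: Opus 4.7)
My plan is to follow the strategy of Lemma~\ref{lem:bounding_distance_with_dual_curtains} (Bottleneck), with the extra $\lceil T\rceil$ term accounting for the fact that $h_1,h_2,h_3$ are not assumed dual to a common geodesic. I apply Lemma~\ref{lem:chain_distance} to fix a chain $c$ of curtains dual to $[x_2,y_2]$ with $1+|c|=\lceil\dist(x_2,y_2)\rceil$, and aim to show $|c|\le 2L+\lceil T\rceil-1$. First, every $k\in c$ meets $h_2$: otherwise, $h_2$ would be a path-connected set (Lemma~\ref{lem:convex_curtains}) disjoint from $k$, hence contained in a single halfspace of $k$, contradicting that $k$ separates $x_2,y_2\in h_2$. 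It follows that the elements of $c$ meeting $h_1$ form a sub-chain all of whose members meet both $h_1$ and $h_2$; $L$--separation of $\{h_1,h_2\}$ then bounds this sub-chain by $L$, and likewise at most $L$ elements of $c$ meet $h_3$.

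To bound the elements $k\in c$ meeting neither $h_1$ nor $h_3$, I observe that the same path-connectivity argument (now applied to the path-connected sets $h_1^-\cup h_1$ and $h_3^+\cup h_3$, each of which contains a separated pair from $\{x_2,y_2\}$ only when the argument fails) forces such a $k$ to lie in $h_1^+\cap h_3^-$. I then apply Lemma~\ref{lem:controlling_curtains} with $\alpha_0=\alpha|_{[x_1,x_3]}$, middle point $x_2$ and external point $y_2$: every $k\in c$ meets at most one of the two segments $[x_1,x_2]$ and $[x_2,x_3]$. As each of these segments has length at most $T$ and $\pi_{[x_2,y_2]}$ is $1$--Lipschitz, their projections onto $[x_2,y_2]$ are connected sub-intervals of length at most $T$ anchored at $x_2$. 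Because the poles of the curtains in $c$ are pairwise disjoint unit sub-intervals of $[x_2,y_2]$, at most $\lceil T\rceil$ of these poles can meet the union of the two projections. Combining the three counts gives $|c|\le 2L+\lceil T\rceil-1$, and hence $\dist(x_2,y_2)\le 2L+\lceil T\rceil$.

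The delicate step is the final one, in which I must justify that every $k\in c$ that meets neither $h_1$ nor $h_3$ has its pole sitting inside $\pi_{[x_2,y_2]}(\alpha|_{[x_1,x_3]})$. I expect this to be the main obstacle, and anticipate using the combination $k\subset h_1^+\cap h_3^-$ together with $k\cap h_2\ne\varnothing$ to argue that any such curtain is forced to intersect the short segment $\alpha|_{[x_1,x_3]}$ (the only part of $\alpha$ lying in $h_1^+\cap h_3^-$), after which the projection bound and Lemma~\ref{lem:controlling_curtains} combine to give the desired count.
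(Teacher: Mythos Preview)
Your overall strategy matches the paper's exactly: take a maximal chain $c$ dual to $[x_2,y_2]$, note that every element meets $h_2$, discard the $\le 2L$ elements meeting $h_1\cup h_3$, and bound the remaining ``good'' curtains by $\lceil T\rceil-1$. The gap is precisely the step you flag as delicate.

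Your proposed justification that a good $k$ must meet $\alpha|_{[x_1,x_3]}$ does not go through from $k\subset h_1^+\cap h_3^-$ and $k\cap h_2\ne\varnothing$ alone. These conditions do not prevent $x_1,x_2,x_3$ from all lying in $k^-$, in which case $[x_1,x_3]$ can sit entirely in $k^-$ and miss $k$. The missing ingredient is the symmetric use of $\alpha'$: apply Lemma~\ref{lem:controlling_curtains} with middle point $y_2$ and external point $x_2$ to conclude that no $k\in c$ meets both $[y_1,y_2]$ and $[y_2,y_3]$. Now if a good $k$ had $x_1,x_2,x_3\in k^-$, then since $k$ is disjoint from $h_1$ and $h_3$ we would also have $y_1,y_3\in k^-$; but $y_2\in k^+$, so $k$ would separate $y_2$ from both $y_1$ and $y_3$ and hence meet both $[y_1,y_2]$ and $[y_2,y_3]$, a contradiction. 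Thus every good $k$ separates $x_2$ from one of $x_1,x_3$.

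With that in hand, the paper's finish is slightly cleaner than your pole-counting. One checks (using that $c$ is a chain dual to $[x_2,y_2]$) that all good $k$ put $h_1$ on the same side, so after swapping $1\leftrightarrow 3$ if necessary, \emph{every} good $k$ separates $x_1$ from $x_2$; then Lemma~\ref{lem:chain_distance} bounds their number by $\lceil\dist(x_1,x_2)\rceil-1\le\lceil T\rceil-1$ directly. Your projection argument also works once the gap is filled, but as written it may overcount by one, since a pole only needs to \emph{meet} the length-$T$ projection interval rather than lie inside it.
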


\begin{proof}
Let $c$ be chain dual to $\beta=[x_2,y_2]$ that realises $\dist_\infty(x_2,y_2)=1+|c|$. In view of Lemma~\ref{lem:chain_distance}, it suffices to show that $|c|\le2L+\lceil T\rceil-1$.

Because $x_2,y_2 \in h_2$, every element of $c$ must intersect $h_2$ by Lemma~\ref{lem:curtains_separate}. Thus $L$--separation tells us that at most $2L$ elements of $c$ can intersect $h_1 \cup h_3$. Moreover, by Lemma~\ref{lem:controlling_curtains}, no curtain in $c$ can intersect both $[x_1,x_2]$ and $[x_2,x_3]$, and similarly for the $y_i$. Hence, perhaps after relabelling, all but at most $2L$ elements of $c$ must cross both $[x_1,x_2]$ and $[y_2,y_3]$. But $\dist(x_1,x_2)=T$, so $|c|\le2L+\lceil T\rceil-1$.
\end{proof}

We now have all the ingredients to prove that the action on $X_L$ is non-uniformly acylindrical.

\begin{proposition} \label{prop:nonuniform_acylindricity}
Any group $G$ acting properly on a CAT(0) space $X$ acts non-uniformly acylindrically on every $X_L$. In particular, if $g\in G$ is contracting, then $g$ is WPD on $X_L$ for every $L$ for which it is loxodromic.
\end{proposition}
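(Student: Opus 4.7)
The plan is to prove the non-uniform acylindricity first, and then deduce the WPD statement for a contracting loxodromic $g$ by applying it to a pair $(x_0, g^N x_0)$, where $x_0$ lies on an axis of $g$: for $N$ sufficiently large, Theorem~\ref{thm:contracting_curtain_characterisation} guarantees that $\dist_L(x_0, g^N x_0)$ exceeds any prescribed threshold, at which point non-uniform acylindricity directly supplies the WPD condition. So fix $\epsilon > 0$; I aim to produce $R = R(L, \epsilon)$ such that $\dist_L(x, y) \geq R$ implies that the set $\{g \in G : \max\{\dist_L(x, gx), \dist_L(y, gy)\} < \epsilon\}$ is finite.

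For each such $g$, I would first apply Lemma~\ref{lem:updating_curtains} to extract an $L$-chain $\{h_1, \ldots, h_n\}$ dual to $\alpha := [x, y]$ with $n \geq R/(4L+10) - 1$. The condition $\dist_L(x, gx) < \epsilon$ forces at most $\epsilon - 1$ of these curtains to separate $x$ from $gx$, and similarly for $y$, so at least $n - 2\epsilon + 2$ of them separate $\{x, gx\}$ from $\{y, gy\}$; I call these \emph{good for $g$}. A pigeonhole argument on the $\alpha$-crossing parameters $r_i$ of the dual curtains then produces three consecutive good curtains $h_i, h_{i+1}, h_{i+2}$ with $r_{i+2} - r_i \leq T$, for a constant $T$ depending only on $L$. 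Applying Lemma~\ref{lem:bottleneck} to $\alpha$ and $\beta := [gx, gy]$ with these three pairwise $L$-separated curtains yields $\dist(p, q) \leq 2L + T$, where $p := \alpha \cap h_{i+1}$ and $q := \beta \cap h_{i+1}$.

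To conclude, I set $p' := g^{-1} q$, which lies on $g^{-1} \beta = \alpha$. Then both $p$ and $p'$ sit in the compact set $K := [x, y]$, while $gp' = q$ lies in $K' := \overline{B_X([x, y], 2L + T)}$, a compact subset of $X$ that is independent of $g$. Since $gp' \in gK \cap K'$, properness of the action of $G$ on the proper CAT(0) space $X$ forces only finitely many such $g$ to exist. The main obstacle I anticipate is the pigeonhole step: the span $r_n - r_1$ is bounded only by $\dist(x, y)$, which can vastly exceed $\dist_L(x, y)$ when $X$ contains large flat-like regions. To handle this, one must exploit that $L$-separated curtains cannot cluster inside flats, so that the dual curtains concentrate in the non-flat parts of $\alpha$ where close triples are abundant; alternatively, one refines the initial $L$-chain realising $\dist_L(x, y)$ so that its $\alpha$-crossings are well-distributed, and applies Lemma~\ref{lem:updating_curtains} within each cluster to obtain a uniform $T$.
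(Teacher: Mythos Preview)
Your approach is close to the paper's and the use of Lemma~\ref{lem:bottleneck} is exactly the right tool, but you have manufactured an obstacle that does not exist. Non-uniform acylindricity only asks, for each \emph{fixed} pair $x,y$ with $\dist_L(x,y)\ge R$, that the set of offending $g$ be finite---there is no uniformity in $x,y$. Consequently the constant $T$ in your application of Lemma~\ref{lem:bottleneck} may freely depend on $x$ and $y$; it only needs to be independent of $g$. Since any three good curtains satisfy $r_{i+2}-r_i\le\dist(x,y)$ trivially, you can take $T=\dist(x,y)$ and let your bounded set be the $(2L+\dist(x,y))$--neighbourhood of $[x,y]$. This set is fixed once $x,y$ are, so metric properness of the action finishes immediately. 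No pigeonhole, no clustering analysis of curtains in flats, and no refinement of the chain is needed; your final paragraph can simply be deleted.

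The paper's proof makes this even cleaner by arranging the three curtains to be independent of $g$ from the outset. It sets $R=4+2\lceil\eps\rceil$, takes any $L$--chain $\{k_1,\dots,k_{\eps'},h_1,h_2,h_3,k'_1,\dots,k'_{\eps'}\}$ separating $x$ from $y$ (no dualising via Lemma~\ref{lem:updating_curtains} is needed), and observes that the $\eps'$ buffer curtains on each side absorb every curtain that could fail to separate $gx$ from $gy$. Thus the \emph{same} triple $h_1,h_2,h_3$ works for every relevant $g$, and Lemma~\ref{lem:bottleneck} places a point of $g[x,y]$ inside the fixed ball of radius $2L+\dist(x,y)+1$ about $x_2\in[x,y]\cap h_2$. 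Your dualised chain has the same feature (halfspaces of curtains dual to a single geodesic are visibly nested), so you could equally well fix the middle three curtains rather than letting the good triple vary with $g$.

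Two minor points: you should not assume $X$ itself is proper---only the action is---but since $K$ and $K'$ are bounded, metric properness of the action suffices. And for the WPD conclusion, note that the definition requires the condition for \emph{every} $x$, not just a point on an axis; this is automatic once $g$ is assumed loxodromic on $X_L$, so the appeal to Theorem~\ref{thm:contracting_curtain_characterisation} is unnecessary there.
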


\begin{proof}
If $\diam X_L<\infty$ then there is nothing to prove. Otherwise, given $\eps>0$, let $\eps'=\lceil\eps\rceil$ and let $R=4+2\eps'$. Suppose that $x,y\in X$ have $\dist_L(x,y)\ge R$, and let $b=[x,y]$. There is an $L$--chain $\{k_1,\dots,k_{\eps'},h_1,h_2,h_3,k'_1,\dots,k'_{\eps'}\}$ separating $x$ from $y$. Let $x_i\in b\cap h_i$, and let $B$ be the ball in $X$ with centre $x_2$ and radius $2L+\dist(x,y)+1$. Note that $b\subset B$.

For any $g\in G$ with $\dist_L(x,gx),\dist_L(y,gy)<\eps$, the curtains $h_1$, $h_2$, and $h_3$ all separate $gx$ from $gy$. From Lemma~\ref{lem:bottleneck}, we deduce that $\dist(x_2,gb)\le2L+\max\{\dist(x_1,x_2),\dist(x_2,x_3)\}+1$. In particular, $gb\subset gB$ meets $B$. By properness of the action of $G$ on $X$, there are only finitely many such $g$.
\end{proof}

\begin{remark}
In fact, when considering WPD elements, we believe that one could weaken the assumptions of Proposition~\ref{prop:nonuniform_acylindricity} with more work. Namely, if one drops the assumption that the action of $G$ on $X$ is proper, then we expect that any WPD contracting isometry of $X$ is WPD on $X_L$. This would generalise the situation of \cite[Prop.~6.61]{genevois:hyperbolicities}.
\end{remark}

Since the existence of a loxodromic WPD element for an action on a hyperbolic space is equivalent to acylindrical hyperbolicity \cite{dahmaniguirardelosin:hyperbolically,osin:acylindrically}, we obtain the following. This simplifies the proof from \cite{sisto:contracting}.

\begin{corollary}
If $G$ acts properly on a proper CAT(0) space and has a rank-one element, then $G$ is acylindrically hyperbolic or virtually cyclic.
\end{corollary}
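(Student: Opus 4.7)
The plan is to assemble the corollary directly from the three main tools the authors have already put in place: the characterisation of rank-one isometries via loxodromic actions on the hyperbolic models (Theorem~\ref{thm:rank_one_characterisation}), the hyperbolicity of each $X_L$ (Theorem~\ref{thm:XL_hyperbolic}), and the non-uniform acylindricity that upgrades loxodromic contracting elements to WPD elements (Proposition~\ref{prop:nonuniform_acylindricity}). Once those are in hand, the statement reduces to the standard criterion of Dahmani--Guirardel--Osin/Osin that a group acting on a hyperbolic space with a loxodromic WPD element is either virtually cyclic or acylindrically hyperbolic. This reference already appears in the paragraph preceding the corollary, so it is legitimate to invoke it.

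Concretely, I would argue as follows. Let $g\in G$ be the given rank-one element. Since $g$ is semisimple and contracting, Theorem~\ref{thm:rank_one_characterisation} produces an integer $L$ such that $g$ acts loxodromically on $X_L$. By Theorem~\ref{thm:XL_hyperbolic} the space $X_L$ is hyperbolic, and the inclusion $\isom X\le\isom X_L$ from the same theorem guarantees that the $G$-action on $X$ descends to an isometric $G$-action on $X_L$. Because the action on $X$ is proper, Proposition~\ref{prop:nonuniform_acylindricity} applies and yields that $g$ is WPD for the $G$-action on $X_L$. Thus $G$ acts on a hyperbolic space with a loxodromic WPD element, and the Dahmani--Guirardel--Osin/Osin characterisation of acylindrical hyperbolicity then forces $G$ to be acylindrically hyperbolic or virtually cyclic.

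There is no genuinely hard step here: the content is entirely in the machinery already developed. The only minor subtlety worth mentioning explicitly is that $X_L$ is a priori only quasigeodesic rather than geodesic, so if one wants to cite the DGO/Osin theorem in a form that requires a geodesic hyperbolic space, one can instead pass to the injective hull $E(X_L)$ from Proposition~\ref{prop:coarsely_injective}, which is a geodesic hyperbolic space containing $X_L$ as a coarsely dense $G$-invariant subspace; loxodromicity and the WPD property are preserved under this change, so the conclusion is unaffected. This observation is the one point in the argument that deserves a line of justification; everything else is a direct chaining of the cited results.
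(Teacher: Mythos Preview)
Your argument is exactly the one the paper intends: combine Theorem~\ref{thm:rank_one_characterisation}, Theorem~\ref{thm:XL_hyperbolic}, and Proposition~\ref{prop:nonuniform_acylindricity} to obtain a loxodromic WPD element on a hyperbolic $X_L$, then invoke the DGO/Osin criterion cited in the sentence preceding the corollary. Your remark about passing to $E(X_L)$ to make the space geodesic is a welcome clarification, but otherwise there is nothing to add.
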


When considering subgroups instead of single elements, that is to say subgroups that are not cyclic, there is an analogous notion of WPD action introduced by Abbott--Manning \cite{abbottmanning:acylindrically}. 

\begin{definition}[A/QI triple] 
Let $G$ be a group acting on a hyperbolic space $(X,\dist)$ and let $Y \subseteq X$. The action of $G$ on $X$ is said to be \emph{acylindrical along $Y$} if for each $\epsilon>0$ there exist $R=R(\epsilon,Y)$ and $M=M(\epsilon, Y)$ such that for any $y_1,y_2 \in Y$ with $\dist(y_1,y_2) > R$ we have
$$|\{g \in G \,:\, \max\{\dist(y_1,gy_1), \dist(y_2,gy_2)\}<\epsilon\}|<M.$$
If $H<G$ is finitely generated and $G$ acts acylindrically along an $X$--orbit that is a quasiisometric embedding, then the triple $(G,X,H)$ is said to be an \emph{A/QI triple.}
\end{definition}

With the next two lemmas, we show that the notions of stability and A/QI triples agree in CAT(0) groups.

\begin{lemma} \label{lem:synchronous_fellow_travelling:contracting}
Let $X$ be a CAT(0) space, let $D>1$, and let $\eps>0$. For each $D$--contracting geodesic $\alpha=[a,b]$ with $\dist(a,b)>16D\lceil\eps\rceil+9$, if $g\in\isom X$ has $\dist_{10D+3}(a,ga),\dist_{10D+3}(b,gb)\le\eps$, then $\dist(\alpha(t),g\alpha(t))<75D\lceil\eps\rceil$ for all $t$.
\end{lemma}

\begin{proof}
According to Lemma~\ref{lem:contracting_crosses}, there is a $(10D+3)$--chain $c=\{h_1,\dots,h_n\}$ dual to $\alpha$, with $n\ge2\lceil\eps\rceil+1$, such that $p_i=\alpha(8Di)\in h_i$. Because $\dist_{10D+3}(a,ga)<\eps$, at most $\lceil\eps\rceil-1$ elements of $c$ can separate $a$ from $ga$. By considering $b$, $gb$, and $gc$ similarly, we see that if $m$ lies in the set $\{\lceil\eps\rceil,\dots,n-\lceil\eps\rceil+1\}$, which has cardinality at least three, then $h_m$ and $gh_m$ both separate $\{a,ga\}$ from $\{b,gb\}$.

Lemma~\ref{lem:bounding_distance_with_dual_curtains} now shows that if $\eps+1\le i\le n-\eps$, then there is some $q_i\in h_i\cap g\alpha$ with $\dist(p_i,q_i)\le20D+7$. Writing $p_0=a$, $p_{n+1}=b$, there is some $j$ such that $q_i\in[gp_j,gp_{j+1}]$, and $j$ can differ from $i$ by at most $\lceil\eps\rceil$. Hence 
\[
\dist(p_i,gp_i) \,\le\, \dist(p_i,q_i)+\dist(q_i,gp_j)+\dist(gp_j,gp_i) \,\le\, 20D+7 + 8D + 8D\lceil\eps\rceil.
\]
For each $t$ there is some $i\in[\eps+1,n-\eps]$ such that $\dist(\alpha(t),p_i)\le8D(\lceil\eps\rceil+1)$, and from this we see that $\dist(\alpha(t),g\alpha(t))<(8D(2+\eps))+(28D+7+8D\lceil\eps\rceil)+(8D(2+\eps))$, which concludes the proof.
\end{proof}

We say that a group $G$ acts \emph{uniformly properly} on a space $X$ if for any radius $r$ there exists $N$ such that for any ball $B$ of radius $r$ we have \[\vert\{g\in G \,:\,B_r \cap gB_r\}\vert \leq N.\]
Note that if a group action is proper and cobounded, then it is uniformly proper. An immediate consequence of Lemma~\ref{lem:synchronous_fellow_travelling:contracting} is the following.

\begin{lemma} \label{lem:Uniform_acylindricity_bottom}
Suppose that a group $G$ acts uniformly properly on a CAT(0) space $X$. If $Y\subset X$ is such that $[y_1,y_2]$ is $D$--contracting for every $y_1,y_2\in Y$, then the action of $G$ on $X_L$ is acylindrical along $Y$ for all $L\ge10D+3$.
\end{lemma}

\begin{proof}
We can assume that $D>1$. Given $\epsilon>0$, suppose that $y_1,y_2\in Y$ have $\dist_L(y_1,y_2)>16D\lceil\eps\rceil+10$. This implies that $\dist(y_1,y_2)>16D\lceil\eps\rceil+9$. Because $L\ge10D+3$, if $g\in G$ satisfies $\dist_L(y_i,gy_i)\le\eps$, then $\dist_{10D+3}(y_i,gy_i)\le\eps$. By Lemma~\ref{lem:synchronous_fellow_travelling:contracting}, any such $g$ also satisfies $\dist(y_1,gy_1)\le75D\lceil\eps\rceil$. By uniform properness, there are uniformly finitely many such $g$.
\end{proof}


Since the definition of a stable subgroup includes the assumption that pairs of points are joined by uniformly contracting geodesics, we can combine Lemma~\ref{lem:Uniform_acylindricity_bottom} with  \cite[Thm~1.5]{abbottmanning:acylindrically} to obtain the following.

\begin{proposition}
Let $G$ be a group acting properly coboundedly on a CAT(0) space $X$ and let $H<G$ be finitely generated. The subgroup $H$ is stable if and only if there exists an $L$ such that $(G,X_L,H)$ is an A/QI triple.
\end{proposition}

\section{The diameter dichotomy}  \label{subsec:tits_boundary}

In the previous section we investigated the case where some $X_L$ is unbounded. As this does not always need to be the case, it is desirable to have some control on the case where every $X_L$ is bounded. In this section we show that in most natural situations the spaces $X_L$ are actually uniformly bounded.

\begin{proposition} \label{prop:uniform_diameter}
Let $X$ be a cobounded CAT(0) space with the geodesic extension property, and let $L\in\mathbf N$. Either $\diam X_L\le4$, or $X_L$ is unbounded.
\end{proposition}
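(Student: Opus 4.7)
The plan is to prove the contrapositive: if $\diam X_L > 4$, then $X_L$ is unbounded. Suppose $\diam X_L > 4$, so there exist points $x, y \in X$ with $\dist_L(x,y) \ge 5$, witnessed by an $L$-chain $\{h_1, h_2, h_3, h_4\}$ separating $x$ from $y$. Using the geodesic extension property, extend $[x,y]$ to a biinfinite geodesic $\alpha : \mathbf{R} \to X$ with $\alpha(0) = x$ and $\alpha(T) = y$, where $T = \dist(x,y) \ge 4$. The goal is to use the cobounded action to translate this configuration along $\alpha$ and concatenate the translates into arbitrarily long $L$-chains.

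Letting $D$ be a coboundedness constant for the action of $\isom X$ on $X$, for each positive integer $n$ I would choose $g_n \in \isom X$ with $\dist(g_n x, \alpha(nT)) \le D$. The translated chain $\{g_n h_1,\dots,g_n h_4\}$ is again an $L$-chain, and it separates $g_n x$ from $g_n y$. If one can arrange that $g_n h_1$ lies deep inside $h_4^+$ and is $L$-separated from $h_4$, then Lemma \ref{lem:gluing_chains} allows the original chain and the translated chain to be spliced together (possibly after discarding a few curtains near the junction) to form a single $L$-chain of length strictly greater than $4$. Iterating, either by translating the new chain further along $\alpha$ or by using $g_n$ with $n$ large, produces $L$-chains of cardinality tending to $\infty$, hence $\diam X_L = \infty$.

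The main obstacle is controlling the \emph{alignment} of the translated chain. Coboundedness pins down only the image of $x$; the direction in which $g_n$ rotates $\alpha$ is a priori undetermined, so the curtains $g_n h_i$ might end up on the wrong side of $\alpha$ relative to $\{h_1,\dots,h_4\}$. To handle this, I would exploit the full bi-infinite extension of $\alpha$: selecting targets near $\alpha(nT)$ for both $n \to +\infty$ and $n \to -\infty$, then passing to a subsequence, I ensure a consistent direction of translation. Once the direction is fixed, Lemma \ref{lem:convexity_around_L_chains} forbids $\alpha$ from backtracking through the $L$-separated pair $(h_4, g_n h_1)$, which together with the analysis in the proof of Lemma \ref{lem:updating_curtains} forces the innermost translated curtain to be $L$-separated from $h_4$. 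This is the technical heart of the argument and the step where the interplay between CAT(0) convexity, geodesic extension, and coboundedness is most delicate.
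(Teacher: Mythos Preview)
Your overall strategy---extend to a biinfinite geodesic, use coboundedness to push a copy of the $L$--chain far along it, then glue---is exactly the paper's. But you have misidentified the obstacle and the step you call ``the technical heart'' does not work as stated.

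The issue is \emph{not} alignment or direction. Once $g_n x$ sits near $\alpha(nT)$, the segment $[g_n x,g_n y]$ lies in a ball of radius $T+D$ around $\alpha(nT)$, and each $g_n h_i$ meets that ball; which way $g_n y$ points is irrelevant. The genuine obstacle is that curtains are unbounded and nonconvex, so even though $g_n h_i$ passes through a region deep in $h_4^+$, it may nevertheless reach back and meet $h_4$ (or even $h_1$). Your proposed fix invokes Lemma~\ref{lem:convexity_around_L_chains} on the pair $(h_4,g_n h_1)$, but that lemma \emph{assumes} the two curtains are $L$--separated, which is precisely what you are trying to establish---so the argument is circular. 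The appeal to Lemma~\ref{lem:updating_curtains} is too vague to fill the gap.

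The paper resolves this with a short buffer argument you are missing. Choose $q\in\alpha\cap h_4^+$ so far out that a chain $\{h'_1,\dots,h'_{L+1}\}$ of $L+1$ curtains dual to $\alpha$ separates $h_4$ from the ball $gB$ containing the translated configuration. Every $gh_i$ meets $gB$; hence any $gh_i$ that also meets $h_4$ must cross all $L+1$ buffer curtains. If two of the $gh_i$ did this, those $L+1$ buffer curtains would form a chain meeting both, contradicting their $L$--separation. Thus at most one $gh_i$ meets $h_4$, leaving at least two translated curtains entirely inside $h_4^+$; now Lemma~\ref{lem:gluing_disjoint_chains} (not Lemma~\ref{lem:gluing_chains}) produces an $L$--chain of length $n+1$.
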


\begin{proof}
We show that the existence of an $L$--chain of length $n\ge4$ implies the existence of an $L$--chain of length $n+1$. See Figure~\ref{fig:diameter_dichotomy}. Let $c=\{h_1,\dots,h_n\}$ be an $L$--chain with $n\ge4$. Let $B$ be a ball in $X$ that meets both $h_1^-$ and $h_n^+$. Let $\alpha$ be a biinfinite geodesic such that $h_n$ is dual to $\alpha$. Fix a point $p\in B$, and let $r$ be such that the translates of $p$ are $r$--dense in $X$. Let $q\in\alpha\cap h_n^+$ satisfy $\dist(q,h_n)=r+L+\diam B+2$, and fix $g\in\isom X$ such that $\dist(gp,q)\le r$.

Let $q'$ be the point in $\alpha\cap h_n^+$ with $\dist(q',h_n)=L+2$, and let $\{h'_1,\dots,h'_{L+1}\}$ be a chain dual to $\alpha$ separating $q'$ from $\alpha\cap h_n$. By the choice of $g$, we have $gB\subset{h'_{L+1}}^{\hspace{-4mm}+}$. Moreover, we have $h_n\subset{h'_1}^-$. As every $gh_i$ meets $gB$, any $gh_i$ meeting $h_n$ must meet every $h'_j$, so no two elements of $gc$ can meet $h_n$, as $gc$ is an $L$--chain. In particular, there is an $L$--subchain $c'=\{gh_{i_1}, gh_{i_2}\}\subset gc$ contained in $h_n^+$. After inverting the orientation of $c'$, Lemma~\ref{lem:gluing_disjoint_chains} produces an $L$--subchain of $c\cup c'$ of length $n+1$.
\end{proof} 

\begin{figure}[ht]
\includegraphics[width=14cm]{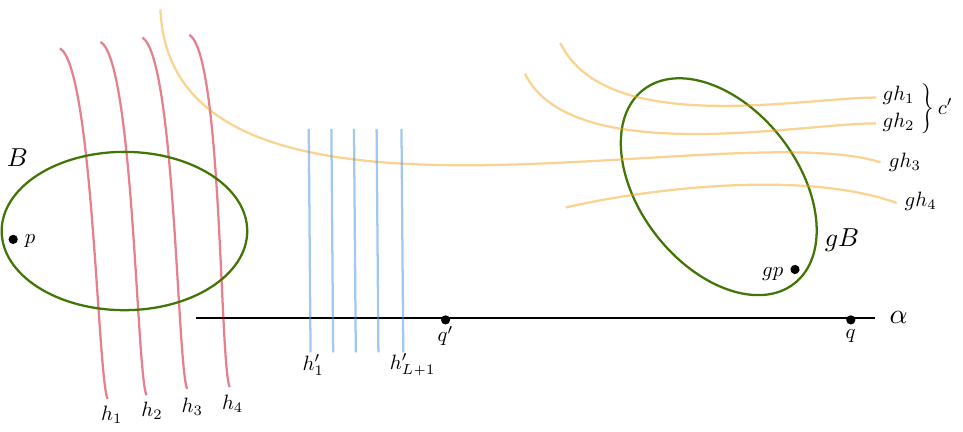}\centering
\caption{The proof of Proposition~\ref{prop:uniform_diameter}, illustrated with $n=4$.} \label{fig:diameter_dichotomy}
\end{figure}

In particular, if no $X_L$ is unbounded, then every $X_L$ has diameter at most 4.  When $X$ is proper we can strengthen the previous statement with Corollary~\ref{cor:diameter_2} below. This does not obsolete Proposition~\ref{prop:uniform_diameter}, though. For one thing, its proof is elementary and entirely self-contained. For another, Corollary~\ref{cor:diameter_2} does not say anything about the diameters of the bounded $X_L$ in the case where some $X_{L'}$ is unbounded.

Whilst the proof of Proposition~\ref{prop:uniform_diameter} is elementary, it requires that we begin with an $L$--chain of length at least four to conclude that $X_L$ is unbounded, leaving some mystery as to the significance of $L$--chains of length two and three. By using more advanced machinery, we shall show that the existence of even two separated curtains implies the existence of an unbounded $X_L$.

Recall that the \emph{visual boundary} $\partial X$ as a set is defined to be the collection of all equivalence classes of geodesic rays, where two rays are equivalent if they are at finite Hausdorff-distance. Equivalently, it is the set of all geodesic rays emanating from a fixed basepoint $\go$. Because of this, if the basepoint is understood then we shall often fail to distinguish between an element of $\partial X$ and the representative with that basepoint. We write $\alpha^\infty$ for the element of the visual boundary represented by a geodesic ray $\alpha$. 

\begin{definition}[Angle]
If $\sigma_1$ and $\sigma_2$ are geodesics with $\sigma_1(0)=\sigma_2(0)=x$, then $\angle_x(\sigma_1,\sigma_2)=\lim_{t\to0}\gamma(t)$, where $\gamma(t)$ is the angle at $x$ in the comparison triangle for $[x,\sigma_1(t),\sigma_2(t)]$. If $\xi_1,\xi_2\in\partial X$, then $\angle(\xi_1,\xi_2)=\sup\{\angle_x(\xi_1,\xi_2)\,:\,x\in X\}$.
\end{definition}

The angle defines a distance and hence a topology on the visual boundary. One of the prominent features of curtains is that they separate the CAT(0) space, and it turns out that curtains have a natural notion of limit that disconnects the boundary. 

\begin{definition}[Limit of a curtain]
For a curtain $h=h_\alpha$ with pole $P$, write 
\[
\Lambda(h)=\{\xi\in\partial X\,:\, \text{there exists }p\in P \text{ such that } \pi_\alpha[p,\xi]=p\}.
\]
\end{definition}

\begin{lemma} \label{lem:boundary_separation}
If $\partial X$ is path-connected and $X$ has the geodesic extension property then $\Lambda(h)$ is non-empty and separates $\partial X$ into two components.
\end{lemma}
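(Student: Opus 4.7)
The plan is to exhibit a partition $\partial X = \partial^+ h \sqcup \Lambda(h) \sqcup \partial^- h$ in which $\partial^+ h$ and $\partial^- h$ are non-empty open subsets of $\partial X$. Since $\partial X$ is assumed connected, this immediately forces $\Lambda(h)$ to be non-empty and to separate $\partial X$ into (at least) the two components $\partial^+ h$ and $\partial^- h$, which is the statement of the lemma.

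To set up the partition, fix a basepoint $o = \alpha(r)$ at the midpoint of the pole $P$, so that every $\xi \in \partial X$ is represented by a unique geodesic ray $\rho_\xi : [0, \infty) \to X$ with $\rho_\xi(0) = o$ (implicitly using properness, as is standard when $\partial X$ is discussed via rays from a basepoint). The central object is the continuous projection function $f_\xi(t) = \alpha^{-1}(\pi_\alpha \rho_\xi(t))$ on $[0, \infty)$, with $f_\xi(0) = r$. Using convexity of $t \mapsto \dist(\rho_\xi(t), \alpha)$ together with the $1$--Lipschitz property of $\pi_\alpha$, I will establish the following trichotomy for the asymptotic behaviour of $f_\xi$: either (A) $f_\xi(t) > r + \tfrac12$ for all sufficiently large $t$, or (B) $f_\xi(t) < r - \tfrac12$ for all sufficiently large $t$, or (C) $f_\xi(t) \in [r - \tfrac12, r + \tfrac12]$ for arbitrarily large $t$. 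I then \emph{define} $\partial^\pm h$ via the cases (A) and (B). In case (C), pick $t_n \to \infty$ with $\alpha(f_\xi(t_n)) \to p \in P$, and apply Arzel\`a--Ascoli to the geodesic segments $[\alpha(f_\xi(t_n)), \rho_\xi(t_n)]$ to extract a subsequential limit $\sigma$, a geodesic ray from $p$ to $\xi$. Combining the CAT(0) angle-projection correspondence (namely $\pi_\alpha(x) = p$ iff the geodesic $[p,x]$ makes angle $\geq \pi/2$ with both directions of $\alpha$ at $p$) with Lemma~\ref{lem:comparing_projections_2} applied along $\sigma$, I obtain $\pi_\alpha \sigma \equiv p$, placing $\xi$ in $\Lambda(h)$.

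Non-emptiness of $\partial^\pm h$ follows because $P$ lies in the interior of the domain of $\alpha$, so $\alpha$ extends past $P$ on both sides into $h^\pm$; taking a geodesic ray from $o$ along $\alpha_\pm$ (or a sub-sequential limit of rays $[o, x_n]$ with $x_n \in h^\pm$ and $\dist(o, x_n) \to \infty$, which exists by properness) produces boundary points in $\partial^\pm h$. Openness of $\partial^\pm h$ uses the cone topology on $\partial X$: if $\xi_n \to \xi$ then the rays $\rho_{\xi_n}$ converge to $\rho_\xi$ uniformly on compact sets, and combined with continuity of $\pi_\alpha$, the ``eventually in the $\pm$ direction'' conditions of (A) and (B) transfer to all sufficiently close rays.

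The main obstacle will be establishing the trichotomy rigorously. Because curtains are non-convex (Remark~\ref{rem:nonconvex}), $f_\xi$ could, a priori, oscillate in and out of $[r - \tfrac12, r + \tfrac12]$. The tool for ruling this out is convexity of $t \mapsto \dist(\rho_\xi(t), \alpha)$, which is therefore eventually monotonic: once this function is strictly increasing and large, the $1$--Lipschitz projection of $\rho_\xi$ cannot swing back through $P$ without contradicting the lack of detour shortcuts in a CAT(0) space. Matching case (C) with the precise definition of $\Lambda(h)$ is also delicate, as $\Lambda(h)$ singles out a specific perpendicular ray from some $p \in P$; the Arzel\`a--Ascoli compactness argument and the angle characterisation of CAT(0) projections are the key ingredients to justify that the limiting ray from $p = \lim \alpha(f_\xi(t_n))$ has the required projection property.
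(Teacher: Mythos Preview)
Your trichotomy (A)/(B)/(C) is well-defined by continuity of $f_\xi$ and the intermediate value theorem, and your argument that case (C) forces $\xi\in\Lambda(h)$ via Arzel\`a--Ascoli and continuity of $\pi_\alpha$ is correct. The difficulty is not where you think it is: the trichotomy itself is immediate, but the openness of $\partial^\pm h$ is not. Uniform convergence of $\rho_{\xi_n}\to\rho_\xi$ on compact sets only controls $f_{\xi_n}$ on a fixed finite interval; it says nothing about the tail, and since curtains are not convex there is no obvious reason a nearby ray cannot eventually dip back into $[r-\tfrac12,r+\tfrac12]$. Your appeal to convexity of $t\mapsto\dist(\rho_\xi(t),\alpha)$ controls the \emph{distance} to $\alpha$, not the \emph{position along} $\alpha$ of the projection, so it does not by itself forbid $f_\xi$ from swinging back. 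Relatedly, the asserted disjoint partition $\partial X=\partial^+h\sqcup\Lambda(h)\sqcup\partial^-h$ need not hold: one can have $\xi\in\Lambda(h)$ witnessed by a perpendicular ray from some $p=\alpha(s)$ with $s\ne r$, while the ray $\rho_\xi$ from $o$ has $f_\xi(t)\in(r+\tfrac12,r+1]$ for all large $t$, placing $\xi$ in your set (A). You only prove (C)$\subseteq\Lambda(h)$, not equality.

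The paper avoids all of this by working with angles rather than projections. Fixing $x\in P$, the function $\phi(t)=\angle_x(\alpha^\infty,\gamma(t))$ is continuous on any path $\gamma$ in $\partial X$ from $\alpha^{-\infty}$ to $\alpha^\infty$ (by \cite[Prop.~II.9.2]{bridsonhaefliger:metric}), takes the values $\pi$ and $0$ at the endpoints, and hence hits $\tfrac\pi2$ at some $\eta=\gamma(t_0)$. A two-line CAT(0) angle-sum argument then shows $\pi_\alpha[x,\eta]=x$: if some $y\in[x,\eta]$ had $\pi_\alpha(y)\ne x$, the triangle $[x,y,\pi_\alpha(y)]$ would have two angles of size at least $\tfrac\pi2$. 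This is a path-connectedness argument rather than an open-set one, so no tail control is needed. If you want to rescue your approach, the cleanest fix is to replace the projection-based trichotomy by the angle $\angle_o(\alpha^\infty,\xi)$, whose continuity is exactly what is required.
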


\begin{proof}
Let $x\in P$, and let $\gamma\colon[a,b]\to\partial X$ be a path from $\alpha^{-\infty}$ to $\alpha^\infty$ which exist since $X$ has the geodesic extension property. By \cite[Prop.~II.9.2]{bridsonhaefliger:metric}, the map $\phi\colon[a,b]\to[0,\pi]$ defined by $\phi(t)= \angle_x (\alpha^\infty, \gamma(t))$ is continuous. Since $\phi(a)=\pi$ and $\phi(b)=0$, there is some $\eta=\gamma(t_0)$ such that $\angle_x(\alpha^\infty,\eta)=\frac{\pi}{2}$. We claim that $\pi_\alpha[x,\eta] = x$, which implies that $\gamma(t_0)\in \Lambda(h)$. 

Suppose that there is $y\in [x,\alpha(t_0)]$ such that $\pi_\alpha(y) \neq x$. By \cite[Prop.~II.2.4]{bridsonhaefliger:metric} we have $\angle_{\pi_\alpha(y)}(y,x) \geq \frac{\pi}{2}$. Moreover, as $\alpha^{-\infty}$ and $\alpha^\infty$ are opposite ends of a geodesic, we have $\angle_x(\eta,\alpha^{-\infty}) = \frac{\pi}{2}$ \cite[I.3.9, I.3.10]{ballmann:lectures}. Thus, the triangle $[x,y, \pi_{\alpha}(y)]$ has two angles of size at least $\frac{\pi}{2}$ and no ideal vertex, which contradicts the CAT(0)-inequality. Thus any path from $\alpha^{-\infty}$ to $\alpha^\infty$ must intersect $\Lambda(h)$, providing the result.
\end{proof}

Our goal now is to obtain lower bounds on the angles between various points in the visual boundary. We first note in Corollary~\ref{cor:perpendicular_to_dual} that the angle between the endpoints of a geodesic $\alpha$ and points of $\Lambda(h_\alpha)$ is at least $\frac\pi2$. 

\begin{lemma} \label{lem:backwards_means_perpendicular}
Let $\alpha$ be a geodesic ray. If $\beta$ is a geodesic ray based at $\alpha(t_0)$ such that $\pi_\alpha\beta\subset\alpha|_{[0,t_0]}$, then $\angle(\alpha^\infty,\beta^\infty)\ge\frac\pi2$. 
\end{lemma}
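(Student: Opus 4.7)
Working at the natural basepoint $p := \alpha(t_0)$, my plan is to prove $\angle_p(\alpha^\infty, \beta^\infty) \geq \pi/2$; since $\angle$ on the visual boundary is the supremum of pointwise Alexandrov angles, this suffices. The Alexandrov angle at $p$ depends only on the initial directions of the representing rays, so for any small $t, s > 0$ it equals $\angle_p(q, r)$ with $q := \alpha(t_0 + t)$ and $r := \beta(s)$, and it is this quantity I would bound.

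The key geometric picture is the CAT(0) triangle $[p, x, r]$, where $x := \pi_\alpha(r) = \alpha(r')$ with $r' \leq t_0$ by hypothesis, together with the fact that $q$ lies on $\alpha$ on the opposite side of $p$ from $x$. I would then chain three standard CAT(0) ingredients: (i) the closest-point projection property gives $\angle_x(r, p) \geq \pi/2$; (ii) the angle sum of the CAT(0) triangle $[p, x, r]$ is at most $\pi$, which upgrades (i) to $\angle_p(x, r) \leq \pi/2$; and (iii) because $p$ strictly separates $x$ from $q$ on the geodesic $\alpha$, one has $\angle_p(x, q) = \pi$. Feeding these into the triangle inequality for Alexandrov angles at $p$ yields
\[
\angle_p(q, r) \;\geq\; \angle_p(x, q) - \angle_p(x, r) \;\geq\; \pi - \pi/2 \;=\; \pi/2,
\]
which is what is required.

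The main step I would be wary of is the degenerate configuration when $x$ coincides with $p$ or with $r$. If $r = x$ then $\beta$ runs along $\alpha$ in reverse and the angle at $p$ is simply $\pi$. If instead $r' = t_0$ so that $x = p$, then $p$ itself is the closest point of $\alpha$ to $r$, and the projection property applied directly at $p$ already yields $\angle_p(r, q) \geq \pi/2$ in a single step, bypassing ingredients (ii)--(iii). Beyond these cases, the conceptual crux is to isolate how the hypothesis $\pi_\alpha \beta \subset \alpha|_{[0, t_0]}$ enters---solely through the inequality $r' \leq t_0$, which is precisely what places $x$ and $q$ on opposite sides of $p$ along $\alpha$ and so activates ingredient (iii).
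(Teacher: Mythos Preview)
Your argument is correct. Both you and the paper reduce to showing $\angle_{p}(\alpha^\infty,\beta^\infty)\ge\frac\pi2$ at $p=\alpha(t_0)$, but you reach this differently. The paper asserts (tersely) that the \emph{comparison} angle at $p$ in the triangle $[p,\beta(t),\alpha(t_0+t)]$ is at least $\frac\pi2$; unpacking that claim amounts to showing $\dist(\beta(t),\alpha(t_0+t))^2\ge 2t^2$, which follows from a Pythagorean estimate through the foot $x=\pi_\alpha\beta(t)$ together with the triangle inequality $\dist(p,x)+\dist(x,\beta(t))\ge t$. You instead work entirely with Alexandrov angles: the projection inequality at $x$, the CAT(0) angle-sum bound in the triangle $[p,x,r]$, and the triangle inequality for angles at $p$, combined with $\angle_p(x,q)=\pi$. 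Your route is more transparent in that each step is a named CAT(0) fact from Bridson--Haefliger (II.2.4, II.1.7, I.1.14), whereas the paper's one-line claim hides a small computation; on the other hand, the paper's formulation dovetails directly with the definition of $\angle_p$ as a limit of comparison angles given just above the lemma. Your handling of the degenerate cases $x=p$ and $r\in\alpha$ is also correct and makes the argument self-contained.
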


\begin{proof}
We have $\angle(\alpha^\infty,\beta^\infty)\ge\angle_{\alpha(t_0)}(\alpha^\infty,\beta^\infty)$ by definition. As $t\to0$, the fact that $\pi_\alpha\beta(t)\subset\alpha|_{[0,t_0]}$ implies that the comparison angle for $[\alpha(t_0),\beta(t),\alpha(t_0+t)]$ at $\alpha(t_0)$ is at least $\frac\pi2$.
\end{proof}

\begin{corollary} \label{cor:perpendicular_to_dual}
If $\partial X$ is connected and $h=h_\alpha$ is a curtain, then $\angle(\alpha^\infty,\xi)\ge\frac\pi2$ for all $\xi\in\Lambda(h)$.
\end{corollary}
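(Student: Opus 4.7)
The plan is to reduce the statement to Lemma~\ref{lem:backwards_means_perpendicular} by replacing the (possibly biinfinite) geodesic $\alpha$ with a suitable sub-ray. Note that the connectedness of $\partial X$ is not needed for the argument; it is only used in Lemma~\ref{lem:boundary_separation} to guarantee that $\Lambda(h)$ is nonempty, so if $\Lambda(h) = \varnothing$ the claim is vacuous.

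Fix $\xi \in \Lambda(h)$, so by definition there exists $p = \alpha(r_0) \in P$ with $\pi_\alpha[p,\xi] = p$. Let $\beta = [p,\xi]$ denote the geodesic ray from $p$ to $\xi$, and choose any $\epsilon > 0$ small enough that $\alpha(r_0 - \epsilon)$ is defined (this is possible since $r_0$ lies in the interior of the domain of $\alpha$: the pole is interior to $\alpha$ by definition). Let $\alpha' \colon [0,\infty) \to X$ be the ray defined by $\alpha'(t) = \alpha(r_0 - \epsilon + t)$, so $\alpha'(\epsilon) = p$ and $(\alpha')^\infty = \alpha^\infty$.

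The key observation is that $\pi_{\alpha'}\beta = \{p\}$. Indeed, since $p = \pi_\alpha(\beta(t))$ and $p \in \alpha'$, for every $q \in \alpha' \subset \alpha$ we have $\dist(\beta(t), q) \ge \dist(\beta(t), p)$, with strict inequality for $q \neq p$ because closest-point projection to the geodesic $\alpha$ is unique. Hence $\pi_{\alpha'}\beta(t) = p = \alpha'(\epsilon) \in \alpha'|_{[0,\epsilon]}$ for every $t$. Applying Lemma~\ref{lem:backwards_means_perpendicular} to the ray $\alpha'$ and the ray $\beta$ based at $\alpha'(\epsilon)$, we conclude that
\[
\angle(\alpha^\infty, \xi) \;=\; \angle((\alpha')^\infty, \beta^\infty) \;\ge\; \tfrac{\pi}{2}.
\]

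The only subtle point is the projection identity $\pi_{\alpha'}\beta = \{p\}$; this is essentially immediate from the uniqueness of closest-point projection in CAT(0) spaces together with $p \in \alpha'$, so I do not expect any real obstacle. The choice of $\epsilon$ is made purely to obtain $t_0 = \epsilon > 0$ in Lemma~\ref{lem:backwards_means_perpendicular}, avoiding the degenerate case where $\beta$ would be based at an endpoint of the ray.
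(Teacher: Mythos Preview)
Your proof is correct and follows essentially the same approach as the paper, which treats the corollary as an immediate consequence of Lemma~\ref{lem:backwards_means_perpendicular}. Your reparametrisation via $\alpha'$ with the $\epsilon$-shift is a harmless extra precaution: the lemma already applies with $t_0=0$ to the sub-ray $\alpha|_{[r_0,\infty)}$, since $\pi_\alpha\beta=\{p\}$ forces $\pi_{\alpha'}\beta=\{p\}=\{\alpha'(0)\}\subset\alpha'|_{[0,0]}$, and you are also right that connectedness of $\partial X$ plays no role beyond making $\Lambda(h)$ nonempty.
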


Next we aim to bound the angle between a pair of separated curtains; this is done in Proposition~\ref{prop:separated_means_perpendicular}.  We recall the following lemma.

\begin{lemma}[{\cite[Thm~II.4.4]{ballmann:lectures}}] \label{lem:comparison_angle}
Let $\xi_1,\xi_2\in\partial X$, $x\in X$. Let $\sigma_i$ be the geodesic ray from $x$ to $\xi_i$. The quantity $c=\lim_{t\to\infty}\frac1t\dist(\sigma_1(t),\sigma_2(t))$ is independent of $x$. In the Euclidean triangle with sides of length 1, 1, and $c$, the angle opposite $c$ is $\angle(\xi_1,\xi_2)$.
\end{lemma}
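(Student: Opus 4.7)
The plan is to study Euclidean comparison angles along the rays $\sigma_i$ and then match them to the Aleksandrov angle on the visual boundary. Write $\sigma_i^x$ for the ray from $x$ to $\xi_i$ and set $\bar\theta_x(t):=\bar\angle_x(\sigma_1^x(t),\sigma_2^x(t))$, the vertex angle at $\bar x$ in the Euclidean comparison triangle. The Euclidean law of cosines on this isosceles triangle gives
\[
\frac{\dist(\sigma_1^x(t),\sigma_2^x(t))}{t}\;=\;2\sin\bigl(\bar\theta_x(t)/2\bigr).
\]
A standard CAT(0) argument shows that $\bar\theta_x(t)$ is non-decreasing in $t$: for $t_1<t_2$, applying the CAT(0) inequality to the comparison triangle of $\Delta(x,\sigma_1^x(t_2),\sigma_2^x(t_2))$ at the interior points lying at distance $t_1$ from $x$ yields $\dist(\sigma_1^x(t_1),\sigma_2^x(t_1))\le 2t_1\sin(\bar\theta_x(t_2)/2)$, whence $\bar\theta_x(t_1)\le\bar\theta_x(t_2)$. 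Since $\bar\theta_x(t)\in[0,\pi]$, the limit $\theta_\infty:=\lim_{t\to\infty}\bar\theta_x(t)$ exists, and hence so does $c=2\sin(\theta_\infty/2)$.

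For independence of $x$, observe that the rays $\sigma_i^x$ and $\sigma_i^y$ share a common boundary point, so they are asymptotic. Convexity of the CAT(0) metric makes $t\mapsto\dist(\sigma_i^x(t),\sigma_i^y(t))$ convex and bounded, hence non-increasing, giving $\dist(\sigma_i^x(t),\sigma_i^y(t))\le\dist(x,y)$ for all $t\ge 0$. The triangle inequality then produces
\[
\bigl|\dist(\sigma_1^x(t),\sigma_2^x(t))-\dist(\sigma_1^y(t),\sigma_2^y(t))\bigr|\;\le\;2\dist(x,y),
\]
which vanishes after dividing by $t$ and letting $t\to\infty$. Hence $c$, and therefore $\theta_\infty$, depend only on $\xi_1,\xi_2$.

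It remains to identify $\theta_\infty$ with $\angle(\xi_1,\xi_2)$; the Euclidean-triangle assertion then follows from the law of cosines applied to sides $1,1,c$. The direction $\angle(\xi_1,\xi_2)\le\theta_\infty$ is immediate: for each $y$, the same monotonicity holds with basepoint $y$, so the Aleksandrov angle satisfies $\angle_y(\xi_1,\xi_2)=\inf_{t>0}\bar\theta_y(t)\le\lim_{t\to\infty}\bar\theta_y(t)=\theta_\infty$, and taking the supremum over $y$ preserves the bound.

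The reverse inequality $\angle(\xi_1,\xi_2)\ge\theta_\infty$ is the main obstacle, and the plan is to push the basepoint far out along a ray to manufacture a large Aleksandrov angle. Given $\epsilon>0$, choose $T$ with $\bar\theta_x(T)\ge\theta_\infty-\epsilon$ and set $y=\sigma_1^x(T)$. Applying the CAT(0) angle-sum-at-most-$\pi$ inequality to the triangles $\Delta(x,y,\sigma_2^x(s))$ for large $s$, together with the fact that the forward and backward tangent directions of $\sigma_1^x$ at $y$ are antipodal in the space of directions (angle $\pi$), squeezes the Aleksandrov angle $\angle_y(\xi_1,\sigma_2^x(s))$ from below in terms of the comparison angle at $x$. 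Passing to the limit $s\to\infty$ and using Arzel\`a--Ascoli to replace $[y,\sigma_2^x(s)]$ by the ray from $y$ to $\xi_2$, one obtains $\angle_y(\xi_1,\xi_2)\ge\theta_\infty-O(\epsilon)$, completing the identification.
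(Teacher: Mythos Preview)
The paper does not prove this lemma: it is quoted verbatim from Ballmann's \emph{Lectures on spaces of nonpositive curvature} (Thm~II.4.4) and used as a black box, so there is no proof in the paper to compare against.

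Your argument is correct through the existence of the limit, its basepoint-independence, and the inequality $\angle(\xi_1,\xi_2)\le\theta_\infty$. The reverse inequality is only sketched, but the ingredients you name do assemble into a proof. With $y=\sigma_1^x(T)$, the chain
\[
\angle_y(\xi_1,\sigma_2^x(s))\;\ge\;\pi-\angle_y(x,\sigma_2^x(s))\;\ge\;\pi-\bar\angle_y(x,\sigma_2^x(s))\;=\;\bar\angle_x(y,\sigma_2^x(s))+\bar\angle_{\sigma_2^x(s)}(x,y)\;\ge\;\bar\angle_x(\sigma_1^x(T),\sigma_2^x(s))
\]
uses antipodality at $y$, the CAT(0) bound $\angle\le\bar\angle$, and the Euclidean angle sum. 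Monotonicity of comparison angles gives $\bar\angle_x(\sigma_1^x(T),\sigma_2^x(s))\ge\bar\theta_x(T)\ge\theta_\infty-\epsilon$ for $s\ge T$. To pass from $\sigma_2^x(s)$ to $\xi_2$ you should not invoke Arzel\`a--Ascoli: $X$ need not be proper here, and in any case the geodesics $[y,\sigma_2^x(s)]$ converge to the ray $\sigma_2^y$ uniformly on compacta purely by convexity of the CAT(0) metric. One then uses that $\angle_y(\xi_1,\sigma_2^x(s))\le\bar\angle_y(\sigma_1^y(t),\gamma_s(t))$ for every $t>0$ (where $\gamma_s=[y,\sigma_2^x(s)]$), lets $s\to\infty$ for fixed $t$, and finally $t\to 0$, to conclude $\angle_y(\xi_1,\xi_2)\ge\theta_\infty-\epsilon$.
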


We use the following version of Lemma~\ref{lem:backwards_means_perpendicular} that does not require any information on basepoints.

\begin{lemma} \label{lem:limiting_distance}
Let $\alpha$ and $\beta$ be geodesic rays. If $\pi_\alpha\beta\subset\alpha|_{[0,t_0]}$, then
\[
\lim_{t\to\infty}\frac1t\dist(\alpha(t),\beta(t))\ge\sqrt2.
\]
\end{lemma}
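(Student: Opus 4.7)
The plan is to project $\beta(t)$ onto $\alpha$ and exploit the resulting right angle at the projection point. Fix large $t$ and set $p_t = \pi_\alpha(\beta(t))$; by hypothesis $p_t \in \alpha[0, t_0]$. Since $\alpha(t)$ lies on the convex geodesic $\alpha$, the standard CAT(0) projection property \cite[Prop.~II.2.4]{bridsonhaefliger:metric} gives $\angle_{p_t}(\alpha(t), \beta(t)) \ge \pi/2$. Feeding this into the CAT(0) law of cosines (or, equivalently, using that in CAT(0) the Alexandrov angle is at most the Euclidean comparison angle, so that the comparison angle at $p_t$ is also $\ge \pi/2$) yields the Pythagoras-type bound
\[
\dist(\alpha(t), \beta(t))^2 \;\ge\; \dist(\alpha(t), p_t)^2 + \dist(p_t, \beta(t))^2.
\]

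Next I would bound each leg below linearly in $t$. Since $\alpha$ is unit-speed and $p_t \in \alpha[0, t_0]$, one immediately has $\dist(\alpha(t), p_t) \ge t - t_0$. For the other leg, the triangle inequality combined with $\dist(\beta(0), p_t) \le \dist(\beta(0), \alpha(0)) + t_0$ gives $\dist(\beta(t), p_t) \ge t - C$, where $C = \dist(\beta(0), \alpha(0)) + t_0$ depends only on the basepoints and on $t_0$. Substituting into the Pythagoras bound and dividing by $t^2$,
\[
\frac{\dist(\alpha(t), \beta(t))^2}{t^2} \;\ge\; \Big(1 - \tfrac{t_0}{t}\Big)^{\!2} + \Big(1 - \tfrac{C}{t}\Big)^{\!2} \xrightarrow[t\to\infty]{} 2,
\]
so $\liminf_{t \to \infty} \tfrac{1}{t}\dist(\alpha(t), \beta(t)) \ge \sqrt{2}$. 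Because $t \mapsto \dist(\alpha(t), \beta(t))$ is convex in the CAT(0) setting, the ratio $\tfrac{1}{t}\dist(\alpha(t), \beta(t))$ admits an honest limit as $t \to \infty$, so the $\liminf$ is in fact the limit.

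I do not anticipate a substantive obstacle: the argument is a short CAT(0) computation once the projection point has been introduced. The only point worth attending to is applying the law of cosines in the correct direction, using that the Alexandrov angle bound $\angle_{p_t}\ge\pi/2$ transfers to a bound on the comparison angle so that $\cos(\cdot)\le 0$ can genuinely be used to produce the Pythagoras-type lower bound on $\dist(\alpha(t),\beta(t))$.
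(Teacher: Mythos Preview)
Your proof is correct and uses the same core idea as the paper: a right angle at a point of $\alpha$ combined with CAT(0) comparison yields a Pythagoras-type lower bound whose legs grow linearly in $t$. The only difference is the choice of vertex. You take the moving projection $p_t=\pi_\alpha(\beta(t))$, where the right angle $\angle_{p_t}(\alpha(t),\beta(t))\ge\frac\pi2$ is immediate from the projection property; the paper instead fixes the vertex at $\alpha(t_0)$ and must first argue, via convexity of the metric applied to the geodesic $[\alpha(t_0),\beta(t)]$, that $\pi_\alpha[\alpha(t_0),\beta(t)]\subset\alpha|_{[0,t_0]}$ in order to obtain $\angle_{\alpha(t_0)}(\alpha(t),\beta(t))\ge\frac\pi2$. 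Your choice makes the argument slightly shorter, and your explicit remark that convexity of $t\mapsto\dist(\alpha(t),\beta(t))$ upgrades the $\liminf$ to a genuine limit is a nice addition the paper leaves implicit.
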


\begin{proof}
Let $\beta(t_1)=\pi_\beta\alpha(t_0)$ and write $\delta=\dist(\alpha(t_0),\beta(t_1))$. By the reverse triangle inequality, $\dist(\alpha(t_0),\beta(t))\ge\dist(\beta(t_1),\beta(t))-\delta$ for all $t$. 

Now, for $t\ge t_1$ let $\gamma_t$ be the geodesic from $\alpha(t_0)$ to $\beta(t)$. By convexity of the metric, $\gamma_t$ lies in the $\dist(\alpha(t_0),\pi_\alpha\beta(t))$--neighbourhood of $[\beta(t),\pi_\alpha\beta(t)]$, so the fact that $\pi_\alpha[\pi_\alpha\beta(t),\beta(t)]=\pi_\alpha\beta(t)$ means that $\pi_\alpha\gamma_t\subset\alpha|_{[0,t_0]}$. It follows that if $t>\max\{t_0,t_1\}$, then $\angle_{\alpha(t_0)}(\alpha(t),\beta(t))\ge\frac\pi2$. Using convexity of the metric, we use this to compute
\begin{align*}
\dist(\alpha(t),\beta(t))^2 \,&\ge\, \dist(\alpha(t),\alpha(t_0))^2 + \dist(\alpha(t_0),\beta(t))^2 \\
    &\ge\, (t-t_0)^2 + (t-t_1-\delta)^2,
\end{align*}
and the result follows immediately.
\end{proof}

\begin{proposition} \label{prop:separated_means_perpendicular}
Suppose that $X$ has the geodesic extension property and path-connected visual boundary. Let $h$, $h'$ be curtains with respective poles $P$ and $P'$. If $h$ and $h'$ are separated, then $\angle(\xi,\xi')\ge\frac\pi2$ for all $\xi\in\Lambda(h)$, $\xi'\in\Lambda(h')$.
\end{proposition}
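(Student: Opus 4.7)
The plan is to apply Lemma~\ref{lem:limiting_distance} to carefully chosen geodesic representatives of $\xi$ and $\xi'$, and then translate the resulting rate of divergence into the angle bound via Lemma~\ref{lem:comparison_angle}. Fix $L$ so that $h=h_\alpha$ and $h'=h_{\alpha'}$ are $L$--separated, and use $\xi\in\Lambda(h)$ and $\xi'\in\Lambda(h')$ to pick $p\in P$, $p'\in P'$ and geodesic rays $\eta=[p,\xi]$, $\eta'=[p',\xi']$ with $\pi_\alpha\eta=p$ and $\pi_{\alpha'}\eta'=p'$. In particular $\eta\subset\pi_\alpha^{-1}(p)\subset h$ and $\eta'\subset h'$. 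The three steps are then: (i) show that $\pi_\eta\eta'$ is a bounded subset of $\eta$; (ii) feed this into Lemma~\ref{lem:limiting_distance}; (iii) pass to a common basepoint to read off the angle.

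The crux is step (i), where the combinatorial $L$--separation condition must be turned into a metric bound on $\pi_\eta\eta'$. I would argue by contradiction: suppose $q_1,q_2\in\eta'$ have projections $p_i=\pi_\eta(q_i)$ with $\dist(p_1,p_2)>L+1$. Lemma~\ref{lem:chain_distance} applied to the subsegment $[p_1,p_2]\subset\eta$ produces a chain $\{k_1,\dots,k_m\}$ of curtains dual to $[p_1,p_2]$ with $m\ge L+1$, each separating $p_1$ from $p_2$. Every $k_i$ meets its pole inside $\eta\subset h$, so $k_i$ meets $h$. Since $\pi_\eta q_j=p_j$ and the halfspaces of $k_i$ are defined via $\pi_\eta$, the points $q_1$ and $q_2$ lie in opposite halfspaces of $k_i$, so Lemma~\ref{lem:curtains_separate} forces the connected path $\eta'$ between them to cross $k_i$; thus $k_i$ meets $h'$. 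This gives a chain of more than $L$ curtains meeting both $h$ and $h'$, contradicting $L$--separation. Hence $\pi_\eta\eta'$ has diameter at most $L+1$; being a connected subset of the ray $\eta$, it lies in $\eta|_{[0,t_0]}$ for some $t_0$.

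Step (ii) is now immediate: Lemma~\ref{lem:limiting_distance} yields $\lim_{t\to\infty}\dist(\eta(t),\eta'(t))/t\geq\sqrt{2}$. For step (iii), replace $\eta'$ by the ray $\tilde\eta'$ from $p$ to $\xi'$; since $\tilde\eta'$ and $\eta'$ are asymptotic, convexity of the CAT(0) metric makes $t\mapsto\dist(\tilde\eta'(t),\eta'(t))$ a bounded convex function (in fact bounded by $\dist(p,p')$), so the asymptotic divergence of $\eta$ from $\tilde\eta'$ agrees with that of $\eta$ from $\eta'$. Applying Lemma~\ref{lem:comparison_angle} to $\eta,\tilde\eta'$ with common basepoint $p$ then gives $\angle(\xi,\xi')\geq\pi/2$, since the angle opposite $c\geq\sqrt{2}$ in the isosceles Euclidean triangle with legs of length $1$ is at least $\pi/2$. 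The hard part is really step (i), where the combinatorial $L$--separation property is upgraded to a metric control on projections between the two rays; the rest is routine CAT(0) asymptotics.
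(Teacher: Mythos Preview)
Your proposal is correct and follows essentially the same route as the paper: pick rays $\eta\subset h$ and $\eta'\subset h'$ representing $\xi,\xi'$, use $L$--separation to bound $\pi_\eta\eta'$, apply Lemma~\ref{lem:limiting_distance}, and convert to an angle via Lemma~\ref{lem:comparison_angle} after moving to a common basepoint. The only wrinkle is that your curtains $k_i$ are dual to $[p_1,p_2]$, so their halfspaces are defined via $\pi_{[p_1,p_2]}$ rather than $\pi_\eta$; but since $[p_1,p_2]\subset\eta$ and $\pi_\eta(q_j)=p_j\in[p_1,p_2]$, the two projections agree on $q_1,q_2$ and your conclusion stands.
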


\begin{proof}
Let $\xi\in\Lambda(h)$ and let $p\in P$ be such that $\pi_P[p,\xi]=p$. Let $\alpha=[p,\xi]$. Take any point $\xi'\in\Lambda(h')$, and let $\beta\subset h'$ be a geodesic ray based in $P'$ with $\beta^\infty=\xi'$. 

For $i\in\{0,1\}$, consider the chains $c_i=\{h_{\alpha,2n+i}:n\in\mathbf N\}$. Since $h$ and $h'$ are $L$--separated for some $L$, at most $L$ of each can meet $h'$, and hence $\beta$ can meet at most $L$ of each. Since $c_0\cup c_1$ is a cover of $\alpha$ and $\pi_\alpha\beta$ is nonempty, this means that there exists $t_0\ge0$ such that $\pi_\alpha\beta\subset\alpha|_{[0,t_0]}$. 

Let $\delta=\dist(\alpha(t_0),\beta(0))$. Now let $\gamma$ be the geodesic ray based at $\alpha(t_0)$ with $\gamma^\infty=\xi'$. By the flat strip theorem, $\dist(\gamma(t),\beta(t))\le\delta$ for all $t$. In particular, $c=\lim_{t\to\infty}\frac1t\dist(\alpha(t),\gamma(t)) = \lim_{t\to\infty}\frac1t\dist(\alpha(t),\beta(t))$. According to Lemma~\ref{lem:limiting_distance}, we have $c\ge\sqrt2$. Lemma~\ref{lem:comparison_angle} now tells us that $\angle(\xi,\xi')$ is equal to the isosceles angle in the Euclidean triangle with side lengths 1, 1, and $c$, which is at least $\frac\pi2$.
\end{proof}

Combining these results gives us information about the \emph{Tits boundary} in the case where $X$ has a pair of separated curtains.

\begin{definition}[Tits boundary]
The \emph{Tits metric} on $\partial X$ is the path-metric induced by $\angle(\cdot,\cdot)$. The \emph{Tits boundary} $\partial_TX$ of $X$ is the (extended) metric space obtained in this way.
\end{definition}

\begin{corollary} \label{cor:boundary_diameter}
Let $X$ be a CAT(0) space with the geodesic extension property and path-connected visual boundary. If $X$ has a pair of separated curtains, then the Tits boundary $\partial_TX$ of $X$ has diameter at least $\frac{3\pi}2$.
\end{corollary}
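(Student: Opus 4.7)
The strategy is to exhibit two boundary points at Tits distance at least $3\pi/2$. Let $h=h_\alpha$ and $h'=h_{\alpha'}$ be a pair of separated curtains. Since curtains are path-connected by Lemma~\ref{lem:convex_curtains} and $h\cap h'=\varnothing$, after relabelling we may assume $h'\subset h^+$; a further relabelling of $\alpha'$ if necessary lets us assume $h\subset {h'}^-$. By Lemma~\ref{lem:boundary_separation}, $\Lambda(h)$ separates $\partial X$ into components $A_\pm$ with $\alpha^{\pm\infty}\in A_\pm$, and similarly $\Lambda(h')$ separates $\partial X$ into $B_\pm$ with ${\alpha'}^{\pm\infty}\in B_\pm$. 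The plan is to show $d_T(\alpha^{-\infty},{\alpha'}^\infty)\geq 3\pi/2$.

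The key preparation is to locate $\Lambda(h')$ and $\alpha^{-\infty}$ with respect to these two separators: one should establish $\Lambda(h')\subset A_+\cup\Lambda(h)$ and $\alpha^{-\infty}\in B_-$. For $\Lambda(h')$: any $\xi'\in\Lambda(h')$ is the endpoint of a geodesic ray $\beta$ lying entirely in $h'\subset h^+$, and I would adapt the angle-continuity argument from the proof of Lemma~\ref{lem:boundary_separation} to a path ending at such a $\xi'$ to rule out $\xi'\in A_-$. For the location of $\alpha^{-\infty}$: the half-ray $\alpha|_{(-\infty,r-\sfrac12]}$ lies in $h\cup h^-$, hence is disjoint from $h'\subset h^+$; starting from a point of $h\subset {h'}^-$ and never crossing $h'$, it stays in ${h'}^-$, so $\alpha^{-\infty}\in B_-\cup\Lambda(h')$. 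A Hausdorff-distance argument excludes $\alpha^{-\infty}\in\Lambda(h')$: a ray inside $h'$ and the tail of $\alpha$ in $h^-$ are separated by distance at least $1$ (as $\dist(h^-,h^+)=1$ by Remark~\ref{rem:thick_curtains}), yet $\alpha$ drifts arbitrarily deep into $h^-$, so the two rays cannot be asymptotic.

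The inequality then assembles in two short steps. First, for any $\xi'\in\Lambda(h')$, any Tits path from $\alpha^{-\infty}\in A_-$ to $\xi'\in A_+\cup\Lambda(h)$ must pass through some $\eta\in\Lambda(h)$; Corollary~\ref{cor:perpendicular_to_dual} gives $d_T(\alpha^{-\infty},\eta)\geq\pi/2$, and Proposition~\ref{prop:separated_means_perpendicular} gives $d_T(\eta,\xi')\geq\pi/2$, yielding $d_T(\alpha^{-\infty},\xi')\geq\pi$. Second, since $\alpha^{-\infty}\in B_-$ and ${\alpha'}^\infty\in B_+$, any Tits path between them passes through some $\xi'\in\Lambda(h')$; combining the previous bound with $d_T(\xi',{\alpha'}^\infty)\geq\pi/2$ from Corollary~\ref{cor:perpendicular_to_dual} applied to $h'$ gives $d_T(\alpha^{-\infty},{\alpha'}^\infty)\geq 3\pi/2$, as required.

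The main obstacle is the first half of the preparation, specifically the inclusion $\Lambda(h')\subset A_+\cup\Lambda(h)$. This is effectively a one-sided strengthening of Lemma~\ref{lem:boundary_separation}: the angle-continuity machinery there needs to be applied to a path terminating at the endpoint of an arbitrary ray in $h^+$, rather than at $\alpha^{\pm\infty}$ themselves. Making the quantity $\phi(t)=\angle_x(\alpha^\infty,\gamma(t))$ take the value $\pi/2$ at such a path appears to be the most delicate point and is where the bulk of the geometric work will lie.
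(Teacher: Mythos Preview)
Your overall route is exactly the paper's: pick the ``outer'' endpoints of the two dual geodesics and argue that any Tits path between them must cross both $\Lambda(h)$ and $\Lambda(h')$, picking up three segments of length at least $\pi/2$ from Corollary~\ref{cor:perpendicular_to_dual} and Proposition~\ref{prop:separated_means_perpendicular}. The paper's own proof is four lines and simply cites Lemma~\ref{lem:boundary_separation} for the crossing claim, without explicitly discussing the order in which the two limit sets are met; you have been more scrupulous in isolating claims (a) and (b) to pin this down.

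There is one genuine gap, though: your sketch of (b) has exactly the same unresolved step as (a). You argue that the backward ray of $\alpha$ stays in the $X$-halfspace ${h'}^-$, and from this conclude that $\alpha^{-\infty}$ lies in the $\partial X$-component $B_-$. But Lemma~\ref{lem:boundary_separation} only says that $\Lambda(h')$ separates ${\alpha'}^{-\infty}$ from ${\alpha'}^\infty$ in $\partial X$; it does \emph{not} identify the resulting boundary components with the limit sets of rays lying in the halfspaces ${h'}^\pm\subset X$. The implication ``a ray stays in ${h'}^-$, therefore its endpoint lies in $B_-\cup\Lambda(h')$'' is precisely the halfspace-to-boundary-component correspondence you flag as the crux of (a). So (b) is not actually easier than (a); both reduce to the same missing lemma, and you have not yet bypassed it. Once you recognise this, the two claims should be attacked together (for instance via the angle function $\phi(\cdot)=\angle_x(\alpha^\infty,\cdot)$ from the proof of Lemma~\ref{lem:boundary_separation}, which reduces the question to locating $\phi(\xi')$ relative to $\pi/2$), rather than treating (b) as already in hand.
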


\begin{proof}
Let $h=h_\alpha$ and $k=k_\beta$ be $L$--separated curtains. We may assume that $k\subset h^-$ and $h\subset k^-$. By Lemma~\ref{lem:boundary_separation}, any path in $\partial X$ from $\alpha^\infty$ to $\beta^\infty$ must pass through both $\Lambda(h)$ and $\Lambda(k)$. Corollary~\ref{cor:perpendicular_to_dual} and Proposition~\ref{prop:separated_means_perpendicular} show that, in the angle metric on $\partial X$, the length of such a path must be at least $\frac{3\pi}2$.
\end{proof}

Bringing in group actions, we are now in a position to prove the main result of this section. The proof relies on work of Guralnick--Swenson \cite{guralnikswenson:transversal}, which itself relies on ideas from \cite{papasogluswenson:boundaries}.

\begin{theorem} \label{thm:separated_implies_contracting}
Let $X$ be a CAT(0) space with the geodesic extension property, and let $G$ be a group acting properly cocompactly on $X$. If $X$ has a pair of separated curtains, then $G$ has a rank-one element.
\end{theorem}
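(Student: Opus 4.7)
The plan is to combine Corollary~\ref{cor:boundary_diameter} with the theorem of Guralnik--Swenson \cite{guralnikswenson:transversal}. Specifically, once one knows the Tits boundary of $X$ has diameter at least $\frac{3\pi}{2}>\pi$, the Guralnik--Swenson theorem produces a rank-one element of $G$. Since Corollary~\ref{cor:boundary_diameter} gives exactly such a Tits-diameter bound from the hypothesis of separated curtains, the bulk of the work is already done---the only thing that needs to be checked before quoting the corollary is that $\partial X$ is connected, which is its standing assumption.

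To establish connectedness of $\partial X$, I would perform a short case analysis on the ends of $G$. If $G$ is one-ended, then connectedness of $\partial X$ is classical, following from the Swenson/Papasoglu--Swenson no-cut-point results \cite{papasogluswenson:boundaries}. If $G$ has more than one end, then by Stallings' theorem together with Dunwoody's accessibility, $G$ either is virtually free or splits as a graph of groups with a one-ended vertex group. In the virtually free case (which includes $G$ virtually cyclic), $X$ is quasiisometric to a tree, hence hyperbolic; any infinite-order element of $G$ then has a contracting axis---directly, or via Lemma~\ref{lem:contracting_crosses}---and is therefore rank-one. In the case where $G$ has a one-ended vertex group $G_v$, the convex-hull argument already carried out in the proof of the Ivanov-type theorem in Section~\ref{subsec:Ivanov} provides a one-ended CAT(0) subspace $X_v \subseteq X$ on which $G_v$ acts cocompactly, and one can run the angle-separation argument of Corollary~\ref{cor:boundary_diameter} inside $X_v$ (using that the relevant $L$--separated curtains can be arranged to cross $X_v$, since they determine nontrivial partitions of $X$ and $X_v$ is coarsely dense in a convex subspace).

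Putting everything together: assuming the non-trivial case in which $\partial X$ is connected, apply Corollary~\ref{cor:boundary_diameter} to obtain $\diam \partial_T X \geq \frac{3\pi}{2}$, and then \cite{guralnikswenson:transversal} yields a rank-one element of $G$. The other cases (virtually free, virtually cyclic) are disposed of by the direct observation that $X$ is hyperbolic in those cases and the hypothesis of separated curtains forces $X$ to be unbounded, so that any infinite-order element of $G$ is already rank-one.

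The main obstacle is the boundary-connectedness step: while Corollary~\ref{cor:boundary_diameter} is the substantive metric ingredient, its hypothesis is nontrivial to verify in full generality, and one must either reduce cleanly to the one-ended setting or invoke end-theoretic/accessibility input from outside the paper. The rest of the argument is a direct quotation of results already in hand.
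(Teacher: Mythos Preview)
Your core strategy---combine Corollary~\ref{cor:boundary_diameter} with Guralnik--Swenson---is exactly the paper's, but you take an unnecessarily circuitous route to verify the connectedness hypothesis. The paper argues by contrapositive: if $G$ has \emph{no} rank-one element, then $\partial X$ is connected by a result of Ballmann--Buyalo \cite{ballmannbuyalo:periodic}, so Guralnik--Swenson gives $\diam\partial_TX<\frac{3\pi}{2}$, and Corollary~\ref{cor:boundary_diameter} then forces the absence of separated curtains. That is the whole proof---three lines. The point is that connectedness of $\partial X$ is not something you verify from the structure of $G$; it is a \emph{consequence} of the assumption you are trying to contradict.

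Your ends-based case analysis is therefore superfluous, and the multi-ended, non-virtually-free branch has a real gap: the claim that the separated curtains ``can be arranged to cross $X_v$'' is not justified. Curtains of $X$ restricted to $X_v$ need not be curtains of $X_v$, and there is no reason a given pair of $L$--separated curtains in $X$ should interact with a particular one-ended piece at all. (In fact, if $\partial X$ is disconnected one already gets a rank-one element directly---this is the Ballmann--Buyalo statement the paper invokes---so the detour through $X_v$ is both unnecessary and unsound as written.) Replace your entire connectedness discussion with the single citation to \cite{ballmannbuyalo:periodic} and run the argument contrapositively.
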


\begin{proof}
If $\partial X$ is not path-connected, then by definition of the Tits metric, for every $\xi\in\partial_TX$ there exists $\zeta\in\partial_TX$ at infinite Tits distance from $\xi$. Thus \cite[Prop.~1]{ballmannbuyalo:periodic} shows that if $\partial X$ is not path-connected, then $G$ has a rank-one element. On the other hand, if $\partial X$ is path-connected then from Corollary~\ref{cor:boundary_diameter} we see that $\diam\partial_TX\ge\frac{3\pi}2$. According to \cite[Thm~3.12]{guralnikswenson:transversal}, this also shows that $G$ has a rank-one element.
\end{proof}

In view of Theorem~\ref{thm:rank_one_characterisation}, we get the following dichotomy for the diameters of the $X_L$.

\begin{corollary} \label{cor:diameter_2}
Let $X$ be a CAT(0) space with the geodesic extension property and admitting a proper cocompact group action. If $\diam X_L>2$ for some $L$, then only finitely many $X_L$ are bounded.
\end{corollary}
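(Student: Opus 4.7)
The plan is to chain together the theorem immediately preceding the corollary with Theorem~\ref{thm:rank_one_characterisation}, exploiting the monotonicity of the metrics $\dist_L$ in $L$. First I would note that the hypothesis $\diam X_L>2$ means there exist $x,y\in X$ with $\dist_L(x,y)>2$, which by definition produces an $L$--chain $c$ separating $x$ from $y$ with $|c|\geq 2$. In particular, $c$ exhibits two $L$--separated curtains in $X$. The theorem just established then supplies a rank-one element $g\in G$.

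Next I would invoke Theorem~\ref{thm:rank_one_characterisation} to conclude that $g$ acts loxodromically on some $X_{L'}$, so that $X_{L'}$ is unbounded. The final step is to upgrade this from a single $L'$ to all sufficiently large parameters: any $L'$--separated pair of curtains is automatically $L''$--separated whenever $L''\geq L'$, directly from the definition of $L$--separation, so every $L'$--chain is an $L''$--chain and hence
\[
\dist_{L'}(x,y)\leq \dist_{L''}(x,y) \quad \text{for all } x,y\in X \text{ and all } L''\geq L'.
\]
Thus $X_{L''}$ is unbounded for every $L''\geq L'$, leaving at most the values $L=0,1,\dots,L'-1$ for which $X_L$ could be bounded.

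There is no real obstacle here; the content sits entirely in the two prior results, and the only thing to verify carefully is the elementary monotonicity of $L$--separation in $L$, which is immediate from unwinding the definition. The main thing I would be careful about when writing this up is making explicit the step from $\dist_L(x,y)>2$ to the existence of a pair of $L$--separated curtains, since this is exactly the bridge into the hypothesis of the preceding theorem.
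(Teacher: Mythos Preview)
Your argument is correct and follows exactly the route the paper intends: the corollary is stated immediately after the theorem producing a rank-one element from a pair of separated curtains, with the paper remarking only that it follows ``in view of Theorem~\ref{thm:rank_one_characterisation}''. Your additional explicit observation about monotonicity of $\dist_L$ in $L$ (to pass from one unbounded $X_{L'}$ to all larger parameters) is the right way to finish, and is noted elsewhere in the paper.
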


\section{Higher-rank CAT(0) spaces} \label{sec:acyl_rr}

In this section, we apply our machinery to the coarse geometry of CAT(0) spaces without rank-one isometries. The main goal of the section is to complete the proof of the weak rank-rigidity statement, Theorem~\ref{icor:rr}, by showing that if $X$ is a CAT(0) space admitting a proper cocompact group action and every $X_L$ is bounded, then $X$ is wide. This is Proposition~\ref{prop:wide}.

 We start with the main technical lemma, which allows us to shrink polygons to efficiently avoid balls.

\begin{lemma}[Circumnavigation Lemma] \label{lem:short_n-gon}
Let $x_1,\dots,x_n\in X$, with $n\ge3$, and write $x_{n+1}=x_1$. Let $p\in[x_1,x_2]$ and let $B$ be the closed $r$--ball about $p$. Suppose that every $[x_i,x_{i+1}]$ with $i>1$ is disjoint from the interior $\ring B$ of $B$. There is a path from $x_1$ to $x_2$ that avoids $\ring B$ and has length at most $8(nr+D)$, where $D=\dist(x_1,x_2)$.
\end{lemma}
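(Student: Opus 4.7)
The plan is to construct the desired path in two parts: a long straight portion following most of $[x_1,x_2]$, and a short detour around $B$ that uses the polygon as a witness.

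First I would reduce the problem. Let $y_1,y_2\in[x_1,x_2]\cap\partial B$ be the entry and exit points of the chord through $B$, which exist because $p\in[x_1,x_2]$ and $B$ is convex. The subsegments $[x_1,y_1]$ and $[y_2,x_2]$ are disjoint from $\ring B$ and together contribute at most $D$ to the length. Thus it suffices to produce a path from $y_1$ to $y_2$ in $X\smallsetminus\ring B$ of length at most $8nr+7D$ (say). Note that $y_1=\pi_B(x_1)$ and $y_2=\pi_B(x_2)$ by Lemma~\ref{lem:comparing_projections_2}, and the hypothesis guarantees that $x_1,x_2\in X\smallsetminus\ring B$, so $y_1,y_2\in\partial B$.

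The natural detour is $P_0=[x_2,x_3]\cup\cdots\cup[x_n,x_1]$; it avoids $\ring B$ but can be arbitrarily long, so it cannot be used directly. My plan is to prove the lemma by induction on $n\ge 3$, and at each step to eliminate one vertex of the polygon at a length-cost of $O(r)$. The inductive step takes a polygon with $n>3$ vertices and either (i) recognises that $[x_{k-1},x_{k+1}]$ avoids $\ring B$ for some $k\in\{3,\dots,n-1\}$, in which case the triangle inequality permits us to delete $x_k$ for free; or (ii) replaces the subpath $[x_{k-1},x_k]\cup[x_k,x_{k+1}]$ by a path of length at most $\dist(x_{k-1},x_{k+1})+O(r)$ that avoids $\ring B$. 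The latter step uses convexity of $B$ in the CAT(0) setting: the two edges $[x_{k-1},x_k]$ and $[x_k,x_{k+1}]$ witness that $x_{k-1}$ and $x_{k+1}$ lie in the same component of $X\smallsetminus\ring B$, and projection to $B$ together with the CAT(0) Reshetnyak majorisation applied to the triangle $(x_{k-1},x_k,x_{k+1})$ produces a short replacement path near $\partial B$.

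The base case is the triangle $n=3$. Here the two edges $[x_1,x_3]$ and $[x_3,x_1]$ avoid $\ring B$ and we must build a path from $y_1$ to $y_2$ in $X\smallsetminus\ring B$ of length at most $24r+7D$. I would set $z_3=\pi_B(x_3)\in\partial B$ and try the concatenation $[y_1,z_3]\cup[z_3,y_2]$, after modifying each geodesic segment by a short projection-detour to keep it outside $\ring B$; the segments $[x_1,z_3]$ and $[z_3,x_2]$, which are respective subpaths of the convex hulls of $[x_1,x_3]\cup[x_3,z_3]$ and $[x_2,x_3]\cup[x_3,z_3]$, are the starting point here. Controlling this base case is the main obstacle, since a geodesic between two points of $\partial B$ generically enters $\ring B$, so the detour must instead follow an arc along $\partial B$ whose length one extracts from the witness edges rather than from the intrinsic geometry of $\partial B$ (which can be wild in general CAT(0)).

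The main obstacle throughout is the quantitative bookkeeping: we must show that each vertex-removal costs at most some constant multiple of $r$ and that the base case is likewise bounded by a constant multiple of $r$ plus $D$. The factor $8$ in the bound $8(nr+D)$ should absorb these constants: roughly, an additive $O(r)$ per removed vertex (accounting for detour around $\partial B$ using 1-Lipschitzness of $\pi_B$ and the triangle inequality) and a multiplicative overhead of at most $8$ for the segments $[x_1,y_1],[y_2,x_2]$ combined with the triangle inequality in the base case. The cleanest route is probably not through a literal induction but by directly constructing $z_i=\pi_B(x_i)$ for each $i\ge 2$, showing the concatenation $[x_1,y_1]\cup\pi_B([x_n,x_1])\cup\pi_B([x_{n-1},x_n])\cup\cdots\cup\pi_B([x_2,x_3])\cup[y_2,x_2]$ lies in $X\smallsetminus\ring B$, and then replacing each projected arc by a geodesic shortcut whose length is bounded by $O(r)$ thanks to the star-convexity provided by Lemma~\ref{lem:convex_curtains} applied to the ball $B$.
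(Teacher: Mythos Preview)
Your outline identifies the right difficulty but does not resolve it. The crux, as you say, is the base case: given a triangle with two sides avoiding $\ring B$, produce a path between the endpoints of the third side of length $O(r+D)$ that avoids $\ring B$. You propose to project to $B$ and then ``replace each projected arc by a geodesic shortcut whose length is bounded by $O(r)$''. But a geodesic between two points of $\partial B$ will typically enter $\ring B$, so the shortcut is not admissible; and if you keep the projected arcs themselves, their lengths are only bounded by the original edge lengths via $1$--Lipschitzness of $\pi_B$, not by $O(r)$. You acknowledge that the intrinsic geometry of $\partial B$ can be wild, which is exactly why no argument using only $\pi_B$ and generic CAT(0) convexity can succeed here. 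Also, Lemma~\ref{lem:convex_curtains} is about curtains, not balls, and does not give you anything for $B$; and Reshetnyak majorisation compares to a flat disc, but does not by itself produce a short \emph{avoiding} path in $X$.

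The paper's proof supplies the missing idea: rather than walking along $\partial B$, it constructs geodesics that are \emph{tangent} to $B$, i.e. meet $B$ in a single point. First it modifies the vertices so that each surviving edge $[x_i',x_{i+1}']$ touches $B$ at a unique point $y_i$ (deleting $x_{i+1}$ when $[x_i',x_{i+2}]$ already avoids $B$, otherwise sliding $x_{i+1}$ along $[x_{i+1},x_{i+2}]$ until the new edge is tangent). Then, for each corner, it uses continuity of the distance to $p$ along a one-parameter family of geodesics between the two incident edges to find a chord $[z_i^-,z_i^+]$ tangent to $B$; convexity of the metric bounds its length by $\dist(y_{i-1},y_i)\le 2r$. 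Concatenating these tangent chords with short connectors gives the avoiding path, and the bookkeeping yields the constant $8$. Your inductive reduction is compatible with this once the tangent-chord construction is in hand, but without it the argument does not close.
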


\begin{proof}
We begin by modifying the set of $x_i$. See Figure~\ref{fig:short_n-gon}. Let $x_3'=x_3$. Given $x_i'$ for $i>2$, if $x_{i+2}$ exists then proceed as follows. If $[x_i',x_{i+2}]$ is disjoint from $B$, then delete $x_{i+1}$, relabel $x_j$ as $x_{j-1}$ for every $j>i+1$, and repeat with the new $x_{i+2}$ (if it exists). Otherwise, fix a point $x_{i+1}'\in[x_{i+1},x_{i+2}]$ such that $\dist([x_i',x_{i+1}'],m)=r$. After this process, we have points $x_1,x_2,x_3=x_3',x_4',\dots,x_m'$ with $m\le n$ such that $\dist([x_i',x_{i+1}'],p)=r$ for all $i\in\{3,\dots,m-1\}$.

\begin{figure}[ht]
\includegraphics[height=7cm]{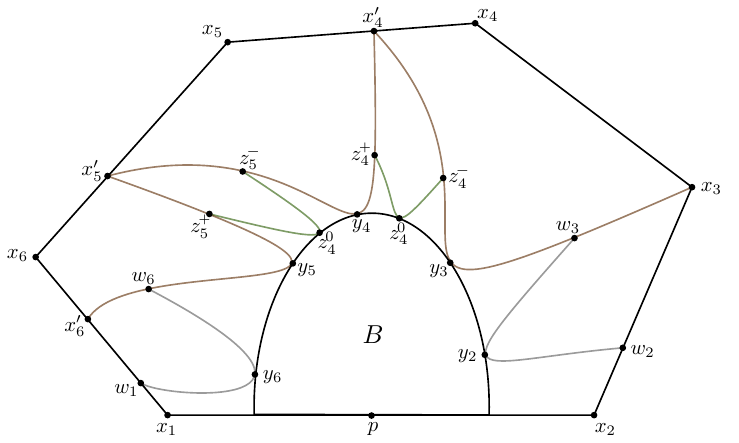}\centering
\caption{The construction in the proof of Lemma~\ref{lem:short_n-gon}, illustrated with $n=m=6$.} \label{fig:short_n-gon}
\end{figure}

Because $B$ is convex, there are unique points $y_i\in[x_i',x_{i+1}']$ with $\dist(y_i,p)=r$ for $i\in\{3,\dots,m-1\}$. (In the case $m=3$, set $y_2=x_2$, $y_3=x_1$.) Since $[x_1,y_{m-1}]$ meets $B$, by considering the family of geodesics with endpoints on $[x_m',x_1]$ and $[x_m',y_{m-1}]$, we can find $w_1\in[x_m',x_1]$ and $w_m\in[x_m',y_{m-1}]$ such that $[w_1,w_m]$ meets $B$ at a single point $y_m$. Similarly, we can find $w_2\in[x_3',x_2]$ and $w_3\in[x_3',y_3]$ such that $[w_2,w_3]$ meets $B$ at a single point $y_2$ (in the case $m=3$, these two paths are the same, so label it $[w_1,w_3]$ and write $w_2=w_3$). Note that by convexity of the metric, $\dist(w_1,w_m)\le\dist(x_1,y_{m-1})$ and $\dist(w_2,w_3)\le\dist(x_2,y_3)$, both of which are at most $r+D$. By the triangle inequality, we consequently get that $\dist(x_1,w_1)$ and $\dist(x_2,w_2)$ are at most $2(r+D)$.

Now, if $m\ge5$, for each $i\in\{4,\dots,m-1\}$ consider a family of geodesics with endpoints in $[x_i',y_{i-1}]$ and $[x_i',y_i]$. Because $y_i\in[x_i',x_{i+1}']$, from each one of these families we obtain points $z_i^-\in[x_{i-1}',x_i']$ and $z_i^+\in[x_i',x_{i+1}']$ such that the geodesic $[z_i^-,z_i^+]$ meets $B$ at a single point $z_i^0$. Again, convexity of the metric implies that $\dist(z_i^-,z_i^+)\le\dist(y_{i-1},y_i)\le2r$. Moreover, the triangle inequality gives $\dist(z_i^+,z_{i+1}^-)\le\dist(z_i^+,z_i^0)+\dist(z_i^0,z_{i+1}^0)+\dist(z_{i+1}^0,z_{i+1}^-)\le6r$.

Consider the path $P$ defined as the concatenation
\[
\hspace{-1.5cm}[x_1,w_1] \cup [w_1,w_m] \cup [w_m,z_{m-1}^+] \cup [z_{m-1}^+,z_{m-1}^-] \cup \bigcup_{i=2}^{m-4}\Big([z_{m-i+1}^-,z_{m-i}^+]\cup[z_{m-i}^+,z_{m-i}^-]\Big) \cup [z_4^-,w_3] \cup[w_3,w_2] \cup[w_2,x_2],
\]
ignoring any terms that are undefined if $m\le4$. It connects $x_1$ to $x_2$ and avoids $\ring B$. It suffices to bound the length $\ell(P)$. We have so far seen that
\[
\dist(x_1,w_1)\le2(r+D), \quad \dist(w_1,w_m)\le r+D, \quad \dist(z_i^+,z_i^-)\le2r,
\]\[
\dist(z_i^-,z_{i-1}^+)\le6r, \quad \dist(w_3,w_2)\le r+D, \quad \dist(w_2,x_2)\le2(r+D).
\]
This leaves us needing to bound $\dist(w_m,z_{m-1}^+)$ and $\dist(z_4^-,w_3)$. By the triangle inequality, $\dist(z_4^-,w_3)\le\dist(z_4^-,z_4^0)+\dist(z_4^0,y_2)+\dist(y_2,w_3)\le5r+D$, and similarly $\dist(w_m,z_{m-1}^+)\le5r+D$. Combining all of these, we get that 
\begin{align*}
\ell(P)\:&\le\: 2(r+D)+(r+D)+(5r+D)+(2r)+(m-5)(8r)+(5r+D)+(r+D)+2(r+D) \\
    &\le\: 18r+8D+8r\max\{0,m-5\} \:\le\: 18r+8D+8r(n-3). \qedhere
\end{align*}
\end{proof}

We now show that if $\diam X_L$ is uniformly bounded, we can verify the hypotheses of the circumnavigation lemma.

\begin{lemma} \label{lem:distant_curtain}
Let $L\ge2$. Suppose that curtains $h_1,h_2$ are not $L$--separated, and let $B$ be a ball in $X$ with radius $r$. If $r\le\frac{L-1}2$, then there is a curtain that meets $h_1$ and $h_2$ but not $B$.
\end{lemma}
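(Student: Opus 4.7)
The plan is to exploit the fact that the chain witnessing non-$L$-separation is forced to spread out linearly in $X$, while the ball $B$ has diameter too small (compared to $L$) to touch every curtain in that chain.

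By Definition~\ref{def:separated}, the hypothesis that $h_1$ and $h_2$ are not $L$--separated gives a chain $\{k_1,\dots,k_n\}$ with $n\ge L+1$ such that every $k_i$ meets both $h_1$ and $h_2$. My goal is to show that $B$ cannot meet every element of this chain.

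The key step is a ``diameter vs.\ chain length'' estimate: if $B$ meets $k_i$ at a point $x$ and $k_j$ at a point $y$, with $i<j$, then because $\{k_m\}$ is a chain we have $x\in k_i\subseteq k_{i+1}^-$ and $y\in k_j\subseteq k_{j-1}^+$, so the subchain $\{k_{i+1},\dots,k_{j-1}\}$ separates $x$ from $y$. Hence
\[
\dist_\infty(x,y)\ge 1+(j-i-1)=j-i.
\]
By Lemma~\ref{lem:chain_distance} we have $\dist_\infty(x,y)=\lceil\dist(x,y)\rceil\le \lceil 2r\rceil$, so $j-i\le \lceil 2r\rceil$. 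The hypothesis $r\le\frac{L-1}{2}$ gives $\lceil 2r\rceil\le L-1$, so all indices $i$ for which $k_i$ meets $B$ lie in a window of width at most $L-1$, and in particular there are at most $L$ such indices.

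Since the chain has $n\ge L+1$ elements, some $k_i$ meets neither of these at most $L$ ``bad'' indices, i.e.\ $k_i\cap B=\varnothing$. This $k_i$ meets $h_1$ and $h_2$ by construction and avoids $B$, so it is the desired curtain. No step in this argument is delicate; the only thing to be careful about is using the correct direction of the inequality $\dist_\infty \le \lceil\dist\rceil$ together with the lower bound on $\dist_\infty$ coming from the sub-chain, so as to pin down the correct dependence of the window width on $r$.
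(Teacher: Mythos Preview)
Your argument is correct and is essentially the same as the paper's proof. The paper phrases the key step as ``any chain of curtains all of whose elements meet $B$ has cardinality at most $\lceil 2r\rceil+1$'' (citing Remark~\ref{rem:thick_curtains}), whereas you unpack this via Lemma~\ref{lem:chain_distance} and the subchain $\{k_{i+1},\dots,k_{j-1}\}$; the resulting bound and the conclusion are identical.
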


\begin{proof}
By Remark~\ref{rem:thick_curtains}, any chain of curtains all of whose elements meet $B$ must have cardinality at most $\lceil2r\rceil+1$. By assumption, there is a chain $c$ of curtains of cardinality $L+1$ such that every element of $c$ meets both $h_1$ and $h_2$. If $r\le\frac{L-1}2$, then $\lceil2r\rceil+1\le L$, so some element of $c$ is disjoint from $B$.
\end{proof}

\begin{lemma}\label{lem:bounding_length}
Let $L\ge2$ and suppose that $\gamma$ is a geodesic with dual curtains $h_1$ and $h_2$ that are not $L$--separated. Let $x_1\in h_1$ and $x_2\in h_2$ be the points of $\gamma$ with $\dist(x_1,x_2)=\dist(h_1,h_2)=D$. If $p\in[x_1,x_2]$ is such that the interior of $B=B(p,\frac{L-1}2)$ is disjoint from $h_1$ and $h_2$, then there is a path from $x_1$ to $x_2$ of length at most $8(3L+D)$ that avoids the interior of $B$.
\end{lemma}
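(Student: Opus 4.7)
The plan is to apply Lemma~\ref{lem:distant_curtain} to obtain a curtain $k$ that crosses both $h_1$ and $h_2$ but is disjoint from $B$, then use it together with the star convexity of curtains (Lemma~\ref{lem:convex_curtains}) to build a polygon around $B$, so that the Circumnavigation Lemma~\ref{lem:short_n-gon} supplies the required path.

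Since $r = (L-1)/2$ satisfies the hypothesis of Lemma~\ref{lem:distant_curtain}, such a curtain $k = \pi_\kappa^{-1}(P_k)$ exists. I will fix $y_i \in h_i \cap k$ for $i = 1, 2$ and use Lemma~\ref{lem:convex_curtains} to show that $[y_i, \pi_{P_i}(y_i)] \subset h_i$ and $[y_i, \pi_{P_k}(y_i)] \subset k$. The subsegment of $\gamma$ from $x_i$ to $\pi_{P_i}(y_i)$ will lie in $P_i \subset h_i$, and the subsegment of the geodesic supporting $P_k$ from $\pi_{P_k}(y_1)$ to $\pi_{P_k}(y_2)$ will lie in $P_k \subset k$. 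Since $h_1, h_2$ are disjoint from $\mathring B$ by hypothesis and $k$ is disjoint from $B$ by choice, each of these edges avoids $\mathring B$. Together they form a polygon with vertices $x_1, x_2, \pi_{P_2}(y_2), y_2, \pi_{P_k}(y_2), \pi_{P_k}(y_1), y_1, \pi_{P_1}(y_1)$ whose only $\mathring B$--meeting edge is $[x_1,x_2]$.

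Applying Lemma~\ref{lem:short_n-gon} with $r = (L-1)/2$ will then yield a path from $x_1$ to $x_2$ avoiding $\mathring B$ of length at most $8(nr+D)$. To match the stated constant $8(3L+D)$, I expect to pick $y_1, y_2$ in a common fibre $\pi_\kappa^{-1}(q) \cap k$ (such a fibre being convex in a CAT(0) space), so that the direct geodesic $[y_1,y_2]$ already lies in $k$ and the polygon collapses to a hexagon with $n = 6$; the Circumnavigation bound then gives $8 \bigl(6 \cdot (L-1)/2 + D\bigr) \le 8(3L+D)$.

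The main obstacle will be verifying that every non-$[x_1,x_2]$ edge truly stays inside one of $h_1, h_2, k$: curtains are not convex (Remark~\ref{rem:nonconvex}), so a direct geodesic between two of their points may leave the curtain and potentially enter $\mathring B$. Star convexity (Lemma~\ref{lem:convex_curtains}) is what circumvents this, by forcing the path to route through pole projections; this is exactly why the auxiliary vertices in the polygon are needed, and it is the reason the estimate must involve both the pole of $k$ and the poles of $h_1,h_2$.
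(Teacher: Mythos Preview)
Your overall strategy---obtain the auxiliary curtain $k$ via Lemma~\ref{lem:distant_curtain}, use star convexity to route a polygon through $h_1$, $k$, $h_2$ avoiding $\mathring B$, then invoke the Circumnavigation Lemma~\ref{lem:short_n-gon}---is exactly the paper's approach, and your octagon with vertices $x_1, x_2, \pi_{P_2}(y_2), y_2, \pi_{P_k}(y_2), \pi_{P_k}(y_1), y_1, \pi_{P_1}(y_1)$ is correctly set up: every non-$[x_1,x_2]$ edge lies in one of the three curtains, either by star convexity or by being a subsegment of a pole.

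The gap is in your reduction to a hexagon. Your claim that a fibre $\pi_\kappa^{-1}(q)$ is convex is \emph{false}: this is precisely the phenomenon recorded in Remark~\ref{rem:nonconvex}, where two points $A_1,A_2$ with $\pi_{[E,C]}(A_1)=\pi_{[E,C]}(A_2)=C$ have an interior point $B\in[A_1,A_2]$ with $\pi_{[E,C]}(B)=D\ne C$. So even if you could find $y_1\in h_1\cap k$ and $y_2\in h_2\cap k$ in a common fibre of $\pi_\kappa$ (itself not guaranteed---there is no reason both $h_1$ and $h_2$ should meet the same fibre), the geodesic $[y_1,y_2]$ could exit that fibre, leave $k$, and enter $\mathring B$. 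You correctly identify non-convexity of curtains as the main obstacle, but then appeal to convexity of fibres, which fails for the same reason.

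The paper reaches $n=6$ not via fibres but by keeping the detour through the pole of $k$ (your $\pi_{P_k}(y_i)$ are its $x_4,x_5$) while dropping the intermediate pole-projection vertices on the $h_1,h_2$ side: it connects $x_6\in h\cap h_1$ directly to $x_1$ inside $h_1$, and $x_3\in h\cap h_2$ directly to $x_2$ inside $h_2$. If you retain your eight vertices, Lemma~\ref{lem:short_n-gon} yields only $8\bigl(4(L-1)+D\bigr)$ rather than $8(3L+D)$; this weaker linear bound would still suffice for the application in Proposition~\ref{prop:wide}.
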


\begin{proof}
By Lemma~\ref{lem:distant_curtain}, there is a curtain $h$ meeting both $h_1$ and $h_2$ but not $B$. By star convexity there exist $x_3\in h\cap h_2$ and $x_6\in h\cap h_1$ with $[x_6,x_1]\in h_1$ and $[x_2,x_3]\in h$. Moreover, there are points $x_4$ and $x_5$ in the pole of $h$ such that $[x_3,x_4]\cup[x_4,x_5],[x_5,x_6]\subset h$. In particular, the conditions of Lemma~\ref{lem:short_n-gon} are met with $r=\frac{L-1}2$.
\end{proof}

Let us now set up the necessary notation for asymptotic cones. Asymptotic cones were introduced by Gromov in \cite{gromov:groups} and later clarified by van den Dries and Wilkie \cite{vandendrieswilkie:gromov's}. We refer the reader to \cite{drutusapir:tree-graded} for a more thorough treatment.

\begin{definition}[Asymptotic cone]
Let $\omega$ be a \emph{non-principal ultrafilter} and let $(\lambda_n)$ be a divergent sequence of positive numbers. Let $(X,\dist)$ be a metric space, and consider the sequence of metric spaces $X_n=(X,\frac1\lambda_n\dist)$. Define an extended pseudometric $\delta_\omega$ on $\prod_{i=1}^\infty X_n$ by setting $\delta_\omega((x_n),(y_n))=r$ if for all $\eps>0$ we have $\{n\,:\,r-\eps<\frac1{\lambda_n}\dist(x_n,y_n)<r+\eps\}\in\omega$, and $\delta_\omega((x_n),(y_n))=\infty$ if there is no such $r$. Fix a basepoint $\go\in X$. The metric quotient of the pseudometric space consisting of all $(x_n)$ with $\delta_\omega((x_n),(\go))<\infty$ is an \emph{asymptotic cone} of $X$. We denote this metric space by $(X_\omega,\dist_\omega)$, suppressing both the scaling sequence and the basepoint. If $(x_n)$ is a sequence of points in $X$ and $x_\omega\in X_\omega$, then we write $(x_n)\to_\omega x_\omega$ if $(x_n)$ is a representative of $x_\omega$.
\end{definition}

\begin{definition}[Wide]
Following \cite{drutusapir:tree-graded}, we say that a metric space is \emph{wide} if none of its asymptotic cones have cut-points.
\end{definition}

We are now ready to prove Proposition~\ref{prop:wide}. Note that this can also be obtained as a consequence of \cite[Prop.~1.1]{drutumozessapir:divergence} and the observation that Lemma~\ref{lem:bounding_length} essentially provides \emph{linear divergence}. We provide a proof in the interests of self-containment.

\begin{proposition} \label{prop:wide}
Let $X$ be a CAT(0) space admitting a proper cocompact group action. If no $X_L$ is unbounded, then $X$ is wide.
\end{proposition}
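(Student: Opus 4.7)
The plan is to establish linear divergence for $X$ and then deduce wideness by a direct ultralimit argument. Corollary~\ref{cor:diameter_2} combined with the hypothesis forces $\diam X_L\le 2$ for every $L$. Hence no two disjoint curtains $h_1,h_2$ can be $L$--separated, for any $L$: otherwise, picking $x\in h_1^-$ and $y\in h_2^+$ would yield $\dist_L(x,y)\ge 3$, exceeding $\diam X_L$.

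I claim that for any $a,b\in X$ with $D=\dist(a,b)$, any $p\in[a,b]$ with $\min\{\dist(a,p),\dist(b,p)\}\ge r+\tfrac{3}{2}$, and any integer $r\ge 1$, there is a path from $a$ to $b$ of length at most $D+62r+31$ avoiding the interior of $B(p,r)$. Set $L=2r+1$ and consider the curtains $h_1=h_{[a,b],p-r-1}$ and $h_2=h_{[a,b],p+r+1}$, both dual to $[a,b]$. Since $\pi_{[a,b]}$ is $1$--Lipschitz, the interior of $B(p,r)$ projects to $[a,b]$ within distance $<r$ of $p$, and therefore meets neither pole. By the previous observation, $h_1$ and $h_2$ are not $L$--separated, so Lemma~\ref{lem:bounding_length} supplies a path between the points $x_i\in[a,b]\cap h_i$ nearest $p$ (so that $\dist(x_1,x_2)=2r+1$) of length at most $8(3L+(2r+1))=32(2r+1)$ avoiding the interior of $B(p,r)$. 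Concatenating with $[a,x_1]$ and $[x_2,b]$ gives the claimed bound.

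Suppose now for contradiction that some asymptotic cone $X_\omega$ contains a cut-point $c$ separating $a,b\in X_\omega$. As $[a,b]_\omega$ must pass through $c$, choose representatives $a_n\to_\omega a$, $b_n\to_\omega b$, and $p_n\in[a_n,b_n]$ with $p_n\to_\omega c$. Since $c\notin\{a,b\}$, fix $0<\eps<\min\{\dist_\omega(a,c),\dist_\omega(b,c)\}$. For $\omega$--a.e.\ $n$ the claim provides a path $\gamma_n$ from $a_n$ to $b_n$ of length at most $C\lambda_n$ avoiding $B(p_n,\eps\lambda_n)$, with $C$ depending only on $\dist_\omega(a,b)$ and $\eps$. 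Parametrise $\gamma_n:[0,1]\to X$ as a $C\lambda_n$--Lipschitz map; then $\tilde\gamma(t)=\lim_\omega\gamma_n(t)$ is a continuous $C$--Lipschitz path in $X_\omega$ from $a$ to $b$ satisfying $\dist_\omega(\tilde\gamma(t),c)\ge\eps$ for all $t$. This contradicts $c$ being a cut-point.

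The metric heart of the argument is Lemma~\ref{lem:bounding_length}, and the second paragraph is essentially bookkeeping around it. The main obstacle is the last paragraph: to translate linear divergence in $X$ into the non-existence of cut-points in $X_\omega$, one must parametrise the circumnavigating paths as Lipschitz maps at scale $\lambda_n$, so that the ultralimit is a well-defined continuous path in the cone rather than merely a sequence of metric estimates. Alternatively, one could appeal to \cite[Prop.~1.1]{drutumozessapir:divergence} to skip this last step, but the direct argument is short and entirely self-contained.
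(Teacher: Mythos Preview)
Your proof is correct and follows essentially the same approach as the paper: both deduce from Corollary~\ref{cor:diameter_2} that no pair of curtains is $L$--separated, invoke Lemma~\ref{lem:bounding_length} to produce linearly-long circumnavigating paths, and pass to the asymptotic cone to rule out cut-points. Your version packages the divergence estimate as a standalone claim and is more explicit about the Lipschitz parametrisation in the ultralimit step, but the argument is the same.
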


\begin{proof}
By Corollary~\ref{cor:diameter_2}, every $X_L$ has diameter at most 2. In other words, no pair of curtains are $L$--separated for any $L$. 

Suppose that for some ultrafilter $\omega$ and some scaling sequence $(\lambda_n)$, the asymptotic cone $X_\omega$ has a cut-point $p_\omega$. Note that $X_\omega$ is a CAT(0) space \cite[Cor.~II.3.10]{bridsonhaefliger:metric}. Let $x_\omega,y_\omega\in X_\omega$ be separated by $p_\omega$, and let $\eps=\min\{\dist_\omega(x_\omega,p_\omega),\dist_\omega(p_\omega,y_\omega)\}$. 
Fix sequences $(x_n)\ora x_\omega$ and $(y_n)\ora y_\omega$. Note that the set $U = \{n\,:\, \dist(x_\omega,y_\omega) - \frac\eps4 < \frac1{\lambda_n}\dist(x_n,y_n) < \dist(x_\omega,y_\omega) + \frac\eps4\}$ is an element of $\omega$. By removing finitely many elements of $U$, we may also assume that $\frac\eps4\lambda_n\ge\frac12$ for all $n\in U$.

For each $n\in U$, we can fix a point $p_n\in[x_n,y_n]$ with $\dist(x_n,p_n)=\lambda_n\dist_\omega(x_\omega,p_\omega)$. Note that we have $\dist(p_n,y_n)>\frac{3\eps}4\lambda_n$. Moreover, by construction we have $(p_n)\ora p_\omega$.
Let $z^1_n$ be the point on $[x_n,p_n]$ with $\dist(z^1_n,p_n)=\frac\eps2\lambda_n$, and let $z^2_n$ be the point on $[p_n,y_n]$ with $\dist(p_n,z^2_n)=\frac\eps2\lambda_n$, which exists because $n\in U$. We use these points to define curtains: for $i\in\{1,2\}$, let $h^i_n$ be the curtain dual to $[x_n,y_n]$ at $z^i_n$. Because $\frac\eps4\lambda_n\ge\frac12$, the curtain $h^1_n$ separates $x_n$ from $p_n$, and because $\dist(z^2_n,y_n)>\frac\eps4\lambda_n$, the curtain $h^2_n$ separates $p_n$ from $y_n$. 
Because no pair of curtains is $L$--separated for any $L$, the curtains $h^1_n$ and $h^2_n$ are not $\eps\lambda_n$--separated. Moreover, the $h^i_n$ were constructed so that they are disjoint from the interior of the ball $B_n=B(p_n,\frac{\eps\lambda_n-1}2)$. By Lemma~\ref{lem:bounding_length}, there is a path from $z^1_n$ to $z^2_n$ that avoids $B_n$ and has length at most $1+8(3\eps\lambda_n+(\eps\lambda_n-1))$.

In the limit we obtain a path in $X_\omega$ from $(z^1_n)_\omega$ to $(z^2_n)_\omega$ that avoids the interior of $B(p_\omega,\frac\eps2)$ and has length at most $32\eps$. Concatenating this with subintervals of $[x_\omega,y_\omega]$, we see that $p_\omega$ cannot separate $x_\omega$ from $y_\omega$, a contradiction.
\end{proof}

\begin{remark}
By using Proposition~\ref{prop:uniform_diameter} instead of Corollary~\ref{cor:diameter_2}, a similar argument to the above proof of Proposition~\ref{prop:wide} can be used to show that $X$ is wide under the assumptions that $X$ is cobounded (not necessarily proper), has the geodesic extension property, and no $X_L$ is unbounded.
\end{remark}

\begin{theorem} \label{thm:wide_dichotomy}
Let $G$ be a group acting properly cocompactly on a CAT(0) space $X$. One of the following holds. 
\begin{itemize}
\item   Some $X_L$ is unbounded, in which case $G$ has a rank-one element and is either virtually cyclic or acylindrically hyperbolic.
\item   Every $X_L$ has diameter at most 2, in which case $G$ is wide.
\end{itemize}
\end{theorem}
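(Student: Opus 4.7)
The statement is a direct synthesis of results already established earlier in the paper, organised as a case split on whether $\sup_L \diam X_L$ is finite. I do not expect any genuine obstacle, beyond checking that the standing proper cocompact hypothesis implies the side conditions required by each cited result.

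Suppose first that some $X_L$ is unbounded. The proper cocompact action of $G$ on $X$ is in particular cobounded and by semisimple isometries, so Lemma~\ref{lem:unbounded_rank_one} produces a contracting element of $G$; by the Bestvina--Fujiwara characterisation recalled at the start of Section~\ref{sec:hyperbolicity}, this is a rank-one isometry (applicable because a proper cocompact action on a CAT(0) space forces the space to be proper). The corollary immediately following Proposition~\ref{prop:nonuniform_acylindricity} then upgrades the existence of a rank-one element to the conclusion that $G$ is either virtually cyclic or acylindrically hyperbolic, which is exactly what is asserted in the first bullet.

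Suppose instead that no $X_L$ is unbounded. Corollary~\ref{cor:diameter_2} says that if $\diam X_{L_0}>2$ for some $L_0$ then only finitely many $X_L$ can be bounded; contrapositively, $\diam X_L\le 2$ for every $L$. Proposition~\ref{prop:wide} now applies verbatim to give that $X$ is wide. Since $G$ is quasiisometric to $X$ via any orbit map by the \v{S}varc--Milnor lemma, and wideness of a metric space is a quasiisometry invariant (being defined purely in terms of asymptotic cones), $G$ is wide as well, giving the second bullet. The two alternatives are mutually exclusive and together exhaust all possibilities, completing the proof.
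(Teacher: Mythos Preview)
Your proof is correct and follows essentially the same approach as the paper: the dichotomy is obtained by combining Corollary~\ref{cor:diameter_2}, Proposition~\ref{prop:wide}, Lemma~\ref{lem:unbounded_rank_one}, and the corollary following Proposition~\ref{prop:nonuniform_acylindricity}, exactly as you do. You supply a little more detail than the paper (the Bestvina--Fujiwara step for rank-one and the \v{S}varc--Milnor transfer of wideness from $X$ to $G$), and one minor correction: the Bestvina--Fujiwara characterisation is recalled at the start of the section on contracting geodesics, not in Section~\ref{sec:hyperbolicity}.
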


\begin{proof}
If every $X_L$ has diameter at most 2, then $G$ is wide by Proposition~\ref{prop:wide}. If some $X_L$ has diameter more than 2, then Corollary~\ref{cor:diameter_2} shows that some $X_L$ is unbounded. In this case, the consequences come from Lemma~\ref{lem:unbounded_rank_one} and Proposition~\ref{prop:nonuniform_acylindricity}. 
\end{proof}

\section{Curtain boundaries} \label{section:curtain_boundaries}

The main goal of this section is to investigate the relationship between the Gromov boundaries of the spaces $X_L$ and the visual boundary of the corresponding CAT(0) space $X$. We shall consider the visual boundary as being equipped with the \emph{cone topology}, which is determined by the neighbourhood basis consisting of the following sets. Given a geodesic ray $b$ emanating from a fixed basepoint $\go$, for constants $r \geq 0$ and $\epsilon >0$, let
\[
U(b^\infty,r,\epsilon):=\{c^\infty \in \partial X \::\: c(0)=\go \text{ and }\dist(c(r), b)<\epsilon\}.
\]

Let $\cal B_L$ be the subspace of the visual boundary $\partial X$ consisting of all geodesic rays $b$ emanating from $\go$ such that there is an infinite $L$-chain crossed by $b$, and let $\cal B=\bigcup_{L\ge0}\cal B_L$. These are equipped with the subspace topology from the cone topology on the visual boundary. We call $\cal B$ the \emph{curtain boundary} of $X$.

\begin{theorem}\label{thm:Gromov_boundaries_inside_visual_boundaries} 
Let $X$ be a proper CAT(0) space. For each $L \geq 0$, we have the following.
\begin{enumerate}
\item Each point in $\cal B_L$ is a visibility point of the visual boundary $\partial X$. \label{item:visibility}
\item The subspace $\cal B_L \subseteq \partial X$ is $\isom X$--invariant. \label{item:isom}
\item The identity map $\iota:X \rightarrow X_L$ induces an $\isom X$--equivariant homeomorphism $\partial \iota:\cal B_L \rightarrow \partial X_L$. \label{item:homeomorphism}
\end{enumerate}
\end{theorem}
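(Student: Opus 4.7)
I will handle the three items in turn; the unifying theme is that the infinite $L$--chain witnessing $b\in\cal B_L$ provides the separation and bottleneck estimates (via Corollary~\ref{cor:diverging_geodesics}, Lemma~\ref{lem:updating_curtains}, and Lemma~\ref{lem:bounding_distance_with_dual_curtains}) strong enough to mimic the standard visibility and compactification arguments from $\delta$--hyperbolic geometry.

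For item~\ref{item:visibility}, let $b\in\cal B_L$ cross the infinite $L$--chain $\{h_i\}$ and fix $\eta\neq b^\infty$, with $c$ the unique ray from $\go$ representing $\eta$. By Corollary~\ref{cor:diverging_geodesics}, $c$ cannot cross an infinite subchain of $\{h_i\}$, so $c\subset\overline{h_{i_0}^-}$ for some $i_0$. After dualising $\{h_i\}$ to a chain along $b$ using Lemma~\ref{lem:updating_curtains}, Lemma~\ref{lem:bounding_distance_with_dual_curtains} uniformly bounds the $X$--distance from $b$ to the crossing point of any geodesic from $c$ to the tail of $b$ with the middle curtain of a triple $\{h_{i_0},h_{i_0+1},h_{i_0+2}\}$. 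Thus the geodesic segments $[b(n),c(n)]$ all pass through a fixed $X$--compact set, and Arzel\`a--Ascoli produces a biinfinite CAT(0) geodesic with endpoints $b^\infty$ and $\eta$.

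For item~\ref{item:isom}, let $g\in\isom X$ and $b\in\cal B_L$. The translate $g\cdot b$ is a ray from $g\go$ to $gb^\infty$ crossing the infinite $L$--chain $\{gh_i\}$, and the unique ray $b'$ from $\go$ to $gb^\infty$ is asymptotic to $g\cdot b$ in $X$. For $i$ large, $\go\in(gh_i)^-$ while the tail of $g\cdot b$ lies arbitrarily deep inside $(gh_i)^+$; since asymptotic CAT(0) rays remain at bounded Hausdorff-distance, the tail of $b'$ is also forced into $(gh_i)^+$. Hence $b'$ crosses all but finitely many $gh_i$, yielding $gb^\infty\in\cal B_L$.

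For item~\ref{item:homeomorphism}, well-definedness is immediate: each point of $\cal B_L$ has a unique representative ray from $\go$, which by Proposition~\ref{prop:cat_unparametrised_quasigeodesic} is an unparametrised quasigeodesic in $X_L$, and the infinite $L$--chain it crosses makes it $\dist_L$--unbounded. For injectivity, if $b^\infty\neq{b'}^\infty$ then Corollary~\ref{cor:diverging_geodesics} forces $b'\subset\overline{h_{i_0}^-}$, whence $\{h_{i_0+1},\dots,h_N\}$ separates $b(N)$ from every point of $b'$ and gives $\dist_L(b(N),b')\geq N-i_0$, ruling out bounded Hausdorff-distance in $X_L$. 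For surjectivity, given $\xi\in\partial X_L$ and a sequence $(x_n)$ representing it with $\dist_L(\go,x_n)\to\infty$, Remark~\ref{rmk:new_metric_bounded} gives $\dist(\go,x_n)\to\infty$, so the CAT(0) segments $[\go,x_n]$ subconverge via Arzel\`a--Ascoli to a ray $b$ from $\go$; dualising $L$--chains realising $\dist_L(\go,x_n)$ via Lemma~\ref{lem:updating_curtains} and performing a diagonal extraction using the convergence $[\go,x_n]\to b$ yields an infinite $L$--chain dual to $b$, so $b\in\cal B_L$, and the fellow-travelling $[\go,x_n]\to b$ in $X$ passes to $X_L$ since $\dist_L\leq 1+\dist$, giving $\partial\iota(b^\infty)=\xi$. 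Continuity of $\partial\iota$ follows because cone-convergence $b_n\to b$ in $X$ gives initial fellow-travelling in $X_L$; the reverse continuity combines injectivity with a subsequential Arzel\`a--Ascoli argument on rays from $\go$; equivariance is automatic since asymptotic rays in $X$ remain asymptotic in $X_L$. The main obstacle is the diagonal extraction in the surjectivity step, which I would handle by fixing parameters along $b$ and exploiting semicontinuity of curtains under uniform-on-compacts convergence of geodesics to ensure that $L$--separation is preserved in the limit.
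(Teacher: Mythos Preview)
Your arguments for items~\ref{item:visibility} and~\ref{item:isom}, and for well-definedness, injectivity, and forward continuity in item~\ref{item:homeomorphism}, are essentially the same as the paper's. The genuine gap is in surjectivity.

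Your plan is to dualise $L$--chains $c_n$ along the segments $[\go,x_n]$, pass to a subsequence so that the segments converge to a ray $b$, and then perform a diagonal extraction to obtain a limiting $L$--chain dual to $b$. You yourself flag this extraction as the main obstacle, and the proposed resolution via ``semicontinuity of curtains'' is not justified: a curtain is defined via closest-point projection to a fixed geodesic, and it is unclear in what sense $h_{[\go,x_n],r}$ converges to $h_{b,r}$, let alone that $L$--separation---a universal statement about \emph{all} chains meeting two given curtains---survives such a limit. Even granting pointwise convergence of individual curtains, there is no reason the positions $r_{n,j}$ of the $j$-th curtain along $[\go,x_n]$ remain bounded as $n\to\infty$, so the extraction may produce nothing. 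The paper sidesteps this entirely. Rather than varying $L$--chains, it first manufactures a \emph{single, fixed} infinite $L$--chain $\{h_i\}$ crossed by the concatenation $q'=\bigcup[x_{n-1},x_n]$; this is the content of Corollary~\ref{cor:QG_cross_chains}, which rests on the no-backtracking statement Lemma~\ref{lem:lifting_quasigeodesics}. The segments $[\go,p_{i_j}]$ (with $p_{i_j}\in q'\cap h_{i_j}$) then subconverge to $b$, and one argues by contradiction---using Lemmas~\ref{lem:updating_curtains} and~\ref{lem:bounding_distance_with_dual_curtains}---that $b$ must already cross every $h_i$. No limits of curtains are ever taken.

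Your sketch for continuity of the inverse has the same weakness: the subsequential Arzel\`a--Ascoli strategy requires knowing that the limiting ray $b'$ lies in $\cal B_L$, which again asks for an $L$--chain in the limit. The paper instead introduces the curtain topology and proves (Theorem~\ref{thm:comparing_topologies}) that it coincides with the cone topology on $\cal B_L$; continuity of the inverse is then verified directly by showing $b_n^\infty\in U_h(b^\infty)$ for all large $n$, using only a fixed $L$--chain dual to $b$ and the bottleneck estimates you already identified.
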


Recall that $b^\infty \in \partial X$ is said to be a \emph{visibility point} if for any other $c^\infty$ in $\partial X$ there exists a geodesic line $l$ at finite Hausdorff-distance from $b \cup c$. In other words, Theorem~\ref{thm:Gromov_boundaries_inside_visual_boundaries} says that the visual boundary $\partial X$ contains $\isom X$--invariant copies of the Gromov boundaries $\partial X_L$.

\begin{definition}[Separation from boundary points, crossing] 
Let $b:[0,\infty) \rightarrow X$ be a geodesic ray and let $h$ be a curtain. We say that $h$ \emph{separates} $b^\infty$ from $A\subset X$ if there exists $t_0$ such that $h$ separates $A$ from $b|_{(t_0, \infty)}$. We say that a geodesic ray $b$ based at $\go$ \emph{crosses} an infinite chain $\{h_i\}$ of curtains if every $h_i$ separates $\go$ from $b^\infty$.
\end{definition}

\begin{remark}
Because curtains may not be convex, it is \emph{a priori} possible for a geodesic ray $b$ to meet every element of an infinite chain of curtains $\{h_i\}$, none of which separates $b(0)$ from $b^\infty$. However, Lemma~\ref{lem:convexity_around_L_chains} ensures that if $b$ meets every element of an infinite $L$-chain, then it crosses it.
\end{remark}

The following lemma establishes part~\eqref{item:visibility} of Theorem~\ref{thm:Gromov_boundaries_inside_visual_boundaries}.

\begin{lemma}
If $X$ is proper, then every point $b^\infty \in \cal B$ is a visibility point of $\partial X.$
\end{lemma}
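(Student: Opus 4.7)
The plan is to fix $b^\infty \in \cal B$, choose $L$ with $b^\infty \in \cal B_L$ and an infinite $L$--chain $\{h_i\}_{i\geq 1}$ crossed by the ray $b$ from $\go$, and given any $c^\infty \in \partial X$ with $c^\infty \neq b^\infty$, construct a biinfinite geodesic $\ell \colon \R \to X$ with $\ell^{-\infty} = c^\infty$ and $\ell^{+\infty} = b^\infty$ as an Arzel\`a--Ascoli limit of segments $\alpha_k := [c(s_k), b(n_k)]$ joining points far out on the two rays. Choose $t_i$ with $b(t_i) \in h_i$; since $\{h_1,\ldots,h_{i-1}\}$ separates $\go$ from $b(t_i)$ and curtains are thick (Remark~\ref{rem:thick_curtains}), we have $t_i \geq i-1 \to \infty$.

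The first step is to extract a fixed ``bottleneck index'' from the chain. If $c$ crossed infinitely many of the $h_i$, those would form an infinite $L$--chain crossed by both $b$ and $c$, forcing $b=c$ by Corollary~\ref{cor:diverging_geodesics}; hence $c$ crosses only finitely many $h_i$, and so there is some $i_0 \geq 2$ with the property that $h_{i_0-1}$ does not separate $\go$ from $c^\infty$. Unpacking the definition of this non-separation yields arbitrarily large $s$ with $c(s) \in h_{i_0-1}^-\cup h_{i_0-1}$, while for $n$ large $b(n) \in h_{i_0+1}^+$. Pick sequences $s_k \to \infty$ and $n_k \to \infty$ accordingly; by Lemma~\ref{lem:curtains_separate}, the segment $\alpha_k = [c(s_k), b(n_k)]$ meets each of $h_{i_0-1}, h_{i_0}, h_{i_0+1}$.

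The main technical step is the bottleneck estimate. Applying Lemma~\ref{lem:bottleneck} to the geodesics $b$ and $\alpha_k$ with the pairwise $L$--separated triple $\{h_{i_0-1},h_{i_0},h_{i_0+1}\}$ shows that any point $p_k \in \alpha_k \cap h_{i_0}$ satisfies $\dist(p_k, b(t_{i_0})) \leq 2L + \lceil T\rceil$, where $T = \max\{t_{i_0}-t_{i_0-1},\, t_{i_0+1}-t_{i_0}\}$ is a fixed constant independent of $k$. In particular, all of the ``centres'' $p_k$ lie in a single compact ball of $X$.

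Finally, I would reparametrise $\alpha_k$ so that $\alpha_k(0)=p_k$; because $c(s_k), b(n_k)$ leave every bounded set while $p_k$ remains bounded, the domains of the $\alpha_k$ exhaust $\R$. Properness of $X$ together with Arzel\`a--Ascoli then produces a subsequential limit $\ell\colon\R\to X$, which is automatically a biinfinite geodesic. The standard CAT(0) fact that $[x_k,y_k]$ converges uniformly on compact sets to the ray $[x,\eta]$ whenever $x_k \to x \in X$ and $y_k \to \eta \in \partial X$ identifies the two ends of $\ell$ as $c^\infty$ and $b^\infty$, and hence $\ell$ is at finite Hausdorff distance from $b\cup c$, as required. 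I expect the main obstacle to be precisely the choice of the bottleneck index $i_0$---this is where the non-convexity of curtains could bite, since $c$ might repeatedly dip back into $h_{i_0-1}^+$---but Corollary~\ref{cor:diverging_geodesics} provides exactly the grip we need, reducing the issue to the simple observation that $c$ must infinitely often leave each $h_j^+$ for $j \geq i_0$.
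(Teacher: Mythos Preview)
Your proof is correct and follows essentially the same strategy as the paper's: find a fixed bottleneck curtain that every segment $[c(s_k),b(n_k)]$ must pass through at uniformly bounded distance from a fixed point, then extract a biinfinite geodesic via Arzel\`a--Ascoli. The only notable difference is that the paper first invokes Lemma~\ref{lem:updating_curtains} to replace $\{h_i\}$ by an $L$--chain \emph{dual to $b$}; this yields the cleaner containment $c\subset h_{k-1}^-$ (since $\pi_b(c)$ must be bounded) and allows the simpler bottleneck Lemma~\ref{lem:bounding_distance_with_dual_curtains}, whereas you keep the original chain and compensate by using the more general Lemma~\ref{lem:bottleneck}. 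Both routes work and the endgame (your ``standard CAT(0) fact'' is the paper's citation of \cite[Lem.~II.9.22]{bridsonhaefliger:metric}) is identical.
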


\begin{proof}
Let $\{h_i\}$ be an infinite $L$--chain dual to $b$, which exists by Lemma~\ref{lem:updating_curtains}, and orient the $h_i$ so that $\go\in h_i^-$. Let $c$ be any geodesic ray with $c(0)=b(0)=\go$ and $c^\infty\ne b^\infty$. According to Corollary~\ref{cor:diverging_geodesics}, there is some $k$ such that $c\subset h_{k-1}^-$. Let $x_n=[c(n),b(n)]$. Lemma~\ref{lem:bounding_distance_with_dual_curtains} tells us that there exist $p \in h_k \cap b$, an integer $m \geq 1$, and points $p_n \in [c(n),b(n)]$ such that $\dist(p_n,p) \leq 2L+2$ for all $n \geq m$. Since balls in $X$ are compact, the statement follows from \cite[Lem.~II.9.22]{bridsonhaefliger:metric}.
\end{proof}

The action of $\isom X$ on $X$ induces an action on $\partial X$. Indeed, if $\alpha$ is a geodesic ray based at $\go$ and $g\in\isom X$, then $g\alpha$ is a geodesic ray, and there is a unique ray $\beta$ based at $\go$ with $\beta^\infty=(g\alpha)^\infty$ \cite[II.8.2]{bridsonhaefliger:metric}. We declare $g(\alpha^\infty)=\beta^\infty$.

\begin{lemma} \label{lem:invariance_of_B_L}
For any CAT(0) space $X$, the set $\cal B_L$ is $\isom X$--invariant.
\end{lemma}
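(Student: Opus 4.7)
The plan is to take an arbitrary $b^\infty \in \cal B_L$ and $g \in \isom X$, and produce an infinite $L$-chain crossed by the unique geodesic ray $\beta$ from $\go$ representing $g \cdot b^\infty$. Since the argument will be symmetric in $g$ and $g^{-1}$, this suffices for invariance.

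First I would observe that isometries of $X$ act naturally on curtains: if $h = \pi_\alpha^{-1}(P)$ then $gh = \pi_{g\alpha}^{-1}(gP)$ is again a curtain, with $g(h^\pm) = (gh)^\pm$. As chains and $L$-separation are defined purely in terms of the separation relation on halfspaces, $g$ sends chains to chains and $L$-chains to $L$-chains. Applying this to an infinite $L$-chain $\{h_i\}$ crossed by $b$, the family $\{gh_i\}$ is an infinite $L$-chain crossed by the geodesic ray $gb$, which is based at $g\go$ rather than at $\go$.

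The substantive step is to transfer the crossing property from $gb$ to $\beta$. Write $D = \dist(\go, gb(0))$; convexity of the CAT(0) metric applied to the asymptotic rays $\beta$ and $gb$ gives the uniform bound $\dist(\beta(t), gb(t)) \le D$ for all $t \ge 0$. Orient the $gh_i$ so that $gb(0) \in (gh_i)^-$; the standard nesting of halfspaces along a chain gives $(gh_j)^- \subset (gh_i)^-$ for $j \le i$. Applying Lemma~\ref{lem:chain_distance} to the separating chain $\{gh_1, \dots, gh_{i-1}\}$ then shows that every point of $gh_i \cup (gh_i)^+$ lies at distance at least $i-1$ from $gb(0)$, whence at distance at least $i-1-D$ from $\go$. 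In particular $\go \in (gh_i)^-$ for all sufficiently large $i$.

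For each such large $i$, my goal is to show $\beta(t) \in (gh_i)^+$ for all sufficiently large $t$, which combined with the previous step gives that $gh_i$ separates $\go$ from $\beta|_{(t_0,\infty)}$. Suppose instead that $\beta(t) \in gh_i \cup (gh_i)^-$ for arbitrarily large $t$. Choose $j > i + D$ and take such a $t$ past the time at which $gb$ crosses $gh_j$, so that $gb(t) \in (gh_j)^+$. By the nesting, the subchain $\{gh_{i+1}, \dots, gh_j\}$ then separates $\beta(t)$ from $gb(t)$, and Lemma~\ref{lem:chain_distance} forces $\dist(\beta(t), gb(t)) > j - i - 1 \ge D$, contradicting the asymptoticity bound. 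Consequently $\beta$ crosses the tail $L$-chain $\{gh_i\}_{i > N}$ for some $N$, so $g \cdot b^\infty = \beta^\infty \in \cal B_L$. The main technical ingredient is precisely the asymptoticity bound $\dist(\beta(t), gb(t)) \le D$; once that is in hand, everything else reduces to the combinatorics of $L$-chains already packaged in Lemma~\ref{lem:chain_distance}.
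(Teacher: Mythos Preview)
Your proof is correct and the argument is clean, but it takes a different route from the paper. The paper first invokes Lemma~\ref{lem:updating_curtains} to replace the given $L$--chain by one \emph{dual to $b$}; after applying $g$, the resulting $L$--chain $\{h_i\}$ is dual to $gb$, and then the observation that $\pi_{gb}(\beta)$ is unbounded (since $\beta$ and $gb$ are at finite Hausdorff distance) immediately gives that $\beta$ crosses all but finitely many $h_i$, because the halfspaces of a curtain dual to $gb$ are preimages of half-rays under $\pi_{gb}$. You instead avoid dualisation entirely: you keep the original (undualised) $L$--chain, push it forward by $g$, and use the asymptoticity bound $\dist(\beta(t),gb(t))\le D$ together with the chain-distance Lemma~\ref{lem:chain_distance} to directly force $\beta$ into the correct halfspaces. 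Your approach is slightly longer but more self-contained, needing only the elementary Lemma~\ref{lem:chain_distance} rather than the more substantial dualisation lemma; the paper's approach is terser because it outsources the work to Lemma~\ref{lem:updating_curtains}.
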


\begin{proof}
By Lemma~\ref{lem:updating_curtains}, any geodesic ray $b$ with $b^\infty\in\cal B_L$ crosses an infinite $L$--chain dual to $b$. For any $g\in\isom X$, the geodesic ray $gb$ crosses an infinite $L$--chain $\{h_i\}$ dual to $gb$. The unique geodesic ray $c$ emanating from $\go$ with $c^\infty=gb^\infty$ lies at finite Hausdorff-distance from $gb$, so $\pi_{gb}(c)$ is unbounded, and hence $c$ crosses all but finitely many $h_i$.
\end{proof}

As a step towards Theorem~\ref{thm:Gromov_boundaries_inside_visual_boundaries}, and for its own interest, we use curtains to introduce a new topology on $\partial X$. We relate it to the standard cone topology in Theorem~\ref{thm:comparing_topologies}.

\begin{definition}[Curtain topology] 
Let $b$ be a geodesic ray emanating from $\go$. For each curtain $h$ dual to $b$, let 
\[
U_h(b^\infty)=\{a^\infty \in \partial X \,:\, h \text{ separates } \go=a(0) \text{ from } a^\infty\}.
\]
We define the \emph{curtain topology} as follows: a set $U\subset\partial X$ is open if for each $b^\infty \in U$, there is some $U_h(b^\infty)$ with $U_h(b^\infty)\subset U.$ It is immediate that such a description yields a topology on $\partial X.$ 
\end{definition}

We remark that the definition of the curtain topology above is inspired by the hyperplane topology introduced by Incerti-Medici in \cite{incertimedici:comparing}.

\begin{example} \label{eg:cone_vs_curtain}
As an example of the difference between the curtain and cone topologies, consider the Euclidean plane $\bf E^2$. A simple computation shows that the curtain topology on $\partial\bf E^2$ is the trivial topology, in contrast to the cone topology, which is that of the circle $S^1$. Note that $\cal B=\varnothing$ in this example.
\end{example}

Example~\ref{eg:cone_vs_curtain} fits the ideology that the curtain topology should detect only negative curvature. Our next result essentially shows that it sees all of it. It also shows that the cone topology on $\partial X$ is either equal to or finer than the curtain topology.
 
\begin{theorem} \label{thm:comparing_topologies} 
The identity map $(\partial X,\cal T_{\mathrm{Cone}})\to(\partial X,\cal T_{\mathrm{Curtain}})$ is continuous. Moreover, the curtain and cone (subspace) topologies agree on $\cal B$.
\end{theorem}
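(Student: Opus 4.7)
The plan is to handle the two halves separately. For continuity of the identity $(\partial X, \cal T_{\mathrm{Cone}}) \to (\partial X, \cal T_{\mathrm{Curtain}})$, fix $a^\infty \in \partial X$ and a curtain $h = h_{a, s}$ dual to $a$; I claim $U(a^\infty, r, \epsilon) \subset U_h(a^\infty)$ whenever $r > s + \tfrac{1}{2} + \epsilon$. Let $c$ be a geodesic ray from $\go$ with $\dist(c(r), a) < \epsilon$ and, for $t > r$, set $a(t^*) = \pi_a(c(t))$. Two triangle-inequality estimates interlock: writing $a(t_1) = \pi_a(c(r))$ gives $\dist(c(t), a) \le \dist(c(t), c(r)) + \dist(c(r), a(t_1)) < (t-r) + \epsilon$, and since $c(0) = \go = a(0)$ the reverse triangle inequality gives $|t - t^*| \le \dist(c(t), a(t^*)) = \dist(c(t), a)$. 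Combining yields $t^* \ge r - \epsilon > s + \tfrac{1}{2}$, so $c(t) \in h^+$ for every $t > r$; hence $h$ separates $\go$ from $c^\infty$.

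For the agreement on $\cal B$, part one already gives continuity of the identity $(\cal B, \cal T_{\mathrm{Cone}}) \to (\cal B, \cal T_{\mathrm{Curtain}})$; the opposite direction is the new content. Fix $a^\infty \in \cal B$, and use Lemma~\ref{lem:updating_curtains} to fix an infinite $L$-chain $\{h_n = h_{a,s_n}\}$ of curtains dual to $a$ with $s_n \to \infty$. Given a cone-basic $U(a^\infty, r, \epsilon)$, I will show $U_{h_n}(a^\infty) \subset U(a^\infty, r, \epsilon)$ for all sufficiently large $n$. If $c^\infty \in U_{h_n}(a^\infty)$, then Lemma~\ref{lem:curtains_separate} yields $\tau_n$ with $c(\tau_n) \in h_n$; writing $a(\sigma_n) = \pi_a(c(\tau_n))$ with $\sigma_n \in [s_n - \tfrac{1}{2}, s_n + \tfrac{1}{2}]$, Lemma~\ref{lem:bounding_distance_with_dual_curtains} applied to the subchain $\{h_{n-1}, h_n, h_{n+1}\}$ bounds $\dist(c(\tau_n), a(\sigma_n)) < 2L + 1$. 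The reverse triangle inequality at $\go$ then forces $\tau_n \ge s_n - 2L - \tfrac{3}{2}$, which tends to infinity with $n$.

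The closing move is CAT(0) convexity. Reparametrising $c|_{[0, \tau_n]}$ and $a|_{[0, \sigma_n]}$ proportional to arclength, the function $t \mapsto \dist(c(t\tau_n), a(t\sigma_n))$ on $[0, 1]$ is convex, vanishes at $0$, and is less than $2L + 1$ at $1$. Evaluating at $t = r/\tau_n$ yields $\dist(c(r), a(r\sigma_n/\tau_n)) \le (r/\tau_n)(2L + 1)$. Taking $n$ large enough that $\tau_n > r(2L+1)/\epsilon$ produces $\dist(c(r), a) < \epsilon$, completing the argument.

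The main delicate point is in part one: the non-convexity of curtains (Remark~\ref{rem:nonconvex}) leaves open \emph{a priori} that $c$ could re-enter $h \cup h^-$ after once leaving, but the sandwich estimate rules this out uniformly thanks to the ray assumption $c(0) = \go = a(0)$, handling the $h$ and $h^-$ cases simultaneously through a single bound on $t^*$. In part two, the heavy lifting is the bottleneck control from Lemma~\ref{lem:bounding_distance_with_dual_curtains}, with CAT(0) convexity used to convert a uniform distance-bound at the far crossing parameter $\tau_n$ into the desired small-distance estimate at the fixed cone-scale $r$.
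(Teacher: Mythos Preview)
Your approach matches the paper's: show each curtain-basic neighbourhood contains a cone-basic one (continuity), and conversely on $\cal B$ using the bottleneck Lemma~\ref{lem:bounding_distance_with_dual_curtains} together with CAT(0) convexity of the metric. Your argument for part one is in fact more direct than the paper's: the paper argues by contradiction (supposing $c|_{(r,\infty)}$ re-enters $h$ and deriving a distance inequality incompatible with $c$ being geodesic), whereas your sandwich bound $t^* > r - \epsilon$ handles all $t > r$ simultaneously.

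There is one small indexing slip in part two. You assume $c^\infty \in U_{h_n}(a^\infty)$ and then invoke Lemma~\ref{lem:bounding_distance_with_dual_curtains} with the triple $\{h_{n-1}, h_n, h_{n+1}\}$ and $p = c(\tau_n) \in h_n$. But that lemma requires \emph{all three} curtains to separate the sets $A$ and $B$ containing the endpoints of the geodesic segment $[x_2,y_2]$; membership in $U_{h_n}$ only guarantees that $c$ eventually lies in $h_n^+$, which says nothing about $c$ ever reaching $h_{n+1}^+$. The fix is immediate: either work in $U_{h_{n+1}}(a^\infty)$ and take $p \in h_n$ (this is exactly what the paper does), or keep $U_{h_n}$ but apply the lemma to $\{h_{n-2}, h_{n-1}, h_n\}$ with $p \in h_{n-1}$. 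Either way the rest of your argument goes through unchanged, since your uniform lower bound $\tau_n \ge s_n - 2L - \tfrac{3}{2}$ depends only on $s_n \to \infty$.
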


\begin{proof} 
We start by showing that every open set $O$ in $\cal T_{\mathrm{Curtain}}$ is also open in $\cal T_{\mathrm{Cone}}$. Let $b^\infty\in O$. By definition, there is some $h$ dual to $b$ such that $U_h(b^\infty)\subseteq O$. Let $r=\dist(\go,h)+5$. We shall show that $U(b^\infty,r,1)\subset U_h(b^\infty)$. For this, let $c^\infty\in U(b^\infty,r,1)$, so that $\dist(c(r),b)<1$, and note that $\dist(b(r),c(r))<2$. By the choice of $r$, we have $\dist(h,c(r))>\dist(h,b(r))-2=2$, and $h$ separates $\go$ from $\pi_b(c(r))$. Suppose that $c|_{(r,\infty)}$ meets $h$ at $c(t)$. In this case, there is some $s\in(0,r)$ such that $\pi_bc(s)=\pi_bc(t)\in h$. By convexity of the metric, $\dist(c(s),\pi_bc(s))<1$, so
\begin{align*}
\dist(c(s),c(t)) \,&\le\, \dist(c(s),\pi_bc(s))+\dist(\pi_bc(s),c(t)) \\
    &=\, \dist(c(s),\pi_bc(s))+\dist(b,c(t)) \\
    &\le\, 1+\dist(b,c(r))+\dist(c(r),c(t)) \\
    &<\, 2+\dist(c(r),c(t)).     
\end{align*}    
But $c(s)\in h$, so $\dist(c(s),c(r))>2$, contradicting the fact that $c$ is a geodesic. Thus $h$ separates $\go$ from $c^\infty$, so $c^\infty\in U_h(b^\infty)$. This shows that $\cal T_{\mathrm{Curtain}}\subset\cal T_{\mathrm{Cone}}$. 

We now show that, when restricted to $\cal B$, the curtain topology is at least as fine as the cone topology. Fix some $U(b^\infty,r,\epsilon)$ for some geodesic ray $b$ with $b^\infty \in \cal B$. Let $\{h_i\}$ be an infinite $L$-chain dual to $b$, which exists by Lemma~\ref{lem:updating_curtains}, and let $t_i$ be such that $b(t_i)\in h_i$. Fix $k$ large enough that $\frac r{t_k-1}(2L+1)<\epsilon$. We shall show that $U_{h_{k+1}}(b^\infty)\subset U(b^\infty,r,\epsilon)$. For this, let $b'^\infty\in U_{h_{k+1}}(b^\infty)$. According to Lemma~\ref{lem:bounding_distance_with_dual_curtains}, if $b'(t'_k)\in h_k$ then $\dist(b'(t'_k),b)\le2L+1$. By convexity of the metric, we have $\dist(b'(r),b)\le\frac r{t'_k}(2L+1)$. Because $h_k$ is dual to $b$, we also have $t'_k\ge t_k-1$. Hence $\dist(b'(r),b)<\epsilon$, so $b'^\infty\in U(b^\infty,r,\epsilon)$. 
\end{proof}

This gives a purely combinatorial description of the subspace topology on $\cal B$ (and hence on each $\cal B_L$) via curtains. We show in Corollary~\ref{cor:Morse} below that the topology on the \emph{Morse boundary} can also be described combinatorially via curtains.

It remains to establish item~\eqref{item:homeomorphism} of Theorem~\ref{thm:Gromov_boundaries_inside_visual_boundaries}, which we do in the next proposition.

\begin{proposition} \label{prop:boundary_embeds}
For a proper CAT(0) space $X$, the identity map $\iota:X \rightarrow X_L$ induces an $\isom X$--equivariant homeomorphism $\partial \iota: \cal B_L \rightarrow \partial X_L.$ 
\end{proposition}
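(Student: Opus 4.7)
The map is forced: a ray $b$ representing a class in $\mathcal B_L$ crosses an infinite $L$-chain, which witnesses $\dist_L(\go,b(t))\to\infty$, so by Proposition~\ref{prop:cat_unparametrised_quasigeodesic} $b$ is an honest quasigeodesic ray in the hyperbolic space $X_L$ (Theorem~\ref{thm:XL_hyperbolic}) and determines a point $\partial\iota(b^\infty)\in\partial X_L$. Equivariance is immediate: for $g\in\isom X$, the ray $gb$ and the unique ray from $\go$ asymptotic to $gb$ lie at bounded $\dist$-distance, hence at bounded $\dist_L$-distance by Remark~\ref{rmk:new_metric_bounded}, so they define the same Gromov limit in $X_L$.

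For injectivity, suppose $b_1^\infty\ne b_2^\infty$ in $\mathcal B_L$ both map to $\xi\in\partial X_L$. Since $b_1,b_2$ are quasigeodesic rays in the hyperbolic space $X_L$ (whose injective hull is geodesic hyperbolic by Proposition~\ref{prop:coarsely_injective}), the Morse lemma gives $M$ with $\dist_L(b_2(t),b_1)\le M$ for all $t$. Lemma~\ref{lem:updating_curtains} produces an infinite $L$-chain $\{h_{b_1,r_i}\}$ dual to $b_1$. For each $j$, pick $s$ with $b_1(s)\in h_{b_1,r_{j+M+2}}^+$ and $t$ with $\dist_L(b_2(t),b_1(s))\le M$, where such $t$ exists with arbitrarily large $s$ because $\dist_L(\go,b_2(t))\to\infty$. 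If $b_2(t)$ lay in $h_{b_1,r_j}^-$, the sub-chain $\{h_{b_1,r_j},\dots,h_{b_1,r_{j+M+2}}\}$ of length $M+3$ would give $\dist_L(b_2(t),b_1(s))\ge M+3$, a contradiction. Hence $h_{b_1,r_j}$ separates $\go$ from a tail of $b_2$ for every $j$, so $b_2$ crosses the same infinite $L$-chain as $b_1$, and Corollary~\ref{cor:diverging_geodesics} forces $b_1=b_2$.

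For surjectivity, given $\xi\in\partial X_L$, pick $x_n\to\xi$ in $X_L$; then $\dist(\go,x_n)\to\infty$ by Remark~\ref{rmk:new_metric_bounded}, so properness of $X$ and Arzel\`a--Ascoli extract a CAT(0) subsequential limit $b$ of $[\go,x_n]$. The $q$-quasigeodesic property of $b$ in $X_L$ rules out $b$ having bounded $\dist_L$-diameter. The gluing construction in the proof of Proposition~\ref{prop:cat_unparametrised_quasigeodesic}, modified so that each constituent $L$-chain is first dualised to $b$ via Lemma~\ref{lem:updating_curtains}, then produces an infinite $L$-chain dual to $b$ and crossed by it, so $b\in\mathcal B_L$. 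A Gromov-product estimate exploiting $b_n(T)\to b(T)$ in $X_L$ for each $T$ together with $\dist_L(\go,b(T))\to\infty$ shows $\partial\iota(b^\infty)=\xi$.

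Continuity of $\partial\iota$ follows because $b_n^\infty\to b^\infty$ in the cone topology gives pointwise convergence $b_n(T)\to b(T)$ in $X$ and hence in $X_L$ (by $\dist_L\le 1+\dist$); picking $T$ with $\dist_L(\go,b(T))$ large then forces the Gromov product $(b_n^\infty|b^\infty)_\go^{X_L}$ to be large for $n$ large. For continuity of the inverse, given $\xi_n\to\xi$ with preimages $b_n$, any subsequence admits a further Arzel\`a--Ascoli-convergent sub-subsequence whose limit $b'$ lies in $\mathcal B_L$ and satisfies $\partial\iota(b'^\infty)=\xi$ by the surjectivity argument applied to points $b_{n_k}(T_k)\to\xi$ in $X_L$; injectivity then forces $b'=b$. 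The main obstacle throughout is the surjectivity step's construction of an infinite $L$-chain crossed by $b$ from mere $\dist_L$-unboundedness: this requires carefully revisiting the iterative chain gluing of Proposition~\ref{prop:cat_unparametrised_quasigeodesic} and ensuring, via Lemma~\ref{lem:updating_curtains} applied at each stage, that the resulting chain is dual to $b$ so that each of its curtains separates $\go$ from a tail.
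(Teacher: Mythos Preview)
Your overall outline is reasonable and the injectivity and forward-continuity arguments are essentially those of the paper. The substantive gap is in surjectivity, and it propagates to your continuity-of-inverse argument.

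You take $x_n\to\xi$ in $\partial X_L$, extract an Arzel\`a--Ascoli limit $b$ of the CAT(0) segments $[\go,x_n]$, and then assert that ``the $q$-quasigeodesic property of $b$ in $X_L$ rules out $b$ having bounded $\dist_L$-diameter.'' This is false: being an \emph{unparametrised} $q$-quasigeodesic says nothing about unboundedness of the image; the reparametrisation produced in Proposition~\ref{prop:cat_unparametrised_quasigeodesic} simply terminates after finitely many steps if the image is $\dist_L$-bounded. The danger is real: even though each $[\go,x_n]$ reaches $\dist_L$-distance $\dist_L(\go,x_n)\to\infty$, the CAT(0) parameter at which this occurs may drift to infinity with $n$, so the compact-set limit $b$ need not see it. Without $\dist_L$-unboundedness of $b$ you cannot run the gluing from Proposition~\ref{prop:cat_unparametrised_quasigeodesic} to produce an infinite $L$-chain, you cannot verify $b\in\mathcal B_L$, and you cannot conclude $\partial\iota(b^\infty)=\xi$.

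The paper circumvents this by reversing the order of operations. It first takes an honest $X_L$-quasigeodesic ray $q$ to $\xi$, forms a piecewise-CAT(0) path $q'$, and uses Corollary~\ref{cor:QG_cross_chains} to find an infinite $L$-chain $\{h_i\}$ crossed by $q'$ \emph{before} any limiting. Only then does it take an Arzel\`a--Ascoli limit of the segments $[\go,p_i]$ with $p_i\in q'\cap h_i$, and the crucial point is that Lemma~\ref{lem:bounding_distance_with_dual_curtains} (the bottleneck lemma) yields a \emph{CAT(0)} bound $\dist(y_i,p)\le 2L+2$ for points $y_i\in[\go,p_i]$ lying on a fixed curtain, and CAT(0) bounds survive the limit. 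This is exactly the bridge between the two metrics that your argument is missing. Your continuity-of-inverse step invokes ``the surjectivity argument'' and so inherits the same gap; the paper instead argues directly with the curtain topology via Theorem~\ref{thm:comparing_topologies} and Lemma~\ref{lem:convexity_around_L_chains}.
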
 

\begin{proof} The proof consists of the following four steps.
\begin{enumerate}[leftmargin=7mm]
\item (Existence and continuity.) 
Since the map $\iota:X \rightarrow X_L$ is $(1,1)$--coarsely Lipschitz and $X_L$ is roughly isometric to a geodesic hyperbolic space, the existence and continuity of $\partial \iota$ are exactly given by \cite[Lem.~6.18]{incertimedicizalloum:sublinearly}. 

\medskip

\item (Injectivity.) \label{item:injective}
If $b^\infty\in\cal B$, then by Lemma~\ref{lem:updating_curtains} there is an infinite $L$--chain $\{h_i\}$ dual to $b$ with $\go\in h_i^-$ for all $i$. For any other $c^\infty\in\cal B$, Corollary~\ref{cor:diverging_geodesics} shows that $c\subset h_k^-$ for some $k$. In particular, if $x_n\in b\cap h_{k+n}$, then $\dist_L(x_n,c)\ge n$, so $b$ does not lie in a finite $X_L$--neighbourhood of $c$.

\medskip
    
\item (Surjectivity.) \label{item:onto}
Let $q:[0,\infty) \rightarrow X_L$ be any quasigeodesic ray with $q(0)=\iota(\go)$, and let $(x_n)\subset q$ be an unbounded sequence with $x_0=\iota(\go)$. Consider the path $q'=\bigcup_{n \in \mathbf{N}}[x_{n-1},x_n] \subset X$, which crosses an infinite $L$--chain $\{h_i\}$ by Corollary~\ref{cor:QG_cross_chains}. Let $p_i\in q'\cap h_i$. As $X$ is proper, there is some geodesic ray $b$ in $X$ emanating from $\go$ and some subsequence $(p_{i_j})$ such that the geodesics $[\go, p_{i_j}]$ converge uniformly to $b$ on compact sets. 
    
Let us show that $b$ meets every $h_i$. Since $\{h_i\}$ is a chain, it suffices to show that $b$ meets infinitely many $h_i$. Suppose that, on the contrary, there is some $k$ such that $b\in h_i^-$ for all $i \geq k$. Let $k'=k+(4L+10)(2L+3)+2$. The curtains $h_{k}$, $h_{k'}$ are separated by an $L$--chain of length $(4L+10)(2L+3)$, and hence, using Lemma~\ref{lem:updating_curtains} with $A=h_{k}$ and $B=h_{k'},$ they are separated by an $L$--chain $\{m_1,\dots,m_{2L+4}\}$ whose elements are all dual to $[\go, p_{k'}]$. Lemma~\ref{lem:bounding_distance_with_dual_curtains} now yields a point $p \in [\go, p_{k'}] \cap m_{2L+3}$ and points $y_i \in [\go, p_i] \cap m_{2L+3}$ such that $\dist(y_i,p) \leq 2L+2$ for all $i \geq k'$. Since $[\go,p_i] \rightarrow b$, the geodesic $b$ must contain a point $y$ with $\dist(y,p) \leq 2L+2$. On other hand, since $b \in h_k^-$ and $\{m_1,\cdots m_{2L+2}\}$ is an $L$--chain separating $h_k$ from $y_i$, we must have $\dist(p,y) \geq 2L+3$. This is a contradiction, so $b$ meets every $h_i$. 
    
Since each $p_i \in q'$, the unparametrised rough geodesics $\iota[\go, p_i]$ lie in a uniform neighborhood of $q$. Therefore, the unparametrised rough geodesic $\iota(b)$ is also in a uniform neighborhood of $q$, by Lemma~\ref{lem:bounding_distance_with_dual_curtains}, which concludes the proof of surjectivity. Note that Corollary~\ref{cor:diverging_geodesics} means that $b$ is the unique geodesic ray crossing all the $\{h_i\}$.

\medskip
    
\item (Continuity of the inverse map.)
We prove sequential continuity, which is enough because $\cal B_L$ and $\partial X_L$ are first-countable. Let $\delta_L$ be a hyperbolicity constant for $X_L$, and let $q_n, q:[0,\infty) \rightarrow X_L$ be $(1,10\delta_L)$--quasigeodesics such that $q_n \rightarrow q$ uniformly on compact subsets of $[0,\infty)$ (see \cite[Rem.~2.16]{kapovichbenakli:boundaries}). From items~\eqref{item:injective} and~\eqref{item:onto}, there are unique geodesic rays $b_n,b$ in $X$ based at $\go$ with $\partial\iota(b_n^\infty)=q_n^\infty$ and $\partial\iota(b^\infty)=q^\infty$. Moreover, there is an integer $K$ depending only on $L,\delta_L$ such that the rough geodesic rays $\iota b$ and $q$ are at Hausdorff-distance at most $K$, and similarly for the pairs $(\iota b_n,q_n)$. In light of Theorem~\ref{thm:comparing_topologies}, to prove continuity we must show that for any curtain $h$ dual to $b$, we have $b_n^\infty \in U_h(b^\infty)$ for all but finitely many~$n$. 

Since $b$ crosses an infinite $L$-chain, by Lemma~\ref{lem:updating_curtains} it must cross an infinite $L$-chain $\{h_i\}$ dual to $b$, with every $h_i\in h^+$. Let $m=2K+L+4$, let $p\in h_m\cap b$, and note that $\dist(p,h_{L+2}^-)\ge2K+2$. There exists $p'\in q$ with $\dist_L(p,p')\le K$. As $q_n \rightarrow q$ in $X_L$, there exists $k$ such that for each $n \geq k$ there is some $p_n'\in q_n$ with $\dist_L(p',p'_n)\le 1$. For each $n\ge k$, there is some $p_n''=b_n(t_n)$ with $\dist_L(p_n',p_n'') \leq K$. By the triangle inequality, $\dist_L(p,p_n'')\le2K+1$. This shows that no $p_n''$ lies in $h_{L+2}^-$. Lemma~\ref{lem:convexity_around_L_chains} now tells us that $b_n|_{(t_n,\infty)}$ is disjoint from $h_1$, hence from $h$. Thus $b_n^\infty\in U_h(b^\infty)$ for all $n\ge k$. \qedhere
\end{enumerate} 
\end{proof}

The proof of Item~(3) of Proposition~\ref{prop:boundary_embeds} uses Corollary~\ref{cor:QG_cross_chains}, which is a consequence of the following lemma.

\begin{lemma} \label{lem:lifting_quasigeodesics}
For all $\lambda$ there exists $M$ as follows. If $P\colon[a,b]\to X$ is an unparametrised $(\lambda,\lambda)$--quasigeodesic of $X_L$, then any $L$--chain $\{c_i\}$ separating $\{x_1,x_3\}$ from $x_2$, where $x_1, x_2, x_3 \in P$ are consecutive points, has length at most $M$.
\end{lemma}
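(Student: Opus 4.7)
The plan is to combine stability of quasigeodesics in the hyperbolic space $X_L$ with the CAT(0) backtracking bound of Corollary~\ref{cor:double_crossing}.

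First, I would restrict $P$ to its subpath from $x_1$ to $x_3$; because $x_1, x_2, x_3$ are consecutive on $P$, this subpath still passes through $x_2$, and it remains an unparametrised $\lambda$--quasigeodesic of $X_L$. The CAT(0) geodesic $\alpha = [x_1, x_3]$ is, by Proposition~\ref{prop:cat_unparametrised_quasigeodesic}, also an unparametrised quasigeodesic of $X_L$ with constants depending only on $L$.

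Second, the key step is to produce a point $y \in \alpha$ with $\dist_L(x_2, y)$ bounded in terms of $\lambda$ and $L$ only. Since $X_L$ is hyperbolic (Theorem~\ref{thm:XL_hyperbolic}) and coarsely dense in its geodesic hyperbolic injective hull $E(X_L)$ (Proposition~\ref{prop:coarsely_injective}), both the restricted subpath of $P$ and $\alpha$ become genuine quasigeodesics in $E(X_L)$ sharing the endpoints $x_1, x_3$. Morse-lemma stability in $E(X_L)$ then produces a constant $M_0 = M_0(\lambda, L)$ such that these two quasigeodesics lie at $X_L$--Hausdorff distance at most $M_0$ from each other. In particular, there is a point $y \in \alpha$ with $\dist_L(x_2, y) \leq M_0$.

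Third, I would split the chain $\{c_i\}$ into a sub-$L$--chain $S_1$ consisting of those $c_i$ that separate $x_2$ from $y$, and a complementary sub-$L$--chain $S_2$. By definition of $\dist_L$, we have $|S_1| \leq \dist_L(x_2, y) - 1 \leq M_0 - 1$. For each $c \in S_2$, the points $x_2$ and $y$ lie on the same side of $c$; as $c$ separates $x_2$ from both $x_1$ and $x_3$, it must then also separate $y$ from $\{x_1, x_3\}$. Thus $S_2$ is an $L$--chain separating the point $y$ of the CAT(0) geodesic $\alpha = [x_1, x_3]$ from its endpoints, so Corollary~\ref{cor:double_crossing} gives $|S_2| \leq 1 + \lfloor L/2 \rfloor$. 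Setting $M = M_0 + \lfloor L/2 \rfloor$ completes the proof.

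The main obstacle is genuinely only the Morse-lemma step: $X_L$ is a priori only quasigeodesic hyperbolic, so stability of quasigeodesics with uniform constants has to be extracted either by passing to the injective hull $E(X_L)$ as above, or by invoking a stability result valid directly for quasigeodesic hyperbolic spaces. The rest of the argument is essentially bookkeeping, using that the $L$--chain $\{c_i\}$ can be partitioned by the curtain $\{c_i\colon c_i\text{ separates }x_2\text{ from }y\}$ and its complement, with each piece controlled by a different mechanism (the $X_L$--distance bound for $S_1$, and the CAT(0) double-crossing bound for $S_2$).
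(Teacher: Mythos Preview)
Your proposal is correct and follows essentially the same approach as the paper: take the CAT(0) geodesic $[x_1,x_3]$, invoke Morse stability in the hyperbolic space $X_L$ to control $\dist_L(x_2,[x_1,x_3])$, and use Corollary~\ref{cor:double_crossing} to bound the interaction of the chain with that geodesic. The paper phrases this as a lower bound $\dist_L([x_1,x_3],x_2)\ge|\{c_i\}|-(1+\lfloor L/2\rfloor)$ rather than your explicit $S_1/S_2$ split, but the content is the same; your handling of the Morse-lemma step via the injective hull is in fact more careful than the paper's one-line appeal to hyperbolicity (and you should allow one extra curtain in $S_2$ for the possibility $y\in c_i$).
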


\begin{proof}
Let $\gamma=[x_1,x_3]$ in $X$. By Corollary~\ref{cor:double_crossing}, $\gamma$ meets at most $1+ \lfloor\frac{L}{2} \rfloor$ elements of $\{c_i\}$, so $\dist_L(\gamma, x_2) \geq \vert \{c_i\}\vert  - (1+ \lfloor\frac{L}{2} \rfloor)$. Proposition~\ref{prop:cat0_rough_geodesic} states that $\gamma$ is an unparametrised $(1,q)$--quasigeodesic of $X_L$. Since $X_L$ is hyperbolic, the Hausdorff-distance between $\gamma$ and $P$ is uniformly bounded in terms of $\lambda$ and $q$. Thus the distance between $\gamma$ and $x_2$ is uniformly bounded, bounding $\vert \{c_i\} \vert$.
\end{proof}

\begin{corollary}\label{cor:QG_cross_chains}
If $P\colon[0,\infty)\to X$ is an unparametrised $(\lambda,\lambda)$--quasigeodesic ray of $X_L$, then there is a sequence $(x_i)_{i=0}^\infty\subset P$ and an $L$--chain $\{c_i:i\in\mathbf N\}$ such that $c_1,\dots,c_n$ separate $x_0$ from $x_n$.
\end{corollary}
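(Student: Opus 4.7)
Plan: My approach will be an inductive extraction of the chain $\{c_i\}$ from $L$--chains realising $X_L$--distances between carefully selected points of $P$, with Lemma~\ref{lem:lifting_quasigeodesics} as the central no-backtracking tool.

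Let $M$ be the constant from Lemma~\ref{lem:lifting_quasigeodesics} for this $\lambda$. Because $P$ is an unparametrised $\lambda$--quasigeodesic ray in $X_L$, the function $t\mapsto \dist_L(P(0),P(t))$ is unbounded, and I can pick $x_0=P(0), x_1, x_2,\ldots$ in order along $P$ with $\dist_L(x_{k-1},x_k)\ge N$ for a large constant $N$ depending on $L$ and $M$. For each $k$, let $\Lambda_k$ be an $L$--chain realising $\dist_L(x_{k-1},x_k)$; by Lemma~\ref{lem:updating_curtains}, I may take $\Lambda_k$ dual to $[x_{k-1},x_k]$ at the cost of a linear factor in its length.

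Inside each $\Lambda_k$, ordered from $x_{k-1}$ to $x_k$, the halfspaces $\lambda^-$ form a nested increasing sequence. I will call $\lambda\in\Lambda_k$ \emph{good} if it separates $\{x_0,\ldots,x_{k-1}\}$ from $\{x_k,x_{k+1},\ldots\}$; by monotonicity, the bad curtains form an initial and a terminal segment, each of length at most $M$ by Lemma~\ref{lem:lifting_quasigeodesics} applied to the consecutive triples $(x_j,x_{k-1},x_k)$ and $(x_{k-1},x_k,x_{j'})$, with $j,j'$ chosen as witnesses at the innermost bad curtains. I then select $c_k$ deep inside the good region of $\Lambda_k$, leaving an $L$--wide buffer of good curtains on each side. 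Once $c_k\cap c_{k+1}=\varnothing$, path-connectedness (Lemma~\ref{lem:convex_curtains}) forces $c_{k+1}$ entirely into $c_k^-$ or $c_k^+$; the first possibility is ruled out because $x_k,x_{k+1}\in c_k^+$ are joined by a path in the path-connected halfspace $c_k^+$ which would have to cross $c_{k+1}$, so the chain ordering $c_{k+1}\subset c_k^+$ holds automatically. The $x_k$ required by the conclusion can then be taken to be any $P(t)$ with $t$ sufficiently large, since $c_k$ being good guarantees $P(t)\in c_k^+$ past $x_k$.

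The hard part will be ensuring disjointness and $L$--separation of consecutive $c_k,c_{k+1}$. The plan is to argue by contradiction: if some $(L+1)$--chain $E$ meets both $c_k$ and $c_{k+1}$, then the path-connectedness of each $e\in E$, together with the $L$--wide buffer of good curtains between $c_k$ and the downstream end of $\Lambda_k$ (and similarly for $\Lambda_{k+1}$), should force each $e$ to meet many additional members of $\Lambda_k$, contradicting $\Lambda_k$ being an $L$--chain. Making this precise is the most delicate step and will likely require a second invocation of Lemma~\ref{lem:lifting_quasigeodesics} combined with the gluing Lemmas~\ref{lem:gluing_disjoint_chains} and~\ref{lem:gluing_chains}; choosing $N$ sufficiently large relative to $L$ and $M$ will provide the necessary slack.
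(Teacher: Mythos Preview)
Your skeleton is right—select $x_k$ far apart in $X_L$, take $L$--chains $\Lambda_k$ separating $x_{k-1}$ from $x_k$, and use Lemma~\ref{lem:lifting_quasigeodesics} to trim each $\Lambda_k$ to a ``good'' subchain $\Lambda_k'$ whose elements separate $\{x_0,\dots,x_{k-1}\}$ from $\{x_k,x_{k+1},\dots\}$. That trimming step is essentially correct (up to $+1$ constants for the case $x_j\in h_i$), and the nesting of halfspaces does make the bad curtains form an initial and a terminal segment as you claim.

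The gap is in your combining step. You select one $c_k$ from each $\Lambda_k'$ and sketch a contradiction: if an $(L+1)$--chain $E$ meets $c_k$ and $c_{k+1}$, then each $e\in E$ ``meets many additional members of $\Lambda_k$'', contradicting that $\Lambda_k$ is an $L$--chain. But that is not a contradiction. A single curtain meeting arbitrarily many members of an $L$--chain violates nothing; $L$--separation only forbids a \emph{chain} of $L+1$ curtains from meeting \emph{two fixed} members. To get a contradiction you would need every $e\in E$ to meet the same buffer curtain $g_j$, and you cannot ensure this: the curtain $c_{k+1}$ is not known to lie in $g_j^+$ (it may well meet $g_j$ or dip into $g_j^-$), so the point of $e\cap c_{k+1}$ need not be on the far side of any particular $g_j$. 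The same issue obstructs your disjointness claim for $c_k$ and $c_{k+1}$, which you never actually argue.

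The repair is not to fix this contradiction but to bypass it: apply Lemma~\ref{lem:gluing_chains} directly to the full trimmed chains $\Lambda_k'$ and $\Lambda_{k+1}'$ rather than extracting a single curtain from each. The hypothesis of that lemma is immediate from goodness: the first element $h_1'$ of $\Lambda_{k+1}'$ has $x_{k-1},x_k\in {h_1'}^-$, and every $h_j\in\Lambda_k'$ separates $x_{k-1}$ from $x_k$, so $h_j$ must meet the path-connected set ${h_1'}^-$. Iterating, and taking $N$ large enough that $|\Lambda_k'|>L+3$, yields an infinite $L$--chain all of whose members are good; relabel the $x_n$ as a subsequence so that $c_1,\dots,c_n$ come from $\Lambda_1',\dots,\Lambda_n'$. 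The paper states the result as a corollary with no proof, and this short gluing argument is almost certainly what is intended.
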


We finish this section by observing that the curtain topology provides a combinatorial description of the strata of the \emph{Morse boundary} \cite{charneysultan:contracting,cordes:morse}.

\begin{definition}[Morse boundary] \label{def:Morse_boundary} 
For a fixed base point $\go \in X$ and $D \geq 0$, define $\partial^DX=\{b^\infty\,:\,b(0)=\go \text{ and }b \text{ is }D \text{--contracting}\}$ as a set, and equip it with the subspace topology inherited from the cone topology on $\partial X$. The \emph{Morse boundary} of $X$ is defined to be 
$$
\partial^\star X:=\bigcup_{D \geq 0}\partial^DX,
$$
where $U \subset \partial^\star X$ is open if and only if $U \cap \partial^D X$ is open for every $D\geq0$. 
\end{definition}

\begin{corollary} \label{cor:Morse}
A set $U\subset\partial^\star X$ is open if and only if for every $D \geq0$ and every $b \in U \cap \partial^D X$, there exists a curtain $h$ dual to $b$ with $U_h(b^\infty) \subset U \cap \partial^D X.$
\end{corollary}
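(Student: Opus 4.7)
The plan is to deduce this directly from Theorem~\ref{thm:comparing_topologies} together with the fact that contracting rays lie in $\cal B$, reducing the question to a routine unwinding of the curtain subspace topology on each stratum $\partial^D X$.

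First I would observe that $\partial^D X \subset \cal B$ for every $D \ge 0$. This is immediate from Lemma~\ref{lem:contracting_crosses}: if $b$ is a $D$-contracting geodesic ray based at $\go$, then the family $\{h_{b,8Di}: 8Di\in [0,\infty)\}$ is an infinite $(10D+3)$-chain dual to $b$, and in particular $b^\infty\in\cal B_{10D+3}\subseteq\cal B$. Consequently, Theorem~\ref{thm:comparing_topologies} applies to each stratum: the cone topology and the curtain topology on $\partial X$ induce the same subspace topology on $\partial^D X$.

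Next I would rewrite the defining condition of the Morse boundary in terms of the curtain topology. By construction, $U\subset\partial^\star X$ is open if and only if $U\cap\partial^D X$ is open in the cone subspace topology on $\partial^D X$ for every $D\ge 0$. Using the coincidence of topologies on each $\partial^D X$, this is equivalent to $U\cap\partial^D X$ being open in the curtain subspace topology on $\partial^D X$ for every $D\ge 0$.

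Finally I would unwind the definition of the curtain topology: a basic neighbourhood of $b^\infty$ is precisely a set of the form $U_h(b^\infty)$ with $h$ dual to $b$. Hence $U\cap\partial^D X$ is open in the curtain subspace topology on $\partial^D X$ if and only if for every $b^\infty\in U\cap\partial^D X$ there exists a curtain $h$ dual to $b$ with $U_h(b^\infty)\cap\partial^D X\subseteq U\cap\partial^D X$, which is exactly the claimed characterisation (understood in the subspace-topology sense). Since every step is a direct invocation of earlier results or of a definition, there is no substantive obstacle; the only care required is in tracking the cone-vs.-curtain topologies and the Morse colimit topology correctly.
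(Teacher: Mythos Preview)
Your proposal is correct and follows essentially the same route as the paper: both invoke the inclusion $\partial^D X\subset\cal B$ via Theorem~\ref{thm:contracting_curtain_characterisation} (you cite its constituent Lemma~\ref{lem:contracting_crosses}), then apply Theorem~\ref{thm:comparing_topologies} to identify the cone and curtain subspace topologies on each stratum, and finally unwind the definitions. Your added remark that the inclusion $U_h(b^\infty)\subset U\cap\partial^D X$ must be read in the subspace sense (i.e.\ $U_h(b^\infty)\cap\partial^D X\subset U\cap\partial^D X$) is a useful clarification that the paper leaves implicit.
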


\begin{proof}
By Theorem~\ref{thm:contracting_curtain_characterisation}, we have $\partial^\star X \subset \cal B$ as sets, and Theorem \ref{thm:comparing_topologies} shows that the curtain topology agrees with the cone topology on $\partial^DX\subset\partial^\star X$. In particular, $U$ is open in $\partial^\star X$ if and only if $U \cap \partial^DX$ is open in the curtain topology for every $D$.
\end{proof}

More explicitly, the Morse boundary consists of the points in $\cal B$ that are represented by rays that cross an infinite $L$--chain \emph{at a uniform rate}.

\begin{remark}
In \cite{cashen:quasiisometries}, Cashen exhibits two quasiisometric CAT(0) spaces whose Morse boundaries, with the subspace topologies inherited from the respective visual boundaries, are not homeomorphic. As the curtain boundary $\mathcal{B}$ contains the Morse boundary as a subspace, it cannot be preserved by quasiisometries.
\end{remark}

\begin{remark}
In \cite{murray:topology}, Murray shows that the Morse boundary is dense in $\partial X$ with respect to the cone topology whenever it is nonempty and $X$ admits a proper cocompact group action. In particular $\cal B$ is dense in $\partial X$ in this case.
\end{remark}

\section{The curtain model} \label{sec:universal}

In this section we use the family of metrics $\dist_L$ to produce an $\isom X$--invariant metric $\Dist$ such that $(X, \Dist)$ is a \emph{single} hyperbolic metric space with properties analogous to the family $X_L$. To achieve this, we shall rescale the metrics $\dist_L$ to make summable their evaluation at each pair of points, and obtain $\Dist$ as that weighted sum.

Fix a sequence of numbers $\lambda_L\in(0,1)$ such that
\begin{align*}
\sum_{L=1}^\infty\lambda_L \;<\; \sum_{L=1}^\infty L\lambda_L \;<\; \sum_{L=1}^\infty L^2\lambda_L \;=\; \Lambda \;<\;\infty.
\end{align*}
For $x,y\in X$, define $\Dist(x,y)=\sum_{L=1}^\infty\lambda_L\dist_L(x,y)$. The \emph{curtain model} of $X$ is the space $\X=(X,\Dist)$. 

\begin{remark} 
We make the following two observations regarding the space $\X$.
\begin{itemize}
\item   By rescaling, we could assume that either $\sum_{L=1}^\infty \lambda_L = 1$ or $\Lambda = 1$. In the former case, this would allow us to interpret the sequence of numbers $\lambda_L$ as a probability distribution on the natural numbers $\mathbf{N}$ (with full support and finite second moment). If $Y$ is the random variable associated to the distribution, then $\Dist (x,y)$ is the expected value $\mathbf{E}[\dist_Y(x,y)]$. On the other hand, if we assume that $\Lambda= 1$ then several equations become simpler. We choose not to pursue either path because it is a needless restriction, and keeping $\Lambda$ separate clarifies the origins of certain expressions.
\item   The condition on $\sum_{L=1}^\infty L^2\lambda_L$ is only needed in Section~\ref{subsec:boundary:D}; for the results on hyperbolicity and isometries of $\X$ we only need that $\sum_{L=1}^\infty L\lambda_L$ to be finite.
\end{itemize}
\end{remark}

\begin{lemma} \label{lem:D_metric}
The function $\Dist$ is a metric, and $\isom X\le\isom \X$.
\end{lemma}

\begin{proof}
Recall from Remark~\ref{rmk:new_metric_bounded} that $\dist_L\le\dist_\infty<1+\dist$ for all $L$. From this we see that $\Dist(x,y)<\sum_{L=1}^\infty\lambda_L(1+\dist(x,y))=\Lambda(1+\dist(x,y))$ is finite. Symmetry, separation of points, and the triangle inequality are immediate from the definition of $\Dist$. If $g\in\isom X$, then $g\in\isom X_L$ for all $L$, so clearly $g\in\isom\X$.
\end{proof}

By definition, $\X$ is intimately related to the spaces $X_L$ and thus it is reasonable to expect that properties of the $X_L$ can be translated into properties of $\X$. The next remark illustrates this for the results of Section~\ref{sec:diameter}. 

\begin{remark} \label{rem:XD_diameter}
From the definition of $\Dist$ it is clear that $\X$ is unbounded as soon as some $X_L$ is unbounded. Converses to this are provided by Proposition~\ref{prop:uniform_diameter} and Corollary~\ref{cor:diameter_2}: if $X$ is cobounded and has the geodesic extension property, or if $X$ admits a proper cocompact group action, then $\X$ can only be unbounded if some $X_L$ is, and otherwise both $\X$ and $X_L$ are uniformly bounded. 

Actually, as a consequence of the results of this section it turns out that if $\X$ is unbounded then one only needs to assume that $\isom X$ acts coboundedly on $\X$ to deduce that some $X_L$ is unbounded. Because $\X$ is hyperbolic (Theorem~\ref{thm:D_hyperbolic}), coboundedness ensures a loxodromic element of $\isom X$, which is contracting (Corollary~\ref{cor:contracting_qie:D}) and hence acts loxodromically on some $X_L$ (Theorem~\ref{thm:contracting_curtain_characterisation}).
\end{remark}

As discussed in the introduction, a simpler candidate for the curtain model would be to equip $X$ with the metric $\lim_{L\to \infty}\dist_L(x,y)$. In general, a procedure of this type will not produce a hyperbolic space, but it does coincide with $\X$ if the various $X_L$ eventually stabilise as long as the scaling sequence is chosen appropriately. 

\begin{lemma}\label{lem:X_L_stab_XD_stab}
Let $X$ be a CAT(0) space. Assume that there is $L_0$ and a sequence $(q_L)$ such that for all $L\geq L_0$ the identity map $X_L\to X_{L_0}$ is a $(q_L,q_L)$--quasiisometry. If $\sum_{L=L_0}^\infty\lambda_Lq_L<\infty$, then the identity map $\X\to X_{L_0}$ is a quasiisometry.
\end{lemma}

\begin{proof}
For every $x,y\in X$ we have $\lambda_{L_0}\dist_L(x,y)\le \Dist(x,y)$. In the other direction, let $\Lambda_0=\sum_{L=L_0}^\infty\lambda_L$. For $L\le L_0$, set $q_L=1$. We have 
\[
\Dist(x,y) \,\le\, \sum_{L=1}^\infty\lambda_L(q_L\dist_{L_0}(x,y)+q_L) 
\,=\, (1+\dist_{L_0}(x,y))\sum_{L=1}^\infty\lambda_Lq_L. \qedhere
\]
\end{proof}

Note that the restriction on quasiisometry constants is necessary. For example, consider the space obtained by gluing end-to-end copies of $[0,i]\times[0,10^i]$ for all $i$, in a staircase-like fashion. However, Theorem~\ref{thmi:X_hypebolic_iff_X_is_XL} will show that this phenomenon does not occur if $X$ is hyperbolic.

\subsection{Hyperbolicity} \label{subsec:hyperbolicity:D}

As in Section~\ref{sec:hyperbolicity}, we shall use the guessing geodesics criterion (Proposition~\ref{prop:guessing_geodesics}) to show that the curtain model is hyperbolic. We start by observing that CAT(0) geodesics are unparametrised rough geodesics of $\X$.




\begin{proposition} \label{prop:cat0_rough_geodesic:D}
There is a constant $q$ such that CAT(0) geodesics are unparametrised $q$--rough geodesics of $\X$.
\end{proposition}

\begin{proof}
According to Proposition~\ref{prop:cat0_rough_geodesic}, if $\alpha$ is a CAT(0) geodesic and $x,y,z\in\alpha$ have $z\in[x,y]$, then for all $L$ we have $\dist_L(x,y)\ge\dist_L(x,z)+\dist_L(z,y)-3L-3$. Hence
\[
\Dist(x,y) \,\ge\, \sum_{L=1}^\infty\lambda_L\left(\dist_L(x,z)+\dist_L(z,y)-3L-3\right) \,\ge\, \Dist(x,z)+\Dist(z,y)-6\Lambda. \qedhere
\]
\end{proof}

Because $X_L$ is $\delta_L$--hyperbolic, every geodesic triangle $T$ of $X$ will have a coarse centre when viewed in $X_L$, i.e. three points on the sides of $T$ at uniformly bounded distance. Unfortunately there is no guarantee that an arbitrary choice of these points will still be a coarse centre when $L$ changes. This misalignment as $L$ varies makes the proof that CAT(0) triangles are thin in $\X$ rather more difficult than in $X_L$. 

Luckily, this can be solved, and in the following lemma we find a coarse centre that works for all $X_L$. Our main tool will be triangular chains (Definition~\ref{def: triangular chain}). Let us fix some conventions regarding the corners of a triangular chain. We enumerate the $x$--corner of a triangular chain as $\{h_1,\dots,h_n\}$, so that $h_i$ is closer to $x$ than $h_j$  whenever $i<j$. Given (a subset of) an $x$--corner, the \emph{maximal element} is the curtain that is furthest away from $x$. Finally, we orient the curtains of an $x$--corner such that $x\in h_i^-$.

\begin{lemma} \label{lem:coarse_centre}
For any CAT(0)-geodesic triangle $T=[x_1,x_2,x_3]$ in $X$, there are points $p_{ij}\in[x_i,x_j]$ such that $\diam_{X_L}\{p_{12},p_{13},p_{23}\}\le8L +15$ for all $L$. 
\end{lemma}

\begin{proof}
The strategy of the proof is as follows. By considering the ``deep'' parts of the corners of a triangular $L$--chain that are far from the maximal elements, we can guarantee that those deep regions remain disjoint from one another, even when the value of $L$ changes. The coarse centres will then be provided by the complements of these deep regions, using Lemma~\ref{lem: triangular distance same as L distance}.

Fix a value of $L$ and let $b^L$ be a maximal triangular $L$--chain for $\{x_1, x_2,x_3\}$. Let $b_i^L$ be the $x_i$--corner of the chain, which is an $L$--chain. Let $a^L_i\subset b^L_i$ be the complement of the final $L+1$ elements of $b^L_i$. We claim that if $j\ne i$, then every element of $a^L_i$ is disjoint from every element of $a^{L'}_j$ for all $L'$. Suppose that some $h\in a^L_1$ meets some $k\in a^{L'}_2$, where $L\le L'$. Since every element of $b^L_1$ separates $x_1$ from $\{x_2,x_3\}$ and every element of $b^{L'}_2$ separates $x_2$ from $\{x_1,x_3\}$, we see that every element of $b^L_1\smallsetminus a^L_1$ must meet every element of $b^{L'}_2\smallsetminus a^{L'}_2$. However, this shows that some chain of curtains of length $L'+1\ge L+1$ (namely $b^{L'}_2\smallsetminus a^{L'}_2$) meets at least two elements of $b^L_1$, contradicting the fact that it is an $L$--chain. Thus every element of $a^L_i$ is disjoint from every element of $a^{L'}_j$ for all $L'$ if $j\ne i$.

Now consider the maximal element  $h\in a^L_1$ and the maximal element $k\in a^L_2$. These define a nonempty closed interval $I^L_{12} = [x_1,x_2]\smallsetminus(h_L^-\cup k_L^-)$. (If $h$ or $k$ does not exist, then replace $h^-$ or $k^-$ by the empty set in the definition of $I^L_{12}$.) Because every element of $a^L_1$ is disjoint from every element of $a^{L'}_2$ for all $L'$, the sequence $\left(\bigcap_{L=1}^nI^L_{12}\right)_n$ is a nested collection of closed intervals. There is therefore some point $p_{12}\in[x_1,x_2]$ that lies in every $I^L_{12}$. The point $p_{12}$ is separated from $x_1$ by all but the final element of $a^L_1$ (which may contain it), and likewise from $x_2$ by all but the final element of $a^L_2$, for every $L$. We can similarly construct points $p_{13}\in[x_1,x_3]$ and $p_{23}\in[x_2,x_3]$. No element of any $a^L_i$ separates any $p_{jk}$ from any $p_{jl}$, so
\[
|p_{ij},p_{ik}|_{b^L} \,\le\, \left|b^L\smallsetminus(a^L_1\cup a^L_2\cup a^L_3)\right| \le 3L+3.
\]
According to Lemma~\ref{lem: triangular distance same as L distance}, we conclude that $\diam_{X_L}\{p_{12}, p_{13},p_{23}\} \leq 8L + 15$.
\end{proof}

Whilst it may at first seem that Lemma~\ref{lem:coarse_centre} and the fact that CAT(0) triangles are thin in every $X_L$ should be enough to establish condition~\ref{ass:guessing3}, again the misalignment as $L$ varies makes things more complicated. Given $x$ in a CAT(0)-geodesic triangle and a value of $L$, we can find some point $y_L$ on one of the other sides (even the same side, thanks to Lemma~\ref{lem:coarse_centre}) with $\dist_L(x,y_L)$ uniformly bounded in $L$, but as $L$ varies the points $y_L$ might be dramatically different. In order to show that $x$ is close to another side of the triangle in $\X$, we need \emph{a single point} $y$ such that $d_L(x,y)$ is controlled for all $L$. We therefore need better control of the parametrisation of images of CAT(0) geodesics in $X_L$. The following two lemmas show that, more than just being quasigeodesics of $X_L$, the CAT(0) geodesics of $X$ actually \emph{synchronously} fellow-travel in $X_L$.

\begin{lemma}[Short tails] \label{lem:tails}
Let $\alpha$ and $\beta$ be CAT(0) geodesics in $X$ from $x$ to $p$ and $p'$ respectively, where $\dist_L(p,p')\le r$. If $y\in\alpha$ has $\dist(x,y)\ge\dist(x,p')$, then $\dist_L(y,p)\le r+L+3$.
\end{lemma}

\begin{proof}
Let $\underline p'=\alpha(\dist(x,p'))$. If $\dist(\underline p',p)\le L+1$, then $\dist_L(y,p)\le\dist_L(\underline p',p)\le L+1$. Otherwise, let $z$ be a point of $[\underline p',p]$ with $\dist(\underline p',z)\in(L+1,L+2)$. We know from Lemma~\ref{lem:chain_distance} that there is a chain $\kappa$ of curtains dual to $[\underline p',z]$ that is of length $L+1$. Because $\dist(x,\underline p')=\dist(x,p')$, we have $\pi_\alpha\beta\subset[x,\underline p']$, so no element of $\kappa$ can meet $\beta$, and hence every element of $\kappa$ separates $p$ from $p'$.

Now let $c$ be an $L$--chain separating $y$ from $p$ and realising $\dist_L(y,p)=1+|c|$. At most $r-1$ elements of $c$ can separate $p$ from $p'$, and at most one can contain $p'$. Since any curtain that separates $z$ from $\{p,p'\}$ must cross every element of $\kappa$, there can be at most one such element of $c$. At most one element of $c$ contains $z$. The remainder all separate $y$ from $z$. Because $y\in[\underline p',p]$ and $\dist(\underline p',z)<L+2$, no more than $L$ elements of $c$ can do this. Thus $|c|\le r+L+2$, so $\dist_L(y,p)\le r+L+3$.
\end{proof}

\begin{lemma}[Synchronous fellow-travelling] \label{lem:synchronous}
Let $\alpha$ and $\beta$ be unit-speed geodesics in $X$ from $x$ to $p$ and $p'$, respectively, where $\dist_L(p,p')\le r$. If $\dist_L(\alpha(t),p)\ge r+8L+21$, then $\dist(\alpha(t),\beta(t))\le4L+2$.
\end{lemma}

\begin{proof}
Suppose that $t$ is such that $\dist_L(\alpha(t),p)\ge r+8L+21$. By Lemma~\ref{lem:tails}, we necessarily have $t\le\dist(x,p')$. Let $c$ be a maximal $L$--chain separating $\alpha(t)$ from $p$, so that $|c|\ge r+8L+20$. Because $\dist_L(p,p')\le r$, at most $r-1$ elements of $c$ can separate $p$ from $p'$, and at most one can contain $p'$. Let $c'$ be the subchain consisting of all the other curtains in $c$, which all separate $\alpha(t)$ from $\{p,p'\}$. We have $|c'|\ge|c|-r\ge2(4L+10)$. 

Let $h_1$ be the element of $c'$ closest to $\alpha(t)$, and let $h_2$ be the element of $c'$ furthest from $\alpha(t)$. According to Lemma~\ref{lem:updating_curtains}, there is an $L$--chain $\kappa$ dual to $[\alpha(t),p]$ of length three whose elements separate $h_1$ from $h_2$. In particular, every element of $\kappa$ separates $\alpha(t)$ from $\{p,p'\}$. Applying Lemma~\ref{lem:bounding_distance_with_dual_curtains} with $A=\{x,\alpha(t)\}$, $B=\{p,p'\}$ shows that there is some point $q'\in\beta$ that is $(2L+1)$--close to $[\alpha(t),p]$ in the CAT(0) metric. Let $q\in[\alpha(t),p]$ be a point with $\dist(q,q')\le2L+1$.

Let $t'=t\frac{\dist(x,q')}{\dist(x,q)}$. By convexity of the metric, $\dist(\alpha(t),\beta(t'))\le\dist(q,q')\le2L+1$. Moreover, the reverse triangle inequality shows that 
\[
|t-t'| \,=\, |\dist(x,\alpha(t))-\dist(x,\beta(t'))| \,\le\, \dist(\alpha(t),\beta(t')) \,\le\, 2L+1,
\]
which implies that $\dist(\alpha(t),\beta(t))\le\dist(\alpha(t),\beta(t'))+|t-t'|\le4L+2$.
\end{proof}



The fact that CAT(0) geodesics fellow-travel synchronously in every $X_L$ allows us to upgrade Lemma~\ref{lem:coarse_centre}: in $\X$, CAT(0)-geodesic triangles do not merely have a coarse centre, they are actually thin.

\begin{lemma} \label{lem:XD_g3}
If $[x_1,x_2,x_3]$ is a CAT(0)-geodesic triangle, then, as subsets of $\X$, the set $[x_1,x_2]$ is contained in the $125\Lambda$--neighbourhood of $[x_1,x_3]\cup[x_2,x_3]$.
\end{lemma}

\begin{proof}
From Lemma~\ref{lem:coarse_centre}, there are points $p_{ij}\in[x_i,x_j]$ such that $\dist_L(p_{ij},p_{ik})\le 8L+15$ for all $L$. Let $y\in[x_1,x_2]$. Our goal is to find $\bar y$ on $[x_1, x_3]\cup [x_2, x_3]$ for which we can bound $\dist_L(y, \bar y)$ for all $L$. Either $y$ lies on the CAT(0) geodesic $[x_1,p_{12}]$ or it lies on $[p_{12},x_2]$, and the argument is the same in either case, so let us assume the former.

Let $\alpha$ and $\beta$ be the unit-speed CAT(0) geodesics from $x_1$ to $p_{12}$ and $p_{13}$, respectively. There exists $t$ such that $y=\alpha(t)$. If $\dist(x_1,p_{13})\le t$, then Lemma~\ref{lem:tails} tells us that $\dist_L(y,p_{12})\le (8L+15)+L+3$, and hence $\dist_L(y,p_{13})\le17L+33$ for all $L$. In this case, let $\bar y=p_{13}$.

Otherwise, let $\bar y=\beta(t)$. We shall bound $\dist_L(y,\bar y)$ in terms of $L$. If $\dist_L(y,p_{12})\ge (8L+15)+8L+21$, then Lemma~\ref{lem:synchronous} tells us that $\dist(y,\bar y)\le4L+2$, and hence $\dist_L(y,\bar y)\le4L+2$. Similarly, if $\dist_L(\bar y,p_{13})\ge 16L+36$ then $\dist_L(y,\bar y)\le4L+2$. If neither of these inequalities hold, then
\[
\dist_L(y,\bar y) \,\le\, \dist_L(y,p_{12})+\dist_L(p_{12},p_{13})+\dist_L(p_{13},\bar y)
    \,\le\, 2(16L+35)+(8L+15).
\]

Thus, in all possible cases we have a point $\bar y\in\beta$ such that $\dist_L(y,\bar y)\le40L+85$ for all $L$. We can therefore compute
\[
\Dist(y,\bar y) \,=\, \sum_{L=1}^\infty\lambda_L\dist_L(y,\bar y) 
    \,\le\, \sum_{L=1}^\infty(40L+85)\lambda_L \,\le\, 125\Lambda. \qedhere
\]
\end{proof}

We can now apply the guessing geodesics criterion to establish hyperbolicity of $\X$.

\begin{theorem} \label{thm:D_hyperbolic}
There is a constant $\delta$ such that the curtain models of all CAT(0) spaces are $\delta$--hyperbolic.
\end{theorem}

\begin{proof}
$\X$ is a $q$--rough geodesic space by Proposition~\ref{prop:cat0_rough_geodesic:D}. Given $x,y\in\X$, let $\eta_{xy}$ be the unique CAT(0) geodesic from $x$ to $y$. As unparametrised $q$--rough geodesics of $\X$, the $\eta_{xy}$ are coarsely connected, and condition~\ref{ass:guessing1} of Proposition~\ref{prop:guessing_geodesics} holds. Condition~\ref{ass:guessing2} holds because CAT(0) geodesics are unique, and condition~\ref{ass:guessing3} is provided by Lemma~\ref{lem:XD_g3}. Applying Proposition~\ref{prop:guessing_geodesics} shows that $\X$ has thin quasigeodesic triangles, and hence is hyperbolic by Proposition~\ref{prop:Rips_coarsely_injective}.
\end{proof}

\begin{remark}
One can also establish four-point hyperbolicity of $\X$ via a more combinatorial, less geometric argument; see \cite[Prop.~5.14]{petytzalloum:constructing}.
\end{remark}

\subsection{Isometries and subgroups}

The goal of this subsection is to transfer results on $\isom X$ and its subgroups from the family $\{X_L\}$ to the curtain model. Before moving on to more involved considerations, we note the following.

\begin{remark} \label{rem:ivanov:D}
The Ivanov-type results of Section~\ref{subsec:Ivanov} also apply to $\X$. Indeed, the only place that the $X_L$ are explicitly used in that section is in Proposition~\ref{prop:preserving}, and it is easy to see that the same argument applies to $\X$, but with $n$--balls and $n$--spheres replaced by $(n\sum_{L=1}^\infty\lambda_L)$--balls and $(n\sum_{L=1}^\infty\lambda_L)$--spheres.
\end{remark}

It is immediate from the definition of $\X$ that whenever a subset $Y \subset X$ is quasiisometrically embedded in some $X_L$, it must also be quasiisometrically embedded in $\X$. It turns out that the converse of this statement is also true.

\begin{proposition} \label{prop:qie_in_XD_iff_XL}
Let $Y\subset X$. If $Y$ is quasiisometrically embedded in $X_L$, then $Y$ is quasiisometrically embedded in $\X$. Conversely, for all $q$ there exists $L_q$ such that if $Y$ is $(q,q)$--quasiisometrically embedded in $\X$, then $Y$ is $(q',q')$--quasiisometrically embedded in $X_{L_q}$, where $q'=2q\max\{\Lambda,\frac1\Lambda\}$. 
\end{proposition}

\begin{proof}
The first statement is immediate from the definition of $\X$. For the second statement, suppose that $Y\subset X$ is $(q,q)$--quasiisometrically embedded in $\X$. Recall that $\dist_L(x,y)\le1+\dist(x,y)$ for all $x,y\in Y$ and all $L$, so it suffices to obtain a lower bound on $\dist_L(x,y)$. 

Let $L_q$ be such that $\Lambda'=\sum_{L=L_q+1}^\infty\lambda_L\le\min\{\frac{1}{2q},q\}$, which is possible because $\sum_{L=1}^\infty\lambda_L$ converges. Given $x,y\in Y$ we can use the fact that $\dist_L(x,y)$ is increasing in $L$ but bounded above by $\dist(x,y)+1$ to compute
\begin{align*}
\frac1q\dist(x,y)-q \,\le\,  \Dist(x,y) \,
    &=\,    \sum_{L=1}^\infty\lambda_L\dist_L(x,y) \\
    &\le\,  \sum_{L=1}^{L_q}\lambda_L\dist_{L_q}(x,y) \ +  \sum_{L=L_q+1}^\infty\lambda_L(\dist(x,y)+1) \\
    &\le\,  \sum_{L=1}^\infty\lambda_L\dist_{L_q}(x,y) \ +\ \Lambda'(\dist(x,y)+1) \\
    &\le\,  \Lambda\dist_{L_q}(x,y) + \frac1{2q}\dist(x,y)+q, 
\end{align*}
from which we deduce that $\dist_{L_q}(x,y)\ge\frac1\Lambda(\frac1{2q}\dist(x,y)-2q)$. 
\end{proof}

We deduce the following from Theorem~\ref{thm:contracting_curtain_characterisation} and Proposition~\ref{prop:qie_in_XD_iff_XL}.

\begin{corollary} \label{cor:contracting_qie:D}
$\alpha$ is a contracting geodesic in a CAT(0) space $X$ if and only if $\alpha$ is quasiisometrically embedded in $\X$.
\end{corollary}

Combining Proposition~\ref{prop:qie_in_XD_iff_XL} with Proposition \ref{prop: stable versus models}, we get the following. 

\begin{corollary} \label{cor:stable:D}
Let $G$ be a group acting properly coboundedly on a CAT(0) space $X$. A subgroup $H$ of $G$ is stable if and only if it is finitely generated and orbit maps $H \rightarrow \X$ are quasiisometric embeddings. 
\end{corollary}

Applying Corollary~\ref{cor:contracting_qie:D} to axes of contracting isometries, we see that every contracting isometry of $X$ acts loxodromically on $\X$. We can actually say rather more than this in the presence of a proper group action. 



\begin{proposition} \label{prop:nonuniform_acyl:D}
If a group $G$ acts properly on a CAT(0) space $X$, then the action of $G$ on $\X$ is non-uniformly acylindrical. In particular, every contracting element of $G$ acts WPD on $\X$.
\end{proposition}

\begin{proof}
Given $\eps$, let $R=2\eps+28\Lambda$. Given $x,y\in X$ with $\Dist(x,y)\ge R$, for each $L$ let $\Chain_L$ be a maximal $L$--chain separating $x$ from $y$. Suppose that $g\in G$ is such that there is no $L$ for which at least $8L+20$ elements of $\Chain_L$ separate $gx$ from $gy$. Because $gx$ and $gy$ each lie in at most one element of $\Chain_L$, all but at most $8L+21$ elements of $\Chain_L$ either separate $x$ from $gx$ or separate $y$ from $gy$ (some may even do both). Thus
\begin{align*}
\Dist(x,gx)+\Dist(y,gy) \,&\ge\, 
    \sum_{L=1}^\infty\lambda_L((1+|\{h\in\Chain_L\,:\,h\text{ separates }x,gx\}|) 
    + (1+|\{h\in\Chain_L\,:\,h\text{ separates }y,gy\}|) \\
&\ge\, \sum_{L=1}^\infty\lambda_L(2+|\Chain_L|-(8L+21)) \\
&\ge\, \Dist(x,y)-28\Lambda \,\ge\, 2\eps.
\end{align*}
By the contrapositive, for any $g\in G$ with $\max\{\Dist(x,gx),\Dist(y,gy)\}<\eps$ there is some $L$ for which at least $8L+20$ elements of $\Chain_L$ separate $gx$ from $gy$.  Applying Lemma~\ref{lem:updating_curtains}, there is an $L$--chain of length three dual to $[x,y]$ that separates $gx$ from $gy$. From Lemma~\ref{lem:bottleneck}, we find that $[gx,gy]$ comes $(2L+\dist(x,y))$--close to $[x,y]$. Curtains are thick, so $\dist(x,y)>8L+20>2L$, which means $[gx,gy]$ comes $2\dist(x,y)$--close to $[x,y]$. Hence any $g\in G$ with $\max\{\Dist(x,gx),\Dist(y,gy)\}<\eps$ sends $x$ to a point in the $4\dist(x,y)$--ball about $x$, and there are only finitely many such $g$ by properness of the action of $G$ on $X$.
\end{proof}

We finish this section by considering A/QI triples with associated hyperbolic space $\X$, just as in Section~\ref{sec:diameter}.

\begin{lemma} \label{lem:Uniform_acylindricity_master_guy}
Suppose that a group $G$ acts uniformly properly on a CAT(0) space $X$. If $Y\subset X$ is such that $[y_1,y_2]$ is $D$--contracting for all $y_1,y_2\in Y$, then the action of $G$ on $\X$ is acylindrical along $Y$.
\end{lemma}

\begin{proof}
Define $\Lambda_D=\sum_{L=10D+3}^\infty\lambda_L$. Given $\eps>0$, let $y_1,y_2\in Y$ have $\Dist(y_1,y_2)>(16D\lceil\frac\eps{\Lambda_D}\rceil+10)\Lambda$. This implies that $\dist(y_1,y_2)>16D\lceil\frac\eps{\Lambda_D}\rceil+9$. Let $g\in G$. Because 
\[
\Dist(y_i,gy_i) \,\ge\, \sum_{L=10D+3}^\infty\lambda_L\dist_L(y_i,gy_i) 
    \,\ge\, \sum_{L=10D+3}^\infty\lambda_L\dist_{10D+3}(y_i,gy_i),
\] 
if $g$ satisfies $\Dist(y_i,gy_i)\le\eps$, then it must also satisfy $\dist_{10D+3}(y_i,gy_i)\le\frac\eps{\Lambda_D}$. According to Lemma~\ref{lem:synchronous_fellow_travelling:contracting}, $g$ satisfies $\dist(y_1,gy_1)<75D\lceil\frac\eps{\Lambda_D}\rceil$. By uniform properness, there are uniformly finitely many such $g$.
\end{proof}

The following result summarises the situation for A/QI triples, and shows that the curtain model is, in a sense, universal for them.

\begin{theorem} \label{thm:A/QI:D}
The following are equivalent for a finitely generated subgroup $H$ of a group $G$ acting properly coboundedly on a CAT(0) space $X$.
\begin{enumerate}
\item\label{item:1AQI}   $H$ is stable.
\item\label{item:2AQI}   $(G,\X,H)$ is an A/QI triple.
\item\label{item:3AQI}   There exists an A/QI triple $(G,Z,H)$ for some hyperbolic space $Z$.
\end{enumerate}
\end{theorem}

\begin{proof}
The implication $\ref{item:1AQI}\Rightarrow \ref{item:2AQI}$ is obtained by combining Lemma~\ref{lem:Uniform_acylindricity_master_guy} and Corollary~\ref{cor:stable:D}. The implication $\ref{item:2AQI}\Rightarrow \ref{item:3AQI}$ is obvious; and the implication $\ref{item:3AQI} \Rightarrow \ref{item:1AQI}$ is provided in \cite[Thm~1.5]{abbottmanning:acylindrically}.
\end{proof}

\subsection{Boundaries} \label{subsec:boundary:D}

Since $\X$ is a hyperbolic space, it has a Gromov boundary, $\partial\X$. In light of the results of Section~\ref{section:curtain_boundaries}, it is natural to wonder how $\partial\X$ relates to the visual boundary $\partial X$ and the curtain boundary $\cal B$ of the CAT(0) space $X$. The following example shows that $\partial\X$ can contain strictly more information than $\cal B$. 

\begin{example} \label{eg:XD_boundary}
Let $c$ be the intersection in $\R^2$ of the $\log(1+x)$--curve with the region $\{x\ge0\}$, and let $X$ be the CAT(0) space obtained by taking the convex hull of $c$ and the $x$--axis. It is straightforward to see that $X_L$ is bounded for every $L$, which means that $X$ has empty curtain boundary. However, one also finds (if the scaling sequence $(\lambda_L)$ decays subexponentially) that $\X$ is a quasiray, and so $\partial\X$ is a point.
\end{example}

In view of the fact that CAT(0) geodesics are unparametrised rough geodesics of $\X$, we can consider the subspace $\partial_{\Dist}X$ of the visual boundary $\partial X$ consisting of all points represented by rays that also represent points of $\partial\X$. The goal of this subsection is to prove the following.

\begin{theorem} \label{thm:boundary:D}
If $X$ is a proper CAT(0) space, then $\partial_{\Dist}X$ is $\isom X$--invariant, and its points are all visibility points in $\partial X$. Moreover, the identity map induces an $\isom X$--equivariant homeomorphism $\partial_{\Dist}X\to\partial\X$. 
\end{theorem}

The good behaviour with respect to isometries is immediate because $\isom X<\isom\X$. We first aim to show that the map is a homeomorphism. 

The idea behind $\X$ is that, though it may be that no individual $X_L$ can adequately capture all the hyperbolicity of $X$, yet the totality of all the spaces $X_L$ does. Correspondingly, when working with $\X$ it can sometimes greatly simplify matters to consider all $L$ simultaneously, as opposed to fixing a single value. This motivates the following definition.

\begin{definition}
A \emph{$\Dist$--chain} $\mc{C}$ is the data of an $L$--chain $\Chain_L$ for each $L$. We say that $\Chain$ \emph{separates} $x$ from $y$ if every $\Chain_L$ does. Given a $\Dist$--chain $\mc{C}$, we write $\|\mc{C}\|=\sum_{L= 1}^\infty \lambda_L \left(\vert \mc{C}_L\vert + 1\right)\in\R\cup\{\infty\}$. A \emph{maximal $\Dist$--chain} separating $x$ and $y$ is a $\Dist$--chain $\mc{C}$ such that each $\mc{C}_L$ is a maximal $L$--chain separating $x$ from $y$.
\end{definition}

Note that $\Dist$--chains need not themselves be chains of curtains, and indeed can contain the same curtain in infinitely many $\Chain_L$. If $\mc{C}$ separates $x$ from $y$, then $\Dist(x,y) \geq \|\Chain\|$, and we have equality if and only if $\Chain$ is maximal. 

We start by providing a way to show that a CAT(0) geodesic is unbounded in $\X$, even in the absence of any infinite $L$--chain that it crosses. We then prove the main technical statement that will be needed to show that the identity map induces a homeomorphism of boundaries.

\begin{lemma} \label{lem:limit_also_crosses}
Let $X$ be a CAT(0) space and let $\go\in X$. Let $\{h_1,\dots,h_n\}$ be a separated chain of curtains in $X$ with $\go\in h_1^-$, and suppose that $(\alpha_j)$ is a sequence of geodesic rays based at $\go$ such that every $\alpha_j$ crosses $h_n$ (i.e. $\alpha_j(t)\in h_n^+$ for all sufficiently large $t$). If the $\alpha_j$ converge to a geodesic ray $\alpha$, then $\alpha$ crosses $h_{n-1}$.
\end{lemma}

\begin{proof}
Let $\bar h_n^+$ be the subset of $\bar X=X\cup\partial X$ consisting of all limits of sequences contained in $h_n\cup h_n^+$, and define $\bar h_{n-1}^-$ similarly. These are closed subsets of $\bar X$, so $\alpha^\infty\in\bar h_n^+$. The same argument as in Proposition~\ref{prop:separated_means_perpendicular} shows the $\bar h_{n-1}^-$ is disjoint from $\bar h_n^+$, and it follows that $\alpha$ crosses $h_{n-1}$.
\end{proof}

\begin{lemma} \label{lem:converge_upstairs_and_downstairs}
Let $X$ be a proper CAT(0) space, and let $(\gamma_i)$ be a sequence of uniform quasigeodesics in $\X$, based at $\go$, that converge to a quasigeodesic ray $\gamma\subset\X$ representing $\xi\in\partial\X$. Suppose that $(t_i)$ is a sequence with $t_i\to\infty$ such that $\gamma_i(t_i)$ is uniformly close to $\gamma$, and let $\alpha_i=[\go,\gamma_i(t_i)]$. There is a subsequence of the $\alpha_i$ whose Arzel\`a--Ascoli limit $\alpha$ represents $\xi$. 
\end{lemma}

\begin{proof}
Let $M$ be such that every $\gamma_i(t_i)$ is $M$--close to $\gamma$. By hyperbolicity of $\X$ and the fact that CAT(0) geodesics are uniform unparametrised rough geodesics of $\X$, we can take $M$ to be large enough that $\alpha_i$ is also $M$--close to $\gamma$ for all $i$. After passing to a subsequence of the $t_i$ and relabelling, we may assume that $\Dist(\go,\gamma_i(t_i))\ge i+2M+3\Lambda$. Let us write $x_i=\gamma_i(t_i)$.

We shall proceed in an inductive manner as follows. Given $x_i$, we shall produce a $\Dist$--chain $\Chain^i$ with $\|\Chain^i\|\ge i$ and a subsequence of $(x_j)_{j>i}$ such that $\Chain^i$ separates $\go$ both from $x_i$ and from every $x_j$ in the subsequence. For convenience, we then relabel to assume that the sequence $(x_k)_{\mathbf N}$ is $(x_1,\dots,x_i)$ together with this subsequence. According to Lemma~\ref{lem:limit_also_crosses}, the Arzel\`a--Ascoli limit $\alpha$ of the subsequence of $(\alpha_j)$ inductively obtained in this way must cross all but at most one element of each $\Chain^i_L$, implying that $\diam_{\X}\alpha\ge\|\Chain^i\|-\Lambda$ for all $i$. This shows that the $x_j$ simultaneously converge to $\alpha^\infty$ in $X\cup\partial X$ and to $\xi$ in $\X\cup\partial\X$, which proves the result.

So, given $x_i$, let us consider a maximal $\Dist$--chain $\Chain^{i,1}$ separating $\go$ from $x_i$. Let $\Chain^{i,2}$ be the finite subset $\bigsqcup_{L=1}^{\lfloor\dist(\go,x_i)\rfloor}\Chain^{i,1}_L$. Observe that 
\[
\|\Chain^{i,1}\|-\|\Chain^{i,2}\| \,=\, \sum_{L=\lfloor\dist(\go,x_i)\rfloor+1}^\infty\lambda_L\dist_L(\go,x_i) \,\le\, \sum_{L=\lfloor\dist(\go,x_i)\rfloor+1}^\infty\lambda_LL \,\le\, \Lambda.
\]
For each $L$ and each $j>i$, let $\B^{i,j,1}\subset\Chain^{i,2}$ be the subset consisting of all curtains that separate $\go$ from $x_j$, and let $\B^{i,j,0}$ be its complement. Since $\Chain^{i,2}$ is finite, there is some infinite subsequence of $(x_j)_{j>i}$ whose elements all have the same corresponding set $\B^{i,j,1}$. Pass to that subsequence, and set $\Chain^i=\B^{i,j,1}$. For each $L$, at most $L$ elements of $\Chain^{i,2}_L$ can meet $\alpha_j$ without separating $\go$ from $x_j$, because of Corollary~\ref{cor:double_crossing}, and at most one can contain $x_j$. We therefore have
\begin{align*}
\|\B^{i,j,0}\| \,&\le\, \sum_{L=1}^{\lfloor\dist(\go,x_i)\rfloor}\lambda_L(L+1) 
    + \sum_{L=1}^{\lfloor\dist(\go,x_i)\rfloor} 
        \lambda_L(1+|\{h\in\B^{i,j,0} \,:\, h\text{ separates }x_i\text{ from }\alpha_j\}|) \\
&\le\, \sum_{L=1}^\infty2\lambda_LL + \Dist(x_i,\alpha_j) \,\le\, 2\Lambda+2M.
\end{align*}
We deduce that $\|\Chain^i\| = \|\B^{i,j,1}\| \ge \|\Chain^{i,2}\|-2\Lambda-2M \ge \Dist(\go,x_i)-3\Lambda-2M \ge i$. As discussed above, this suffices to prove the lemma.
\end{proof}

We now proceed to the proof that the identity map induces a homeomorphism $\partial_{\Dist}X\to\partial\X$. Let us write $\pi:X\cup\partial_{\Dist}X\to\X\cup\partial\X$ for the identity map together with the map it induces on $\partial_{\Dist}X$.

\begin{proposition} \label{prop:homeo:D}
If $X$ is proper, then the map $\pi$ restricts to a homeomorphism $\partial_{\Dist}X\to\partial\X$.
\end{proposition}

\begin{proof}
Surjectivity of the map is immediate from Lemma~\ref{lem:converge_upstairs_and_downstairs}.

Suppose that $\pi Y\subset\partial X$ is closed, and let $\alpha\in\partial X$ be the limit of a sequence $\alpha_j\in Y$. Because $\pi|_X$ is Lipschitz, the sequence $\pi\alpha_j$ converges in $\partial\X$, and the fact that $\pi Y$ is closed means that the limit is in $\pi Y$. Hence $\alpha\in Y$, so $\pi$ is continuous.

Suppose that $Y\subset\partial_{\Dist}X$ is closed, and let $\xi\in\partial\X$ be a limit of points $\xi_j\in\pi Y$. Let $\alpha_j$ be the CAT(0) geodesic based at $\go$ that represents $\xi_j$. By Lemma~\ref{lem:converge_upstairs_and_downstairs}, there is a subsequence of the $\alpha_j$ with initial subsegments converging to a geodesic $\alpha$ such that $\pi\alpha^\infty=\xi$. Since $Y$ is closed, $\alpha^\infty\in Y$, so $\pi Y$ is closed, showing that $\pi$ is open.

Finally let us show that $\pi|_{\partial_{\Dist}X}$ is injective. Suppose that $\alpha$ and $\beta$ are CAT(0) geodesics based at $\go$ that both represent $\xi\in\partial\X$. By hyperbolicity there is a constant $C\ge1$ such that the Hausdorff-distance between $\pi\alpha$ and $\pi\beta$ is less than $C$. Let $x_n\in\alpha$ be such that $\Dist(\go,x_n)>100nC\max\{1,\Lambda\}$, and let $y_n\in\beta$ have $\Dist(y_n,x_n)<C$. According to Lemma~\ref{lem:niftier_correct_proportions_in_L}, there is a number $L$ such that 
\begin{align*}
&\dist_L(\go,x_n) \,\ge\, \frac12\dist_L(x_n,y_n)\frac{\Dist(\go,x_n)}{\Dist(x_n,y_n)} \,>\, 50\dist_L(x_n,y_n) \\
\text{and } & \dist_L(\go,x_n) \,\ge\, \frac1{2\Lambda}L^2\Dist(\go,x_n) \,>\, 50L^2n.
\end{align*}
Let $c_n$ be an $L$--chain realising $\dist_L(\go,x_n)=1+|c_n|$, and let $c'_n$ be the subchain of elements that separate $\go$ from $y_n$. We have 
\begin{align*}
|c'_n| \,&\ge\, (|c_n|-1-|\{h\in c_n\,:\,h\text{ separates }x_n\text{ from }y_n\}| \\
&\ge\, (|c_n|-\dist_L(x_n,y_n)) \,>\, \frac{49}{50}|c_n| \,\ge\, 49L^2n.
\end{align*}
Applying Lemma~\ref{lem:updating_curtains} to the final $8L+20$ elements of $c'_n$ gives an $L$--chain of length three that is dual to $\alpha$ and separates all the previous elements of $c'_n$ from $\{x_n,y_n\}$. From this, Lemma~\ref{lem:gluing_disjoint_chains} provides an $L$--chain $c''_n$ of length at least $49L^2n-(8L+20)+3-1>20L^2n$ that separates $\go$ from $\{x_n,y_n\}$ and whose final three elements are dual to $\alpha$.

Let $s_n$ and $t_n$ be such that $\alpha(s_n)$ and $\beta(t_n)$ lie in the penultimate element of $c''_n$. Note that $s_n,t_n>10L^2n$, because curtains are thick. Lemma~\ref{lem:bounding_distance_with_dual_curtains} now tells us that $\dist(\alpha(s_n),\beta(t_n))\le2L+1<\sqrt{10L^2}$. This shows that $\alpha$ and $\beta$ diverge sublinearly, and we deduce from convexity of the metric that $\alpha=\beta$.

Let $s_n$ and $t_n$ be such that $\alpha(s_n)$ and $\beta(t_n)$ lie in the penultimate element of $c''_n$. Note that $s_n,t_n>10L^2n$, because curtains are thick. Lemma~\ref{lem:bounding_distance_with_dual_curtains} now tells us that $\dist(\alpha(s_n),\beta(t_n)) \le 2L+1 < \sqrt{10L^2} < \sqrt{t_n}$. This shows that $\alpha$ and $\beta$ diverge sublinearly, and we deduce from convexity of the metric that $\alpha=\beta$: see \cite[Prop.~2.12]{zalloum:convergence}.
\end{proof}

The proof of Proposition~\ref{prop:homeo:D} used the following lemma, which allows one to transfer metric data from $\X$ to some $X_L$.



\begin{lemma} \label{lem:niftier_correct_proportions_in_L}
Let $K,T>0$ be such that $K+T\leq 1$. For all $x_0,x_1 \in X$ there exists $L_1$ such that for all $x_2\in X$ there is some ${L_{12}}\leq L_1$ such that both of the following hold.
\begin{enumerate}
\item   $\dist_{L_{12}}(x_0, x_1) \,\geq\, K \dist_{L_{12}}(x_1, x_2) \frac{\Dist(x_0, x_1)}{\Dist(x_1, x_2)}$. \label{item:1_very_nifty}
\item   $\dist_{L_{12}}(x_0, x_1) \,\geq\, \frac{T}{\Lambda}(L_{12})^2\Dist(x_0, x_1)$. \label{item:2_very_nifty}
\end{enumerate}
\end{lemma}

\begin{proof}
If there were no value of $L$ for which $\dist_L(x_0,x_1)\ge\frac1\Lambda L^2\Dist(x_0,x_1)$, then we would have 
\begin{align*}
\Dist(x,y) &= \sum_{L=1}^\infty \lambda_i \dist_L(x,y) < \sum_{L=1}^\infty \lambda_L \frac{L^2\Dist(x,y)}{\Lambda}
    = \frac{\Dist(x,y)}{\Lambda} \sum_{L=1}^{\infty} \lambda_LL^2 \leq \Dist(x,y),
\end{align*}
which is a contradiction. We can therefore consider the nonempty set $\mc{L}$ of all values of $L$ for which Item~\ref{item:2_very_nifty} holds. Let $L_1$ be the maximal element of $\cal L$. Suppose for the sake of contradiction that no $L\in \mc{L}$ satisfies Item~\ref{item:1_very_nifty}. We would then have
\begin{align*}
\Dist(x_0, x_1) \,&=\, \sum_{L\in \mc{L}}\lambda_L \dist_L(x_0, x_1)  
        \qquad+\qquad \sum_{L\in \mathbf{N}\setminus \mc{L}}\lambda_L \dist_L(x_0, x_1) \\
    &<\, \sum_{L\in \mc{L}}\lambda_L K \dist_{L}(x_1, x_2) \frac{\Dist(x_0, x_1)}{\Dist(x_1, x_2)} 
        \,+\, \sum_{L\in \mathbf{N}\setminus \mc{L}}\lambda_L\frac{T}{\Lambda}{L}^2\Dist(x_0, x_1) \\
    &\leq\, K \frac{\Dist(x_0, x_1)}{\Dist(x_1, x_2)}\sum_{L\in \mc{L}}\lambda_L \dist_{L}(x_1, x_2) 
        \,+\, \frac{T}{\Lambda}\Dist(x_0, x_1)\sum_{L\in \mathbf{N}\setminus \mc{L}}\lambda_L{L}^2 \\
    &\leq\, K\Dist(x_0, x_1) \,+\, {T}\Dist(x_0, x_1) \;\leq\; \Dist(x_0, x_1),
\end{align*}
which is a contradiction. Thus there exists $L_{12}\in\cal L$ satisfying both items.
\end{proof}

All that remains is to show that every point of $\partial_{\Dist}X$ is a visibility point of $\partial X$. Our argument for this uses the following lemma, which shows that closest-point projection $\pi_\alpha$ to a CAT(0) geodesic coarsely agrees (in $\dist_L$ and $\Dist$, respectively) with closest-point projection in $X_L$ and $\X$.

\begin{lemma} \label{lemma:closest_point_projection}
Let $\alpha$ be a CAT(0) geodesic, let $a\in\alpha$, and let $o\in X$ be any point. We have $\dist_L(o,\pi_\alpha o)\le\dist_L(o,a)+L+3$ for all $L$, and also $\Dist(o,\pi_\alpha o) \le \Dist(o,a)+4\Lambda$. 
\end{lemma}

\begin{proof}
Let $c$ be an $L$--chain realising $\dist_L(o,\pi_\alpha o)=1+|c|$, and suppose that at least $L+1$ elements of $c$ meet $[\pi_\alpha o,a]\subset\alpha$. Let $h$ be an element of $c$ meeting $[\pi_\alpha o,a]$ that is separated from $\pi_\alpha o$ by $L$ elements of $c$. There is a point $b\in h$ with $\dist(\pi_\alpha o,b)>L+1$, so there is a chain $\kappa$ dual to $[\pi_\alpha o,b]$ of length $L+1$. Note that every element of $\kappa$ separates $[o,\pi_\alpha o]$ from $[b,a]$. This implies that any element of $c$ that meets $[\pi_\alpha o,a]$ and is separated from $\pi_\alpha o$ by $h$ must intersect every element of $\kappa$. Because $c$ is an $L$--chain, it can have at most one such element. This shows that at most $L+2$ elements of $c$ can separate $\pi_\alpha o$ from $a$. Of the remaining elements of $c$, only one can contain $a$, and the rest separate it from $o$. We therefore compute
\[
\dist_L(o,a) \,\ge\, 1+|\{h'\in c\,:\, h'\text{ separates }o\text{ from }a\}| \,\ge\, 1+|c|-(L+3) \,=\, \dist_L(x,\pi_\alpha o)-L-3.
\]
Finally, we have $\Dist(o,\pi_\alpha x)\le\sum_{L=1}^\infty\lambda_L(\dist_L(o,a)+L+3)\le\Dist(o,a)+4\Lambda$.
\end{proof}

\begin{proposition}
If $X$ is a proper CAT(0) space, then every point of $\partial_{\Dist}X$ is a visibility point of $\partial X$.
\end{proposition}

\begin{proof}
Let $\alpha$ be a geodesic ray based at $\go$ that represents a point of $\partial_{\Dist}X$, and let $\beta$ be any other geodesic ray based at $\go$. If $\pi_\alpha\beta$ is unbounded, then Lemma~\ref{lemma:closest_point_projection} shows that $\beta$ has unbounded closest-point projection to $\alpha$ in the hyperbolic space $\X$. As they are both uniform quasigeodesics, $\beta$ represents the same point of $\partial\X$ as $\alpha$, contradicting the fact that the identity map induces a bijection $\partial_{\Dist}X\to\partial\X$. Thus $\pi_\alpha\beta$ is bounded. Let $x\in\alpha$ be such that $\pi_\alpha\beta\subset[\go,x]$. Let $y\in\alpha\smallsetminus[\go,x]$ be such that $\Dist(x,y)>28\Lambda$. We have 
\[
\sum_{L=1}^\infty\lambda_L\dist_L(x,y) \,=\, \Dist(x,y) \,>\, 28\Lambda \,\ge\, \sum_{L=1}^\infty\lambda_L(8L+20),
\]
so there is some $L$ such that $\dist_L(x,y)>8L+20$. According to Lemma~\ref{lem:updating_curtains}, there is an $L$--chain $\{h_1,h_2,h_3\}$ dual to $[x,y]$. For some $t_0$, all three $h_i$ separate $\alpha(t)$ from $\beta$ for all $t>t_0$. Thus, if we let $\gamma_t=[\beta(t),\alpha(t)]$ for $t>t_0$, then Lemma~\ref{lem:bounding_distance_with_dual_curtains} shows that every $\gamma_t$ passes through a CAT(0) ball of radius $2L+1$ centred in $h_2$. As balls are compact, the Arzel\`a--Ascoli limit $\gamma$ of the $\gamma_t$ exists. It is a geodesic with $\gamma^{-\infty}=\beta^\infty$ and $\gamma^\infty=\alpha^\infty$.
\end{proof}

\begin{remark} 
It follows \emph{a posteriori} from Theorems~\ref{thm:Gromov_boundaries_inside_visual_boundaries} and~\ref{thm:boundary:D} that $\partial\X$ contains every $\partial X_L$. Note that a similar argument to that of Theorem~\ref{thm:comparing_topologies} shows that the curtain and cone topologies agree on $\partial\X$.
\end{remark}

\appendix

\section{Non-geodesic hyperbolic spaces} 

Conventions about non-geodesic hyperbolic spaces are not uniformly established in the literature. The aim of this appendix is to avoid confusion by making certain statements formal. Most likely the statements here are well-known to experts, but their inclusion makes the article more self-contained.

The following ``guessing geodesics'' criterion for hyperbolicity is used in Sections~\ref{sec:hyperbolicity} and~\ref{sec:universal}. Bowditch states a version of this criterion in terms of a ternary map that ends up being the median of the hyperbolic space \cite[Prop.~3.1]{Bowditch:Intersection}, but we follow Hamenst\"adt's formulation \cite[Prop.~3.5]{hamenstadt:geometry:complex} in terms of paths. We have included a proof because the spaces we deal with are not geodesic, and we cannot assume the paths to be continuous. 

For a path $\alpha:[0,n]\to X$, write $\ell(\alpha)=n$ for the parametrisation-length of $\alpha$. Following L\"oh \cite[\S7]{loeh:geometric}, we say that $X$ is \emph{quasihyperbolic} if there is a constant $q_0$ such that $X$ is $(q_0,q_0)$--quasigeodesic and for every $q$ there is a constant $\delta$ such that every $(q,q)$--quasigeodesic triangle of $X$ is $\delta$--thin.

\begin{proposition}\label{prop:guessing_geodesics}
Let $(X,\dist)$ be a $(q_0,q_0)$--quasigeodesic space. Assume that for some constant $D>0$ there are $D$--coarsely-connected paths $\eta_{xy}=\eta(x,y):[0,1]\to X$ from $x$ to $y$, for each pair $x, y \in X$. If the following three conditions are satisfied, then $X$ is quasihyperbolic. 
\begin{enumerate}
\enumlabel[(G1)]   The diameter of $\eta_{xy}=\eta_{xy}[0,1]$ is at most $\frac D2\dist(x,y)$. \label{ass:guessing1}
\enumlabel[(G2)]   For any $s\le t$, the Hausdorff-distance between $\eta_{xy}[s,t]$ and $\eta(\eta_{xy}(s),\eta_{xy}(t))$ is at most~$D$. \label{ass:guessing2}
\enumlabel[(G3)]   For any $x,y,z\in X$ the set $\eta_{xy}$ is contained in the $D$-neighborhood of  $\eta_{xz} \cup \eta_{zy}$. \label{ass:guessing3}
\end{enumerate}
\end{proposition}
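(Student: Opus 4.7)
The plan is to first establish that each $\eta$-path is uniformly Hausdorff-close to any $q$-quasigeodesic with the same endpoints, and then deduce hyperbolicity from the slim-triangle condition~(G3). Since $X$ is $q$-quasigeodesic, reducing hyperbolicity to the slimness of $q$-quasigeodesic triangles is the standard route, and the bulk of the work lies in this approximation step.

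As a preliminary I would show there are constants $A,B$ depending only on $D,q$ with $\diam(\eta_{xy})\le A\,\dist(x,y)+B$. Fix a $q$-quasigeodesic $\gamma\colon[0,N]\to X$ from $x$ to $y$ and subdivide it as $x=x_0,\ldots,x_n=y$ with $\dist(x_i,x_{i+1})\le 2q$ and $n=O(\dist(x,y)/q)$. By~(G1) each $\eta_{x_ix_{i+1}}$ has diameter at most $D$; iterating~(G3) along a binary-tree decomposition of the subdivision puts $\eta_{xy}$ inside an $O(D\log n)$-neighbourhood of $\bigcup_i\eta_{x_ix_{i+1}}$. This yields the diameter bound, and simultaneously shows that every $x_i$, and hence every point of $\gamma$, lies in a uniform neighbourhood of $\eta_{xy}$.

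The expected main obstacle is the converse: that $\eta_{xy}$ lies in a uniform neighbourhood of $\gamma$. I would run a switching argument. Given $z\in\eta_{xy}$ at distance more than $3D$ from $\{x,y\}$, define $\phi(t)\in\{L,R\}$ according to whether $z$ is closer to $\eta_{x\gamma(t)}$ or to $\eta_{\gamma(t)y}$. Condition~(G1) applied to the pairs $(x,x)$ and $(y,y)$ forces $\phi(0)=R$ and $\phi(N)=L$, while~(G1) together with~(G3) bounds the Hausdorff distance between $\eta_{x\gamma(t)}$ and $\eta_{x\gamma(t+1)}$ by a constant depending only on $D$ and $q$. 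Hence at a switching integer $t^\ast$, both $\dist(z,\eta_{x\gamma(t^\ast)})$ and $\dist(z,\eta_{\gamma(t^\ast)y})$ are bounded by a constant $D_2=D_2(D,q)$. Choosing approximations $z'\in\eta_{x\gamma(t^\ast)}$ and $z''\in\eta_{\gamma(t^\ast)y}$ within $D_2$ of $z$, the subpath consistency~(G2) identifies the portions of $\eta_{x\gamma(t^\ast)}$ and $\eta_{\gamma(t^\ast)y}$ from $z',z''$ to $\gamma(t^\ast)$ with $\eta_{z'\gamma(t^\ast)}$ and $\eta_{z''\gamma(t^\ast)}$. Combining these via~(G3) applied to the near-degenerate triangle $\{z',z'',\gamma(t^\ast)\}$, together with the linear diameter bound from the preliminary step applied to the short $\eta_{z'z''}$, forces $\gamma(t^\ast)$ to lie within a uniform distance of $z$. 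The principal subtlety is that the $\eta_{xy}$ are not continuous, so the switch must be carried out at integer $t$-values with all quantitative estimates kept uniform in $D$ and $q$.

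With the Hausdorff closeness of the $\eta_{xy}$ to $q$-quasigeodesics in hand, hyperbolicity is quick: for any triple $x,y,z$ the $q$-quasigeodesic triangle on these vertices is uniformly Hausdorff-close to the $\eta$-triangle, which is $D$-slim by~(G3). Therefore $q$-quasigeodesic triangles in $X$ are uniformly slim, equivalently $X$ is $\delta$-hyperbolic as a quasigeodesic space for some $\delta=\delta(D,q)$. The ``moreover'' clause of the statement is precisely the closeness result obtained in the approximation step.
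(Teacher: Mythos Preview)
Your switching argument has a genuine gap at its final step. You correctly locate a parameter $t^\ast$ at which $z$ lies within bounded distance of both $\eta_{x\gamma(t^\ast)}$ and $\eta_{\gamma(t^\ast)y}$, with nearby points $z',z''$ on these two paths. But applying (G3) to the triangle $\{z',z'',\gamma(t^\ast)\}$ does \emph{not} force $\gamma(t^\ast)$ close to $z$: that triangle can be an arbitrarily long thin sliver. Think of the $\eta$-tripod on $\{x,y,\gamma(t^\ast)\}$ with centre $c$. The paths $\eta_{x\gamma(t^\ast)}$ and $\eta_{\gamma(t^\ast)y}$ come together only near $c$, so your switching argument pins $z$ near $c$; but $c$ is the Gromov-product point of the triple, not $\gamma(t^\ast)$, and nothing in (G1)--(G3) bounds $\dist(c,\gamma(t^\ast))$. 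The side $\eta_{z'z''}$ being short just says the two long sides $\eta_{z'\gamma(t^\ast)}$ and $\eta_{z''\gamma(t^\ast)}$ fellow-travel; it gives no bound on their length. Note also that your argument never uses the quasigeodesic inequality for $\gamma$, which is a warning sign: the statement is false for arbitrary coarsely Lipschitz $\gamma$.

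The paper's route is different and is precisely where the quasigeodesic property of $\gamma$ enters. Your binary-tree estimate is the right first step, but stated for an arbitrary $q$-coarsely-Lipschitz path $\alpha$: it places $\eta(\alpha(0),\alpha(\ell))$ in the $(D\log_2\ell(\alpha)+D)$-neighbourhood of $\alpha$. Then, with $r=\max_t\dist(\eta_{xy}(t),\gamma)$ realised at $t$, one picks $t_1<t<t_2$ on $\eta_{xy}$ at distance a fixed multiple of $r$ from $\eta_{xy}(t)$, projects these to $\gamma(s_1),\gamma(s_2)$ within $r$ by maximality, and concatenates short quasigeodesics with $\gamma|_{[s_1,s_2]}$ to form a detour $\alpha$ from $\eta_{xy}(t_1)$ to $\eta_{xy}(t_2)$. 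The quasigeodesic inequality for $\gamma$ makes $\ell(\alpha)$ linear in $r$, while the choice of $t_i$ keeps $\alpha$ at distance $r$ from $\eta_{xy}(t)$; together with (G2) and the log estimate this gives $r\le D\log_2(\text{linear in }r)+2D$, bounding $r$. The inclusion $\gamma\subset N_C(\eta_{xy})$ is then a separate, easier coarse-connectedness argument run afterwards --- your preliminary step does not establish it either, since the binary tree only gives an $O(D\log n)$-neighbourhood, which depends on $\dist(x,y)$.
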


\begin{proof}
It suffices to bound the Hausdorff-distance between $\eta_{xy}$ and an arbitrary $(q,q)$--quasigeodesic from $x$ to $y$, for then~\ref{ass:guessing3} implies that $(q,q)$--quasigeodesic triangles are thin.

Let $\alpha:[0,2^n]\to X$ be any $(q,q)$--coarsely-Lipschitz path. Write $\eta^0=\eta(\alpha(0),\alpha(2^{n-1}))$ and $\eta^1=\eta(\alpha(2^{n-1}),\alpha(2^n))$. By~\ref{ass:guessing2}, the $D$--neighbourhood of $\eta^0\cup\eta^1$ contains $\eta(\alpha(0),\alpha(2^n))$. Repeat this subdivision for $\eta^0$ and $\eta^1$, and inductively we find that $\eta(\alpha(0),\alpha(2^n))$ lies in the $nD$--neighbourhood of the concatenation of the paths $\eta(\alpha(i),\alpha(i+1))$. According to~\ref{ass:guessing1}, each of these has diameter at most $Dq$, so is contained in the $Dq$--neighbourhood of $\alpha[i,i+1]$. We therefore have that $\eta(\alpha(0),\alpha(2^n))$ is contained in the $(D\log_2\ell(\alpha)+qD)$--neighbourhood of~$\alpha$. 

Now let $\gamma:[0,n]\to X$ be a $(q,q)$--quasigeodesic from $x$ to $y$. Let $t$ maximise the distance from $\eta_{xy}(t)$ to $\gamma$. Write $r$ for this distance, and let $s$ satisfy $\dist(\eta_{xy}(t),\gamma(s))=r$. If $\dist(x,\eta_{xy}(t))\le qr+q+r$, then let $t_1=0$. Otherwise, consider $\eta(x,\eta_{xy}(t))$. Coarse connectivity and~\ref{ass:guessing2} imply that there is some $t_1<t$ such that $\dist(\eta_{xy}(t_1),\eta_{xy}(t))\in(qr+q+r,qr+q+r+D]$. Define $t_2>t$ similarly. By the choice of $r$ and $t$, there are $s_i$ such that $\dist(\eta_{xy}(t_i),\gamma(s_i))\le r$. (If $t_i\in\{0,1\}$ then take $s_i=t_i$.) Observe that $\dist(\gamma(s_1),\gamma(s_2))\le2qr+2q+4r+2D$.

Let $\alpha$ be the $(q,q)$--coarsely Lipschitz path obtained by concatenating: a $(q,q)$--quasigeodesic from $\eta_{xy}(t_1)$ to $\gamma(s_1)$, the subpath $\gamma[s_1,s_2]$, and a $(q,q)$--quasigeodesic from $\gamma(s_2)$ to $\eta_{xy}(t_2)$. Since $\alpha$ is comprised of $(q,q)$--quasigeodesics, we have $\ell(\alpha)\le q(2qr+2q+6r+2D+q)$. As we have seen, this implies that the $(D\log_2(2q^2r+2q^2+6qr+2Dq+q^2)+qD)$--neighbourhood of $\alpha$ contains $\eta(\eta_{xy}(t_1),\eta_{xy}(t_2))$. In turn, this is at Hausdorff-distance at most $D$ from $\eta_{xy}[t_1,t_2]$, by~\ref{ass:guessing2}. Thus the $(D\log_2(2q^2r+2q^2+6qr+2Dq+q^2)+(q+1)D)$--neighbourhood of $\alpha$ contains $\eta_{xy}[t_1,t_2]$. However, we also know from the choice of $t_i$ that $\dist(\eta_{xy}(t),\alpha)=\dist(\eta_{xy}(t),\gamma(s))=r$. Thus $r$ satisfies the inequality $r\le D\log_2(2q^2r+2q^2+6qr+2Dq+q^2)+(q+1)D$, and so is bounded above by some universal constant $\kappa=\kappa(D,q)$. We have shown that $\eta_{xy}$ lies in a uniform neighbourhood of any $(q,q)$--quasigeodesic from $x$ to $y$.

It remains to show that every $(q,q)$--quasigeodesic $\gamma=\gamma[0,n]$ from $x$ to $y$ lies in a uniform neighbourhood of $\eta_{xy}$. We know that the set 
\[  S=\{s\in[0,n]\hspace{1mm}:\hspace{1mm}\text{there exists } t \text{ satisfying } \dist(\gamma(s),\eta_{xy}(t))\le\kappa\}    \]
is nonempty. Given $s\in S$, let $t$ be maximal such that there exists $s''\le s$ with $\dist(\gamma(s''),\eta_{xy}(t))\le\kappa$. If $\dist(\eta_{xy}(t),y)\le D$, then $\dist(\gamma(s),y)\le\kappa+D$, so $\gamma[s,n]$ lies in the $q^2(\kappa+D+q+1)$--neighbourhood of $\eta_{xy}$.

Otherwise, consider $\eta(\eta_{xy}(t),y)$. Coarse connectivity and~\ref{ass:guessing2} imply that there is some $t'>t$ for which $\dist(\eta_{xy}(t),\eta_{xy}(t'))\le D$. Fix $s'\in[0,n]$ such that $\dist(\gamma(s'),\eta_{xy}(t'))\le\kappa$. We have $s'>s$ by the choice of $t$. Moreover, we have 
\[  \dist(\gamma(s),\gamma(s')) \le \dist(\gamma(s),\eta_{xy}(t)) + \dist(\eta_{xy}(t),\eta_{xy}(t')) + \dist(\eta_{xy}(t'),\gamma(s')) \le 2\kappa+D. \]
Since $\gamma$ is a $(q,q)$--quasigeodesic, this implies that $|s-s'|\le q(2\kappa+D+q)$. This shows that $S$ is $q(2\kappa+D+q)$--dense in $[0,n]$. Hence $\gamma(S)$ is $q^2(2\kappa+D+q+1)$--dense in $\gamma$, so $\gamma$ is contained in the $(q^2(2\kappa+D+q+1)+\kappa)$--neighbourhood of $\eta_{xy}$.
\end{proof}

Although quasihyperbolicity is more general than hyperbolicity (in the sense of the four-point condition), Proposition~\ref{prop:Rips_coarsely_injective} below characterises which quasihyperbolic spaces are hyperbolic. In particular it says that roughly geodesic quasihyperbolic spaces are hyperbolic, which is why quasihyperbolicity is not mentioned in Sections~\ref{sec:hyperbolicity} and~\ref{sec:universal}. This characterisation is presumably well known, but we were unable to find another reference.

A geodesic space is \emph{injective} if every family of pairwise intersecting balls has nonempty total intersection \cite{aronszajnpanitchpakdi:extension}. Every metric space $X$ has an essentially unique \emph{injective hull}: an injective space $E(X)$ with an isometric embedding $e:X\to E(X)$ such that any other isometric embedding of $X$ in an injective space factors via~$e$ \cite{isbell:six}. Moreover, isometries of $X$ naturally extend to $E(X)$ \cite[Prop.~3.7]{lang:injective}.

Lang proved hyperbolicity of the injective hull of any hyperbolic space, and used this to deduce that geodesic hyperbolic spaces are coarsely dense in their injective hulls \cite[Prop.~1.3]{lang:injective}. These conclusions do not hold in general for quasihyperbolic spaces: consider the graph of $|x|$ inside $(\R^2,\ell^1)$, for example. 

A metric space $X$ is \emph{weakly roughly geodesic} if there is a constant $C$ such that for all $x,y\in X$ and all $r\in[0,\dist(x,y)]$, there is a point $z$ such that $\dist(x,z)\le r+C$ and $\dist(z,y)\le(\dist(x,y)-r)+C$.

\begin{proposition} \label{prop:Rips_coarsely_injective}
Let $X$ be a metric space. The following are equivalent.
\begin{enumerate}
\item   $X$ is quasihyperbolic and weakly roughly geodesic. \label{item:quasihyperbolic}
\item   $X$ is hyperbolic in the sense of the four-point condition and weakly roughly geodesic.  \label{item:four_point}
\item   $X$ is equivariantly roughly isometric to the geodesic space $E(X)$, and $E(X)$ is \mbox{hyperbolic}. \label{item:coarsely_injective}
\end{enumerate}
\end{proposition}

\begin{proof}
It is clear that Item~\eqref{item:coarsely_injective} implies Items~\eqref{item:quasihyperbolic} and~\eqref{item:four_point}. The proof that Item~\eqref{item:four_point} implies Item~\eqref{item:coarsely_injective} is identical to \cite[Prop.~1.3]{lang:injective}, where Lang proves it for geodesic four-point hyperbolic spaces. Finally, we show that if $X$ is a weakly roughly geodesic quasihyperbolic space then it is coarsely dense in $E(X)$. According to \cite[Prop.~3.12]{chalopinchepoigenevoishiraiosajda:helly} (also see \cite[Prop.~1.1]{haettelhodapetyt:coarse}), it suffices to show that there is a constant $\eps$ such that for any collection of balls $B(x_i,r_i)$ with $r_i+r_j\ge\dist(x_i,x_j)$ for all $i,j$, the intersection $\bigcap B(x_i,r_i+\eps)$ is nonempty. 

By quasihyperbolicity, balls in $X$ are uniformly quasiconvex. Let $\{B_i=B(x_i,r_i)\}$ be a collection of balls in $X$ such that $\dist(x_i,x_j)\le r_i+r_j$ for all $i,j$. Due to weak rough geodesicity, uniform thickenings $B_i'$ of the $B_i$ intersect pairwise. Let $f:X\to X'$ be a quasiisometry to a graph (see \cite[Prop.~3.B.6]{cornulierdelaharpe:metric}, for example), which is necessarily hyperbolic, and let $\bar f$ be a quasiinverse. The $fB_i'$ are uniformly quasiconvex and intersect pairwise. According to \cite[Thm~5.1]{chepoidraganvaxes:core}, this implies that there is some point $z\in X'$ that is uniformly close to every $fB_i'$. Thus $\bar f(z)$ is uniformly close to every $B_i'$, and hence to every $B_i$ as desired.
\end{proof}

\bibliography{bio}{}

\newcommand{\etalchar}[1]{$^{#1}$}
\begin{thebibliography}{CGGW19}

\bibitem[ABD21]{abbottbehrstockdurham:largest}
Carolyn Abbott, Jason Behrstock, and Matthew~G. Durham.
\newblock Largest acylindrical actions and stability in hierarchically
  hyperbolic groups.
\newblock {\em Trans. Amer. Math. Soc. Ser. B}, 8:66--104, 2021.
\newblock With an appendix by Daniel Berlyne and Jacob Russell.

\bibitem[ADT17]{aougabdurhamtaylor:pulling}
Tarik Aougab, Matthew~G. Durham, and Samuel~J. Taylor.
\newblock Pulling back stability with applications to {${\rm Out}(F_n)$} and
  relatively hyperbolic groups.
\newblock {\em J. Lond. Math. Soc. (2)}, 96(3):565--583, 2017.

\bibitem[Ago13]{agol:virtual}
Ian Agol.
\newblock The virtual {H}aken conjecture.
\newblock {\em Doc. Math.}, 18:1045--1087, 2013.
\newblock With an appendix by Agol, Daniel Groves, and Jason Manning.

\bibitem[Ale70]{aleksandrov:mappings}
A.~D. Aleksandrov.
\newblock Mappings of families of sets.
\newblock {\em Dokl. Akad. Nauk SSSR}, 190:502--505, 1970.

\bibitem[AM21]{abbottmanning:acylindrically}
Carolyn~R Abbott and Jason~F Manning.
\newblock Acylindrically hyperbolic groups and their quasi-isometrically
  embedded subgroups.
\newblock {\em arXiv:2105.02333}, 2021.

\bibitem[And06]{andreev:aleksandrov}
P.~D. Andreev.
\newblock A. {D}. {A}leksandrov's problem for {${\rm CAT}(0)$}-spaces.
\newblock {\em Sibirsk. Mat. Zh.}, 47(1):3--24, 2006.

\bibitem[AP56]{aronszajnpanitchpakdi:extension}
N.~Aronszajn and P.~Panitchpakdi.
\newblock Extension of uniformly continuous transformations and hyperconvex
  metric spaces.
\newblock {\em Pacific J. Math.}, 6:405--439, 1956.

\bibitem[AS11]{aramayonasouto:automorphisms}
Javier Aramayona and Juan Souto.
\newblock Automorphisms of the graph of free splittings.
\newblock {\em Michigan Math. J.}, 60(3):483--493, 2011.

\bibitem[Bal85]{ballmann:nonpositively}
Werner Ballmann.
\newblock Nonpositively curved manifolds of higher rank.
\newblock {\em Ann. of Math. (2)}, 122(3):597--609, 1985.

\bibitem[Bal95]{ballmann:lectures}
Werner Ballmann.
\newblock {\em Lectures on spaces of nonpositive curvature}, volume~25 of {\em
  DMV Seminar}.
\newblock Birkh\"{a}user Verlag, Basel, 1995.
\newblock With an appendix by Misha Brin.

\bibitem[BB08]{ballmannbuyalo:periodic}
Werner Ballmann and Sergei Buyalo.
\newblock Periodic rank one geodesics in {H}adamard spaces.
\newblock In {\em Geometric and probabilistic structures in dynamics}, volume
  469 of {\em Contemp. Math.}, pages 19--27. Amer. Math. Soc., Providence, RI,
  2008.

\bibitem[BBE85]{ballmannbrineberlein:structure}
Werner Ballmann, Misha Brin, and Patrick Eberlein.
\newblock Structure of manifolds of nonpositive curvature. {I}.
\newblock {\em Ann. of Math. (2)}, 122(1):171--203, 1985.

\bibitem[BBF15]{bestvinabrombergfujiwara:constructing}
Mladen Bestvina, Ken Bromberg, and Koji Fujiwara.
\newblock Constructing group actions on quasi-trees and applications to mapping
  class groups.
\newblock {\em Publ. Math. Inst. Hautes \'{E}tudes Sci.}, 122:1--64, 2015.

\bibitem[BBFS20]{bestvinabrombergfujiwarasisto:acylindrical}
Mladen Bestvina, Ken Bromberg, Koji Fujiwara, and Alessandro Sisto.
\newblock Acylindrical actions on projection complexes.
\newblock {\em Enseign. Math.}, 65(1-2):1--32, 2020.

\bibitem[BBKL20]{bestvinabrombergkentleininger:undistorted}
Mladen Bestvina, Kenneth Bromberg, Autumn~E. Kent, and Christopher~J.
  Leininger.
\newblock Undistorted purely pseudo-{A}nosov groups.
\newblock {\em J. Reine Angew. Math.}, 760:213--227, 2020.

\bibitem[BBS85]{ballmannbrinspatzier:structure}
Werner Ballmann, Misha Brin, and Ralf Spatzier.
\newblock Structure of manifolds of nonpositive curvature. {II}.
\newblock {\em Ann. of Math. (2)}, 122(2):205--235, 1985.

\bibitem[BC12]{behrstockcharney:divergence}
Jason Behrstock and Ruth Charney.
\newblock Divergence and quasimorphisms of right-angled {A}rtin groups.
\newblock {\em Math. Ann.}, 352(2):339--356, 2012.

\bibitem[BCM12]{brockcanaryminsky:classification}
Jeffrey~F. Brock, Richard~D. Canary, and Yair~N. Minsky.
\newblock The classification of {K}leinian surface groups, {II}: {T}he ending
  lamination conjecture.
\newblock {\em Ann. of Math. (2)}, 176(1):1--149, 2012.

\bibitem[BD14]{behrstockdrutu:divergence}
Jason Behrstock and Cornelia Dru\c{t}u.
\newblock Divergence, thick groups, and short conjugators.
\newblock {\em Illinois J. Math.}, 58(4):939--980, 2014.

\bibitem[BF09]{bestvinafujiwara:characterization}
Mladen Bestvina and Koji Fujiwara.
\newblock A characterization of higher rank symmetric spaces via bounded
  cohomology.
\newblock {\em Geom. Funct. Anal.}, 19(1):11--40, 2009.

\bibitem[BF14]{bestvinafeighn:hyperbolicity}
Mladen Bestvina and Mark Feighn.
\newblock Hyperbolicity of the complex of free factors.
\newblock {\em Adv. Math.}, 256:104--155, 2014.

\bibitem[BH99]{bridsonhaefliger:metric}
Martin~R. Bridson and Andr\'{e} Haefliger.
\newblock {\em Metric spaces of non-positive curvature}, volume 319 of {\em
  Grundlehren der Mathematischen Wissenschaften}.
\newblock Springer-Verlag, Berlin, 1999.

\bibitem[BHS19]{behrstockhagensisto:hierarchically:2}
Jason Behrstock, Mark Hagen, and Alessandro Sisto.
\newblock Hierarchically hyperbolic spaces~{II}: {C}ombination theorems and the
  distance formula.
\newblock {\em Pacific J. Math.}, 299(2):257--338, 2019.

\bibitem[BHS21]{behrstockhagensisto:quasiflats}
Jason Behrstock, Mark Hagen, and Alessandro Sisto.
\newblock Quasiflats in hierarchically hyperbolic spaces.
\newblock {\em Duke Math. J.}, 170(5):909--996, 2021.

\bibitem[BKMM12]{behrstockkleinerminskymosher:geometry}
Jason Behrstock, Bruce Kleiner, Yair~N. Minsky, and Lee Mosher.
\newblock Geometry and rigidity of mapping class groups.
\newblock {\em Geom. Topol.}, 16(2):781--888, 2012.

\bibitem[BM04]{brendlemargalit:commensurations}
Tara~E. Brendle and Dan Margalit.
\newblock Commensurations of the {J}ohnson kernel.
\newblock {\em Geom. Topol.}, 8:1361--1384, 2004.

\bibitem[BM19]{brendlemargalit:normal}
Tara~E. Brendle and Dan Margalit.
\newblock Normal subgroups of mapping class groups and the metaconjecture of
  {I}vanov.
\newblock {\em J. Amer. Math. Soc.}, 32(4):1009--1070, 2019.

\bibitem[Bow06]{Bowditch:Intersection}
Brian~H. Bowditch.
\newblock Intersection numbers and the hyperbolicity of the curve complex.
\newblock {\em J. Reine Angew. Math.}, 598:105--129, 2006.

\bibitem[Bow08]{bowditch:tight}
Brian~H. Bowditch.
\newblock Tight geodesics in the curve complex.
\newblock {\em Invent. Math.}, 171(2):281--300, 2008.

\bibitem[Bow18]{bowditch:large:mapping}
Brian~H. Bowditch.
\newblock Large-scale rigidity properties of the mapping class groups.
\newblock {\em Pacific J. Math.}, 293(1):1--73, 2018.

\bibitem[BQ53]{beckmanquarles:onisometries}
F.~S. Beckman and D.~A. Quarles, Jr.
\newblock On isometries of {E}uclidean spaces.
\newblock {\em Proc. Amer. Math. Soc.}, 4:810--815, 1953.

\bibitem[BR15]{bestvinareynolds:boundary}
Mladen Bestvina and Patrick Reynolds.
\newblock The boundary of the complex of free factors.
\newblock {\em Duke Math. J.}, 164(11):2213--2251, 2015.

\bibitem[BS87]{burnsspatzier:manifolds}
Keith Burns and Ralf Spatzier.
\newblock Manifolds of nonpositive curvature and their buildings.
\newblock {\em Inst. Hautes \'{E}tudes Sci. Publ. Math.}, 65:35--59, 1987.

\bibitem[Cas16]{cashen:quasiisometries}
Christopher~H. Cashen.
\newblock Quasi-isometries need not induce homeomorphisms of contracting
  boundaries with the {G}romov product topology.
\newblock {\em Anal. Geom. Metr. Spaces}, 4(1):278--281, 2016.

\bibitem[CC19]{chalopinchepoi:safe}
J\'{e}r\'{e}mie Chalopin and Victor Chepoi.
\newblock 1-safe {P}etri nets and special cube complexes: equivalence and
  applications.
\newblock {\em ACM Trans. Comput. Log.}, 20(3):Art. 17, 49, 2019.

\bibitem[CCG{\etalchar{+}}20]{chalopinchepoigenevoishiraiosajda:helly}
J{\'e}r{\'e}mie Chalopin, Victor Chepoi, Anthony Genevois, Hiroshi Hirai, and
  Damian Osajda.
\newblock Helly groups.
\newblock {\em arXiv:2002.06895}, 2020.

\bibitem[CCM19]{charneycordesmurray:quasimobius}
Ruth Charney, Matthew Cordes, and Devin Murray.
\newblock Quasi-{M}obius homeomorphisms of {M}orse boundaries.
\newblock {\em Bull. Lond. Math. Soc.}, 51(3):501--515, 2019.

\bibitem[CCMT15]{capracecornuliermonodtessera:amenable}
Pierre-Emmanuel Caprace, Yves Cornulier, Nicolas Monod, and Romain Tessera.
\newblock Amenable hyperbolic groups.
\newblock {\em J. Eur. Math. Soc. (JEMS)}, 17(11):2903--2947, 2015.

\bibitem[CDV17]{chepoidraganvaxes:core}
Victor Chepoi, Feodor~F. Dragan, and Yann Vax\`es.
\newblock Core congestion is inherent in hyperbolic networks.
\newblock In {\em Proceedings of the {T}wenty-{E}ighth {A}nnual {ACM}-{SIAM}
  {S}ymposium on {D}iscrete {A}lgorithms}, pages 2264--2279. SIAM,
  Philadelphia, PA, 2017.

\bibitem[CFI16]{chatterjifernosiozzi:median}
Indira Chatterji, Talia Fern\'{o}s, and Alessandra Iozzi.
\newblock The median class and superrigidity of actions on {$\rm CAT(0)$} cube
  complexes.
\newblock {\em J. Topol.}, 9(2):349--400, 2016.
\newblock With an appendix by Pierre-Emmanuel Caprace.

\bibitem[CGGW19]{cumplidogebhardtgonzalezmeneseswiest:onparabolic}
Mar\'{\i}a Cumplido, Volker Gebhardt, Juan {Gonz\'{a}lez-Meneses}, and Bert
  Wiest.
\newblock On parabolic subgroups of {A}rtin-{T}its groups of spherical type.
\newblock {\em Adv. Math.}, 352:572--610, 2019.

\bibitem[CH16]{cornulierdelaharpe:metric}
Yves Cornulier and Pierre de~la Harpe.
\newblock {\em Metric geometry of locally compact groups}, volume~25 of {\em
  EMS Tracts in Mathematics}.
\newblock European Mathematical Society, Z\"{u}rich, 2016.

\bibitem[Che00]{chepoi:graphs}
Victor Chepoi.
\newblock Graphs of some {${\rm CAT}(0)$} complexes.
\newblock {\em Adv. in Appl. Math.}, 24(2):125--179, 2000.

\bibitem[Che22]{chesser:stable}
Marissa Chesser.
\newblock Stable subgroups of the genus 2 handlebody group.
\newblock {\em Algebr. Geom. Topol.}, 22(2):919--971, 2022.

\bibitem[Cho22]{choi:limit}
Inhyeok Choi.
\newblock Limit laws on outer space, {T}eichm\"uller space, and {${\rm
  CAT}(0)$} spaces.
\newblock {\em arXiv:2207.06597}, 2022.

\bibitem[CK00]{crokekleiner:spaces}
Christopher~B. Croke and Bruce Kleiner.
\newblock Spaces with nonpositive curvature and their ideal boundaries.
\newblock {\em Topology}, 39(3):549--556, 2000.

\bibitem[CM19]{chatterjimartin:note}
Indira Chatterji and Alexandre Martin.
\newblock A note on the acylindrical hyperbolicity of groups acting on {${\rm
  CAT}(0)$} cube complexes.
\newblock In {\em Beyond hyperbolicity}, volume 454 of {\em London Math. Soc.
  Lecture Note Ser.}, pages 160--178. Cambridge Univ. Press, Cambridge, 2019.

\bibitem[Cor17]{cordes:morse}
Matthew Cordes.
\newblock Morse boundaries of proper geodesic metric spaces.
\newblock {\em Groups Geom. Dyn.}, 11(4):1281--1306, 2017.

\bibitem[CS11]{capracesageev:rank}
Pierre-Emmanuel Caprace and Michah Sageev.
\newblock Rank rigidity for {${\rm CAT}(0)$} cube complexes.
\newblock {\em Geom. Funct. Anal.}, 21(4):851--891, 2011.

\bibitem[CS15]{charneysultan:contracting}
Ruth Charney and Harold Sultan.
\newblock Contracting boundaries of {$\rm CAT(0)$} spaces.
\newblock {\em J. Topol.}, 8(1):93--117, 2015.

\bibitem[CW17]{calvezwiest:curve}
Matthieu Calvez and Bert Wiest.
\newblock Curve graphs and {G}arside groups.
\newblock {\em Geom. Dedicata}, 188:195--213, 2017.

\bibitem[DGO17]{dahmaniguirardelosin:hyperbolically}
F.~Dahmani, V.~Guirardel, and D.~Osin.
\newblock Hyperbolically embedded subgroups and rotating families in groups
  acting on hyperbolic spaces.
\newblock {\em Mem. Amer. Math. Soc.}, 245(1156):v+152, 2017.

\bibitem[DK18]{drutukapovich:geometric}
Cornelia Dru\c{t}u and Michael Kapovich.
\newblock {\em Geometric group theory}, volume~63 of {\em American Mathematical
  Society Colloquium Publications}.
\newblock American Mathematical Society, Providence, RI, 2018.
\newblock With an appendix by Bogdan Nica.

\bibitem[DMS10]{drutumozessapir:divergence}
Cornelia Dru\c{t}u, Shahar Mozes, and Mark Sapir.
\newblock Divergence in lattices in semisimple {L}ie groups and graphs of
  groups.
\newblock {\em Trans. Amer. Math. Soc.}, 362(5):2451--2505, 2010.

\bibitem[DS05]{drutusapir:tree-graded}
Cornelia Dru\c{t}u and Mark Sapir.
\newblock Tree-graded spaces and asymptotic cones of groups.
\newblock {\em Topology}, 44(5):959--1058, 2005.
\newblock With an appendix by Denis Osin and Mark Sapir.

\bibitem[DT15]{durhamtaylor:convex}
Matthew~G. Durham and Samuel~J. Taylor.
\newblock Convex cocompactness and stability in mapping class groups.
\newblock {\em Algebr. Geom. Topol.}, 15(5):2839--2859, 2015.

\bibitem[Dun85]{dunwoody:accessibility}
M.~J. Dunwoody.
\newblock The accessibility of finitely presented groups.
\newblock {\em Invent. Math.}, 81(3):449--457, 1985.

\bibitem[DW84]{vandendrieswilkie:gromov's}
L.~van~den Dries and A.~J. Wilkie.
\newblock Gromov's theorem on groups of polynomial growth and elementary logic.
\newblock {\em J. Algebra}, 89(2):349--374, 1984.

\bibitem[DZ22]{durhamzalloum:geometry}
Matthew~G. Durham and Abdul Zalloum.
\newblock The geometry of genericity in mapping class groups and
  {T}eichm\"uller spaces via {${\rm CAT}(0)$} cube complexes.
\newblock {\em arXiv:2207.06516}, 2022.

\bibitem[EH90]{eberleinheber:differential}
Patrick Eberlein and Jens Heber.
\newblock A differential geometric characterization of symmetric spaces of
  higher rank.
\newblock {\em Inst. Hautes \'{E}tudes Sci. Publ. Math.}, 71:33--44, 1990.

\bibitem[Fio22]{fioravanti:automorphisms}
Elia Fioravanti.
\newblock Automorphisms of contact graphs of {CAT}(0) cube complexes.
\newblock {\em Int. Math. Res. Not. IMRN}, 5:3278--3296, 2022.

\bibitem[FLM18]{fernoslecureuxmatheus:random}
Talia Fern\'{o}s, Jean L\'{e}cureux, and Fr\'{e}d\'{e}ric Math\'{e}us.
\newblock Random walks and boundaries of {$\rm CAT(0)$} cubical complexes.
\newblock {\em Comment. Math. Helv.}, 93(2):291--333, 2018.

\bibitem[FLM21]{fernoslecureuxmatheus:contact}
Talia Fern{\'o}s, Jean L{\'e}cureux, and Fr{\'e}d{\'e}ric Math{\'e}us.
\newblock Contact graphs, boundaries, and a central limit theorem for {${\rm
  CAT}(0)$} cubical complexes.
\newblock {\em arXiv:2112.10141}, 2021.

\bibitem[FM02]{farbmosher:convex}
Benson Farb and Lee Mosher.
\newblock Convex cocompact subgroups of mapping class groups.
\newblock {\em Geom. Topol.}, 6:91--152, 2002.

\bibitem[Gen18]{genevois:cubical}
Anthony Genevois.
\newblock {\em Cubical-like geometry of quasi-median graphs and applications to
  geometric group theory}.
\newblock PhD thesis, Universit\'e Aix-Marseille. arXiv:1712.01618., 2018.

\bibitem[Gen20a]{genevois:contracting}
Anthony Genevois.
\newblock Contracting isometries of {${\rm CAT}(0)$} cube complexes and
  acylindrical hyperbolicity of diagram groups.
\newblock {\em Algebr. Geom. Topol.}, 20(1):49--134, 2020.

\bibitem[Gen20b]{genevois:hyperbolicities}
Anthony Genevois.
\newblock Hyperbolicities in {$\rm CAT(0)$} cube complexes.
\newblock {\em Enseign. Math.}, 65(1-2):33--100, 2020.

\bibitem[Ger94]{gersten:quadratic}
S.~M. Gersten.
\newblock Quadratic divergence of geodesics in {${\rm CAT}(0)$} spaces.
\newblock {\em Geom. Funct. Anal.}, 4(1):37--51, 1994.

\bibitem[Gir21]{giraud:surcertaines}
Georges Giraud.
\newblock Sur certaines fonctions automorphes de deux variables.
\newblock {\em Ann. Sci. \'{E}cole Norm. Sup. (3)}, 38:43--164, 1921.

\bibitem[Gol99]{goldman:complex}
William~M. Goldman.
\newblock {\em Complex hyperbolic geometry}.
\newblock Oxford Mathematical Monographs. Clarendon Press, Oxford University
  Press, New York, 1999.

\bibitem[Gro81]{gromov:groups}
Mikhael Gromov.
\newblock Groups of polynomial growth and expanding maps.
\newblock {\em Inst. Hautes \'{E}tudes Sci. Publ. Math.}, 53:53--73, 1981.

\bibitem[Gro87]{gromov:hyperbolic}
M.~Gromov.
\newblock Hyperbolic groups.
\newblock In {\em Essays in group theory}, volume~8 of {\em Math. Sci. Res.
  Inst. Publ.}, pages 75--263. Springer, New York, 1987.

\bibitem[GS13]{guralnikswenson:transversal}
Dan~P. Guralnik and Eric~L. Swenson.
\newblock A `transversal' for minimal invariant sets in the boundary of a
  {CAT}(0) group.
\newblock {\em Trans. Amer. Math. Soc.}, 365(6):3069--3095, 2013.

\bibitem[Hag13]{hagen:simplicial}
Mark~F. Hagen.
\newblock The simplicial boundary of a {CAT}(0) cube complex.
\newblock {\em Algebr. Geom. Topol.}, 13(3):1299--1367, 2013.

\bibitem[Hag14]{hagen:weak}
Mark Hagen.
\newblock Weak hyperbolicity of cube complexes and quasi-arboreal groups.
\newblock {\em J. Topol.}, 7(2):385--418, 2014.

\bibitem[Ham05]{hamenstadt:word}
Ursula Hamenst{\"a}dt.
\newblock Word hyperbolic extensions of surface groups.
\newblock {\em arXiv:math/0505244}, 2005.

\bibitem[Ham06]{hamenstadt:train}
Ursula Hamenst\"{a}dt.
\newblock Train tracks and the {G}romov boundary of the complex of curves.
\newblock In {\em Spaces of {K}leinian groups}, volume 329 of {\em London Math.
  Soc. Lecture Note Ser.}, pages 187--207. Cambridge Univ. Press, Cambridge,
  2006.

\bibitem[Ham07]{hamenstadt:geometry:complex}
Ursula Hamenst\"{a}dt.
\newblock Geometry of the complex of curves and of {T}eichm\"{u}ller space.
\newblock In {\em Handbook of {T}eichm\"{u}ller theory. {V}ol. {I}}, volume~11
  of {\em IRMA Lect. Math. Theor. Phys.}, pages 447--467. Eur. Math. Soc.,
  Z\"{u}rich, 2007.

\bibitem[Har81]{harvey:boundary}
W.~J. Harvey.
\newblock Boundary structure of the modular group.
\newblock In {\em Riemann surfaces and related topics: {P}roceedings of the
  1978 {S}tony {B}rook {C}onference}, volume~97 of {\em Ann. of Math. Stud.},
  pages 245--251. Princeton Univ. Press, Princeton, N.J., 1981.

\bibitem[Hat95]{hatcher:homological}
Allen Hatcher.
\newblock Homological stability for automorphism groups of free groups.
\newblock {\em Comment. Math. Helv.}, 70(1):39--62, 1995.

\bibitem[HHP23]{haettelhodapetyt:coarse}
Thomas Haettel, Nima Hoda, and Harry Petyt.
\newblock Coarse injectivity, hierarchical hyperbolicity and semihyperbolicity.
\newblock {\em Geom. Topol.}, 27(4):1587--1633, 2023.

\bibitem[HM13]{handelmosher:free:1}
Michael Handel and Lee Mosher.
\newblock The free splitting complex of a free group, {I}: hyperbolicity.
\newblock {\em Geom. Topol.}, 17(3):1581--1672, 2013.

\bibitem[HMS21]{hagenmartinsisto:extra}
Mark Hagen, Alexandre Martin, and Alessandro Sisto.
\newblock Extra-large type {A}rtin groups are hierarchically hyperbolic.
\newblock {\em arXiv:2109.04387}, 2021.

\bibitem[Hor16a]{horbez:hyperbolic}
Camille Horbez.
\newblock Hyperbolic graphs for free products, and the {G}romov boundary of the
  graph of cyclic splittings.
\newblock {\em J. Topol.}, 9(2):401--450, 2016.

\bibitem[Hor16b]{horbez:poisson}
Camille Horbez.
\newblock The {P}oisson boundary of {$\rm Out${}$(F_N)$}.
\newblock {\em Duke Math. J.}, 165(2):341--369, 2016.

\bibitem[Hua17]{huang:top}
Jingyin Huang.
\newblock Top-dimensional quasiflats in {$\rm CAT(0)$} cube complexes.
\newblock {\em Geom. Topol.}, 21(4):2281--2352, 2017.

\bibitem[HV98]{hatchervogtmann:complex}
Allen Hatcher and Karen Vogtmann.
\newblock The complex of free factors of a free group.
\newblock {\em Quart. J. Math. Oxford Ser. (2)}, 49(196):459--468, 1998.

\bibitem[HW15]{horbezwade:automorphisms}
Camille Horbez and Richard~D. Wade.
\newblock Automorphisms of graphs of cyclic splittings of free groups.
\newblock {\em Geom. Dedicata}, 178:171--187, 2015.

\bibitem[IM21]{incertimedici:comparing}
Merlin Incerti-Medici.
\newblock Comparing topologies on the {M}orse boundary and quasi-isometry
  invariance.
\newblock {\em Geom. Dedicata}, 212:153--176, 2021.

\bibitem[IMZ23]{incertimedicizalloum:sublinearly}
Merlin Incerti-Medici and Abdul Zalloum.
\newblock Sublinearly {M}orse boundaries from the viewpoint of combinatorics.
\newblock {\em Forum Math.}, 35(4):1077--1103, 2023.

\bibitem[Isb64]{isbell:six}
J.~R. Isbell.
\newblock Six theorems about injective metric spaces.
\newblock {\em Comment. Math. Helv.}, 39:65--76, 1964.

\bibitem[Iva97]{ivanov:automorphisms}
Nikolai~V. Ivanov.
\newblock Automorphisms of complexes of curves and of {T}eichm\"{u}ller spaces.
\newblock In {\em Progress in knot theory and related topics}, volume~56 of
  {\em Travaux en Cours}, pages 113--120. Hermann, Paris, 1997.

\bibitem[KB02]{kapovichbenakli:boundaries}
Ilya Kapovich and Nadia Benakli.
\newblock Boundaries of hyperbolic groups.
\newblock In {\em Combinatorial and geometric group theory}, volume 296 of {\em
  Contemp. Math.}, pages 39--93. Amer. Math. Soc., Providence, RI, 2002.

\bibitem[KK13]{kimkoberda:embedability}
Sang-Hyun Kim and Thomas Koberda.
\newblock Embedability between right-angled {A}rtin groups.
\newblock {\em Geom. Topol.}, 17(1):493--530, 2013.

\bibitem[KL97]{kleinerleeb:rigidity}
Bruce Kleiner and Bernhard Leeb.
\newblock Rigidity of quasi-isometries for symmetric spaces and {E}uclidean
  buildings.
\newblock {\em Inst. Hautes \'{E}tudes Sci. Publ. Math.}, 86:115--197 (1998),
  1997.

\bibitem[KL98]{kapovichleeb:3manifold}
M.~Kapovich and B.~Leeb.
\newblock {$3$}-manifold groups and nonpositive curvature.
\newblock {\em Geom. Funct. Anal.}, 8(5):841--852, 1998.

\bibitem[KL08]{kentleininger:shadows}
Autumn~E. Kent and Christopher~J. Leininger.
\newblock Shadows of mapping class groups: capturing convex cocompactness.
\newblock {\em Geom. Funct. Anal.}, 18(4):1270--1325, 2008.

\bibitem[KL09]{kapovichlustig:geometric}
Ilya Kapovich and Martin Lustig.
\newblock Geometric intersection number and analogues of the curve complex for
  free groups.
\newblock {\em Geom. Topol.}, 13(3):1805--1833, 2009.

\bibitem[Kla99]{klarreich:boundary}
Erica Klarreich.
\newblock The boundary at infinity of the curve complex and the relative
  {T}eichm\"uller space.
\newblock {\em Available on the arXiv at \mbox{arXiv:1803.10339}}, 1999.

\bibitem[KM96]{kaimanovichmasur:poisson}
Vadim~A. Kaimanovich and Howard Masur.
\newblock The {P}oisson boundary of the mapping class group.
\newblock {\em Invent. Math.}, 125(2):221--264, 1996.

\bibitem[KMT17]{koberdamangahastaylor:geometry}
Thomas Koberda, Johanna Mangahas, and Samuel~J. Taylor.
\newblock The geometry of purely loxodromic subgroups of right-angled {A}rtin
  groups.
\newblock {\em Trans. Amer. Math. Soc.}, 369(11):8179--8208, 2017.

\bibitem[Kor99]{korkmaz:automorphisms}
Mustafa Korkmaz.
\newblock Automorphisms of complexes of curves on punctured spheres and on
  punctured tori.
\newblock {\em Topology Appl.}, 95(2):85--111, 1999.

\bibitem[KR21]{kentricks:asymptotic}
Curtis Kent and Russell Ricks.
\newblock Asymptotic cones and boundaries of {$\rm CAT(0)$} spaces.
\newblock {\em Indiana Univ. Math. J.}, 70(4):1441--1469, 2021.

\bibitem[Lan13]{lang:injective}
Urs Lang.
\newblock Injective hulls of certain discrete metric spaces and groups.
\newblock {\em J. Topol. Anal.}, 5(3):297--331, 2013.

\bibitem[{Le }22a]{lebars:central}
Corentin {Le Bars}.
\newblock Central limit theorem on {${\rm CAT}(0)$} spaceswith contracting
  isometries.
\newblock {\em arXiv:2209.11648}, 2022.

\bibitem[{Le }22b]{lebars:random}
Corentin {Le Bars}.
\newblock Random walks and rank one isometries on {${\rm CAT}(0)$} spaces.
\newblock {\em arXiv:2205.07594}, 2022.

\bibitem[Lev18]{levcovitz:divergence}
Ivan Levcovitz.
\newblock Divergence of {$\rm CAT(0)$} cube complexes and {C}oxeter groups.
\newblock {\em Algebr. Geom. Topol.}, 18(3):1633--1673, 2018.

\bibitem[L{\"{o}}h17]{loeh:geometric}
Clara L{\"{o}}h.
\newblock {\em Geometric group theory}.
\newblock Universitext. Springer, Cham, 2017.
\newblock An introduction.

\bibitem[Luo00]{luo:automorphisms}
Feng Luo.
\newblock Automorphisms of the complex of curves.
\newblock {\em Topology}, 39(2):283--298, 2000.

\bibitem[Mah11]{maher:random}
Joseph Maher.
\newblock Random walks on the mapping class group.
\newblock {\em Duke Math. J.}, 156(3):429--468, 2011.

\bibitem[Man14]{mann:hyperbolicity}
Brian Mann.
\newblock Hyperbolicity of the cyclic splitting graph.
\newblock {\em Geom. Dedicata}, 173:271--280, 2014.

\bibitem[MM99]{masurminsky:geometry:1}
Howard~A. Masur and Yair~N. Minsky.
\newblock Geometry of the complex of curves.~{I}. {H}yperbolicity.
\newblock {\em Invent. Math.}, 138(1):103--149, 1999.

\bibitem[{Mor}21]{morriswright:parabolic}
Rose {Morris-Wright}.
\newblock Parabolic subgroups in {FC}-type {A}rtin groups.
\newblock {\em J. Pure Appl. Algebra}, 225(1):1--13, 2021.

\bibitem[Mos80]{mostow:onremarkable}
G.~D. Mostow.
\newblock On a remarkable class of polyhedra in complex hyperbolic space.
\newblock {\em Pacific J. Math.}, 86(1):171--276, 1980.

\bibitem[MP22]{martinprzytycki:acylindrical}
Alexandre Martin and Piotr Przytycki.
\newblock Acylindrical actions for two-dimensional {A}rtin groups of hyperbolic
  type.
\newblock {\em Int. Math. Res. Not. IMRN}, 17:13099--13127, 2022.

\bibitem[MQZ22]{murrayqingzalloum:sublinearly}
Devin Murray, Yulan Qing, and Abdul Zalloum.
\newblock Sublinearly {M}orse geodesics in {CAT}(0) spaces: lower divergence
  and hyperplane characterization.
\newblock {\em Algebr. Geom. Topol.}, 22(3):1337--1374, 2022.

\bibitem[MS13]{masurschleimer:geometry}
Howard Masur and Saul Schleimer.
\newblock The geometry of the disk complex.
\newblock {\em J. Amer. Math. Soc.}, 26(1):1--62, 2013.

\bibitem[MT18]{mahertiozzo:random}
Joseph Maher and Giulio Tiozzo.
\newblock Random walks on weakly hyperbolic groups.
\newblock {\em J. Reine Angew. Math.}, 742:187--239, 2018.

\bibitem[Mur19]{murray:topology}
Devin Murray.
\newblock Topology and dynamics of the contracting boundary of cocompact {$\rm
  CAT(0)$} spaces.
\newblock {\em Pacific J. Math.}, 299(1):89--116, 2019.

\bibitem[NR97]{nibloreeves:groups}
Graham~A. Niblo and Lawrence Reeves.
\newblock Groups acting on {${\rm CAT}(0)$} cube complexes.
\newblock {\em Geom. Topol.}, 1:1--7, 1997.

\bibitem[NR98]{nibloreeves:geometry}
G.~A. Niblo and L.~D. Reeves.
\newblock The geometry of cube complexes and the complexity of their
  fundamental groups.
\newblock {\em Topology}, 37(3):621--633, 1998.

\bibitem[Osi16]{osin:acylindrically}
D.~Osin.
\newblock Acylindrically hyperbolic groups.
\newblock {\em Trans. Amer. Math. Soc.}, 368(2):851--888, 2016.

\bibitem[Pet21]{petyt:mapping}
Harry Petyt.
\newblock Mapping class groups are quasicubical.
\newblock {\em arXiv:2112.10681}, 2021.

\bibitem[Pia13]{piatek:viscosity}
Bozena Piatek.
\newblock Viscosity iteration in {${\rm CAT}(\kappa)$} spaces.
\newblock {\em Numer. Funct. Anal. Optim.}, 34(11):1245--1264, 2013.

\bibitem[PS09]{papasogluswenson:boundaries}
Panos Papasoglu and Eric Swenson.
\newblock Boundaries and {JSJ} decompositions of {CAT}(0)-groups.
\newblock {\em Geom. Funct. Anal.}, 19(2):559--590, 2009.

\bibitem[PZS24]{petytzalloum:constructing}
Harry Petyt, Abdul Zalloum, and Davide Spriano.
\newblock Constructing metric spaces from systems of walls.
\newblock {\em arXiv:2404.12057}, 2024.

\bibitem[QRT20]{qingrafitiozzo:sublinearly:2}
Yulan Qing, Kasra Rafi, and Giulio Tiozzo.
\newblock Sublinearly morse boundary {II}: Proper geodesic spaces.
\newblock {\em arXiv:2011.03481}, 2020.

\bibitem[Sag95]{sageev:ends}
Michah Sageev.
\newblock Ends of group pairs and non-positively curved cube complexes.
\newblock {\em Proc. London Math. Soc. (3)}, 71(3):585--617, 1995.

\bibitem[Sag97]{sageev:codimension}
Michah Sageev.
\newblock Codimension-{$1$} subgroups and splittings of groups.
\newblock {\em J. Algebra}, 189(2):377--389, 1997.

\bibitem[She22]{shepherd:cubulation}
Sam Shepherd.
\newblock A cubulation with no factor system.
\newblock {\em arXiv:2208.10421}, 2022.

\bibitem[Sis18]{sisto:contracting}
Alessandro Sisto.
\newblock Contracting elements and random walks.
\newblock {\em J. Reine Angew. Math.}, 742:79--114, 2018.

\bibitem[Sta68]{stallings:ontorsionfree}
John~R. Stallings.
\newblock On torsion-free groups with infinitely many ends.
\newblock {\em Ann. of Math. (2)}, 88:312--334, 1968.

\bibitem[Sta71]{stallings:group}
John Stallings.
\newblock {\em Group theory and three-dimensional manifolds}, volume~4 of {\em
  Yale Mathematical Monographs}.
\newblock Yale University Press, New Haven, Conn.-London, 1971.

\bibitem[Sta22]{stadler:cat}
Stephan Stadler.
\newblock {${\rm CAT}(0)$} spaces of higher rank.
\newblock {\em arXiv:2202.02302}, 2022.

\bibitem[SW05]{sageevwise:tits}
Michah Sageev and Daniel~T. Wise.
\newblock The {T}its alternative for {${\rm CAT}(0)$} cubical complexes.
\newblock {\em Bull. London Math. Soc.}, 37(5):706--710, 2005.

\bibitem[Tra19]{tran:onstrongly}
Hung~Cong Tran.
\newblock On strongly quasiconvex subgroups.
\newblock {\em Geom. Topol.}, 23(3):1173--1235, 2019.

\bibitem[Ves23]{vest:curtain}
Elliott Vest.
\newblock Curtain characterization of sublinearly morse geodesics in {CAT}(0)
  spaces.
\newblock {\em arXiv:2309.02725}, 2023.

\bibitem[Wis21]{wise:structure}
Daniel~T. Wise.
\newblock {\em The structure of groups with a quasiconvex hierarchy}, volume
  209 of {\em Annals of Mathematics Studies}.
\newblock Princeton University Press, Princeton, NJ, 2021.

\bibitem[Zal22]{zalloum:convergence}
Abdul Zalloum.
\newblock Convergence of sublinearly contracting horospheres.
\newblock {\em Geom. Dedicata}, 216(3):1--14, 2022.

\end{thebibliography}
\bibliographystyle{alpha}
\end{document}